\numberwithin{equation}{section}
\newtheorem{theorem}{Theorem}[section]
\newtheorem{proposition}[theorem]{Proposition}
\newtheorem{conjecture}[theorem]{Conjecture}
\newtheorem{corollary}[theorem]{Corollary}
\newtheorem{lemma}[theorem]{Lemma}
\theoremstyle{definition}
\newtheorem{definition}[theorem]{Definition}
\newtheorem{question}[theorem]{Question}
\newtheorem{example}[theorem]{Example}
\newtheorem{remark}[theorem]{Remark}
\DeclareMathOperator{\Aut}{Aut}
\DeclareMathOperator{\ad}{ad}
\DeclareMathOperator{\Spec}{Spec}
\DeclareMathOperator{\Br}{Br}
\DeclareMathOperator{\disc}{disc}
\DeclareMathOperator{\End}{End}
\DeclareMathOperator{\ess}{ess}
\DeclareMathOperator{\hdg}{Hdg}
\DeclareMathOperator{\id}{id}
\DeclareMathOperator{\MT}{MT}
\DeclareMathOperator{\rk}{rk}
\DeclareMathOperator{\ord}{ord}
\DeclareMathOperator{\Frac}{Frac}
\newcommand{\Cl}{{\rm Cl}}
\newcommand{\Pic}{{\rm Pic}}
\newcommand{\KS}{{\rm KS}}
\newcommand{\NS}{{\rm NS}}
\newcommand{\Gal}{{\rm Gal}}
\newcommand{\GL}{{\rm GL}}
\newcommand{\SO}{{\rm SO}}
\newcommand{\GSpin}{{\rm GSpin}}
\newcommand{\Nef}{{\rm Nef}}
\newcommand{\Amp}{{\rm Amp}}
\newcommand{\BA}{{\rm BA}}
\newcommand{\Bir}{{\rm Bir}}
\newcommand{\Mov}{{\rm Mov}}
\DeclareMathOperator{\et}{\Acute{e}t}
\newcommand{\GSp}{{\rm GSp}}
\newcommand{\Pos}{{\rm Pos}}
\newcommand{\Sh}{\mathrm{Sh}}
\renewcommand{\hom}{{\rm hom}}
\newcommand{\Sha}{\mathrm{Shaf}}
\newcommand{\scF}{\mathscr{F}} 
\newcommand{\scS}{\mathscr{S}}
\newcommand\rH{\mathrm{H}}
\newcommand{\rT}{\mathrm{T}}
\newcommand\rP{\mathrm{P}}
\newcommand\rI{\mathrm{I}}
\newcommand\spin{{\bf sp}}
\newcommand{\NN}{\mathbb{N}}
\newcommand{\CC}{\mathbb{C}}
\newcommand{\QQ}{\mathbb{Q}}
\newcommand{\RR}{\mathbb{R}}
\newcommand{\ZZ}{\mathbb{Z}}
\newcommand{\fX}{\mathfrak{X}}
\newcommand{\PP}{\mathbb{P}}
\newcommand{\p}{\mathfrak{p}}
\DeclareMathOperator{\Sp}{sp}
\DeclareMathOperator{\bir}{-bir}
\DeclarePairedDelimiterX\Set[1]\lbrace\rbrace{\def\given{\;\delimsize\vert\;}#1}
\newif\ifHideFoot
\newcommand{\Lie}[1]{}
\newcommand{\Zhiyuan}[1]{}
\newcommand{\Haitao}[1]{}
\newcommand{\Teppei}[1]{}
\newcommand{\marg}[1]{\normalsize{{
			\color{red}\footnote{{\color{blue}#1}}}{\marginpar[\vskip
			-.25cm{\color{red}\hfill$\Rightarrow$\tiny\thefootnote}]{\vskip
				-.2cm{\color{red}$\Leftarrow$\tiny\thefootnote}}}}}
\newcommand{\Lie}[1]{\marg{(Lie) #1}}
\newcommand{\Zhiyuan}[1]{\marg{(Zhiyuan) #1}}
\newcommand{\Haitao}[1]{\marg{(Haitao) #1}}
\newcommand{\Teppei}[1]{\marg{(Teppei) #1}}
	\def\MR#1{}
\title{Unpolarized Shafarevich conjectures for hyper-K{\"a}hler varieties}
\author[L. Fu]{Lie Fu}
\address{Institut de recherche math\'ematique avanc\'ee (IRMA), Universit\'e de Strasbourg, France}
\email{lie.fu@math.unistra.fr}
\author[Z. Li]{Zhiyuan Li}\address{Shanghai Center for Mathematical Sciences, Fudan University, 2005 Songhu Road 200438, Shanghai, China}\email{zhiyuan\_li@fudan.edu.cn}
\author[T. Takamatsu]{Teppei Takamatsu}
\address{Department of Mathematics, Faculty of Science, Saitama University, 255 Shimo-Okubo, Sakura-ku, Saitama-shi, Saitama 338-8570, Japan}
\email{teppeitakamatsu.math@gmail.com}
\author[H. Zou]{Haitao Zou}
\address{Universität Bielefeld, Universitätsstraße 25, 33615, Bielefeld
Germany}
\email{hzou@math.uni-bielefeld.de}
\subjclass[2020]{14G35, 14J28, 14J42, 11G15, 11G35}
\keywords{K3 surfaces, Hyper-Kähler varieties, Shafarevich conjectures, Shimura varieties, uniform Kuga--Satake construction, arithmetic period map, CM type}
\begin{document}

\thanks{L.~F. is supported by the University of Strasbourg Institute for Advanced Study (USIAS), and by the Agence Nationale de la Recherche (ANR) under projects ANR-20-CE40-0023 and ANR-24-CE40-4098.  Z.L. is supported by NSFC grants (12121001, 12171090 and 12425105) and the Shanghai Pilot Program for Basic Research (No. 21TQ00).  H.~Z. is  supported by  Deutsche Forschungsgemeinschaft (DFG, German Research Foundation) -- SFB-TRR 358/1 2023 -- 491392403, NKRD Program of China (No. 2020YFA0713200) and LMNS. T.~T. is supported by JSPS KAKENHI Grant number JP19J22795 and JP22KJ1780.}

\begin{abstract} 
The Shafarevich conjecture/question is about the finiteness of isomorphism classes of a family of varieties defined over a number field with good reduction outside a finite collection of places. For K3 surfaces, such a finiteness result was proved by Y.~She. For hyper-Kähler varieties, which are higher-dimensional analogues of K3 surfaces, Andr\'e proved the Shafarevich conjecture for hyper-Kähler varieties of a given dimension and admitting a very ample polarization of bounded degree. In this paper, we provide a unification of both results by proving the (unpolarized) Shafarevich conjecture for hyper-Kähler varieties in a given deformation type. We also discuss the cohomological generalization of the Shafarevich conjecture by replacing the good reduction condition with the unramifiedness of the cohomology, where our results are subject to a certain necessary assumption on the faithfulness of the action of the automorphism group on cohomology.
In a similar fashion, generalizing a result of Orr and Skorobogatov on K3 surfaces, we prove the finiteness of geometric isomorphism classes of hyper-Kähler varieties of CM type in a given deformation type defined over a number field with bounded degree. A key to our approach to these results is a uniform Kuga--Satake map, inspired by She's work, and we study its arithmetic properties, which are of independent interest.
\end{abstract}
\maketitle
\setcounter{tocdepth}{1}
\tableofcontents
\section{Introduction}
\subsection{Background}
Let $K$ be a number field and $S$ a finite set of places of $K$. The classical Hermite--Minkowski theorem says that there are only finitely many extensions of $K$ with a fixed degree that is unramified outside $S$. The geometric generalization is the so-called \textit{Shafarevich question}: given a family $\mathcal{M}$ of smooth projective varieties defined over $K$, we ask the following:
\begin{question}[Finiteness of varieties] \label{Question}
Is the set $\Sha_{\mathcal{M}}(K, S)$ of $K$-isomorphism classes of varieties in $\mathcal{M}$ defined over $K$ and with good reduction outside $S$ finite? 
\end{question}
As the notation indicates, such question can be traced back to a famous conjecture of Shafarevich \cite{ShafarevichICM} for the family of smooth curves of a given genus ($\geq 2$), which plays an important role in Faltings' proof of the Mordell conjecture in \cite{Faltings83}.  Such Shafarevich-type questions have been investigated in various situations, where results fall into two categories: polarized versus unpolarized.

For {\it polarized varieties}, i.e.~varieties equipped with an ample line bundle, the question has an affirmative answer in a number of cases:
\begin{itemize}
	\item (Faltings \cite{Faltings83}) Curves of a fixed genus $g\geq 2$, i.e.~$\Sha_{\mathcal{M}_g}(K,S)$ is a finite set. 
	\item (Faltings \cite{Faltings83}) Abelian varieties of a fixed dimension $g$ and admitting a polarization of a given degree $d$, i.e.~$\Sha_{\mathcal{A}_{g,d}}(K,S)$ is a finite set. 
	\item  (Scholl \cite{Sch85}) del Pezzo surfaces.
	\item (Andr\'e \cite{A96}) K3 surfaces admitting a polarization of a given degree. 
	\item (Andr\'e \cite{A96}) Hyper-Kähler varieties (with $b_2>3$) of given dimension with a very ample polarization of bounded degree; and similarly for more general ``K3-type" varieties such as cubic fourfolds. 
	\item (Javanpeykar--Loughran \cite{Javanpeykar-FlagVarieties}) Flag varieties.
	\item (Javanpeykar \cite{Javanpeykar-CanPolarizedSurface}) Canonically polarized surfaces fibered over a curve. 
	\item (Javanpeykar--Loughran \cite{JavanpeykarCI2017}, \cite{JavanpeykarLoughran-SexticSurface}) Certain varieties ``controlled'' by abelian varieties (via e.g.~intermediate Jacobian): complete intersections of Hodge level $\leq 1$, prime Fano threefolds of index 2, sextic surfaces, etc.
    \item (Lawrence and Sawin \cite{LawrenceSawin})  Hypersurfaces,  up to translation, in a given abelian scheme of dimension $\geq 4$ over $\mathcal{O}_{K,S}$, representing a given ample class. This is based on the techniques in Lawrence--Venkatesh \cite{LawrenceVenkatesh}.
\end{itemize}
These results can often be reinterpreted as the finiteness of $\mathcal{O}_{K,S}$-points in certain moduli spaces; see works of Javanpeykar and his coauthors \cite{JavanpeykarLoughran-StackyCW21}, \cite{Javanpeykar-MathAnn21}, \cite{Javanpeykar-CyclicCover}, \cite{Javanpeykar-PointedFamily} for related studies from this point of view of arithmetic hyperbolicity. 

The \textit{unpolarized} Shafarevich conjecture is a much stronger statement that bypasses the restriction on polarizations (e.g.~degree or natural embedding) in the finiteness statements in the above polarized version. The first example is Zarhin's result \cite{ZarhinTrick85} which gives a positive answer to Question \ref{Question} for abelian varieties of a given dimension $g$, i.e.~the following set is finite:
\[\bigcup_{d}\Sha_{\mathcal{A}_{g,d}}(K, S),\]
generalizing the aforementioned theorem of Faltings.
As for K3 surfaces, Y. She \cite{Sh17} established  the unpolarized Shafarevich conjecture, strengthening Andr\'e's result:
\begin{theorem}[She]\label{Thm:She}
The following set is finite:
	\begin{equation*}
		\Sha_{\rm K3}(K,S)=\Set*{X \given \parbox{13em}{$X$ is a K3 surface over $K$,\\
		having good reduction outside $S$}}/ \cong_{K}.
	\end{equation*}
\end{theorem}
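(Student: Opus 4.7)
My plan is to reduce the finiteness of K3 surfaces to Zarhin's unpolarized Shafarevich theorem for abelian varieties via a Kuga--Satake-type construction that does not depend on a choice of polarization. To any $X\in \Sha_{\rm K3}(K,S)$ I would attach an abelian variety $\KS(X)$ over a controlled extension $K'/K$, show that $\KS(X)$ has good reduction outside a finite set $S'\supset S$ depending only on $(K,S)$, and bound $\dim \KS(X)$ uniformly. Zarhin's theorem then gives finiteness of isomorphism classes of $\KS(X)$, and a Torelli-type input recovers $X$ from $\KS(X)$ up to finite ambiguity.

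\textbf{Uniform Kuga--Satake.} The classical Kuga--Satake construction requires a polarization $L$ to define the primitive sublattice $\langle L\rangle^\perp\subset H^2(X,\ZZ)$, and the resulting abelian variety depends on $L$ and on its degree. The essential new ingredient is a uniform variant: since $H^2$ of every K3 surface is isometric to the fixed K3 lattice $\Lambda_{\rm K3}=3U\oplus 2E_8(-1)$, one can upgrade the period of $X$ to a morphism from $\Spec K$ into an orthogonal Shimura variety of type $(2,n)$ attached to a suitable hyperbolic enlargement of $\Lambda_{\rm K3}$, so that the polarization direction is absorbed into an auxiliary hyperbolic summand rather than fixed. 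Composing with the spin/Kuga--Satake morphism to a Siegel variety then produces $\KS(X)$ canonically, without any polarization choice.

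\textbf{Good reduction and Zarhin.} Using integral canonical models of the orthogonal Shimura variety and their compatibility with the Siegel integral model, I would argue that the period morphism $\Spec K\to\Sh$ extends over $\Spec \mathcal{O}_{K,S'}$ whenever $X$ has good reduction outside $S$; hence $\KS(X)$ inherits good reduction outside $S'$. A bounded extension of scalars $K'/K$ may be needed to rigidify level structures and to lift the period to the spin cover. Since $\dim\KS(X)$ depends only on $\Lambda_{\rm K3}$, Zarhin's unpolarized Shafarevich theorem applied over $K'$ yields finitely many possibilities for $\KS(X)$.

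\textbf{Fibers and the main obstacle.} From $A=\KS(X)$ one recovers the rational Hodge structure on $H^2(X,\QQ)$ as a canonical sub-Hodge-structure of $\End(H^1(A,\QQ))$; together with the Galois action this pins down $X_{\overline{K}}$ up to finitely many options via the global Torelli theorem over $\overline{K}$, and descent to $K$ reduces to the finiteness of $H^1(\Gal(\overline{K}/K),\Aut(X_{\overline{K}}))$, which follows from the faithful action of $\Aut(X_{\overline{K}})$ on $H^2$. The step I expect to be hardest is making the uniform Kuga--Satake map genuinely canonical and Galois-equivariant while maintaining tight control over the auxiliary extension $K'/K$ and the enlarged bad-reduction set $S'$; allowing the polarization to vary forces careful bookkeeping of spin levels, Clifford algebra data, and the resulting ramification.
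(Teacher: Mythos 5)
Your overall strategy---a polarization\emph{-uniform} Kuga--Satake map into a fixed Siegel modular variety, good reduction of the associated abelian variety, Faltings/Zarhin finiteness, and then quasi-finiteness of the period/Kuga--Satake map plus Torelli to recover $X$---is exactly She's route in \cite{Sh17}, and it is the route this paper generalizes to hyper-K\"ahler varieties in Sections \ref{sec:ArithPerMap}--\ref{sec:FinitnessResults}. Two caveats on the construction itself. First, the uniform map is not literally ``without any polarization choice'': one still picks a polarization $H$ (of a priori unbounded degree) and primitively embeds $h^{\perp}$ into one fixed unimodular lattice $\Sigma$ of signature $(2,*)$ (Lemma \ref{lemma:UniformKS}, via Nikulin); what is uniform is the target Shimura/Siegel variety and hence $\dim \KS(X)$, not the map's independence of $H$. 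Second, when you pass from finitely many $\KS(X)$ back to finitely many $X_{\overline{K}}$, the Torelli step needs integral, not just rational, control: one first pins down the transcendental lattice $T(X)$ from the Galois-invariant part of $f^{*}(\underline{\Sigma})$, then uses the finiteness of primitive embeddings $T(X)\hookrightarrow \Lambda_{\mathrm{K3}}$ (Kneser/Nikulin) to bound the integral Hodge structure; this is the content of Theorem \ref{boundedpolarization}. The control of the auxiliary extension $K'$ is handled by a single Hermite--Minkowski argument producing one finite extension that works for all members of $\Sha_{\mathrm{K3}}(K,S)$ simultaneously (cf.\ Proposition \ref{unplevel}).

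The genuine gap is in your final descent step. The set $\rH^{1}(\Gal(\overline{K}/K),\Aut(X_{\overline{K}}))$ is infinite in general---already for a group of order $2$ with trivial action it is $K^{\times}/(K^{\times})^{2}$---and the faithfulness of the action of $\Aut(X_{\overline{K}})$ on $\rH^{2}$ has no bearing on its finiteness (moreover $\Aut(X_{\overline{K}})$ itself is often infinite for K3 surfaces, so the cohomology set is not even of a finite group). What is actually true, and what must be proved, is that only twists which themselves lie in $\Sha_{\mathrm{K3}}(K,S)$ need to be counted, and that for such a twist the associated cocycle, valued in the \emph{finite} group $\Aut(X_{\overline{K}},\mathscr{L}_{\overline{K}})$ of polarized automorphisms, is unramified outside a finite enlargement of $S$. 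This unramifiedness is not formal: it requires a specialization argument of Matsusaka--Mumford type (injectivity of the specialization homomorphism for automorphisms of a smooth model with non-ruled special fiber, as in Lemma \ref{Lemma:Unramifiedness}), after which Hermite--Minkowski yields finiteness of the possible cocycles. This is precisely Proposition \ref{Proposition:FinitenessTwists2} here and the corresponding step in Andr\'e's and She's arguments; without it your proof does not close.
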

The analogue of Zarhin's trick for K3 surfaces also has its origin in Charles' proof of the Tate conjecture \cite{Ch16}, as well as in Orr--Skorobogatov \cite{OrrSkorobogatov}.

More recently, the third author \cite{Takamatsu2020K3} proved an even stronger version of Theorem \ref{Thm:She} by weakening the good reduction condition to a condition on unramified places of the second cohomology as Galois modules, hence establishing the so-called \emph{cohomological (unpolarized) Shafarevich conjecture} for K3 surfaces.
He also established the analogous results for bielliptic surfaces \cite{takamatsu-Bielliptic}, Enriques surfaces \cite{Takamatsu2021a}, and proper hyperbolic polycurves \cite{Nagamachi2019} (the last one, jointly with Nagamachi, generalizes \cite{Javanpeykar-CanPolarizedSurface}).

\subsection{Finiteness of hyper-Kähler varieties with bounded bad reduction}
In this paper, we aim to generalize She's result (Theorem \ref{Thm:She}) to higher-dimensional analogues of K3 surfaces, which are hyper-Kähler varieties (see Section \ref{sec:HKgenerality} for generalities on this type of varieties). We propose the following conjecture. As before, we fix a number field $K$ and a finite set of places $S$. A smooth projective $K$-variety is called hyper-Kähler if the associated complex variety is hyper-Kähler (for an/any embedding of $K$ into $\mathbb{C}$); see Section \ref{sec:HKgenerality}.

\begin{conjecture}[{Unpolarized Shafarevich conjecture for hyper-Kähler varieties}]\label{conj1}
 Given a positive integer $n$, there are only finitely many $K$-isomorphism classes of  $2n$-dimensional hyper-Kähler varieties defined over $K$ and having good reduction outside $S$.
\end{conjecture}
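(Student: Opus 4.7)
The plan is to deduce Conjecture \ref{conj1} from the paper's main theorem (the unpolarized Shafarevich conjecture within a fixed deformation type) by showing that only finitely many deformation types of $2n$-dimensional hyper-K\"ahler varieties can arise from $K$-forms with good reduction outside $S$. The argument would proceed in three steps: first bound the topological invariants of any such $X$, then bound the possible deformation types, then take a finite union of the deformation-type-wise finiteness statements.

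\emph{Step 1 (topological bounds via good reduction).} Fix a prime $\ell$ and a place $v\notin S$ of $K$ with residue characteristic $p\neq \ell$. Smooth proper base change identifies $H^{i}_{\et}(X_{\overline K},\QQ_\ell)$ with the $\ell$-adic cohomology of the smooth special fiber $X_v$, so the Betti numbers of $X$ are determined by the geometric generic fiber and transport well under specialization. What is needed is a uniform bound $b_2(X)\leq C(n)$ depending only on $n$. This is classical for K3 surfaces ($b_2=22$) and holds in dimension four by Guan's bound $b_2 \in \{3,4,5,6,7,8,23\}$; for general $n$ such a bound is expected but is a well-known open problem in hyper-K\"ahler geometry.

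\emph{Step 2 (finiteness of deformation types).} Given the bound from Step 1, the Beauville--Bogomolov--Fujiki lattice $(H^{2}(X,\ZZ),q_X)$ has signature $(3,b_2-3)$ and uniformly bounded rank, hence only finitely many isomorphism classes are possible. Combined with the Fujiki constant $c_X\in \QQ_{>0}$ and the finitely many additional discrete invariants encoded in the cohomology ring (e.g.\ the Riemann--Roch polynomial, the class $c_2$), the deformation type of $X$ is conjecturally pinned down, and in any case the full classification of deformation types of $2n$-dimensional hyper-K\"ahler manifolds is conjectured to be finite. Granting this, at most finitely many deformation types $\Lambda_1,\dots,\Lambda_N$ can occur for $X$ as in Conjecture \ref{conj1}.

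\emph{Step 3 (apply the main theorem and assemble).} For each $\Lambda_j$, the main theorem of the paper gives finiteness of the $K$-isomorphism classes of hyper-K\"ahler varieties of deformation type $\Lambda_j$, defined over $K$, with good reduction outside $S$. Taking the finite union
\[
\bigcup_{j=1}^{N}\Sha_{\Lambda_j}(K,S)
\]
yields the desired finiteness.

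\emph{Main obstacle.} The crux of the difficulty is not arithmetic but geometric: Step 1 hinges on a uniform bound for $b_2$, and Step 2 on the finiteness of deformation types, of $2n$-dimensional hyper-K\"ahler manifolds. Both are notoriously open beyond dimension four. Modulo this classification input, Conjecture \ref{conj1} becomes a formal consequence of the paper's main theorem, so the reduction above identifies the genuine bottleneck as a problem in the topology and deformation theory of hyper-K\"ahler manifolds, rather than in arithmetic geometry.
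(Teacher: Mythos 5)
The statement you are proving is stated in the paper as a \emph{conjecture}, and the paper does not prove it: it explicitly says the general case ``seems out of reach in view of the topological difficulty that even the finiteness of deformation classes of complex hyper-K\"ahler manifolds in a given dimension is unknown.'' Your proposal correctly identifies this as the bottleneck and correctly observes that, granting finiteness of the relevant deformation types, the conjecture follows by taking a finite union of the deformation-type-wise finiteness statements. This is essentially the paper's own reduction: it combines the Huybrechts--Kamenova theorem (finitely many deformation classes with \emph{given Fujiki constant and given discriminant} of the Beauville--Bogomolov form) with the main theorem to obtain the partial result in that direction, and states that the remaining gap is exactly the finiteness of the possible pairs (Fujiki constant, discriminant) in a given dimension.

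Two points in your write-up need correction. First, in Step 2 you assert that bounded rank of the Beauville--Bogomolov lattice implies ``only finitely many isomorphism classes are possible.'' This is false: there are infinitely many isometry classes of lattices of any fixed rank and signature (e.g.\ $\langle 2d\rangle\oplus L$ for varying $d$). Finiteness of isometry classes requires bounding the rank \emph{and} the discriminant, which is precisely why the paper's formulation of the missing input is the finiteness of possible Fujiki constants and discriminants, not merely a bound on $b_2$. Second, even granting finitely many deformation types, your Step 3 does not close the argument as stated: the paper's main theorem gives finiteness of $K$-isomorphism classes only for deformation types with $b_2\geq 5$; for $b_2=4$ it gives only finiteness of birational classes (the finiteness of birational models relies on Amerik--Verbitsky's boundedness of wall divisors, which needs $b_2\geq 5$), and for $b_2=3$ it gives nothing. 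So the union $\bigcup_j \Sha_{\Lambda_j}(K,S)$ is only known to be finite after discarding the (hypothetical) low-$b_2$ deformation types. With these caveats, your identification of the genuine obstruction as a problem in the topology and deformation theory of hyper-K\"ahler manifolds, rather than in arithmetic, agrees with the paper.
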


In this generality, Conjecture \ref{conj1} seems out of reach, considering the topological difficulty that even the finiteness of deformation classes of complex hyper-Kähler manifolds in a given dimension is unknown. It is therefore natural to restrict ourselves to a given \textit{deformation type}. Let $M$ be a deformation type of complex hyper-Kähler manifolds (e.g.~$M$ can be $K3^{[n]}$, $\mathrm{Kum}_n$, $OG_6$, or $OG_{10}$). We say a hyper-Kähler variety $X$ over $K$ is of deformation type $M$, if $X\times_{\Spec K} \Spec(\mathbb{C})$ is of deformation type $M$, for some embedding $K\hookrightarrow \mathbb{C}$; see Section \ref{subsec:DeformationType}.

The following \textit{Shafarevich set} is the central object of study in this paper:
	\begin{equation}\label{HKset}
	\Sha_{M}(K,S)=\Set*{ X \given \parbox{20em}{$X$ is a hyper-Kähler variety over $K$ of deformation type $M$, having good reduction outside $S$}
    }/ \cong_{K}.
	\end{equation}

Generalizing the cohomological Shafarevich conjecture for K3 surfaces proposed by the third author in \cite{Takamatsu2020K3}, let us also consider the following \textit{cohomological Shafarevich set}, which is larger than \eqref{HKset}:
\begin{equation}\label{homHKset}
	\Sha_{M}^{\hom}(K,S)=\Set*{ X \given \parbox{20em}{$X$ is a hyper-Kähler variety over $K$ of deformation type $M$, with $\rH^2_{\et}(X_{\bar{K}},\QQ_{\ell})$ unramified outside $S$}
    }/ \cong_{K}.
	\end{equation}
Here, we say a $\Gal(\overline{K}/K)$-module is unramified at a place $v$ if the action of the inertia group $\rI_{\bar{v}}$ is trivial, where $\bar{v}$ is any extension to $\overline{K}$ of the valuation $v$.

Compared to the case of K3 surfaces, a new feature of higher-dimensional hyper-Kähler varieties that causes substantial difficulties is the existence of non-isomorphic birational transformations among them. Birational isomorphisms between hyper-Kähler varieties are isomorphisms in codimension one and hence preserve (see Proposition \ref{prop:bir-obs}) the cohomological Shafarevich condition in \eqref{homHKset} on the unramifiedness of the second cohomology. In particular, $\Sha_{M}^{\hom}(K,S)$ contains the entire $K$-birational class of any of its members. This motivates us to consider the so-called \textit{Shafarevich set with essentially good reduction}, denoted by $\Sha_{M}^{\ess}(K,S)$; see Definition \ref{essentially good} for the precise definition. Roughly speaking, a hyper-Kähler variety $X$ defined over $K$ has essentially good reduction at a place $v$ if, after a finite extension $K'/K$ that is unramified at $v$, we have a $K'$-birational model of $X_{K'}$ that has good reduction.

The above three Shafarevich sets are related as follows:
\begin{equation}
    \Sha_{M}(K,S) \subset \Sha_{M}^{\ess}(K,S)\subset \Sha_{M}^{\hom}(K,S).
\end{equation}

\vspace{0.5cm}

The first main result of the paper is the following, which gives a positive answer to Conjecture \ref{conj1} for all hyper-Kähler varieties, \textit{upon fixing the deformation type}. The statement is in a more general setting: we allow finitely generated field extension $F$ of $\QQ$ instead of just a number field $K$, the role of the ring of $S$-integers in $K$ is played by a finitely generated normal domain $R$ with fraction field $F$, and the good reduction condition is stipulated only for prime ideals of height 1. See Section \ref{sec:ShafarevichSets} for the precise definitions of various Shafarevich sets. In the general notation, the previous $\Sha_M(K,S)$ corresponds to $\Sha_M(K, \mathcal{O}_{K,S})$.

\begin{theorem}[{Unpolarized Shafarevich conjecture in a deformation class}]\label{mainthm}
Let $R$ be a finitely generated $\ZZ$-algebra which is a normal integral domain with fraction field $F$. Let $M$ be a deformation type of hyper-Kähler varieties with $b_2\neq 3$.
Then there are only finitely many $F$-birational isomorphism classes in  $\Sha^{\ess}_M(F,R)$.\\ 
If $b_2\geq 5$, then $\Sha^{\ess}_M(F,R)$, and hence $\Sha_M(F,R)$, is a finite set. 
\end{theorem}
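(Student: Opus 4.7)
The plan is to mimic She's strategy for K3 surfaces (Theorem \ref{Thm:She}), but in a form that accommodates the new features of higher-dimensional hyper-K\"ahler varieties: non-trivial birational geometry, larger automorphism groups acting on cohomology, and the need to treat several deformation types at once. The main input should be a \emph{uniform} Kuga--Satake construction (whose arithmetic properties are studied earlier in the paper) that fits every $X\in\Sha^{\ess}_M(F,R)$ simultaneously into a single arithmetic period map with values in an orthogonal Shimura variety, whose Kuga--Satake lift lands in a Siegel modular variety.

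First I would reduce to birational classes. By the very definition of essentially good reduction and by Proposition \ref{prop:bir-obs}, every $X\in\Sha^{\ess}_M(F,R)$ becomes, after a controlled finite extension $F'/F$ which is unramified over the height one primes of $R$, birational to a hyper-K\"ahler variety $X'$ of deformation type $M$ with genuine good reduction over the corresponding normalization $R'$. So for the first assertion it suffices to bound the set of such good-reduction models up to $F'$-birational equivalence, then descend. Next I would apply the uniform Kuga--Satake map to $X'$: this produces an abelian variety $\KS(X')$ over $F'$ with good reduction outside the primes of $R'$ above $R$, whose isogeny class is controlled by the $\Gal(\overline{F}/F')$-representation on $\rH^2_{\et}(X'_{\overline{F}},\QQ_\ell)_{\mathrm{prim}}$, together with a polarization of degree bounded in terms of $M$ alone (this is the decisive ``uniform'' input and replaces the case-by-case Kuga--Satake datum available for polarized K3 surfaces). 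Faltings' theorem then gives finitely many possibilities for $\KS(X')$ up to isomorphism, hence for the Hodge/Galois structure on $\rH^2$ of $X'$.

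Now I would convert the finiteness of Hodge structures into finiteness of varieties by an Andr\'e-style polarization argument. Once the $\Gal$-module structure on $\rH^2_{\et}$ is fixed, the Picard lattice $\Pic(X'_{\overline{F}})$ and its $\Gal$-action are pinned down; in particular there exist ample classes of degree bounded purely in terms of $M$ and the finitely many Hodge structures produced above. This feeds $X'$ into Andr\'e's polarized Shafarevich theorem for hyper-K\"ahler varieties (the fifth bullet in the list after Question \ref{Question}), giving finiteness of $F'$-isomorphism classes of the $X'$, and therefore finiteness of $F$-birational classes in $\Sha^{\ess}_M(F,R)$ after a standard Galois descent using Hermite--Minkowski for $F'/F$. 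This establishes the first assertion of the theorem and covers the case $b_2\neq 3$.

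For the second assertion ($b_2\geq 5$), I need to upgrade finiteness of birational classes to finiteness of isomorphism classes. The key fact is that in a fixed deformation type of complex hyper-K\"ahler manifolds with $b_2\geq 5$ the set of birational models of a given variety is finite (a consequence of the Morrison--Kawamata cone conjecture type results, or more directly of the finiteness of orbits of the birational automorphism group on the movable cone chambers), and this finiteness propagates arithmetically: a single $F$-birational class contributes only finitely many $F$-isomorphism classes. Combined with the previous step this yields the finiteness of $\Sha^{\ess}_M(F,R)$ itself, and a fortiori of $\Sha_M(F,R)$.

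The hardest step will be the construction and arithmetic control of the uniform Kuga--Satake map in the second paragraph: one must show that the resulting abelian variety is defined over a controlled extension of $F$, has good reduction precisely where one wants, and that its isogeny class together with the Hodge structure on $\rH^2(X)$ recovers $X$ up to finite ambiguity. The signature restriction $b_2\neq 3$ enters here, since for $b_2=3$ the target orthogonal Shimura datum degenerates and the period map carries no information; the strict inequality $b_2\geq 5$ in the second assertion reflects that the finiteness-of-birational-models ingredient is only available above this threshold.
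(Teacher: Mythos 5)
Your overall strategy --- uniform Kuga--Satake into a fixed Siegel modular variety, Faltings' finiteness, reduction to a polarized Shafarevich statement, and then Theorem \ref{cor:bir-fin} to pass from birational to isomorphism classes when $b_2\geq 5$ --- is indeed the paper's strategy. But two of your intermediate steps contain genuine gaps.

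First, you claim that after a controlled finite extension $F'/F$ every $X\in\Sha^{\ess}_M(F,R)$ is $F'$-birational to a single hyper-K\"ahler variety $X'$ with genuine good reduction over all of $R'$. Essentially good reduction (Definition \ref{essentially good}) is a \emph{local} condition: at each height one prime $\p$ one gets, over a finite \'etale extension of $\widehat{R}_\p$, some smooth proper model birational to $X$, and these local birational models may differ from prime to prime; nothing produces a global birational model with good reduction everywhere. The paper never needs such a model: the correct intermediate statement is that $\rH^2_{\et}(X_{\overline F},\ZZ_\ell)$ is unramified at every $\p$ (Proposition \ref{prop:bir-obs}), which by the Raynaud criterion argument of Corollary \ref{cor:kugasatakegoodreduction} already forces the uniform Kuga--Satake abelian variety to have good reduction everywhere. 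Your subsequent appeal to Faltings survives once rerouted this way, but as written the step is unjustified.

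Second, the passage from ``finitely many Galois-module structures on $\rH^2$'' to ``ample classes of degree bounded in terms of $M$'' is precisely the content of Theorem \ref{finbirsh}, and it is not automatic. Knowing $\Pic_X$ up to finitely many isometry classes does not produce an ample class of bounded square on $X$ itself: the ample cone is cut out by wall divisors, and a class of small square need not be ample or even movable. The paper's argument needs the deformation invariance of MBM/wall divisor classes, the bound $N$ of \eqref{bound-ba} defined relative to the wall structure, and the minimal model program (taking $\operatorname{Proj}$ of a section ring) to realize a bounded birationally ample class as an honest polarization on a \emph{birational model} $Y$ of $X$. Relatedly, you then ``feed $X'$ into Andr\'e's polarized Shafarevich theorem,'' but Andr\'e's theorem requires good reduction and a \emph{very ample} polarization of bounded degree; here one only has essentially good reduction and an ample class of bounded Beauville--Bogomolov square. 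The paper therefore has to prove the polarized statement afresh (Theorem \ref{thmpol}), whose twist-finiteness half (Proposition \ref{Proposition:FinitenessTwists2}) rests on the unramifiedness of the twisting cocycle, proved via the Matsusaka--Mumford specialization theorem for algebraic spaces (Lemma \ref{Lemma:Unramifiedness} and Appendix \ref{appendixMatsusakaMumford}). Your ``standard Galois descent using Hermite--Minkowski'' omits this essential ingredient.
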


The finiteness of deformation classes of hyper-Kähler manifolds in a given dimension is unknown. Nevertheless, Huybrechts \cite{Hu03} proved such a finiteness result upon fixing the Beauville--Bogomolov form. This has been strengthened by Kamenova \cite{Kamenova2018a} in the following form: 

\begin{theorem}[Huybrechts, Kamenova] \label{finitetype}
There are only finitely many deformation classes of complex hyper-Kähler manifolds, with given Fujiki constant and given discriminant of the Beauville--Bogomolov form. 
\end{theorem}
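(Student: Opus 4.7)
The plan is to reduce the statement to Huybrechts's earlier finiteness theorem \cite{Hu03}, which asserts that fixing the isometry class of the Beauville--Bogomolov lattice $(\rH^2(X, \ZZ), q_X)$ leaves only finitely many deformation types of compact hyper-K\"ahler manifolds realizing it. By classical lattice theory (e.g.~the Minkowski--Hermite reduction of quadratic forms, or the Siegel mass formula), there are only finitely many isometry classes of integral lattices of prescribed rank, signature, and discriminant. Since the signature of $q_X$ is always $(3, b_2 - 3)$, determined by $b_2$, the problem reduces to bounding the single invariant $b_2 = \rk\, \rH^2(X, \ZZ)$ in terms of the Fujiki constant $c_X$ and the discriminant $d$ of $q_X$.

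To carry out this bound, one exploits the Fujiki relation
\[
\int_X \alpha^{2n} \;=\; c_X \cdot q_X(\alpha)^n \qquad \forall\, \alpha \in \rH^2(X, \QQ),
\]
together with its Huybrechts--Verbitsky generalizations to pairings of powers of $\rH^2$-classes with characteristic classes of $X$, and with Hirzebruch--Riemann--Roch. These yield universal polynomial identities between $c_X$, the Chern numbers of $X$, and $q_X$; combined with Salamon's identity
\[
\sum_{j=0}^{2n} (-1)^j \bigl(3j^2 - n(6n+1)\bigr) b_{2n-j}(X) \;=\; 0,
\]
with integrality of holomorphic Euler characteristics, and with Verbitsky's structure theorem for the subalgebra of $\rH^*(X,\QQ)$ generated by $\rH^2$, one extracts inequalities forcing $b_2$ to lie in a finite set depending only on $c_X$ and $d$. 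Applying Huybrechts's theorem to each of the finitely many resulting BB lattices then completes the proof.

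The main obstacle is precisely this last step: the numerical bound on $b_2$ in terms of $(c_X, d)$. Huybrechts's moduli-theoretic finiteness becomes usable only after the entire BB lattice has been pinned down, so replacing it by just two numerical invariants requires genuinely new arithmetic-topological input tying the Fujiki constant to the Betti numbers of $X$. This is the substance of Kamenova's contribution \cite{Ka19}, after which the remaining reductions (lattice theory plus invocation of \cite{Hu03}) are formal.
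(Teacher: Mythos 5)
The paper does not actually prove Theorem \ref{finitetype}: it is quoted from Huybrechts \cite{Hu03} and Kamenova \cite{Ka19}, so the comparison is with the argument in those sources. Your outer reduction --- finitely many isometry classes of lattices of given rank, signature and discriminant, plus Huybrechts's theorem that a fixed Beauville--Bogomolov lattice carries only finitely many deformation types --- is valid, but it makes everything hinge on the single claim that $b_2$ is bounded in terms of the Fujiki constant $c_X$ and the discriminant $d$. That claim is where the proposal breaks down. Salamon's identity and Verbitsky's embedding $\operatorname{Sym}^{\le n}\rH^2(X,\QQ)\hookrightarrow \rH^{*}(X,\QQ)$ are purely topological and contain no reference to $c_X$ or $d$; the Fujiki relation and Riemann--Roch bring in $c_X$ only through Chern numbers, which are not a priori bounded. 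Extracting a bound on $b_2$ from these identities is a well-known open problem already in fixed dimension $2n\ge 6$ (Guan's bound $b_2\le 23$ for $2n=4$ is essentially the only case where this strategy succeeds), and no variant of it in terms of $(c_X,d)$ is known. Worse, such a bound is a \emph{consequence} of the theorem (each of the finitely many deformation types has a fixed $b_2$), so posing it as the intermediate step is circular unless you supply an independent proof, which you do not.

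Deferring this step to \cite{Ka19} is a misattribution, because Kamenova's argument runs in the opposite direction and never bounds $b_2$ first. Her route is: by the representation theory of indefinite integral quadratic forms, a lattice of signature $(3,b_2-3)$ and discriminant $d$ represents a positive integer $k$ bounded in terms of $d$ uniformly in the rank; by surjectivity of the period map and Huybrechts's projectivity criterion one deforms $X$ so that such a class becomes a polarization $H$, whence $(H^{2n})=c_X\,q(H)^n$ is bounded; Huybrechts's \emph{polarized} finiteness theorem (resting on Koll\'ar--Matsusaka boundedness, the same input as Proposition \ref{prop:representabilityHyperK}) then yields finitely many deformation types, and the bound on $b_2$ falls out at the end rather than going in at the start. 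The fix is therefore to replace ``bound $b_2$, then invoke the fixed-lattice theorem'' by ``produce a polarization of bounded degree on a deformation, then invoke the bounded-polarization theorem''.
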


The definitions of the Fujiki constant and the discriminant, which are natural deformation invariants for hyper-Kähler varieties, are recalled in Section \ref{sec:HKgenerality}.
Combining Theorem \ref{finitetype} with Theorem \ref{mainthm}, we obtain the following result in the direction of Conjecture \ref{conj1}:
\begin{theorem}\label{mainthm2}
Let $K$ be a number field and $S$ a finite set of places of $K$.
For any $n, \Delta\in \NN$ and $c\in \QQ$, there are only finitely many $K$-isomorphism classes of $2n$-dimensional hyper-Kähler varieties defined over $K$, with Fujiki constant $c$, discriminant of Beauville--Bogomolov form $\Delta$, $b_2\geq 5$, and with good reduction outside $S$. 
\end{theorem}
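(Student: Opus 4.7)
The plan is to combine Theorem \ref{finitetype} of Huybrechts--Kamenova with Theorem \ref{mainthm}, reducing the statement to a finite union of cases each of which is controlled by the Shafarevich finiteness for a fixed deformation type.

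First I would observe that the hypotheses on $X$ are invariants of the complex manifold $X_\iota := X \times_{K,\iota} \mathbb{C}$ that do \emph{not} depend on the embedding $\iota \colon K \hookrightarrow \mathbb{C}$ chosen. Indeed, the dimension, the second Betti number, the Fujiki constant, and the discriminant of the Beauville--Bogomolov form are all topological invariants, and comparison between \'etale and singular cohomology shows that they agree for all complex embeddings of $K$. In particular, for every $X$ satisfying the hypotheses of the theorem, each complex incarnation $X_\iota$ is a $2n$-dimensional hyper-K\"ahler manifold with Fujiki constant $c$, BB-discriminant $\Delta$, and $b_2 \geq 5$.

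Next I would apply Theorem \ref{finitetype}: there is a finite set $\mathcal{M}_{n,c,\Delta}$ of deformation classes of complex hyper-K\"ahler manifolds of dimension $2n$ with Fujiki constant $c$ and BB-discriminant $\Delta$. Picking any embedding $\iota$, the deformation class of $X_\iota$ lies in $\mathcal{M}_{n,c,\Delta}$; by the definition recalled in the excerpt, this means $X$ is of deformation type $M$ for some $M \in \mathcal{M}_{n,c,\Delta}$. Since we are restricting to $b_2 \geq 5$, only those $M \in \mathcal{M}_{n,c,\Delta}$ with $b_2(M) \geq 5$ contribute.

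Finally, for each such $M$, Theorem \ref{mainthm} applied with $F = K$ and $R = \mathcal{O}_{K,S}$ (a finitely generated $\mathbb{Z}$-algebra that is a normal integral domain with fraction field $K$) asserts that $\Sha_M(K, \mathcal{O}_{K,S}) = \Sha_M(K,S)$ is finite. The set of $K$-isomorphism classes we wish to bound is therefore contained in the finite union
\[
\bigcup_{\substack{M \in \mathcal{M}_{n,c,\Delta} \\ b_2(M) \geq 5}} \Sha_M(K,S),
\]
which is finite. No substantive obstacle appears at this step: all the arithmetic and Hodge-theoretic difficulty is absorbed into Theorem \ref{mainthm}, and the present theorem is just the formal combination of that result with Kamenova's topological finiteness statement.
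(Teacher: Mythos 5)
Your proposal is correct and is exactly the paper's argument: the paper derives Theorem \ref{mainthm2} by combining Kamenova's finiteness of deformation classes with fixed Fujiki constant and BB-discriminant (Theorem \ref{finitetype}) with the unpolarized Shafarevich finiteness per deformation type (Theorem \ref{mainthm}), applied with $R=\mathcal{O}_{K,S}$. Your preliminary remark on the embedding-independence of the invariants matches the paper's discussion in Section \ref{subsec:BBform}.
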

 The gap between Theorem \ref{mainthm2} and Conjecture \ref{conj1} can be resolved if there are only finitely many possibilities for Fujiki constants and discriminants of the Beauville--Bogomolov forms of hyper-Kähler manifolds in a given dimension.  

\vspace{0.5cm}
Another new feature of hyper-Kähler varieties in dimension $>2$ is that the action of the automorphism group on the second cohomology is not necessarily faithful (see Example \ref{exm:aut}), which by \textit{twisting} \cite{BorelSerre} leads to the failure of the naive version of the cohomological generalization of the Shafarevich conjecture for (polarized or unpolarized) hyper-Kähler varieties in general, see Section \ref{subsec:Fail-secondCohomShafConj} for counter-examples. On the other hand, we have the following result, where $(i)$ says that passing to geometric isomorphism classes (hence ignoring the effect of twists) recovers the finiteness, and $(ii)$ says that the nonfaithfulness of the action of the automorphism group is the ``only" obstruction to finiteness.

\begin{theorem}[{Cohomological Shafarevich conjecture}]
\label{mainthm-cohom}
Let $R$ be a finitely generated $\ZZ$-algebra that is a normal integral domain with fraction field $F$. Let $M$ be a deformation type of hyper-Kähler varieties with $b_2\neq 3$.
\begin{enumerate}
    \item [(i)] There are only finitely many geometrically birational isomorphism classes in  $\Sha_M^{\hom }(F,R)$.\\ If $b_2\geq 5$, then there are only finitely many geometric isomorphism classes in $\Sha_M^{\hom}(F,R)$. 
     \item [(ii)] Suppose that for any hyper-Kähler variety in the deformation type $M$, the automorphism group acts faithfully on the second cohomology. Then there are only finitely many $F$-birational isomorphism classes in  $\Sha_M^{\hom}(F,R)$.\\ If $b_2\geq 5$,  then $\Sha_M^{\hom}(F,R)$ is a finite set.
\end{enumerate}
\end{theorem}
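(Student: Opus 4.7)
The plan is to reduce both parts to Theorem \ref{mainthm} via a Néron--Ogg--Shafarevich-type criterion for hyper-K\"ahler varieties. The key ingredient I would establish separately is the following: if $X$ is a hyper-K\"ahler variety over $F$ of deformation type $M$ with $\rH^2_{\et}(X_{\overline{F}}, \QQ_\ell)$ unramified at a height-one prime $\mathfrak{p}$ of $R$, then there exists an $F$-form $X'$ of $X$ with essentially good reduction at $\mathfrak{p}$; any such $X'$ differs from $X$ by a cohomology class whose image in $\rH^1(F, \Aut(\rH^2(X_{\overline{F}})))$ is unramified at $\mathfrak{p}$. This would be obtained by pairing the uniform Kuga--Satake construction of the paper with the classical Néron--Ogg--Shafarevich theorem for abelian varieties: unramifiedness of $\rH^2$ promotes to unramifiedness of the $\ell$-adic Tate module of the associated Kuga--Satake abelian variety, hence to its good reduction, and the arithmetic period map then transports this back to a good-reduction birational model of $X$ over a controlled extension, which one descends to an $F$-form via a twisting argument. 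The hypothesis $b_2 \neq 3$ enters through the Kuga--Satake construction.

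Granting this criterion, Part (i) follows immediately: for $X \in \Sha_M^{\hom}(F,R)$, applying the criterion at each height-one prime produces $X' \in \Sha_M^{\ess}(F,R)$ with $X'_{\overline{F}} \cong X_{\overline{F}}$, so the geometric (birational) isomorphism class of every element of $\Sha_M^{\hom}(F,R)$ is represented in $\Sha_M^{\ess}(F,R)$. Theorem \ref{mainthm}---which provides finitely many $F$-birational (respectively $F$-isomorphism, under $b_2 \geq 5$) classes in $\Sha_M^{\ess}(F,R)$---therefore yields finitely many geometric birational (resp.\ geometric isomorphism) classes in $\Sha_M^{\hom}(F,R)$.

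For Part (ii), the faithfulness hypothesis forces the natural map $\Aut(X_{\overline{F}}) \to \Aut(\rH^2(X_{\overline{F}}))$ to have trivial kernel as a Galois-equivariant group homomorphism; the standard exact sequence of pointed sets in nonabelian Galois cohomology then implies that the induced map on $\rH^1$ has trivial fiber over the base point, so the $F$-form $X'$ produced by the criterion must coincide with $X$ itself. This yields the equality $\Sha_M^{\hom}(F,R) = \Sha_M^{\ess}(F,R)$ as sets of $F$-isomorphism classes, whence Theorem \ref{mainthm} gives the claimed finiteness of $F$-birational classes (and of $F$-isomorphism classes when $b_2 \geq 5$).

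The main obstacle is the Néron--Ogg--Shafarevich criterion itself, which extends the analogous statement for K3 surfaces to arbitrary hyper-K\"ahler deformation types. Beyond the global-to-local Kuga--Satake compatibility, the subtle step is the descent of the good-reduction model over an auxiliary extension to an $F$-form of $X$; this involves understanding how the Kuga--Satake functor interacts with twisting, and in particular identifying the obstruction precisely with a class in $\rH^1(F, \ker(\Aut \to \Aut(\rH^2)))$, which is exactly the non-faithfulness of the cohomological action addressed by the hypothesis in Part (ii).
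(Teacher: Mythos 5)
The central step of your plan --- the N\'eron--Ogg--Shafarevich-type criterion asserting that unramifiedness of $\rH^2_{\et}(X_{\overline{F}},\QQ_\ell)$ at $\mathfrak{p}$ yields an $F$-form of $X$ with essentially good reduction at $\mathfrak{p}$ --- is a genuine gap, and it is exactly the statement the paper's proof is designed to avoid. The Kuga--Satake construction transports information in only one direction here: essentially good reduction (indeed, mere unramifiedness of $\rH^2$) gives good reduction of the Kuga--Satake abelian variety (Corollary \ref{cor:kugasatakegoodreduction}), but there is no known way to transport good reduction of the abelian variety back to a good-reduction birational model of $X$. Even for K3 surfaces this reverse direction requires the Kulikov--Pinkham--Persson classification of semistable degenerations and surface-specific MMP input; for higher-dimensional hyper-K\"ahler varieties no such theory exists, and the paper explicitly notes that the period map is not known to extend integrally and that even the definition of hyper-K\"ahler varieties in mixed characteristic is unclear. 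Two further problems: even granting the local criterion, you would need a single $F$-form with essentially good reduction at all (infinitely many) height-one primes of $R$ simultaneously, and you give no gluing argument for the local twists; and in part (ii), injectivity of $\Aut(X_{\overline{F}})\to\GL(\rH^2)$ does not imply that the induced map on nonabelian $\rH^1$ has trivial fibers, nor does your criterion produce a cocycle that is globally trivial (only unramified at $\mathfrak{p}$), so the conclusion $X'=X$ does not follow. Note also that the paper does \emph{not} claim $\Sha^{\hom}_M(F,R)=\Sha^{\ess}_M(F,R)$ even under the faithfulness hypothesis; Section \ref{sectionremcoh} shows these sets genuinely differ in general.

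The paper's actual route (Theorem \ref{cohshaf}) never passes through $\Sha^{\ess}_M$: unramifiedness of $\rH^2$ already suffices for good reduction of the Kuga--Satake abelian variety, so Faltings' finiteness, the quasi-finiteness of the period map, and the extension property of integral canonical models of the relevant Shimura varieties give finiteness of \emph{geometric} (birational) isomorphism classes directly, which is part (i). The passage to $F$-isomorphism classes in part (ii) is then handled not by a good-reduction criterion but by rigidification: under the faithfulness hypothesis, an object equipped with a suitable level structure has trivial automorphism group, hence admits no nontrivial twists, and finiteness over $F$ follows from the geometric finiteness. You should either adopt that strategy or supply a proof of your good-reduction criterion, which at present is an open problem.
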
  

In fact, to remedy the nonfaithfulness of the action of the automorphism group on the second cohomology, we investigate in Section \ref{subsec:FullDegCohomShafConj} a more general version of the cohomological Shafarevich conjecture by taking into account of the unramifiedness of the cohomology groups of degree other than 2, and Theorem \ref{mainthm-cohom} is proved in the general form of Theorem \ref{cohshaf}. As a consequence, we deduce in Corollary \ref{cor:cohshaf} that the full-degree cohomological Shafarevich conjecture holds for all hyper-Kähler varieties of known deformation type, and for all hyper-Kähler varieties of dimension 4.

\subsection{Geometric finiteness of hyper-Kähler varieties of CM type} 
The key ingredient in the proof of Theorem \ref{mainthm} is the construction of a \textit{uniform Kuga--Satake map} (\cite{OrrSkorobogatov}, \cite{Sh17}, see Section \ref{subsec:UniformKS}) and its arithmetic properties,
which allows us to show, as an intermediate step towards Theorem \ref{mainthm}, that there are only finitely many isometry classes of (geometric) Picard lattices of hyper-Kähler varieties arising from  $\Sha_M^{\ess}(F,R)$ (see Theorem \ref{boundedpolarization}). 

More generally, we propose the following conjecture, inspired by yet another conjecture of Shafarevich formulated in \cite{Shafarevich96}.

\begin{conjecture}[Finiteness of geometric Picard lattices]\label{conj:fin-ns}
Let $d$ be an integer and $M$ a deformation type of hyper-Kähler manifolds.  Then there are only finitely many isometry classes in the following set of lattices:
\begin{equation}\label{eq:pic-bound}
\bigg\{\Pic(X_\CC)\bigg |~ \begin{array}{lr}
	\hbox{\rm  $X$~is hyper-Kähler over a number field $K$ of  degree $\leq d$  } &  \\
	\hbox{\rm   $X_\CC\coloneqq X\times_K \Spec(\CC)$ is of deformation type $M$ for some $K\hookrightarrow \CC$} &
	\end{array}  \bigg \}.
\end{equation} 
Here the Picard group is equipped with the restriction of the Beauville--Bogomolov quadratic form.
\end{conjecture}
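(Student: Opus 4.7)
The plan is to reduce Conjecture \ref{conj:fin-ns} to a finiteness statement about Néron-Severi lattices of abelian varieties, by deploying the uniform Kuga-Satake construction developed earlier in this paper. Fix the deformation type $M$, so that the Beauville-Bogomolov lattice $\Lambda = (H^2, q)$ of a hyper-Kähler manifold of type $M$ is a fixed orthogonal $\ZZ$-lattice. For any $X$ of type $M$ over a number field $K$ with $[K : \QQ] \leq d$, apply the uniform Kuga-Satake map to obtain an abelian variety $A = KS(X)$ of bounded dimension (depending only on $M$), defined over a number field of degree bounded by a function $\phi(d, M)$ of $d$ and $M$. This is the same uniformity mechanism that underpins Theorem \ref{mainthm}.

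Next, exploit the Hodge-theoretic content of Kuga-Satake: there is a canonical embedding of rational Hodge structures $H^2(X_\CC, \QQ)(1) \hookrightarrow \End\bigl(H^1(A_\CC, \QQ)\bigr)$, compatible with the natural quadratic forms up to a scalar. Passing to Hodge classes yields an injection $\Pic(X_\CC) \otimes \QQ \hookrightarrow \End^0(A_\CC)$ of quadratic $\QQ$-vector spaces. Now $\Pic(X_\CC)$ sits as a primitive sublattice of the fixed lattice $\Lambda$; hence its isometry class is determined, up to finite ambiguity, by its rank (bounded by $\rk \Lambda$), its signature (constrained by the Hodge index theorem and the signature of $\Lambda$), and its discriminant, by Nikulin's theory combined with the classical finiteness of genera and class numbers. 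It therefore suffices to bound $\disc(\Pic(X_\CC))$ uniformly in $d$ and $M$.

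The main obstacle is the following finiteness question, of independent interest: for abelian varieties of bounded dimension defined over number fields of bounded degree, are there only finitely many isometry classes of Néron-Severi lattices, i.e., of endomorphism rings equipped with their Rosati pairing? The rational version is immediate from the classification of possible endomorphism algebras $\End^0(A)$ in a fixed dimension. The integral refinement should follow from Masser-Wüstholz style isogeny bounds, which control the conductor of $\End(A)$ inside a maximal order of $\End^0(A)$ in terms of the degree of the field of definition. An alternative strategy is a reduction to the CM case: specialize $X$ along a Hodge locus where $\Pic(X_\CC)$ jumps to maximal rank, producing a CM hyper-Kähler variety whose Picard lattice contains $\Pic(X_\CC)$; the CM case is then amenable to Orr-Skorobogatov's finiteness theorem for CM abelian varieties over bounded degree fields, combined with the finiteness of geometric isomorphism classes of CM hyper-Kähler varieties in a fixed deformation type that is already established in this paper. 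The delicate point in either route is obtaining uniform control over the field of definition of the auxiliary arithmetic data, which is precisely where the arithmetic period map studied elsewhere in this paper should play a decisive role.
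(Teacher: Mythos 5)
This statement is labeled a \emph{conjecture} in the paper, and the authors do not prove it in the generality stated; they prove it only under the additional hypothesis that $X$ is of CM type (Theorem \ref{mainthm3} and Corollary \ref{cor:FinitenessPicard}), by combining the uniform Kuga--Satake map with Orr--Skorobogatov's finiteness of CM points on Shimura varieties of abelian type over number fields of bounded degree. So there is no full proof in the paper to compare your attempt against; the conjecture is open.

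Against that backdrop, your proposal is strategically aligned with the paper's method: you correctly deploy the uniform Kuga--Satake construction, correctly observe that it suffices (by Nikulin's theory and finiteness of genera for lattices of bounded rank and discriminant) to bound $\disc(\Pic(X_\CC))$, and correctly identify that the CM case is tractable via Orr--Skorobogatov combined with the paper's earlier results. However, you also candidly flag two possible routes to close the gap, and both of them fail.

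First, the Masser--W\"ustholz route does not yield the uniformity you need. Those isogeny bounds control the minimal isogeny degree in terms of the Faltings height, which is unbounded as one ranges over abelian varieties over number fields of bounded degree. Without the CM hypothesis one has no finiteness of geometric isomorphism (or even isogeny) classes of the Kuga--Satake abelian varieties, which is exactly what drives the CM argument (via \cite[Proposition 3.1]{OrrSkorobogatov}, quoted in the paper as Theorem \ref{CM-abfin}). The finiteness question you isolate --- whether N\'eron--Severi lattices of abelian varieties of bounded dimension over bounded-degree fields form a finite set of isometry classes --- is not a known result and appears to be of comparable difficulty to the conjecture itself; for instance for $A = E_1 \times E_2$ with $E_1, E_2$ isogenous non-isomorphic elliptic curves, $\disc(\NS(A))$ grows with the minimal isogeny degree.

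Second, the proposed reduction to the CM case by specializing along a Hodge locus where the Picard rank jumps does not preserve the bounded-degree hypothesis. A CM point of the relevant sub-Shimura variety will in general be defined only over a number field of unbounded degree, independent of the degree $d$ of the field of definition of $X$. So one cannot transport the hypothesis $[K:\QQ]\leq d$ to the auxiliary CM variety, and the appeal to Orr--Skorobogatov collapses. In short, your write-up correctly reproduces the paper's mechanism in the CM case but the suggested extensions to the non-CM case contain genuine gaps; the conjecture remains open both in the paper and in your proposal.
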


As a special case, if we fix the number field $K$ in \eqref{eq:pic-bound},  the finiteness of isometry classes in \eqref{eq:pic-bound} can be regarded as a strengthening of the Bombieri--Lang conjecture for moduli spaces of lattice-polarized hyper-Kähler varieties, which predicts that the $K$-rational points in a moduli space  $\scF$ of lattice-polarized hyper-Kähler varieties are contained in the Noether--Lefschetz loci if $\scF$ is of ``sufficiently" general type. 

One could also speculate the more ambitious form without the restriction on deformation type. In the case of K3 surfaces, Conjecture \ref{conj:fin-ns} has been confirmed for K3 surfaces of CM type by Orr--Skorobogatov in \cite[Corollary B.1]{OrrSkorobogatov}. In fact, they proved the following stronger result \cite[Theorem B]{OrrSkorobogatov}:

\begin{theorem}[Orr--Skorobogatov]\label{thm:OrrSkoro}
There are only finitely many geometric isomorphism classes of K3 surfaces of CM type which can be defined over a number field of a given degree. 
\end{theorem}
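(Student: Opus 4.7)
The plan is to reduce Theorem \ref{thm:OrrSkoro} to the classical finiteness of CM abelian varieties of bounded dimension defined over number fields of bounded degree, via the Kuga--Satake construction. Given a K3 surface $X$ of CM type over $\overline{\QQ}$, the Mumford--Tate group of the weight-$2$ Hodge structure $\rH^2(X_\CC,\QQ)$ is a torus. The Kuga--Satake construction then produces a complex abelian variety $A=\KS(X)$ of dimension $2^{20}$ whose Mumford--Tate group is also a torus; hence $A$ is of CM type, with CM algebra generated by the action of the even Clifford algebra of $\rH^2(X,\ZZ)$ together with the endomorphisms encoding the CM of the transcendental lattice of $X$.

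Now suppose $X$ descends to a number field $K$ with $[K:\QQ]\le d$. The first key step is to bound the degree of the smallest number field $K'$ over which $A$ can be defined together with its Clifford action. This uses the fact that the orthogonal Shimura datum for K3 surfaces admits a finite covering by a $\GSpin$ Shimura datum parametrising Kuga--Satake abelian varieties; the degree of the covering depends only on the K3 lattice, and the arithmetic descent for CM points is governed by class field theory applied to the reflex field of the CM type. This yields $[K':\QQ]\le d'$ with $d'$ depending only on $d$. The second key step is the classical finiteness of CM abelian varieties of bounded dimension with bounded field of moduli: such a variety is classified by a CM type on a CM algebra of degree at most $2g$ together with discrete invariants in a finite class group, and only finitely many of these data admit a model over a number field of degree $\le d'$.

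Finally, I would reconstruct $X$ from $A$ up to finite ambiguity. The weight-$2$ Hodge structure $\rH^2(X,\QQ)$ sits as a direct summand of $\End(\rH^1(A,\QQ))$ compatibly with the spinorial action, so $A$ equipped with its Clifford structure determines the polarized Hodge structure of $X$ up to finitely many choices. The Global Torelli theorem for K3 surfaces then shows that the polarized Hodge structure, together with a choice of ample class from the finitely many orbits under the automorphism group of the K3 lattice, recovers $X$ up to isomorphism over $\overline{\QQ}$, giving the finiteness of geometric isomorphism classes.

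The main obstacle is the arithmetic descent in the second paragraph: making the bound $[K':\QQ]\le d'$ \emph{uniform} (depending only on $d$, not on $X$) requires either Rizov's construction via spin-level structures or Andr\'e's absolute-Hodge-cycle approach, combined with a class-field-theoretic argument exploiting the abelianity of the Galois action on the transcendental lattice of a CM K3 surface. The reconstruction step, although in principle mechanical, also requires care to bound the finite ambiguity uniformly in $d$, for which one invokes the finiteness of isometry classes of lattices of bounded discriminant and signature.
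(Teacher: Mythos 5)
Your proposal follows essentially the same route as Orr--Skorobogatov's original argument and as this paper's generalization to hyper-K\"ahler varieties: note that the paper only \emph{cites} Theorem \ref{thm:OrrSkoro} rather than reproving it, so the relevant comparison is with the proof of Theorem \ref{mainthm3}, which likewise combines the (uniform) Kuga--Satake map, the CM criterion of Lemma \ref{KS-CM}, the finiteness of CM points on Shimura varieties of abelian type (Theorem \ref{CM-abfin}), a uniform bound on the field of definition of a level structure (Proposition \ref{unplevel}), finiteness of the transcendental and Picard lattices, and quasi-finiteness of the period map together with Torelli. The only thin spots are the ones you flag yourself, namely the uniformity of the descent bound (handled via spin level structures) and the bound on the degree of a polarization, which in the paper is extracted \emph{after} the finiteness of Picard lattices rather than assumed at the outset.
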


As another application of the uniform Kuga--Satake construction, our second main result generalizes Theorem \ref{thm:OrrSkoro} to hyper-Kähler varieties of CM type. 

\begin{theorem}\label{mainthm3}
Let $d$ be a positive integer and $M$ a fixed deformation type of hyper-Kähler varieties with $b_2 \geq 4$. Then there are only finitely many geometrically birational equivalence classes of hyper-Kähler varieties of CM type and of deformation type $M$ which can be defined over a number field of degree $\leq d$.
When $b_2\geq 5$, there are only finitely many geometric isomorphism classes of hyper-Kähler varieties of CM type and of deformation type $M$ which can be defined over a number field of degree $\leq d$.
\end{theorem}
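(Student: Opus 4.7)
The plan is to reduce Theorem \ref{mainthm3} to the finiteness of CM abelian varieties of bounded dimension defined over number fields of bounded degree, which is Tsimerman's theorem on the Andr\'e--Oort conjecture for $\mathcal{A}_g$ as exploited by Orr--Skorobogatov \cite{OrrSkorobogatov} for K3 surfaces. The bridge between hyper-K\"ahler varieties and abelian varieties is the uniform Kuga--Satake map developed in Section \ref{subsec:UniformKS}, and it is here that the main technical work must take place.

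Starting from a hyper-K\"ahler variety $X$ of deformation type $M$ and CM type, defined over a number field $K$ with $[K:\QQ]\le d$, the first step is to apply the uniform Kuga--Satake construction to produce an abelian variety $A(X)$ of dimension bounded in terms of $M$ alone, together with a Hodge-theoretic embedding of (a Tate twist of) the primitive $H^2(X,\QQ)$ into $\End(H^1(A(X),\QQ))$. Since $\MT(H^2(X,\QQ))$ is a torus by the CM hypothesis, this embedding forces $\MT(H^1(A(X),\QQ))$ to be a torus as well, so $A(X)$ is of CM type. The uniformity of the construction furthermore provides a bound, depending only on $M$, on the degree of a field extension $L/K$ over which $A(X)$ can be defined, so that $[L:\QQ]$ is controlled in terms of $d$ and $M$ alone.

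Applying the Orr--Skorobogatov finiteness theorem then yields only finitely many possibilities for $A(X)$ up to geometric isomorphism. For each such class, the rational Hodge structure on $H^2(X_\CC,\QQ)$ embeds as a sub-Hodge structure of the finite-dimensional CM Hodge structure $\End(H^1(A(X)_\CC,\QQ))$; sub-Hodge structures of a fixed CM Hodge structure of given rank are parametrised by unions of Galois orbits of characters of the Mumford--Tate torus and are therefore finite in number. This pins down the isometry class of the transcendental lattice, and hence of the geometric Picard lattice $\Pic(X_\CC)$ endowed with its Beauville--Bogomolov form, within a finite set. In particular $X_\CC$ carries a polarization of uniformly bounded BBF-degree.

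To conclude, one invokes Andr\'e's polarized Shafarevich-type finiteness \cite{A96} for hyper-K\"ahler varieties of a given deformation type with polarization of bounded degree and defined over a number field of bounded degree. This yields finiteness of geometric isomorphism classes when $b_2\ge 5$, and finiteness only up to geometric birational equivalence when $b_2\ge 4$ (accounting for the birational modification needed to replace a bounded quasi-polarization by a bounded very ample one). The main obstacle throughout is the first step: the naive Kuga--Satake construction at a single point could require a quadratic extension varying uncontrollably with $X$, and taming this extension uniformly across all $X$ of deformation type $M$ is precisely the purpose of the uniform variant constructed in Section \ref{subsec:UniformKS}.
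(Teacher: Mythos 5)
Your overall route --- uniform Kuga--Satake, transfer of the CM property to the abelian side (Lemma \ref{KS-CM}), Orr--Skorobogatov/Tsimerman finiteness for CM abelian varieties over fields of bounded degree, finiteness of transcendental and Picard lattices, then a bounded polarization on a birational model --- is exactly the paper's. But your concluding step contains a genuine gap. You invoke ``Andr\'e's polarized Shafarevich-type finiteness for hyper-K\"ahler varieties of a given deformation type with polarization of bounded degree and defined over a number field of bounded degree.'' No such theorem exists: Andr\'e's finiteness in \cite{A96} is a Shafarevich statement whose input is Faltings' finiteness for the Kuga--Satake abelian varieties \emph{with good reduction outside a fixed finite set of places}, and no good-reduction hypothesis is available here; the set of all polarized hyper-K\"ahler varieties of bounded degree over number fields of bounded degree is infinite. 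What actually closes the argument is that the CM hypothesis must be used once more at this stage: the uniform Kuga--Satake map is quasi-finite on $\CC$-points, and its image on the varieties in question consists of CM points of a Shimura variety of abelian type defined over fields of degree at most $\delta(4,M)\,d$, a finite set by Theorem \ref{CM-abfin}; quasi-finiteness then yields finitely many geometric isomorphism classes of the bounded-degree polarized models. So the mechanism is Andr\'e's, but the finiteness input is Tsimerman/Orr--Skorobogatov rather than Faltings, and it cannot be replaced by a citation of the polarized Shafarevich conjecture.

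Two smaller points. First, from ``finitely many isometry classes of $\Pic(X_\CC)$'' you pass directly to ``$X_\CC$ carries a polarization of uniformly bounded BBF-degree.'' A primitive class of small positive square in $\Pic(X_\CC)$ need not be ample on $X_\CC$; it is only ample on some birational model, and producing such a class of uniformly bounded square uses the deformation invariance of wall divisors and the bound \eqref{bound-ba} from the proof of Theorem \ref{finbirsh}. This is exactly where the ``geometrically birational'' qualifier for $b_2\geq 4$ enters, and the upgrade to isomorphism classes for $b_2\geq 5$ comes from the finiteness of birational models (Theorem \ref{cor:bir-fin}), not from replacing a quasi-polarization by a very ample one. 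Second, your claim that sub-Hodge structures of a fixed CM Hodge structure of a given rank are automatically finite in number needs care when the Mumford--Tate torus representation has multiplicities; the paper instead deduces finiteness of $T(X_\CC)$ from the finiteness of the image points in $\Sh_{\mathbf{K}'_4}(\Sigma)$ as in Theorem \ref{boundedpolarization}, and then finiteness of $\Pic(X_\CC)=T(X_\CC)^{\perp}$ from Kneser/Nikulin finiteness of primitive embeddings into the fixed lattice $\Lambda$.
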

Note that (geometrically) birational transformations preserve the (geometric) Picard lattice. In particular, Conjecture \ref{conj:fin-ns} holds for hyper-Kähler varieties of CM type:

\begin{corollary}
\label{cor:FinitenessPicard}
Let $d$ be a positive integer and $M$ a fixed deformation type of hyper-Kähler varieties with $b_2 \geq 4$. There are only finitely many  
isometry classes of the geometric Picard lattices of hyper-Kähler varieties of CM type and of deformation type $M$ which can be defined over a number field of degree $\leq d$.
\end{corollary}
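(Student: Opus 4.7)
The plan is to reduce Corollary \ref{cor:FinitenessPicard} to Theorem \ref{mainthm3} using the birational invariance of the Picard lattice for hyper-K\"ahler varieties. First I would apply Theorem \ref{mainthm3}: in the regime $b_2\geq 4$, it provides only finitely many geometrically birational equivalence classes of hyper-K\"ahler varieties of CM type and of deformation type $M$ defined over a number field of degree $\leq d$. It therefore suffices to show that two hyper-K\"ahler varieties that are geometrically birational have isometric geometric Picard lattices (equipped with the restriction of the Beauville--Bogomolov form).

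The second step is this birational invariance statement. If $f\colon X_\CC\dashrightarrow Y_\CC$ is a birational map between hyper-K\"ahler manifolds, then by a well-known result (originally due to Huybrechts), $f$ is an isomorphism in codimension one: there exist closed subsets $Z_X\subset X_\CC$ and $Z_Y\subset Y_\CC$ of codimension $\geq 2$ such that $f$ restricts to an isomorphism $X_\CC\setminus Z_X\xrightarrow{\sim} Y_\CC\setminus Z_Y$. Since $\rH^2$ is insensitive to removing a codimension-$2$ closed subset of a smooth variety, pullback along $f$ induces a Hodge isometry $f^*\colon \rH^2(Y_\CC,\ZZ)\xrightarrow{\sim} \rH^2(X_\CC,\ZZ)$ with respect to the Beauville--Bogomolov form, which restricts to an isometry $\Pic(Y_\CC)\xrightarrow{\sim}\Pic(X_\CC)$.

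Combining these two steps, the map sending a hyper-K\"ahler variety to the isometry class of its geometric Picard lattice factors through the set of geometric birational equivalence classes, which by Theorem \ref{mainthm3} is finite. This yields the desired finiteness. I do not expect any serious obstacle: the content is entirely in Theorem \ref{mainthm3}, and the passage from birational classes to Picard lattices is a routine application of the codimension-one isomorphism property of birational maps between hyper-K\"ahler manifolds.
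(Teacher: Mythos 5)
Your proposal is correct and is exactly the paper's argument: Corollary \ref{cor:FinitenessPicard} is deduced from Theorem \ref{mainthm3} together with the observation that geometrically birational hyper-K\"ahler varieties have isometric geometric Picard lattices, since a birational map between hyper-K\"ahler manifolds is an isomorphism in codimension one and hence induces a Beauville--Bogomolov isometry on $\rH^2$. No gaps.
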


As a consequence, we get the uniform boundedness of Brauer groups for hyper-Kähler varieties of CM type. 

\begin{corollary}\label{CorBr}
Fix a deformation type $M$ of hyper-Kähler varieties with $b_2 \geq 4$.
For any positive integer $d$,  there exists a constant $N$ such that
\begin{center}
    $|\Br(X)/\Br_0(X)| < N$ and $|\Br(X_{\bar{\QQ}})^{\Gal(\bar{\QQ}/F)}| < N$
\end{center}
for any hyper-Kähler variety $X$ of CM type, of deformation type $M$, and defined over a number field $F$ of degree at most $d$. Here $\Br_0(X)\coloneqq \operatorname{Im}\left(\Br(F)\to \Br(X)\right)$.
\end{corollary}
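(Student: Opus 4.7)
The plan is to combine the geometric finiteness from Theorem~\ref{mainthm3} and Corollary~\ref{cor:FinitenessPicard} with the standard description of the Brauer group in terms of the transcendental lattice and the Galois action on the Picard group.

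First I would bound $|\Br(X_{\bar\QQ})^{\Gal(\bar\QQ/F)}|$. Since $H^2(X_\CC,\ZZ)$ with its Beauville--Bogomolov form is determined by the deformation type $M$ and the geometric Picard lattice belongs to a finite set of isometry classes by Corollary~\ref{cor:FinitenessPicard}, the transcendental lattice $T(X):=\Pic(X_\CC)^\perp\subset H^2(X_\CC,\ZZ)$ also takes only finitely many isometry classes, and the same holds for the deformation invariant $H^3(X_\CC,\ZZ)_\mathrm{tors}$. Via the standard exponential-sequence identification
\[
\Br(X_{\bar\QQ})\cong \bigl(T(X)\otimes\QQ/\ZZ\bigr)\oplus H^3(X_\CC,\ZZ)_\mathrm{tors},
\]
the underlying abelian group of $\Br(X_{\bar\QQ})$ ranges over a finite set up to isomorphism. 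The CM hypothesis implies that the Mumford--Tate group of $T(X)$ is an algebraic torus, so the $\ell$-adic Galois representation on $T(X)\otimes\hat{\ZZ}$ has abelian image factoring through class-field-theoretic data attached to the reflex field of the CM structure (whose degree is bounded by $\rk T(X)$); together with $[F:\QQ]\leq d$ and the finiteness of the isometry classes of $T(X)$, this yields a uniform bound $N_1$ on $|(T(X)\otimes\QQ/\ZZ)^{\Gal(\bar\QQ/F)}|$, hence on $|\Br(X_{\bar\QQ})^{\Gal(\bar\QQ/F)}|$.

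Next I would bound $|\Br(X)/\Br_0(X)|$ via the filtration $\Br_0(X)\subset \Br_1(X)\subset \Br(X)$. The Hochschild--Serre spectral sequence gives the standard inequality
\[
|\Br(X)/\Br_0(X)|\;\leq\;|H^1(F,\Pic(X_{\bar\QQ}))|\cdot |\Br(X_{\bar\QQ})^{\Gal(\bar\QQ/F)}|.
\]
For $X$ of CM type the Galois action on $\Pic(X_{\bar\QQ})$ factors through a finite abelian quotient, whose order is controlled by the same reflex/CM data together with the finitely many possible isometry classes of $\Pic(X_{\bar\QQ})$ coming from Corollary~\ref{cor:FinitenessPicard}. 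This uniformly bounds $|H^1(F,\Pic(X_{\bar\QQ}))|$ and, combined with $N_1$, yields the desired constant $N$. The main obstacle is precisely the uniform control of the finite Galois quotient acting on $T(X)$ and $\Pic(X_{\bar\QQ})$, uniformly in $X$ and in $F$ of degree at most $d$; here the CM hypothesis is essential, as it reduces the problem to class field theory over number fields of bounded degree, whereas without CM one would require much stronger input of Faltings type on the associated Kuga--Satake abelian variety.
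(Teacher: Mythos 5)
There is a genuine gap at the heart of your first step. You claim that the CM hypothesis, the bound $[F:\QQ]\leq d$, and the finiteness of the isometry classes of $T(X)$ (from Corollary \ref{cor:FinitenessPicard}) together yield a uniform bound on $|(T(X)\otimes\QQ/\ZZ)^{\Gal(\bar\QQ/F)}|$. The isometry class of $T(X)$ does not control the Galois representation on $T(X)\otimes\widehat{\ZZ}$: two CM hyper-K\"ahler varieties with isometric transcendental lattices can have entirely different CM fields and Galois actions. Moreover, knowing that the image of Galois is abelian and ``factors through class-field-theoretic data'' does not bound the invariants of $T(X)\otimes\ZZ/\ell^n$ --- a small abelian image would produce a large invariant subgroup. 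What is actually needed is that the adelic image of Galois is open of \emph{uniformly bounded index} in the integral points of the Mumford--Tate group, uniformly over all the varieties and fields in question. This is precisely the content of the integral Mumford--Tate conjecture together with the uniformity analysis of Orr--Skorobogatov, and it is the hardest part of the statement; your proposal asserts it rather than proves it.

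The paper's proof is structured to avoid exactly this problem. It first reduces, via Theorem \ref{mainthm3} (not merely Corollary \ref{cor:FinitenessPicard}), to finitely many \emph{geometric isomorphism classes} $X_{\bar\QQ}$; for each fixed geometric class it then invokes \cite[Theorem 5.1]{OrrSkorobogatov}, which gives a single constant $C(d,X)$ bounding both $|\Br(Y_{\bar F})^{\Gal}|$ and $|\Br(Y)/\Br_0(Y)|$ for \emph{all} forms $Y$ of $X$ over fields of degree $\leq d$, conditional on the integral Mumford--Tate conjecture in codimension one; and finally it verifies that hypothesis for hyper-K\"ahler varieties with $b_2\geq 4$ by combining the Hodge-maximality argument of Cadoret--Moonen with Andr\'e's proof of the (rational) Mumford--Tate conjecture in codimension one, using the period map of Proposition \ref{prop:descentperiod}. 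Your second step (the Hochschild--Serre bound $|\Br(X)/\Br_0(X)|\leq |H^1(F,\Pic(X_{\bar\QQ}))|\cdot|\Br(X_{\bar\QQ})^{\Gal}|$ and the boundedness of $H^1$ of a finite group of bounded order acting on a lattice of bounded rank) is fine, but to repair the proposal you must replace the appeal to ``class field theory'' by the reduction to finitely many geometric isomorphism classes plus the integral Mumford--Tate input, as the paper does.
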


\noindent{\bf Structure of the paper.}
In Section \ref{sec:HKgenerality}, we recall some basic notions on hyper-Kähler varieties over a field of characteristic zero, including the ($\widehat{\ZZ}$-)Beauville--Bogomolov form, deformation types, polarization in families, (birational) automorphisms, wall divisors, (birational) ample cones, etc. 
In Section \ref{sec:ShafarevichSets}, we introduce various Shafarevich conditions and the corresponding Shafarevich sets. 
In Section \ref{sec:ModuliHK}, we define the moduli stacks/spaces of (oriented) polarized hyper-Kähler varieties (with level structures). 
In Section \ref{sec:ArithPerMap}, we first follow Bindt \cite{BindtThesis} to study the period map from the moduli spaces of hyper-Kähler varieties to Shimura varieties of orthogonal type, and then develop a uniform Kuga--Satake construction generalizing She \cite{Sh17}. 
In Section \ref{sectionMatsusakaMumford}, we show the finiteness of twists with smooth reductions.  
In Section \ref{sec:FinitnessResults}, we prove the unpolarized Shafarevich conjecture (Theorem \ref{mainthm}). 
In Section \ref{sectionremcoh}, we establish a cohomological version of the unpolarized Shafarevich conjecture (Theorem \ref{mainthm-cohom}). 
In Section \ref{sec:FiniteCM}, we show the finiteness of CM type hyper-Kähler varieties (Theorem \ref{mainthm3}) and deduce Corollary \ref{CorBr}.
In Appendix \ref{appendixMatsusakaMumford}, we provide a generalization to the setting of algebraic spaces of Matsusaka--Mumford's theorem on specializations of birational maps.

\subsection*{Acknowledgment}  We would like to thank Fran\c{c}ois Charles, Brendan Hassett, Ariyan Javanpeykar, Tatsuro Kawakami, Giovanni Mongardi, Ben Moonen, Chenyang Xu, Shou Yoshikawa, Zhiwei Yun for helpful suggestions and valuable discussions. T.T. owes deep gratitude to his advisor Naoki Imai for encouragement and helpful advice and to Tetsushi Ito and Yoichi Mieda for helping him with various parts of the paper, including the proof of Theorem \ref{cohshaf} and Lemma \ref{Lemma:Specialization}. 

\section{Generalities on hyper-Kähler varieties}
\label{sec:HKgenerality}
Hyper-Kähler varieties are higher-dimensional analogues of
$K3$ surfaces. They play a significant role in algebraic geometry since they form one type of building blocks of varieties with vanishing first Chern class, by the Beauville--Bogomolov decomposition theorem (\cite{Bea83, Bogomolov74}). 
In this section, we collect some generalities on these varieties. See \cite{Bea83, MR1664696, MR1963559} for basic results and examples over the field of complex numbers.

Firstly, we fix the definition: a {\it  hyper-Kähler} (or  {\it irreducible symplectic}) variety over a field $k$ of characteristic $0$ is a geometrically connected smooth projective $k$-variety $X$ such that it is geometrically simply connected $\pi_1^{\et}(X_{\bar{k}})=1$, and $\rH^0(X,\Omega^2_{X/k})$ is spanned by a nowhere degenerate algebraic
$2$-form. Note that the $2$-form is automatically closed (hence symplectic) by the degeneration of the Hodge-de Rham spectral sequence. Moreover, $\pi_1^{\et}(X_{\bar{k}})=1$ implies that $\rH^1_{\et}(X_{\bar{k}},\ZZ_{\ell})=0$ by \'etale Hurewicz theorem. Using Beauville--Bogomolov decomposition theorem, it is easy to see that $X$ is hyper-Kähler if and only if $X\times_k \Spec\CC$ is a complex hyper-Kähler manifold in the sense of \cite{Bea83, MR1664696}, for any embedding $k\hookrightarrow \CC$; see \cite[Lemma 3.1.3]{BindtThesis}.

\subsection{Beauville--Bogomolov form} 
\label{subsec:BBform}
For a complex hyper-Kähler variety $X$, $\rH^2(X, \ZZ)$ carries an integral, primitive quadratic form $q_X$,  called the Beauville--Bogomolov (BB) form which satisfies the following conditions:

\begin{enumerate}
	\item $q_X$ is non-degenerate and of signature $(3,b_2(X)-3)$.
	\item  There exists a positive rational number $c_X$, called the {\it Fujiki constant}, such that $q_X^n(\alpha)=c_X\int_X \alpha^{2n}$ for all classes $\alpha\in \rH^2(X,\ZZ)$, where $2n=\dim(X)$.
	\item The Hodge decomposition $\rH^2(X, \CC)=\rH^{2,0}(X)\oplus \rH^{1,1}(X)\oplus H^{0,2}(X)$ is orthogonal with respect to $q_X\otimes \CC$. 
\end{enumerate}

\begin{remark}
\label{rmk:evenness}
	All known examples of complex hyper-Kähler manifolds have an even BB form, that is, $\Lambda_M$ is an even lattice.  The evenness of $\Lambda_M$ in general is unknown. Let us mention that there are examples of hyper-Kähler \textit{orbifolds} with odd BB form \cite{KapferMenet}.
\end{remark} 

By Bogomolov--Tian--Todorov \cite{Bo78}, the deformation space of $X$ is unobstructed and its smooth deformations remain hyper-Kähler. The property $(2)$  ensures the deformation invariance of the Fujiki constant $c_X$ and the lattice $\Lambda_X\coloneqq(\rH^2(X,\ZZ), q_X)$. From now on, we assume that $k$ is finitely generated. 

For a hyper-Kähler variety $X$ defined over a field $k$ of characteristic zero, upon fixing an embedding $\iota\colon k\hookrightarrow \CC$, Artin's comparison isomorphism
\begin{equation}
\rH^2(X\times_{k, \iota}\Spec\CC,\ZZ(1))\otimes \widehat{\ZZ}\cong  \rH^2_{\et} (X_{\bar{k}},\widehat{\ZZ}(1))
\end{equation}
allows us to transport the BB form $q_X$ on $\rH^2(X\times_{\iota}\Spec\CC, \ZZ(1))$ to a $\widehat{\ZZ}$-quadratic form:
$$q\colon\rH^2_{\et} (X_{\bar{k}},\widehat{\ZZ}(1))\times \rH^2_{\et} (X_{\bar{k}},\widehat{\ZZ}(1)) \to \widehat{\ZZ},$$ where $\bar{k}$ denotes the algebraic closure of $k $ in $\CC$.
As shown in \cite[Lemma 4.2.1]{BindtThesis}, this $\widehat{\ZZ}$-quadratic form is independent of the choice of embedding, which is called the \textit{$\widehat{\ZZ}$-BB form} of $X$. Similarly, the Fujiki constant $c_X$ is defined to be the Fujiki constant of $X\times_{\iota}\Spec\CC$, which is also independent of $\iota$.

\subsection{Deformation type and \texorpdfstring{$\widehat{\ZZ}$}{ℤ̂}-numerical type}
\label{subsec:DeformationType}
\begin{definition}[Deformation type]
Let $M$ be a fixed complex hyper-Kähler manifold.
Given an embedding $\iota\colon k\hookrightarrow \CC$, we say that a hyper-Kähler variety $X$ over $k$ is of (complex) \textit{deformation type} $M$ with respect to $\iota$ if the complex variety $X_\CC \coloneqq X\times_{\iota} \Spec (\CC)$ is deformation equivalent to $M$. We say that a hyper-Kähler variety $X$ is of deformation type $M$ if it is so for some embedding $k\hookrightarrow \CC$. 
\end{definition}

So far, only a few deformation types have been discovered: in each even dimension $2n$, there are the $K3^{[n]}$-type and the generalized Kummer type $\operatorname{Kum}_n$ constructed by Beauville \cite{Bea83}, and O'Grady constructed in \cite{OG6} and \cite{OG10} two other sporadic examples in dimension 6 and 10, called $OG_6$-type and $OG_{10}$-type respectively.  Clearly, a hyper-Kähler variety can have at most finitely many deformation types. It is an interesting open question whether the deformation type is actually independent of the embedding $\iota\colon k\hookrightarrow \CC$. For hyper-Kähler varieties of known types, one can obtain the following result.

\begin{proposition}
Let $k$ be a  field of characteristic $0$. If there exists $\iota_0:k\hookrightarrow \CC$ such that $X\times_{\iota_0} \Spec (\CC)$ is one of the four known deformation types, then $X\times_{\iota}\Spec(\CC)$ is of the same deformation type for any embedding $\iota:k\hookrightarrow \CC$.
\end{proposition}
\begin{proof}
This was proved in \cite[Proposition 2.3.1]{Yang2019} for $X\times_{\iota}\Spec(\CC)$ of the $K3^{[n]}$ type, and the argument is similar for other known deformation types. Let us sketch the proof for the convenience of the reader. We write $X_\CC=X\times_{\iota_0} \Spec(\CC)$. By the Lefschetz principle, one can assume that $k$ is finitely generated over $\mathbb{Q}$.

    By Mongardi and Pacienza \cite[Corollary 1.2 \& Example 3.18]{MongardiPacienza}, $X_\CC$ is deformation equivalent via a chain of projective families to $Y$, a crepant resolution of the albanese fiber of the moduli space of semistable vector bundles on some smooth K3 or abelian surface $S$ with Mukai vector of the form $(1, 0, n)$ or $(2,0,-2)$, with respect to some suitable generic polarization.
  
   For any embedding $\iota: k\hookrightarrow \CC$, the isomorphism $\iota(k)\cong \iota_0(k)$ can be extended to an element in $\Aut(\CC)$, which we still denote by $\iota$. By conjugating the chain of projective deformations between $X$ and $Y$, we see that $X\times_\iota \Spec(\CC)$ is of the same deformation type as $Y\times_\iota \Spec(\CC)$.
   
   The conjugate surface $S_{\iota}=S\times_\iota \Spec(\CC)$ remains a K3 or abelian surface. The conjugate $Y\times_\iota \Spec(\CC)$ is then birational to a crepant resolution of the albanese fiber of the moduli space of semistable vector bundles on $S_{\iota}$ with the same Mukai vector, with respect to some suitable generic polarization. 

 Thanks to Huybrechts' result \cite[Theorem 4.6]{MR1664696},  the geometric deformation type is invariant under birational transformations between hyper-Kähler varieties. Therefore the deformation type of $X\times_\iota \Spec(\CC)$ is the same as that of $X_\CC$. 
\end{proof}

If $X$ is of deformation type $M$, denoting  $\Lambda_M$  the lattice realized by the BB form on $\rH^2(M,\ZZ)$  and $c_M$ its Fujiki constant, then as we mentioned above in Section \ref{subsec:BBform}, $\Lambda_M\otimes \widehat{\ZZ}$ is the $\widehat{\ZZ}$-BB form of $X$, and $c_M$ is the Fujiki constant of $X$, regardless of the embedding $\iota$.

In \cite{GHS10}, the authors consider the moduli problem of polarized hyper-Kähler manifolds of fixed numerical type. This motivates us to introduce the following weaker notion of $\widehat{\ZZ}$-\textit{numerical type} of hyper-Kähler varieties over general fields, which is more appropriate for our analysis when addressing moduli spaces and period maps.

 \begin{definition}[$\widehat{\ZZ}$-numerical type]
We fix a positive integer $n$.
Let $k$, $k'$ be fields of characteristic $0$, and $X$ (resp.\,$X'$) a $2n$-dimensional hyper-Kähler variety over $k$ (resp.\,$k'$).
We say that $X$ and $X'$ are \emph{$\widehat{\ZZ}$-numerically equivalent} if there exists an isomorphism of $\widehat{\ZZ}$-modules
\[
g\colon
\rH^{2}_{\et}(X_{\overline{k}},\widehat{\ZZ}) \simeq \rH^{2}_{\et} (X'_{\overline{k'}}, \widehat{\ZZ})
\]
such that 
$g$ is isometric with respect to the $\widehat{\ZZ}$-BB forms, and $X$ and $X'$ have the same Fujiki constant.
\end{definition}

\begin{remark}
\label{remnumdef}
A $\widehat{\ZZ}$-numerical type is the union of finitely many deformation types. 
Indeed, if $X$ and $Y$ are of the same deformation type (i.e.~$X_\CC$ and $Y_\CC$ are deformation equivalent), then by the definitions of $\widehat{\ZZ}$-BB form and Fujiki constant, $X$ and $Y$ clearly have the same $\widehat{\ZZ}$-numerical type. Conversely, since there are only finitely many possible isometry classes of $\ZZ$-lattices of bounded rank and discriminant, fixing a $\widehat{\ZZ}$-numerical type $N$ leaves only finitely many possibilities for the BB lattice of $X_\CC$ for any $X$ of $\widehat{\ZZ}$-numerical type $N$. By Huybrechts \cite[Theorem 4.3]{Hu03}, there are only finitely many possibilities for the deformation types.
\end{remark}

\subsection{Polarization and Picard lattice} 
\label{subsec:Polarization}
A \textit{family} of (projective) hyper-Kähler varieties over a $\QQ$-scheme $T$ means a proper smooth morphism of algebraic spaces $\mathfrak{X}\to T$ such that every geometric fiber is a projective hyper-Kähler variety. 

Given such a family of projective hyper-Kähler varieties $$f\colon \fX \to T,$$
the relative Picard functor $\Pic_{X/T}$ is represented by a separated algebraic space over $T$, denoted by the same notation. In particular, if $X$ is a hyper-Kähler variety over a field $k$ of characteristic zero,  then the relative Picard functor $\Pic_{X/k}$ is represented by a separated, smooth, $0$-dimensional scheme over $k$ (cf.~\cite[\S 8.4, Theorem 1]{BLR90}). 

Let $G_k\coloneqq \Gal(\bar{k}/k)$ be the absolute Galois group of $k$.
Then the group $\Pic_{X/k}(k)$ is identified with the subgroup 
$\Pic_{X/k}(\bar{k})^{G_k} \subset \Pic_{X/k}(\bar{k})=\Pic(X_{\bar{k}})$, and hence is torsion-free (since $\Pic(X_{\bar{k}})$ is so). Denote $\Pic_{X/k}(k)$ by $\Pic_X$ in the sequel.

\begin{definition}[cf.~\cite{A96,Ri06}] 
Let $\fX\to T$ be a family of hyper-Kähler varieties over a $\QQ$-scheme $T$.
A \textit{polarization} on $\fX/T$ is a global section $\mathcal{H} \in  \Pic_{ \fX/T}(T)$, such that for every geometric point $t$ of $T$, the fiber $\mathcal{H}_t\in \Pic_{\fX_t}$ is an ample line bundle on $\fX_t$. 
\end{definition}

Let $X$ be a hyper-Kähler variety over a field $k$ of characteristic zero. For each prime $\ell$, the $\ell$-adic cycle class map defines an embedding
\[
c_1^{\ell} \colon \Pic(X)\otimes \ZZ_\ell \rightarrow \rH^2_{\et}(X_{\bar{k}},\ZZ_\ell(1)),
\]
which factors through the geometric first Chern class map
\begin{equation}
\label{eqn:c1-kbar}
\Pic(X_{\bar{k}}) \otimes \widehat{\ZZ} \to \rH^2_{\et}(X_{\bar{k}}, \widehat{\ZZ}(1)).
\end{equation}
Putting all prime $\ell$ together, we get 
\begin{equation}\label{eqn:c1-Zhat}
c_1 \colon \Pic(X)\otimes \widehat{\ZZ} \rightarrow \rH^2_{\et}(X_{\bar{k}},\widehat{\ZZ}(1)),
\end{equation}
Since the image of $c_1^{\ell}$ is $G_k$-invariant for any prime $\ell$, the morphism \eqref{eqn:c1-Zhat} is extended to a homomorphism (keeping the same notation):
\begin{equation}\label{eqn:c1-extended}
c_1 \colon  \Pic_X \otimes_{\ZZ} \widehat{\ZZ} \hookrightarrow \rH^2_{\et}(X_{\bar{k}}, \widehat{\ZZ}(1)),
\end{equation}
whose image lies in the $G_{k}$-invariant part.

\begin{proposition}\label{prop:TateConjecture}
Let $X$ be a hyper-Kähler variety over a number field $k$ with $b_2>3$. Then the first Chern class map \eqref{eqn:c1-extended} induces an isomorphism:
\[
c_1  \colon \Pic_X \otimes \widehat{\ZZ} \xrightarrow{\sim} \rH^2_{\et}(X_{\bar{k}}, \widehat{\ZZ}(1))^{G_k}.
\]
\end{proposition}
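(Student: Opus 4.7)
The plan is to reduce the surjectivity of $c_1$ (injectivity being \eqref{eqn:c1-extended}) to the Tate conjecture for divisors on $X$, which for hyper-K\"ahler varieties over a number field with $b_2>3$ is a theorem of Andr\'e \cite{A96} obtained via the Kuga--Satake construction.

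First I would extract a clean $G_k$-equivariant short exact sequence on $X_{\bar{k}}$ from the Kummer sequences \eqref{eq:longexactell}. Since $X$ is hyper-K\"ahler, one has $\rH^1(X_{\bar{k}},\mathcal{O}_{X_{\bar{k}}})=0$, so $\Pic(X_{\bar{k}})=\NS(X_{\bar{k}})$; moreover, $\rH^2(X_{\bar{k}},\ZZ)$ is torsion-free, hence $\Pic(X_{\bar{k}})$ is a finitely generated free abelian group. Passing to the inverse limit in \eqref{eq:longexactell} (the $\varprojlim{}^1$ term vanishes by finite generation, and $\Pic(X_{\bar{k}})/n=\Pic(X_{\bar{k}})\otimes \ZZ/n$ by torsion-freeness) yields
\[
0\to \Pic(X_{\bar{k}})\otimes\widehat{\ZZ}\to \rH^2_{\et}(X_{\bar{k}},\widehat{\ZZ}(1))\to T(\Br(X_{\bar{k}}))\to 0,
\]
where $T(\Br(X_{\bar{k}})):=\varprojlim_n \Br(X_{\bar{k}})[n]$ is the Tate module of the Brauer group.

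Next I would take $G_k$-invariants. The Galois action on the finitely generated free abelian group $\Pic(X_{\bar{k}})$ is continuous for the discrete topology and hence factors through a finite quotient, so the formation of invariants commutes with $-\otimes_{\ZZ}\widehat{\ZZ}$:
\[
\Pic_X \otimes \widehat{\ZZ}= \Pic(X_{\bar{k}})^{G_k}\otimes \widehat{\ZZ}= (\Pic(X_{\bar{k}})\otimes \widehat{\ZZ})^{G_k}.
\]
The surjectivity of $c_1$ thereby reduces to showing $T(\Br(X_{\bar{k}}))^{G_k}=0$. Decomposing into $\ell$-primary components and noting that each $T_\ell(\Br(X_{\bar{k}}))$ is a torsion-free $\ZZ_\ell$-module, its $G_k$-invariants embed into $T_\ell(\Br(X_{\bar{k}}))\otimes_{\ZZ_\ell}\QQ_\ell$, which is canonically identified with $\rH^2_{\et}(X_{\bar{k}},\QQ_\ell(1))/(\Pic(X_{\bar{k}})\otimes\QQ_\ell)$.

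The vanishing of $G_k$-invariants in this last quotient for every prime $\ell$ is exactly the Tate conjecture for divisors on $X$, which for hyper-K\"ahler varieties with $b_2>3$ defined over a number field is due to Andr\'e \cite{A96}. The main obstacle is precisely this Tate input: the rest of the argument is formal, but the Kuga--Satake bridge to Faltings's theorem for abelian varieties is what ultimately forces the hypothesis $b_2>3$.
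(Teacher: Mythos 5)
Your proof is correct, but it takes a genuinely different route from the paper's. Both arguments use Andr\'e's theorem (the Tate conjecture for divisors on polarized hyper-K\"ahler varieties over number fields with $b_2>3$) as the crucial input, but the way the integral statement is deduced from the rational Tate input differs. The paper first extracts from the Tate conjecture the equality of $\widehat{\ZZ}$-ranks of $\Pic_X\otimes\widehat\ZZ$ and $\rH^2_{\et}(X_{\bar k},\widehat\ZZ(1))^{G_k}$, then argues that $c_1$ is a saturated embedding by transporting, via the Artin comparison isomorphism, the torsion-freeness of the cokernel of the complex-analytic $c_1^{\CC}$ (which sits in $\rH^2(X_\CC,\mathcal O_{X_\CC})$ by the exponential sequence); rank equality plus saturation gives surjectivity. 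You instead stay entirely on the \'etale side, identify the cokernel of $\Pic(X_{\bar k})\otimes\widehat\ZZ\hookrightarrow\rH^2_{\et}(X_{\bar k},\widehat\ZZ(1))$ with the Tate module $T(\Br(X_{\bar k}))$ of the geometric Brauer group, and reduce surjectivity onto the $G_k$-invariants to the vanishing $T(\Br(X_{\bar k}))^{G_k}=0$. That vanishing does follow from the Tate conjecture, but not quite ``exactly'' as you claim: Andr\'e's theorem says that the map from $\Pic_X\otimes\QQ_\ell$ onto $\rH^2_{\et}(X_{\bar k},\QQ_\ell(1))^{G_k}$ is surjective, and to conclude that the invariants of the quotient $V_3=\rH^2_{\et}(X_{\bar k},\QQ_\ell(1))/(\Pic(X_{\bar k})\otimes\QQ_\ell)$ vanish, one should note (e.g.) that the $G_k$-equivariant Beauville--Bogomolov pairing gives a $G_k$-stable splitting $\rH^2_{\et}(X_{\bar k},\QQ_\ell(1))=(\Pic(X_{\bar k})\otimes\QQ_\ell)\oplus V_3$, since the pairing is non-degenerate on the algebraic part by the Hodge index theorem; only then does $\rH^2_{\et}(X_{\bar k},\QQ_\ell(1))^{G_k}=\Pic_X\otimes\QQ_\ell$ force $V_3^{G_k}=0$. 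With that small supplement, your Brauer-theoretic route is a clean alternative to the paper's comparison-of-lattices argument, and arguably more self-contained since it avoids the detour through singular cohomology and the exponential sequence.
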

\begin{proof}
When $b_2(X)>3$, the Tate conjecture for divisors on polarized hyper-Kähler varieties over number fields holds true, i.e., for each prime $\ell$, the first Chern class map induces an isomorphism of $\QQ_{\ell}$-vector spaces
\[
c_1^{\ell} \otimes \QQ_{\ell} \colon \Pic_X \otimes \QQ_{\ell} \xrightarrow{\sim} \rH^2_{\et}(X_{\bar{k}}, \QQ_{\ell})^{G_k}.
\]
See \cite[Theorem 1.6.1 (2)]{A96}.
Therefore $\rk (\Pic_X \otimes_{\ZZ} \widehat{\ZZ}) = \rk \left(\rH^2_{\et}(X_{\bar{k}}, \widehat{\ZZ}(1))^{G_k}\right)$. To conclude, it suffices to show that $\Pic_{X} \otimes_{\ZZ} \widehat{\ZZ}$ is saturated in $\rH^2_{\et}(X_{\bar{k}}, \widehat{\ZZ}(1))$ via $c_1$.

To this end, by the exponential sequence, the cokernel of the (complex) first Chern class map 
\begin{equation}
    \label{eqn:c1-Complex}
    c_1^{\CC}\colon \Pic(X_{\bar{k}}) \cong \Pic(X_{\CC}) \hookrightarrow \rH^2(X(\CC), \ZZ)
\end{equation}
lies in $\rH^2(X_{\CC}, \mathcal{O}_{X_{\CC}})$, which is torsion-free. Thus $\Pic(X_{\bar{k}}) \subseteq  \rH^2(X(\CC), \ZZ)$ is saturated.
The complex and the $\widehat{\ZZ}$-first Chern class maps \eqref{eqn:c1-kbar} and \eqref{eqn:c1-Complex} are compatible in the sense that the comparison isomorphism 
$\rH^2(X(\CC), \ZZ)\otimes \widehat{\ZZ}\cong \rH^2_{\et}(X_{\bar{k}}, \widehat{\ZZ})$ 
identifies the $\widehat{\ZZ}$-sublattices 
\begin{equation}
c_1^\ell(\Pic(X_{\bar{k}})\otimes \widehat{\ZZ}) = c_1(\Pic(X_{\bar{k}}))\otimes \widehat{\ZZ}.
\end{equation}
Therefore, the sub-lattice $\Pic(X_{\bar{k}})\otimes \widehat{\ZZ} \subseteq  \rH^2_{\et}(X_{\bar{k}}, \widehat{\ZZ}(1))$ is saturated.
Since $\Pic_X \subseteq \Pic(X_{\bar{k}})$ is also saturated by definition, $\Pic_X\otimes \widehat{\ZZ} \subseteq  \rH^2_{\et}(X_{\bar{k}}, \widehat{\ZZ}(1))$ is saturated. 
\end{proof}

\subsection{Automorphisms and birational self-maps}\label{subsect:AutBir}
Let $X$ be a hyper-Kähler variety over a finitely generated field $k$ of characteristic zero.  For any field extension $k'/k$, denote by $\Aut(X_{k'})$ the group of $k'$-automorphisms of $X_{k'}$  and $\Bir(X_{k'})$ the group of $k'$-birational self-maps. The group $\Aut(X_{k'})$ is discrete.
For an embedding $k\hookrightarrow \CC$, the action of $\Aut(X_{\CC})$ on the second cohomology preserves the Beauville--Bogomolov form. Define 
\begin{equation}
    \Aut_0(X_\CC)\coloneqq\ker\left(\Aut(X_\CC)\to \mathrm{O}(\rH^2(X_\CC,\ZZ))\right);
\end{equation}
and its subgroup of automorphisms acting trivially on the whole cohomology:
\begin{equation}
    \Aut_{00}(X_\CC)\coloneqq \ker\left(\Aut(X_\CC)\to \GL(\rH^*(X_\CC,\QQ))\right).
\end{equation}

\begin{theorem}[{Huybrechts \cite[Proposition 9.1]{MR1664696}, Hassett--Tschinkel \cite[Theorem 2.1]{HT13}}]
\label{thm:Aut0}
 The group $\Aut_0(X_\CC)$ is a finite group and it is deformation invariant for hyper-Kähler manifolds.
\end{theorem}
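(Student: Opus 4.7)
My plan has two parts: finiteness of $\Aut_0(X_\CC)$ and its deformation invariance.

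For finiteness, I would first reduce to the observation that $\Aut(X_\CC)$ is discrete (as a group scheme). On any complex hyper-K\"ahler manifold, the holomorphic symplectic form induces an isomorphism $T_{X_\CC}\cong \Omega^1_{X_\CC}$, and simple-connectedness together with Hodge symmetry yields
\[
H^0(X_\CC,T_{X_\CC})\cong H^0(X_\CC,\Omega^1_{X_\CC})\cong H^1(X_\CC,\mathcal{O}_{X_\CC})^*=0.
\]
Hence $\Aut(X_\CC)$ is a $0$-dimensional group scheme. Next, any $f\in \Aut_0(X_\CC)$ acts trivially on $H^2(X_\CC,\ZZ)$; in particular it fixes the class of any ample line bundle $h$ on $X_\CC$. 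Therefore $\Aut_0(X_\CC)$ is contained in the automorphism group of the polarized variety $(X_\CC,h)$, which is well known to be a group scheme of finite type (Matsusaka--Mumford style boundedness for polarized varieties, or equivalently the fact that isometries of a lattice preserving an ample cone act on a bounded set of Hilbert polynomials). Finite type plus discrete gives finite, completing this part.

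For deformation invariance, let $\mathfrak{X}\to B$ be a smooth proper family of hyper-K\"ahler manifolds over a connected (complex) base, and pick a basepoint $0\in B$. I want to exhibit a canonical isomorphism $\Aut_0(X_0)\xrightarrow{\sim} \Aut_0(X_t)$ for every $t\in B$ by showing that every $f\in \Aut_0(X_0)$ extends uniquely to an automorphism $\widetilde f$ of the entire family. By Bogomolov--Tian--Todorov (recalled in the excerpt) the Kuranishi space of $X_0$ is smooth, and via the symplectic form its tangent space $H^1(X_0,T_{X_0})$ is canonically identified with $H^{1,1}(X_0)\subset H^2(X_0,\CC)$. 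Since $f$ acts trivially on $H^2(X_0,\ZZ)$, it acts trivially on this Kuranishi space, so at the infinitesimal level $f$ lifts compatibly to each deformation. Because $H^0(X_0,T_{X_0})=0$, the Kuranishi family is universal, so this infinitesimal lift integrates to an automorphism of the Kuranishi family. Uniqueness of the lift (again from vanishing of $H^0(T)$) makes $f\mapsto \widetilde f$ a group homomorphism and allows one to patch over $B$, yielding a fiberwise family of automorphisms in $\Aut_0(X_t)$. The inverse is constructed symmetrically from $f^{-1}$, so the specialization map is a group isomorphism, proving deformation invariance.

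The main obstacle I expect is turning the infinitesimal/formal lift into a genuine analytic (or algebraic) automorphism of the Kuranishi family and then globalizing it over $B$. The cleanest route is to use universality of the Kuranishi family (valid precisely because $H^0(T_X)=0$) to convert the trivial action on the base into a literal automorphism, and then to appeal to rigidity of such extensions to show they glue; a technical point to handle carefully is that the base $B$ may not be simply connected, in which case one may first work \'etale-locally on $B$ and then use uniqueness to descend.
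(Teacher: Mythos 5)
Your finiteness argument is fine and is essentially Huybrechts' original one: $H^0(X_\CC,T_{X_\CC})\cong H^0(X_\CC,\Omega^1_{X_\CC})=0$ makes $\Aut(X_\CC)$ discrete, and since every $f\in\Aut_0(X_\CC)$ fixes a K\"ahler (or ample) class, $\Aut_0(X_\CC)$ sits inside a compact (finite-type) polarized automorphism group, hence is finite. The first half of your deformation-invariance argument — that each $f\in\Aut_0(X_0)$ acts trivially on the Kuranishi base because it acts trivially on $H^1(X_0,T_{X_0})\simeq H^{1,1}(X_0)$, and therefore extends fiberwise over the universal deformation — is also correct and is exactly the Hassett--Tschinkel step (it is reproduced verbatim in the paper's proof of the analogous Proposition \ref{prop:Aut00}).

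The gap is in the converse direction. What your argument establishes is that the relative group space $\Aut_0(\mathfrak X/B)\to B$ is a local homeomorphism, i.e.\ that $t\mapsto|\Aut_0(X_t)|$ is lower semicontinuous: nearby fibers have \emph{at least} as many such automorphisms. It does not rule out that a nearby fiber $X_t$ has an element $g\in\Aut_0(X_t)$ that fails to specialize to $X_0$; running your extension argument "symmetrically from $f^{-1}$" only inverts the map on its image and extends $g$ over a Kuranishi neighborhood of $t$, which need not reach $0$. An \'etale map with finite fibers over a connected base is a covering (hence has locally constant fibers) only if it is also proper, and properness is precisely the missing step. The paper is explicit about this in the proof of Proposition \ref{prop:Aut00}: after the Kuranishi argument it says "we are reduced to show the universal closedness of $\pi$" and settles it, in the projective case, by Matsusaka--Mumford specialization of the graphs $\Gamma_{\phi_t}$. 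For Theorem \ref{thm:Aut0} itself the standard resolution (Huybrechts, Hassett--Tschinkel) is that elements of $\Aut_0$ preserve a K\"ahler class, so their graphs have uniformly bounded volume, and Bishop compactness of the cycle space lets one pass to a limit cycle which is again the graph of an automorphism acting trivially on $H^2$. You need to add an argument of this kind to close the surjectivity/closedness direction.
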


Similarly, we have the following result concerning $\Aut_{00}$. 
\begin{proposition}
\label{prop:Aut00}
 The group $\Aut_{00}(X_\CC)$ is invariant under deformation for hyper-Kähler manifolds.
\end{proposition}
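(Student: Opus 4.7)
The plan is to realize $\Aut_{00}$ as the fibers of a finite \'etale subgroup scheme inside the relative polarized automorphism scheme of the family, and then deduce invariance from the fact that an automorphism of a local system on a connected base is determined by a single fiber. So I would fix a smooth projective polarized family $(\fX, \mathcal{H}) \to T$ of hyper-K\"ahler manifolds over a connected base $T$, and reduce the statement to showing that $\Aut_{00}(X_t)$ is constant along $T$.

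First I would invoke the representability of the relative polarized automorphism functor $\underline{\Aut}(\fX/T,\mathcal{H})$ by a separated $T$-group scheme of finite type (e.g.\ as a locally closed subscheme of a Hilbert scheme of graphs in $\fX\times_T \fX$). Since the symplectic form identifies $T_{X_t} \cong \Omega^1_{X_t}$ and $h^{1,0}(X_t)=0$ gives $\rH^0(X_t, T_{X_t})=0$, this group scheme is unramified over $T$, so its geometric fibers are finite and discrete. Next I would introduce the closed subgroup scheme $G_0 \subset \underline{\Aut}(\fX/T,\mathcal{H})$ cut out by the requirement of acting as the identity on $R^2 f_*\QQ$; its fiber at $t$ is $\Aut_0(X_t)$, which by Theorem \ref{thm:Aut0} is a finite group whose order is constant along $T$. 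Combining fiberwise discreteness with constant finite cardinality forces $G_0\to T$ to be finite \'etale. After pulling back along a finite \'etale connected cover $T'\to T$ that trivializes $G_0$, one can write $G_0\times_T T' \cong T'\times A$ for a finite group $A$ canonically identified with $\Aut_0(X_{t_0})$. For each $a\in A$ and each integer $i$, the induced endomorphism of the local system $R^i f'_*\QQ$ on $T'$ is the identity on every fiber as soon as it is on one, because the locus $\{t'\in T' : a|_{t'} = \id\}$ is closed by continuity and open by the rigidity of flat endomorphisms of local systems on a connected base. Hence the subgroup $\{a\in A : a|_{t'} \text{ acts trivially on } R^i f'_*\QQ \text{ for all } i\} \subset A$ is independent of $t'\in T'$, and by Galois descent along $T'\to T$ the same holds for $\Aut_{00}(X_t) \subset \Aut_0(X_t)$ on $T$.

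The hard part will be confirming that $G_0\to T$ is finite \'etale: this rests simultaneously on the unramifiedness coming from $\rH^0(T_{X})=0$ and on the constancy of the fiber cardinality from Theorem \ref{thm:Aut0} (Huybrechts, Hassett--Tschinkel). Once these two inputs are in place, the remainder is a formal application of the rigidity of endomorphisms of local systems over connected bases, so no further geometric input is needed.
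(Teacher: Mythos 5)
Your overall strategy — realize $\Aut_{00}$ as a fiberwise condition inside a finite \'etale group scheme over the base, trivialize over a cover, then use rigidity of local-system endomorphisms — is a natural reformulation of what the paper does, and your final step (the clopen argument on $R^i f'_*\QQ$) is sound. However, there is a genuine gap at the pivotal assertion ``combining fiberwise discreteness with constant finite cardinality forces $G_0\to T$ to be finite \'etale.''

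Unramifiedness of $\underline{\Aut}(\fX/T,\mathcal{H})$ plus constant geometric fiber cardinality give you a separated \emph{quasi-finite} group scheme with discrete reduced fibers, but they do not by themselves give properness, hence not finiteness. (Quasi-finite separated schemes are, by Zariski's Main Theorem, open subschemes of finite $T$-schemes; constant fiber cardinality alone does not preclude an open, non-closed image.) If $G_0\to T$ were already known to be \emph{finite}, then your reasoning is fine: a finite unramified morphism whose geometric fibers all have the same cardinality $n$ has $\dim_{\kappa(t)} H^0(\text{fiber})=n$ constant, hence is flat, hence \'etale. The missing input is precisely properness of the (polarized) automorphism group scheme over $T$, i.e.\ the valuative criterion that a polarized automorphism of the generic fiber over a DVR specializes to a polarized automorphism of the special fiber. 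For non-ruled fibers — hyper-K\"ahler varieties in particular — this is exactly Matsusaka--Mumford, which the paper's proof invokes explicitly via the specialization of the graph cycle $\Gamma_{\phi}$, and which you have silently bypassed. (Alternatively, one could invoke the full strength of Theorem~\ref{thm:Aut0} in the form ``$\Aut_0(\fX/T)\to T$ is a locally constant sheaf of groups,'' not merely ``constant order''; but you only use the cardinality statement.) Once this properness is supplied, your argument closes: $G_0\to T$ is finite \'etale, the trivializing cover $T'$ exists, and the rigidity argument shows for each $a\in A$ that the locus where $a$ acts trivially on every $R^if'_*\QQ$ is clopen in $T'$, hence constant. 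In short, your proof is a valid reorganization of the paper's argument, but it must be supplemented with Matsusaka--Mumford (or the locally-constant strengthening of Theorem~\ref{thm:Aut0}) to justify the finite-\'etaleness of $G_0\to T$; the way you currently deduce it is not a correct implication.
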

\begin{proof}
We are in the complex setting and drop the subscript $\CC$ here. 
The first step is similar to Hassett--Tschinkel \cite[Theorem 2.1]{HT13}: given a complex hyper-Kähler manifold $X$, the group $\Aut_{00}(X)$ acts equivariantly on the universal deformation over the Kuranishi space $\mathfrak{X}\to \operatorname{Def}(X)$. Since the action on $$\rH^1(X, T_X)\simeq \rH^1(X, \Omega_X^1)\subset \rH^2(X, \CC)$$ is the identity, the action is the identity on $\operatorname{Def}(X)$, i.e. it is a fiberwise action. This shows that for any family of hyper-Kähler manifolds $\mathfrak{X}\to B$, the group scheme map $\pi\colon \Aut_{00}(\mathfrak{X}/B)\to B$ is a local homeomorphism. We are reduced to showing the universal closedness of $\pi$ for a family $\mathfrak{X}/B$; in other words, given a family of (fiberwise) automorphisms $\phi\in \Aut_0(\mathfrak{X}/B)$, if $\phi_t\in \Aut_{00}(X_t)$ for all $t\neq 0$, then the specialization $\phi_0\in \Aut_0(X_0)$ also acts trivially on the whole cohomology ring. To this end, note that the specialization (as cycles) of the graph $\Gamma_{\phi_t}$ is the graph $\Gamma_{\phi_0}$, and $[\Gamma_{\phi_t}]=[\Delta_{X_t}]$ for all $t\neq 0$, we have that $[\Gamma_{\phi_0}]=[\Delta_{X_0}]$, i.e. $\phi_0$ acts trivially on $\rH^*(X_0, \QQ)$.
\end{proof}

\begin{example}\label{exm:aut}
The group $\Aut_0(X_\CC)$ for a complex hyper-Kähler variety $X_\CC$ of known deformation types has been worked out, and $\Aut_{00}(X_\CC)$ is shown to be trivial for all known deformation types and also in general in dimension 4.
\begin{enumerate}
    \item $K3^{[n]}$-type: $\Aut_0$ is trivial by  Beauville \cite[Proposition 10]{Beauville-82Katata}. 
     \item Generalized Kummer type $\operatorname{Kum}_n$: $$ \Aut_0\cong (\ZZ/(n+1)\ZZ)^4\rtimes \ZZ/2\ZZ,$$  
by Boissi\`ere, Nieper-Wisskirchen, and Sarti \cite[Corollary 5]{BNS11}. Oguiso proved in \cite[Theorem 1.3]{Ogu20}
that $\Aut_{00}$ vanishes. 
\item $OG_6$-type: Mongardi and Wandel proved that $\Aut_0\cong (\ZZ/2\ZZ)^{8}$ in \cite[Theorem 5.2]{MW17}, and that $\Aut_{00}$ vanishes in \cite[Remark 6.9]{MW17}. Note that they showed the vanishing of $\Aut_{00}$ for crepant resolutions of the projective $OG_6$ singular moduli spaces. Then by Proposition \ref{prop:Aut00}, %
we can conclude the vanishing for all $OG_6$-type manifolds; cf.~\cite[Section 4]{Ogu20}.
    \item $OG_{10}$-type: $\Aut_0$ is trivial by Mongardi and Wandel \cite[Theorem 3.1]{MW17}.
 \item Dimension $4$: $\Aut_{00}$ is trivial by Jiang and Liu \cite{JiangLiu-NumericalTrivialAutomorphism}.
\end{enumerate}
\end{example}

Moreover, we can consider the automorphism group of polarized hyper-Kähler varieties. For a polarized pair $(X,\xi)$, we define  $$\Aut\left(X_{k'},\xi_{k'}\right)\coloneqq \Set*{f\in \Aut(X_{k'})\given f^\ast \xi=\xi},$$ which is finite by the Matsusaka--Mumford theorem (see \cite[\S 1, Corollary 2]{MM64}).  Similarly, one could also consider the group  $\Bir(X_{k'},\xi_{k'})$ of $k'$-birational self maps of $(X_{k'},\xi_{k'})$. It is well-known that between two smooth projective varieties with trivial canonical bundles, any birational map that preserves an ample class is an isomorphism, i.e., $\Bir(X_{k'},\xi_{k'})=\Aut(X_{k'},\xi_{k'})$ (see \cite[Corollary 3.3, Lemma 3.4]{Fuj81} for the geometric case).

\subsection{Cone conjecture and finiteness of birational models}
\label{sec:WallDivisors}

Let $X$ be a hyper-Kähler variety over $k$ of a given deformation type. 
Let $N^1(X)=\Pic(X)$ be the  N\'eron--Severi lattice of $X$, equipped with the Beauville--Bogomolov form. Inside the \textit{N\'eron--Severi space} $$N^1(X)_\RR=N^1(X)\otimes \RR,$$ we have the positive cone $\Pos(X)$ defined as the connected component of $\{v\in N^1(X)_\RR~|~ v^2>0\}$ containing the ample classes,  and the nef cone $\Nef(X)$ defined as the closure of the ample cone $\Amp(X)$.  
The \textit{birational ample cone}, denoted by $\BA(X)$, is defined as the union $$\bigcup\limits_f f^\ast \Amp(Y),$$ where $f$ runs through all $k$-birational isomorphisms from $X$ to other hyper-Kähler varieties over $k$.  It is known that the closure of $\BA(X)$ is the movable cone $\overline{\Mov}(X)$ (see \cite{HT13, takamatsu21}).

\begin{definition}(\cite{AV15}, \cite{Mongardi-WallDiv})\label{def:WallDiv}
Let $D$ be a divisor class on a hyper-Kähler variety $X_\CC$. Then $D$ is called a \textit{wall divisor} or a \textit{monodromy birationally minimal} (MBM) class if $q(D) <0$ and $$\Phi(D^\perp) \cap  \BA(X_\CC) =\emptyset, $$ for any
 Hodge monodromy operator $\Phi$. We denote the set of wall divisors on $X_\CC$ by $\mathcal{W}(X_\CC)$.
\end{definition}

In \cite{AV17}, Verbitsky and Amerik proved the following boundedness results on wall divisor classes and confirmed the Kawamata--Morrison cone conjecture for hyper-Kähler varieties (see also \cite{AV-IMRN}).

\begin{theorem}[{\cite[Theorem 1.5, Theorem 1.7]{AV17}}]
\label{con-conj}
Let $X$ be a smooth hyper-Kähler variety with $b_2\geq 5$ over a field $k$ of characteristic zero. Then the Beauville--Bogomolov squares of the elements in $\mathcal{W}(X_\CC)$ are bounded (from below).   
\end{theorem}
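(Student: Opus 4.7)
The plan is to invoke the ergodic/monodromy-theoretic analysis underlying \cite{AV17} and reduce the boundedness statement to the fact that wall divisors form finitely many monodromy orbits.

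First I would reduce to the case $k=\CC$. Fix any embedding $k\hookrightarrow \CC$; base change identifies $\mathrm{Pic}(X)$ as an isometric sublattice of $\mathrm{Pic}(X_\CC)$ with respect to the Beauville--Bogomolov form, and by Definition \ref{def:WallDiv} a class $D\in\mathrm{Pic}(X)$ is a wall divisor exactly when its image on $X_\CC$ is. Hence it is enough to bound $q(D)$ for $D\in \mathcal{W}(X_\CC)$, which lives inside the complex picture.

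Next I would enlarge $\mathcal{W}(X_\CC)$ to a monodromy-invariant set in the abstract Beauville--Bogomolov lattice $\Lambda_M$ associated to the deformation type $M$. The MBM property can be formulated purely in terms of $\Lambda_M$, and the resulting set $\mathcal{W}_M \subseteq \Lambda_M$ of primitive MBM classes is independent of the chosen complex structure and is stable under the monodromy group $\mathrm{Mon}^2(M)\subseteq O(\Lambda_M)$. The set $\mathcal{W}(X_\CC)$ consists of those $D\in\mathcal{W}_M$ which are of Hodge type $(1,1)$ for the period of $X_\CC$, so the BB-squares realized by $\mathcal{W}(X_\CC)$ form a subset of those realized by $\mathcal{W}_M$. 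It therefore suffices to prove $\{q(D)\mid D\in \mathcal{W}_M\}$ is finite.

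The crux is to show that $\mathcal{W}_M$ decomposes into \emph{finitely many} $\mathrm{Mon}^2(M)$-orbits when $b_2\ge 5$. Two ingredients enter: (a) for $b_2\ge 5$, $\mathrm{Mon}^2(M)$ is a finite-index subgroup of $O(\Lambda_M)$, which acts with only finitely many orbits on primitive vectors of any fixed negative square in this signature $(3,b_2-3)$ lattice of rank $\ge 5$; and (b) only finitely many negative squares can actually arise from MBM classes. Since (a) is standard arithmetic of indefinite lattices of rank $\ge 5$, the genuine obstacle is (b): one must exclude the existence of MBM classes of arbitrarily large $|q(D)|$. This is where the Amerik--Verbitsky ergodic/density argument on the period domain enters: if $|q(D)|$ were unbounded, the Ratner-type equidistribution of monodromy orbits on the Grassmannian of positive $3$-planes in $\Lambda_M\otimes\RR$ would produce a complex deformation $X_t$ in which $D$ remains of type $(1,1)$ and yet $\Phi(D^\perp)$ meets $\mathrm{BA}(X_t)$ for some monodromy operator $\Phi$, contradicting Definition \ref{def:WallDiv}. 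I expect this equidistribution step to be by far the deepest part.

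Granted the finitely many orbit statement, each orbit consists of vectors of one fixed negative BB-square (since monodromy is isometric), so $\{q(D)\mid D\in \mathcal{W}_M\}$ is finite, and in particular bounded below. Pulling back to $\mathcal{W}(X_\CC)\subseteq \mathcal{W}_M$ yields the theorem.
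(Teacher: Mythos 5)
First, note that the paper offers no proof of Theorem \ref{con-conj}: it is quoted verbatim from Amerik--Verbitsky \cite{AV17} and used as a black box, so there is no internal argument to compare yours against. Judged on its own terms, your outline is a faithful reconstruction of the Amerik--Verbitsky strategy: the reduction to $\CC$ is immediate (indeed $\mathcal{W}(X_\CC)$ is, by Definition \ref{def:WallDiv}, already a set of classes on $X_\CC$); the passage to the deformation-invariant, monodromy-stable set of MBM classes in the abstract lattice $\Lambda_M$ is exactly how \cite{AV15} and \cite{AV17} set things up; and your step (a) --- finitely many orbits of a finite-index subgroup of $\mathrm{O}(\Lambda_M)$ on primitive vectors of fixed negative square, via Eichler's criterion together with Verbitsky's arithmeticity of $\mathrm{Mon}^2$ for $b_2\geq 5$ --- is standard and correct.

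The gap is that your step (b), the exclusion of MBM classes of arbitrarily large $|q(D)|$, is precisely the content of the theorem, and you do not prove it: you only name the tool (Ratner-type homogeneous dynamics on the Grassmannian of positive $3$-planes) and assert that it ``would produce'' a contradictory deformation. Moreover, the contradiction you aim for is slightly misstated: Amerik--Verbitsky do not exhibit a deformation on which a fixed $D$ ceases to satisfy the MBM condition; rather, they show that unboundedness of the squares would force the union of walls $\bigcup_{D} D^{\perp}$ to become dense in the positive cone of some member of the deformation class, contradicting the fact that the K\"ahler (equivalently, birational ample) chamber is a nonempty \emph{open} component of $\Pos\setminus\bigcup_{D} D^{\perp}$. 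The two formulations are close, but the density statement is what the equidistribution machinery actually delivers, and it is the one compatible with the deformation-invariance of MBM classes. Since the paper's intent is simply to cite \cite{AV17}, the appropriate ``proof'' here is the citation; your proposal correctly identifies the architecture of that proof but does not replace its deepest step.
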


Using  Markman and Yoshioka's argument in \cite[Corollary 2.5]{MY15} and the movable cone conjecture for complex hyper-Kähler varieties (proved by Markman \cite{Mar11}), the boundedness of squares of wall divisors above allow Amerik and Verbitsky to conclude the finiteness of the set of birational hyper-Kähler models of a given hyper-Kähler variety over the complex numbers. Over an arbitrary field of characteristic zero, using the method of Bright--Logan--van Luijk \cite{BLvL} for K3 surfaces, the third author \cite[Theorem 4.2.7]{takamatsu21} generalized Markman's work \cite{Mar11} and deduced in the similar way the finiteness of birational hyper-Kähler models:

\begin{theorem}[Finiteness of birational models]\label{cor:bir-fin}
Let $k$ be a field of characteristic zero. Let $X$ be a hyper-Kähler variety defined over $k$ with $b_2\geq 5$. 
Then there are only finitely many $k$-isomorphism classes of hyper-Kähler varieties defined over $k$ which are $k$-birational to $X$.
\end{theorem}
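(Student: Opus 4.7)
The plan is to adapt the strategy of Amerik--Verbitsky \cite{AV17} from $\CC$ to an arbitrary field $k$ of characteristic zero via the descent technique of Bright--Logan--van Luijk. The geometric heart of the argument is the chamber decomposition of the birational ample cone: inside $\Pos(X)\subset N^1(X)_{\RR}$, the cone $\BA(X)$ decomposes as a disjoint union of open subcones of the form $f^*\Amp(Y)$, indexed by $k$-birational maps $f\colon X\dashrightarrow Y$, with $f_1^*\Amp(Y_1)=f_2^*\Amp(Y_2)$ precisely when $f_2\circ f_1^{-1}\colon Y_1\dashrightarrow Y_2$ extends to a $k$-isomorphism. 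The walls between adjacent chambers are cut out by orthogonal complements to wall divisors, by the lemma in Section \ref{sec:WallDivisors}. So counting $k$-isomorphism classes of birational models of $X$ reduces to counting $\Bir(X)$-orbits of chambers inside $\overline{\Mov}(X)\cap \Pos(X)$.

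The second step is to establish the cone conjecture over $k$: the group $\Bir(X)$ should act on $\overline{\Mov}(X)\cap\Pos(X)$ with a rational polyhedral fundamental domain. Over $\CC$ this is Markman's theorem \cite{Mar11}, proved via the global Torelli theorem. To descend to an arbitrary $k$ of characteristic zero, the argument of \cite{BLvL} (originally developed for K3 surfaces, and extended to the hyper-K\"ahler setting in \cite{takamatsu21}) applies: one extracts a $\Bir(X)$-fundamental domain inside the Galois-invariant subspace $N^1(X)_{\RR}\subset N^1(X_{\bar{k}})_{\RR}$ from a Markman fundamental domain over $\bar{k}$, using that $\Gal(\bar{k}/k)$ acts through isometries compatible with the chamber structure and with the subgroup $\Bir(X)\subset \Bir(X_{\bar{k}})$.

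Once the cone conjecture is available over $k$, Theorem \ref{con-conj} provides a uniform lower bound on the Beauville--Bogomolov squares of wall divisors, so only finitely many wall hyperplanes meet a fixed rational polyhedral fundamental domain $\Pi$ for $\Bir(X)$. This gives finitely many chambers inside $\Pi$, and each $\Bir(X)$-orbit of chambers corresponds to a unique $k$-isomorphism class of birational model $Y$. Therefore, the total number of $k$-isomorphism classes of hyper-K\"ahler varieties $k$-birational to $X$ is finite.

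The main obstacle is the Galois descent of Markman's cone conjecture from $\bar{k}$ to $k$. One must verify that the combinatorial structure of the chamber decomposition (walls, rational polyhedrality, orbit structure) is Galois-equivariant: wall divisors form a Galois-stable set by the monodromy-invariant characterization underlying Definition \ref{def:WallDiv}, but showing that a $\Bir(X_{\bar{k}})$-fundamental domain can be refined to yield a $\Bir(X)$-fundamental domain inside $N^1(X)_{\RR}$ with rational polyhedral boundary requires the cohomological arguments of \cite{BLvL} combined with the hyper-K\"ahler-specific input of bounded wall divisor squares from Theorem \ref{con-conj}; this is the technical core of \cite{takamatsu21}.
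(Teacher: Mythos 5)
Your proposal follows essentially the same route the paper takes: the paper proves Theorem~\ref{cor:bir-fin} by citing \cite[Theorem~4.2.7]{takamatsu21}, which is precisely the combination you describe of Markman's movable-cone conjecture over $\CC$, the Bright--Logan--van Luijk Galois-descent mechanism to pass to $N^1(X)_\RR$ over an arbitrary $k$, and the Amerik--Verbitsky bound (Theorem~\ref{con-conj}) on wall-divisor squares, together with the Markman--Yoshioka observation that $\Bir(X)$-orbits of ample chambers inside the movable cone correspond bijectively to $k$-isomorphism classes of birational models. Your write-up is an accurate expansion of what that cited theorem establishes, not a different proof.
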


\section{Various Shafarevich sets of hyper-Kähler varieties} 
\label{sec:ShafarevichSets}

In this section, we first define in a more general setting of the Shafarevich sets \eqref{HKset}, \eqref{homHKset} mentioned in the introduction, and then introduce the notion of essentially good reduction and the corresponding Shafarevich set. Throughout the section, let $R$ be a finitely generated $\ZZ$-algebra which is a normal integral domain with fraction field $F$. Fix positive integers $n$ and $d$. Let $M$ be a $\widehat{\ZZ}$-numerical type or a deformation type of $2n$-dimensional hyper-Kähler varieties (Section \ref{subsec:DeformationType}); these two choices do not make much difference thanks to Remark \ref{remnumdef}, and we will make use of both.

\subsection{Polarized and unpolarized Shafarevich sets and their cohomological versions}
The Shafarevich set \eqref{HKset} is generalized as follows:
\begin{definition}[Polarized and unpolarized Shafarevich sets]\label{def:ShafaSets}
Let $M$ be a deformation type of hyper-Kähler variety. 
\begin{equation}\label{unpolHKset}
\Sha_{M}(F,R)=\Set*{X\given 
\parbox{19em}{
$X$ is a hyper-Kähler variety of type $M$ over $F$, \\
 for any height $1$ prime ideal $\mathfrak{p}\in \Spec R$, $X$ has good reduction at $\mathfrak{p}$
}
}/ \cong_{F}.
\end{equation}
Furthermore, let $d$ be a positive integer.
\begin{equation*}
\Sha_{M,d}(F,R)=\Set*{ (X, \xi) \given \parbox{16em}{
$(X, \xi)$ is a polarized hyper-Kähler variety of type $M$ over $F$ with $(\xi)^{2n}=d$, \\
for any height $1$ prime ideal $\mathfrak{p}\in \Spec R$, 
 $X$ has good reduction at $\mathfrak{p}$
}
}/ \cong_{F}.
\end{equation*}
Here, $X$ has \textit{good reduction} at $\mathfrak{p}$ means that there is a regular algebraic space $\fX$ which admits a smooth and proper morphism to the spectrum of the local ring $R_{\mathfrak{p}}$ such that $\fX_{F} \cong X$ as $F$-schemes.
\end{definition}

Following \cite{Takamatsu2020K3}, we define the following \emph{cohomological (polarized) Shafarevich sets}.
\begin{definition}[Cohomological Shafarevich sets]
Let $M$ be a deformation type of hyper-Kähler variety.
\begin{equation}\label{homunpolHKset}
\Sha_{M}^\textrm{hom}(F,R)=\Set*{X~\given \parbox{19em}{
$X$ is a hyper-Kähler variety of type $M$ over $F$, \\
for any height $1$ prime ideal $\mathfrak{p}\in \Spec R$, $\rH^2_{\et}(X_{\bar{F}} , \QQ_\ell)$ is unramified at $\mathfrak{p}$ for some $\ell\neq \operatorname{char}(k(\mathfrak{p}))$}
}/\cong_{F}.
\end{equation}
Furthermore, let $d$ be a positive integer.
\begin{equation}\label{hompolHKset}
\Sha_{M,d}^\textrm{hom}(F,R)=\Set*{ (X,\xi)\given 
\parbox{18em}{
$(X, \xi)$ is a polarized hyper-Kähler variety of degree $d$ of type $M$ over $F$,\\
and for any height $1$ prime ideal $\mathfrak{p}\in \Spec R$, 
 $ \rH^2_{\et}(X_{\bar{F}} , \QQ_\ell)$ is unramified at $\mathfrak{p}$ for some $\ell\neq \operatorname{char}(k(\mathfrak{p}))$}
}/ \cong_{F}.
\end{equation}

Here, $\rH^2_{\et}(X_{\bar{F}} , \QQ_\ell)$ is \textit{unramified} at $\mathfrak{p}$ if the inertia group $I_{k(\mathfrak{p})}$ acts trivially on it. For $n=1$ (K3 surfaces), the unramifiedness conditions in \eqref{homunpolHKset} and \eqref{hompolHKset} are independent of the choice of $\ell\neq \operatorname{char}(R/\mathfrak{p})$, see \cite[Section 5]{Takamatsu2020K3}. Unfortunately, we do not know the $\ell$-independence of unramifiedness in higher dimensions. Therefore, we also consider the following slightly more restrictive version of  \eqref{homunpolHKset} and \eqref{hompolHKset}: for a prime number $\ell\in R^{\times}$:

\begin{equation}\label{homunpolHKset-uniform-l}
\Sha_{M}^{\hom_\ell}(F,R)=\Set*{X \given 
\parbox{19em}{
$X$ is a hyper-Kähler variety of type $M$ over $F$, \\
and for any height $1$ prime ideal $\mathfrak{p}\in \Spec R$,
$\rH^2_{\et}(X_{\bar{F}} , \QQ_\ell)$ is unramified at $\mathfrak{p}$
}}/ \cong_{F};
\end{equation}

\begin{equation}\label{hompolHKset-uniform-l}
\Sha_{M,d}^{\hom_\ell}(F,R)=\Set*{(X,\xi)\given 
\parbox{18em}{$(X,\xi)$ is a polarized hyper-Kähler variety of type $M$ over $F$ with $(\xi)^{2n} =d$,\\ and for any height $1$ prime ideal $\mathfrak{p} \in \Spec R$, $\rH^2_{\et}(X_{\bar{F}},\QQ_{\ell})$ is unramified at $\mathfrak{p}$.}
}/ \cong_{F}.
\end{equation}

\end{definition}

Note that the  unpolarized Shafarevich sets $\Sha_M(F,R)$ admits a natural map from the union of polarized Shafarevich sets by forgetting the polarization: 
	\begin{equation}\label{up=p}
	\coprod\limits_{d \in \ZZ_{>0}}\Sha_{M,d}(F,R) \twoheadrightarrow \Sha_M(F,R)
	\end{equation} 
which is surjective.
Similarly for the cohomological Shafarevich sets $\Sha^{\hom}$ and $\Sha^{\hom_\ell}$.

By the smooth and proper base change theorems (see \cite[Th\'eor\`eme XII.5.1]{SGA4} and \cite[{Tag 0DG2}]{stacks-project}), having good reduction implies the unramifiedness. Therefore,  $$\Sha_M(F,R)\subset \Sha_M^{\hom_\ell}(F,R)\subset \Sha_M^{\hom}(F,R);$$ $$\Sha_{M,d}(F,R)\subset \Sha_{M,d}^{\hom_\ell}(F,R)\subset \Sha_{M,d}^{\hom}(F,R).$$
The difference between $\Sha_M(F,R)$ and $\Sha_M^{\hom}(F,R)$ is partially explained by the following observation:
\begin{proposition}\label{prop:bir-obs}
 Let $X$ and $Y$ be hyper-Kähler varieties over $F$. Suppose that $X$ and $Y$ are birationally equivalent over $F$.  Then $X\in \Sha^\mathrm{hom}_{M}(F,R)$ if and only if $Y\in \Sha^\mathrm{hom}_{M}(F,R)$. 
\end{proposition}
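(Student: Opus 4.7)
My plan is to establish the stronger statement that any $F$-birational isomorphism $f\colon X\dashrightarrow Y$ between hyper-K\"ahler varieties induces a $\Gal(\bar F/F)$-equivariant isomorphism $\rH^2_{\et}(X_{\bar F},\QQ_\ell)\cong \rH^2_{\et}(Y_{\bar F},\QQ_\ell)$ for each prime $\ell$ invertible on $\Spec R$. Since unramifiedness at a height-one prime $\p$ is detected purely by the inertia action, such an isomorphism transports the defining condition of $\Sha_M^{\hom}(F,R)$ in both directions. The deformation type $M$ automatically agrees on the two sides, because birationally equivalent complex hyper-K\"ahler manifolds are deformation equivalent, and this passes to $F$ through any embedding $\bar F\hookrightarrow \CC$.

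The first ingredient is the classical fact that a birational map between hyper-K\"ahler manifolds is an isomorphism in codimension one. Resolving the closure of the graph of $f$ in $X\times_F Y$ to obtain a smooth $\widetilde{\Gamma}$ with projections $p\colon\widetilde{\Gamma}\to X$ and $q\colon\widetilde{\Gamma}\to Y$, the triviality of $K_X$ and $K_Y$ together with the nowhere-vanishing symplectic forms on $X$ and $Y$ force both $p$ and $q$ to be small. This is established over $\CC$ by Huybrechts, and descends to $F$ since the indeterminacy loci of $f$ and $f^{-1}$ are closed $F$-subschemes whose geometric codimensions equal their $F$-codimensions. Thus there exist $F$-open subschemes $U\subseteq X$ and $V\subseteq Y$, with $X\setminus U$ and $Y\setminus V$ of codimension $\geq 2$, such that $f$ restricts to an $F$-isomorphism $U\xrightarrow{\sim} V$.

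The second ingredient is Gysin/purity in $\ell$-adic cohomology: on a smooth variety, removing a closed subscheme of codimension $\geq 2$ preserves $\rH^2_{\et}$ as a Galois module. Applying this to the pairs $(X,X\setminus U)$ and $(Y,Y\setminus V)$, and composing with $f|_U^{\ast}$, produces the chain of $\Gal(\bar F/F)$-equivariant isomorphisms
\[
\rH^2_{\et}(X_{\bar F},\QQ_\ell)\cong \rH^2_{\et}(U_{\bar F},\QQ_\ell)\cong \rH^2_{\et}(V_{\bar F},\QQ_\ell)\cong \rH^2_{\et}(Y_{\bar F},\QQ_\ell).
\]
Transferring unramifiedness along this chain concludes the argument.

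The only delicate bookkeeping is the descent of the small locus from $\bar F$ to $F$ with geometric codimension $\geq 2$, but this is automatic because the loci of indeterminacy are $F$-closed subschemes and codimension is preserved under geometric base change. I do not expect any more serious obstacle.
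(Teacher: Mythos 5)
Your argument is correct and follows essentially the same route as the paper: both proofs hinge on the fact that a birational map between hyper-K\"ahler varieties is an isomorphism in codimension one, which yields a $\Gal(\bar F/F)$-equivariant isomorphism of $\rH^2_{\et}$ and hence transports the unramifiedness condition. The paper's proof is terse (it simply asserts the $\Gal(\bar F_{\mathfrak p}/F_{\mathfrak p})$-equivariant isomorphism on $\rH^2_{\et}$ and concludes), while you supply the intermediate steps explicitly — descent of the small locus to $F$, the Gysin/purity comparison $\rH^2_{\et}(X_{\bar F})\cong\rH^2_{\et}(U_{\bar F})$, and the observation that the deformation type is preserved under birational equivalence (by Huybrechts) — so you have actually tidied up two points that the published proof elides, in particular the membership-in-$M$ requirement built into the definition of $\Sha^{\hom}_M$.
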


\begin{proof}
For any height 1 prime ideal $\mathfrak{p}$ of $R$, let $F_{\mathfrak{p}}$ be the completion of $F$ at $\mathfrak{p}$.
The birational isomorphism between $X$ and $Y$ is an isomorphism in codimension 1, hence we have an isomorphism 
$$\rH^2_{\et}(X_{\bar{F}_{\mathfrak{p}}},\QQ_\ell(1))\cong \rH^2_{\et}(Y_{\bar{F}_{\mathfrak{p}}},\QQ_\ell(1))$$
as $\Gal(\bar{F}_{\mathfrak{p}}/F_{\mathfrak{p}})$-modules for all $\ell$ and $\mathfrak{p}$.  It follows that  $\rH^2_{\et}(X_{\bar{F}_{\mathfrak{p}}},\QQ_\ell)$  is unramified at $\mathfrak{p}$ if and only if $\rH^2_{\et}(Y_{\bar{F}_{\mathfrak{p}}},\QQ_\ell)$ is unramified at $\mathfrak{p}$. 
\end{proof}

In other words, the unramifiedness condition cannot distinguish birationally equivalent objects in $\Sha_M^{\hom}(F,R)$. This suggests our approach towards the Shafarevich conjecture: we have the finiteness of isomorphism classes within a birational equivalence class (this is guaranteed by \cite{takamatsu21}, recalled above as \Cref{cor:bir-fin}); on the other hand, we study the finiteness of birational equivalence classes in various Shafarevich sets, which is the main goal of the paper, accomplished in \Cref{sec:FinitnessResults}).

\subsection{Shafarevich sets of hyper-Kähler varieties with essentially good reduction} To take into account the birational transformations, we introduce the following notion of \textit{essentially good reduction} of hyper-Kähler varieties, which roughly means having good reduction after birational modification over an unramified base change. 

\begin{definition}[Essentially good reduction]
\label{essentially good}
Let $X$ be a hyper-Kähler variety over $F$.
Let $\p$ be a height $1$ prime ideal of $R$.
We say $X$ has \textit{essentially good reduction} at $\p$ if
there exists a finite \'{e}tale extension $S$ of the completion $\widehat{R}_{\p}$ with respect to the maximal ideal, such that there exists a smooth proper algebraic space $\mathcal{Y}$ over $S$ whose generic fiber $\mathcal{Y}_{\Frac(S)}$ is a hyper-Kähler variety that is $\Frac(S)$-birationally isomorphic to $X_{\Frac(S)}$.
\end{definition}

\begin{remark}
The requirement on base change is meaningful: already in dimension 2, there are examples of K3 surfaces over $\QQ_{p}$ which does not admit good reduction but admits good reduction after a finite unramified extension of $\QQ_{p}$ (see \cite[Theorem 1.6]{LM18}).
\end{remark}

\begin{definition}[Essential Shafarevich sets]
Keep the notation as above.
\begin{equation}\label{essunpolHKset}
\Sha_{M}^{\ess}(F,R)=\Set*{ X\given 
\parbox{19em}{
$X$ is a hyper-Kähler variety of type $M$ over $F$,\\
 for any height $1$ prime ideal $\mathfrak{p}\in \Spec R$, $X$ has essentially good reduction at $\mathfrak{p}$}
}/ \cong_{F}
\end{equation}

\begin{equation}\label{esspolHKset}
\Sha_{M,d}^{\ess}(F,R)=\Set*{ (X, \xi)\given 
\parbox{19em}{
$(X, \xi)$ is a polarized hyper-Kähler variety of type $M$ over $F$, with
$(\xi)^{2n}=d$ \\
for any height $1$ prime ideal $\mathfrak{p}\in \Spec R$, $X$ has essentially good reduction at $\mathfrak{p}$
}
}/ \cong_{F}
\end{equation}
\end{definition}

By definition, we have 
\begin{equation}
            \Sha_{M}(F,R) \subset \Sha_{M}^{\ess}(F,R)\subset \Sha_{M}^{\hom_\ell}(F,R)\subset \Sha_{M}^{\hom}(F,R),
\end{equation}
and similarly for polarized Shafarevich sets. 
The difference between the middle two sets is partially explained in Section \ref{sectionremcoh}.

\section{Moduli of polarized hyper-Kähler varieties}
\label{sec:ModuliHK}
In this section, we review the moduli theory of polarized hyper-Kähler varieties. A good reference is Bindt's PhD thesis \cite{BindtThesis}.
\subsection{Moduli stacks of polarized hyper-Kähler varieties}
Given positive integers $n$ and $d$, consider the groupoid fibration $\scF_d\to (\mathtt{Sch}/\QQ)$ of $2n$-dimensional polarized hyper-Kähler varieties of degree $d$. Let us write it in the form of a moduli functor $$\scF_{d} \colon (\mathtt{Sch}/\QQ)^{op}\to \mathtt{Groupoids}$$  defined as follows: for any $\QQ$-scheme $T$,
\begin{equation}
	\scF_d(T)= \left\{ (f \colon\fX \rightarrow T, \xi)~\Bigg| \begin{array}{lr}
		\hbox{\rm $\fX \rightarrow T$ is a family of $2n$-dimensional hyper-Kähler varieties,}\\
		\hbox{\rm   $\xi\in \Pic_{\fX/T}(T)$ is a polarization with $(\xi_t^{2n})=d$ for all $t\in T(\bar{k})$}   	\end{array}
	\right\}
\end{equation}
(see Section \ref{subsec:Polarization} for the notion of polarization), and for any morphism $T_1\to T_2$ of $\QQ$-schemes, the functor $\scF_d(T_2)\to \scF_d(T_1)$ is given by  pulling back families.
Here, an isomorphism between $(\fX_1 \to T, \xi_1)$ and $(\fX_2 \to T, \xi_2)$ in the groupoid $\scF_d(T)$ is a Cartesian diagram
\[
\begin{tikzcd}
 \fX_1 \ar[r,"\psi","\sim"'] \ar[d,""] & \fX_2 \ar[d, ""] \\
 T \ar[r, "\sim"] & T
\end{tikzcd}
\]
together with an isomorphism $\psi^*(\xi_2) \xrightarrow{\sim} \xi_1$ in the groupoid $\Pic_{\fX_1/T}(T)$.

\begin{proposition}[Bindt]
\label{prop:representabilityHyperK}
The moduli functor $\scF_d$ is represented by a separated smooth Deligne--Mumford stack of finite type over $\QQ$, still denoted by $\scF_d$. 
\end{proposition}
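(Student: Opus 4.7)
My plan is to realize $\scF_d$ as the quotient stack associated to a locally closed subscheme of a Hilbert scheme, and then verify the listed adjectives one by one. The first step is a uniform Matsusaka-type bound: I would produce an integer $m=m(n,d)$, depending only on $n$ and $d$, such that for every $(X,H)\in \scF_d$ the class $mH$ is very ample with Hilbert polynomial belonging to a finite list $\{P_1,\dots,P_r\}$. After passing to an fppf cover to lift the polarization class $\mathcal{H}\in \Pic_{\fX/T}(T)$ to an honest line bundle (killing its Brauer obstruction), the complete linear system $|mH|$ embeds $X\hookrightarrow \PP^{N_i}$ with $N_i=P_i(1)-1$. Inside each $\mathrm{Hilb}^{P_i}_{\PP^{N_i}/\QQ}$ I would carve out the locally closed subscheme $\tilde{H}_i$ parametrizing smooth hyper-K\"ahler subvarieties embedded by the $m$th power of a polarization of degree $d$, and present $\scF_d$ as a disjoint union of quotient stacks $[\tilde{H}_i/\PGL_{N_i+1}]$, which is automatically of finite type over $\QQ$.

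For the Deligne--Mumford property, the stabilizer of $(X,H)$ under $\PGL_{N_i+1}$ is identified with $\Aut(X,H)$, which coincides with $\Bir(X,H)$ by Fujiki's theorem (Section~\ref{subsect:AutBir}) and is therefore finite; in characteristic zero this forces reduced finite stabilizers. Smoothness over $\QQ$ follows from Bogomolov--Tian--Todorov: infinitesimal deformations of $X$ are unobstructed, and the locus where $H$ stays of type $(1,1)$ is cut out by a single equation in the Kuranishi space (since $\rH^2(X,\mathcal{O}_X)$ is one-dimensional), hence is a smooth divisor. For separatedness, the valuative criterion requires that given a DVR $R$ with fraction field $K$ and two polarized smooth proper extensions $(\fX_1,\xi_1),(\fX_2,\xi_2)$ over $\Spec R$ with a common generic fiber, every generic isomorphism $\fX_{1,K}\xrightarrow{\sim}\fX_{2,K}$ extends over $R$. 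I would prove this by a Matsusaka--Mumford-type specialization argument on the graph inside $\fX_1\times_R\fX_2$, equipped with the natural product polarization; because the models $\fX_i$ are a priori only algebraic spaces rather than schemes, I would invoke the generalization of Matsusaka--Mumford to algebraic spaces supplied in Appendix~\ref{appendixMatsusakaMumford}.

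The main obstacle I anticipate is the uniform Matsusaka bound: since $\scF_d$ is not a priori known to parametrize only finitely many deformation types of complex hyper-K\"ahler manifolds, one cannot simply quote an effective bound from a fixed deformation class. I expect the resolution to combine an effective Matsusaka theorem (in the spirit of Siu or Demailly) with uniform bounds on the relevant cohomological invariants of members of $\scF_d$: concretely, one would first bound the Fujiki constant and the Beauville--Bogomolov discriminant in terms of $n$ and $d$, then invoke Theorem~\ref{finitetype} to reduce to finitely many deformation types, and finally apply a standard effective bound within each class. This is presumably where the substantial technical input of Bindt's thesis is concentrated; once it is in place the remaining verifications are essentially formal.
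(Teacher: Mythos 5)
Your overall strategy — realize $\scF_d$ as a quotient of a locally closed subscheme of a Hilbert scheme after a uniform Matsusaka-type bound, then verify the adjectives — is in the same spirit as Bindt's construction (which the paper simply cites), and the verifications of smoothness (via Bogomolov--Tian--Todorov and the codimension-one cut by $\rH^{0,2}$), the Deligne--Mumford property (via finiteness of $\Aut(X,H)=\Bir(X,H)$, using Fujiki), and separatedness (via Matsusaka--Mumford for algebraic spaces) are all on the right track. The one genuine gap is precisely in the place you flag as the ``main obstacle,'' where your proposed resolution is both circular and unworkable.

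You suggest bounding the Fujiki constant and the Beauville--Bogomolov discriminant in terms of $n$ and $d$, then invoking Theorem~\ref{finitetype} to reduce to finitely many deformation types, then applying an effective Matsusaka bound within each class. But there is no a priori bound on the Fujiki constant $c_X$ (or on $b_2$, or on $\disc\Lambda_X$) in terms of $n$ and $d=(H^{2n})$ alone: the relation $q_X(H)^n = c_X \cdot d$ involves $q_X(H)$, which is unbounded. So Theorem~\ref{finitetype} cannot be brought to bear in this way, and the proposed route stalls. The correct and much simpler resolution — the one the paper uses — is the Koll\'ar--Matsusaka theorem \cite{KM83}: for a polarized smooth projective variety $(X,H)$ of dimension $2n$, it gives an integer $m$ and a finite set of possible Hilbert polynomials for $mH$ depending only on $2n$, $(H^{2n})$, and $(K_X\cdot H^{2n-1})$. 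Since $K_X$ is trivial for hyper-K\"ahler varieties, the last invariant vanishes, and the bound depends only on $n$ and $d$, with no deformation-type considerations at all. This makes the ``main obstacle'' a non-issue, so the extra machinery you propose (Siu/Demailly effective Matsusaka, deformation-type finiteness) is unnecessary and, in the form proposed, would not close the gap. The paper's actual proof is two lines: cite \cite[Theorem 3.3.2, Lemma 3.3.9]{BindtThesis} for separated, smooth, DM, and locally of finite type, then invoke \cite{KM83} as above for finite type.
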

\begin{proof}
\cite[Theorem 3.3.2, Lemma 3.3.9]{BindtThesis} shows that  $\scF_d$ is a separated smooth Deligne--Mumford stack locally of finite type over $\QQ$. To show that it is of finite type, applying Koll\'ar--Matsusaka \cite{KM83}, since $(\xi^{2n})=d$ is fixed, there are only finitely many possibilities for the Hilbert polynomial. Hence $\scF_d$ is of finite type.
\end{proof}

We shall introduce the moduli stack $\widetilde{\scF}_d$ of \textit{oriented} polarized hyper-Kähler spaces of degree $d$, as a finite covering of $\scF_d$.
\begin{definition}[Orientation]
Let $T$ be a $\QQ$-scheme, and let $f \colon \fX\to  T$ be a family of hyper-Kähler varieties with $b_2>3$ (Section \ref{subsec:Polarization}). An \emph{orientation} on $\fX/T$ is an isomorphism of sheaves over $S_{\et}$:
\begin{equation}
   \omega \colon \underline{\ZZ/4\ZZ} \xrightarrow{\simeq} \det R^2_{\et} f_\ast \mu_4.
\end{equation}
This pair $(\fX/S,\omega)$ is called a \emph{family of oriented hyper-Kähler varieties} over $T$.
\end{definition}

The following observation in \cite[Lemma 4.3.2]{BindtThesis} shows that the ``orientation" defined here is equivalent to that in Taelman \cite{taelman17}, at least over a normal base.
\begin{lemma}\label{lemma:liftoforientation}
Let $S$ be a normal scheme of finite type over $\QQ$. For any family of oriented hyper-Kähler varieties $(f\colon \fX \to S, \omega)$ there are unique isomorphisms of lisse $\widehat{\ZZ}$-local systems
\[
\omega_{\et} \colon \underline{\widehat{\ZZ}} \xrightarrow{\sim} \det R^2_{\et}f_* \widehat{\ZZ}(1)
\]
on $S$ and 
\[
\omega_{\operatorname{an}} \colon \underline{\ZZ} \xrightarrow{\sim} \det R_{\operatorname{an}}^2f_* \ZZ(1)
\]
on $S_{\CC}$, 
whose reductions on $\ZZ/4\ZZ$ is $\omega$ and $\omega_{\et}|_{S_{\CC}} = \omega_{\operatorname{an}} \otimes \widehat{\ZZ}$.
\end{lemma}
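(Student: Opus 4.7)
The plan is to derive unique lifts $\omega_{\et}$ and $\omega_{\operatorname{an}}$ of the mod-$4$ orientation $\omega$ by exploiting the fact that the determinant line sheaves $\det R^2_{\et} f_* \widehat{\ZZ}(1)$ and $\det R^2_{\operatorname{an}} f_* \ZZ(1)$ are, up to a canonical Tate twist built into the $(1)$-convention, quadratic characters of the fundamental group, so that the ambient torsor under $\widehat{\ZZ}^{\times}$ collapses to a torsor under $\{\pm 1\}$.

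The first step would be to show that the $\widehat{\ZZ}$-Beauville--Bogomolov form of Section \ref{subsec:BBform} defines a canonical, $\pi_1^{\et}(S)$-equivariant pairing $q\colon (R^2_{\et} f_* \widehat{\ZZ}(1))^{\otimes 2} \to \widehat{\ZZ}(2)$ on the rank-$b_2$ lisse sheaf, whose top exterior power yields a canonical isomorphism $(\det R^2_{\et} f_* \widehat{\ZZ}(1))^{\otimes 2} \simeq \widehat{\ZZ}(2 b_2)$. This exhibits $\det R^2_{\et} f_* \widehat{\ZZ}(1)$ as the twist of a $\mu_2$-character $\eta\colon \pi_1^{\et}(S)\to\{\pm 1\}$ by the canonical Tate line $\widehat{\ZZ}(b_2)$. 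Because the reduction map $\{\pm 1\} \hookrightarrow \widehat{\ZZ}^{\times} \twoheadrightarrow (\ZZ/4\ZZ)^{\times}$ is the identity on $\{\pm 1\}$, the set of trivializations of $\eta$, if non-empty, maps bijectively onto the set of trivializations of $\eta\otimes \ZZ/4\ZZ$.

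Given the orientation $\omega$, translated through the above identification, one obtains a trivialization of $\eta\otimes \ZZ/4\ZZ$, which therefore lifts uniquely to a trivialization of $\eta$, producing a unique isomorphism $\omega_{\et}\colon \underline{\widehat{\ZZ}} \xrightarrow{\sim} \det R^2_{\et} f_* \widehat{\ZZ}(1)$. For the analytic lift, the same argument applies verbatim with the Betti Beauville--Bogomolov pairing on $R^2_{\operatorname{an}} f_* \ZZ(1)$, which is integral and non-degenerate by the same structural input. The compatibility $\omega_{\et}|_{S_{\CC}} = \omega_{\operatorname{an}}\otimes\widehat{\ZZ}$ follows automatically from the Artin comparison theorem, which intertwines the Betti and $\widehat{\ZZ}$-adic Beauville--Bogomolov pairings, combined with the uniqueness established on each side.

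The main technical obstacle lies in verifying the canonicity and non-degeneracy of the pairing $q$ in the $\widehat{\ZZ}$-setting over a general normal $\QQ$-scheme $S$: one must track Tate twists accurately and show that $\Lambda^{b_2}(q)$ actually induces the isomorphism $(\det R^2_{\et} f_* \widehat{\ZZ}(1))^{\otimes 2} \simeq \widehat{\ZZ}(2 b_2)$ rather than a mere rational isogeny. When $\disc(\Lambda_M)$ is not a unit in $\widehat{\ZZ}$ (in particular at the prime $2$), $q$ need not be integrally perfect, and one must either work up to finite index, restrict to the open locus where the discriminant is invertible and glue using the uniqueness, or replace $R^2 f_* \widehat{\ZZ}(1)$ by its primitive part with respect to a polarization, where the relevant non-degeneracy is cleaner. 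These integrality subtleties constitute the principal technical content beyond the formal $\{\pm 1\}$-torsor manipulation.
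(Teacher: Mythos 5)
The paper does not prove this lemma itself (it is quoted from Bindt's thesis), so I am judging your argument on its own terms. There is a genuine gap at the central step, namely the claim that ``the set of trivializations of $\eta$, if non-empty, maps bijectively onto the set of trivializations of $\eta\otimes\ZZ/4\ZZ$.'' The determinant character $\eta\colon \pi_1^{\et}(S)\to\widehat{\ZZ}^{\times}$ of $\det R^2_{\et}f_*\widehat{\ZZ}(1)$ satisfies $\eta^2=1$ (the Beauville--Bogomolov pairing, which after the $(1)$-twist is valued in the \emph{untwisted} constant sheaf $\widehat{\ZZ}$ -- your targets $\widehat{\ZZ}(2)$ and $\widehat{\ZZ}(2b_2)$ are off, though this is minor), but the $2$-torsion of $\widehat{\ZZ}^{\times}=\prod_p\ZZ_p^{\times}$ is $\prod_p\{\pm1\}$, not the diagonally embedded $\{\pm1\}$. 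Reduction modulo $4$ detects only the component at $p=2$. So triviality of $\eta\bmod 4$ gives triviality of the $2$-adic determinant character and says nothing about the odd primes; the torsor does not ``collapse to a $\{\pm1\}$-torsor'' without further input. Relatedly, even granting existence, a trivialization of the trivial rank-one $\widehat{\ZZ}$-local system is only determined mod $4$ up to the large group $(1+4\ZZ_2)\times\prod_{p>2}\ZZ_p^{\times}$, so uniqueness of $\omega_{\et}$ requires the compatibility with $\omega_{\operatorname{an}}$ as part of the characterization, not as an afterthought.

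The missing idea is precisely the $\ell$-independence (equivalently, diagonality in $\prod_p\{\pm1\}$) of the determinant characters $\det R^2_{\et}f_*\ZZ_\ell(1)$. On the geometric fundamental group this follows from Artin comparison: the $\widehat{\ZZ}$-monodromy is the profinite completion of the topological monodromy into $\mathrm{O}(\Lambda)$, whose determinant is a single sign embedded diagonally -- this also produces $\omega_{\operatorname{an}}$. But on the arithmetic part of $\pi_1^{\et}(S)$ one needs an actual argument: spread $f$ out over an open part of $\Spec\ZZ$, use smooth--proper base change and the Weil conjectures to see that $\det(\mathrm{Frob}_x)$ on $\rH^2_{\et}$ is an $\ell$-independent sign at closed points of good reduction, and then use that Frobenius elements are dense in $\pi_1^{\et}$ of a \emph{normal} scheme of \emph{finite type over $\QQ$} (Chebotarev). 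These are exactly the hypotheses of the lemma, and your proposal never invokes them -- a sign that the formal torsor manipulation cannot suffice. By contrast, the issue you single out as the main obstacle (non-unimodularity of the Beauville--Bogomolov form over $\widehat{\ZZ}$) is harmless: $\det(g)^2=1$ for any $g$ preserving a form that is merely non-degenerate over $\QQ$, so no perfectness or passage to the primitive part is needed.
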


We denote by $\widetilde{\scF}_d$ the moduli stack of oriented polarized hyper-Kähler varieties of degree $d$.  The natural forgetful functor $\widetilde{\scF}_d \to \scF_d$ is a finite \'etale covering by Lemma \ref{lemma:liftoforientation}, which is of degree two. Thus, the stack $\widetilde{\scF}_d$ is a smooth Deligne--Mumford stack of finite type over $\QQ$, by \Cref{prop:representabilityHyperK}.

\begin{remark}
For K3 surfaces, the moduli stack $\scF_d$ can be defined over $\ZZ$ and is smooth over $\ZZ[\frac{1}{2d}]$.  It is natural to ask whether the same assertion holds for higher dimensional hyper-Kähler varieties. This is known to be true for some families, such as the Fano varieties of lines on cubic fourfolds  \cite{A96}. However, this is a rather difficult problem in general, since the deformation theory and even the ``right'' definition of hyper-Kähler varieties over mixed characteristic fields are unclear at present. See \cite{Yang2019} for the discussion on those of K3$^{[n]}$-type.
\end{remark} 

Different from the case of K3 surfaces, the moduli stack  $\scF_{d}$ is not necessarily (geometrically)  connected.
In practice, as there are only finitely many connected components, we may work with a connected component of the moduli stack $\scF_{d}$ or $\widetilde{\scF}_d$ each time.

\begin{remark}
It is also natural to consider the moduli stack of polarized hyper-Kähler varieties of degree $d$ of a given deformation type, which is a union of some geometric connected components of $\scF_d$. However, we do not know whether such stack is defined over $\QQ$, as it is unclear to us whether the hyper-Kähler varieties in a connected component of $\scF_{d}$ are deformation equivalent. Instead, one can consider the moduli stack $\scF_{M,d}$ of polarized hyper-Kähler varieties of a given $\widehat{\ZZ}$ numerical type $M$ (Section \ref{subsec:DeformationType}), this will be a substack of $\scF_d$ defined over $\QQ$ as $M$ is stable under the $\Gal(\overline{\QQ}/\QQ)$-action. In this way, we can view $\Sha_{M,d}(F,R)$ as a subset of isomorphism classes in $\scF_{M,d}(F)$. 
\end{remark}

\subsection{Level structures}\label{subsection:levelstructure}

Let $\scF_{d}^{\dag}$ be a connected component of $\scF_d$. Choose a point $(X_0,\xi_0) \in \scF_d^{\dag}(\CC)$.
Let $\Lambda$ be the BB lattice of $X_0$. Let $\Lambda_{h}$ be the orthogonal complement of $h= c_1(\xi_0)$ in $\Lambda$.
For any geometric point $(X,\xi)$ in $\scF_{d}^{\dag}(\CC)$, there is an isomorphism of $\widehat{\ZZ}$-lattices
\[
\phi \colon \Lambda \otimes \widehat{\ZZ} \xrightarrow{\sim} \rH^2_{\et}(X, \widehat{\ZZ})  
\]
such that $\phi(h) = c_1(H)$ by the connectedness of $\scF_d^\dag$ (cf.~\cite[Lemma 4.5.1]{BindtThesis}).

We write $G=\SO(\Lambda_{h,{\QQ}})$ and let $K \to  G(\mathbb{A}_f)$ be a continuous group homomorphism from a profinite group, with compact open image and finite kernel. Following \cite{Ri06} or \cite{BL19}, we say that $K\to  G(\mathbb{A}_f)$, or simply $K$, is {\it admissible} if the image of every element of $K$ can be viewed as an isometry of $\Lambda \otimes \mathbb{A}_f$ fixing $h$ and  stabilizing $\Lambda_h \otimes \widehat{\ZZ}$.
Recall that there is an injection
\begin{equation}
K_h \coloneqq \left\{g\in \SO(\Lambda)(\widehat{\ZZ}) \; | \; g(h)=h \right\}\rightarrow G(\mathbb{A}_f).
\end{equation} 
By definition, $K_h$ is admissible, and $K$ is admissible if and only if its image in $G(\mathbb{A}_f)$ lies in $K_h$.

Consider $\mathcal{J}_h$, the sheafification of the
presheaf of sets  on $(\widetilde{\scF}^{\dag}_d)_{\et}$, with sections over a connected $\QQ$-scheme $T$ given by the $\pi^{\et}_1(T,\bar{t})$-invariant set
\begin{equation}
(f \colon \fX \to T, \xi;\omega) \rightsquigarrow \Set*{\alpha \colon \Lambda\otimes\widehat{ \ZZ} \xrightarrow{\sim} \rH^2_{\et}(\fX_{\bar{t}},\widehat{\ZZ}(1)) \given
\begin{matrix}
\text{$\alpha$ is an isometry}\\
\text{such that $\alpha(h)=c_1(\xi_{\bar{t}})$}\\
\text{and $\det(\alpha) = \omega_{\et}$}.
\end{matrix}
}^{\pi_1^{\et}(T, \bar{t})}
\end{equation}
Here $c_1$ is the $\widehat{\ZZ}$-first Chern class maps \eqref{eqn:c1-Zhat}.

By construction, any admissible subgroup $K$ naturally acts on $\mathcal{J}_h(T)$ (via the source of each $\alpha$). Denote $K \backslash\mathcal{J}_h$ for the quotient of $\mathcal{J}_h$ by the left-action of $K$, which is well-defined in the category of \'etale sheaves on $(\widetilde{\scF}_d)_{\et}^{\dag}$.

\begin{definition}  
A \textit{$K$-level structure} on a polarized family of hyper-Kähler varieties $f\colon\fX \rightarrow T$ is a section over $T$ of the \'etale sheaf $K\backslash\mathcal{J}_h$.
\end{definition}

Let $\widetilde{\scF}_{d,K}$ be the moduli stack of oriented polarized hyper-Kähler varieties of degree $d$ with a $K$-level structure. It can be viewed as a finite \'etale cover of the original moduli stack (cf.~\cite[Proposition 3.11]{Pe15}), i.e.,
\begin{proposition}
The stack $\widetilde{\scF}_{d,K}^{\dag}$ is a smooth Deligne--Mumford stack  and the forgetful map 
\begin{equation*}
\pi_{d,K}\colon \widetilde{\scF}_{d,K}^{\dag} \rightarrow \widetilde{\scF}_d^{\dag}
\end{equation*}
is finite and \'etale. 
\end{proposition}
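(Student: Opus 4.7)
The plan is to realize $\widetilde{\scF}_{d,K}^{\dag}$ as the relative ``espace \'etal\'e'' over $\widetilde{\scF}_d^{\dag}$ attached to the \'etale sheaf $K\backslash\mathcal{J}_h$ and to show that this sheaf is finite locally constant. Finiteness and \'etaleness of $\pi_{d,K}$ will then be formal, and the smooth Deligne--Mumford property will descend to $\widetilde{\scF}_{d,K}^\dag$ along this finite \'etale cover from the oriented analogue of Proposition \ref{prop:representabilityHyperK}.

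The main step is to verify that $\mathcal{J}_h$ is a torsor on the small \'etale site of $\widetilde{\scF}_d^\dag$ under the profinite group $K_h$. Freeness and transitivity of the right $K_h$-action is a direct composition argument: two isometries $\alpha_1,\alpha_2$ satisfying the defining conditions differ by $g=\alpha_1^{-1}\alpha_2 \in \SO(\Lambda)(\widehat{\ZZ})$ with $g(h)=h$, where the identity $\det(g)=1$ is forced by $\det(\alpha_1)=\det(\alpha_2)=\omega_{\et}$. For non-emptiness of stalks after an \'etale base change, one works over a connected normal \'etale neighbourhood $T'$ of a geometric point $\bar t$: the lisse $\widehat{\ZZ}$-sheaf $R^2_{\et}f_*\widehat{\ZZ}(1)$ corresponds to a continuous representation $\rho\colon \pi_1^{\et}(T',\bar t)\to K_h$ (it preserves the BB form, the polarization class, and the orientation via Lemma \ref{lemma:liftoforientation}), and passing to the finite \'etale cover trivializing $\rho$ reduces the question to constructing an initial $\alpha$ at a single geometric point. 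The existence of such an $\alpha$ then follows from connectedness of $\scF_d^\dag$ combined with \cite[Lemma 4.5.1]{BindtThesis} and, if necessary, an adjustment by an element of $\mathrm{O}(\Lambda)(\widehat{\ZZ})$ stabilizing $h$ in order to match the orientation.

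Admissibility of $K$ provides a map $K\to K_h$ with compact open image and finite kernel; since $K_h$ is compact, this image is necessarily of finite index. Hence $K\backslash\mathcal{J}_h$ is a finite locally constant \'etale sheaf on $\widetilde{\scF}_d^\dag$, and the associated classifying morphism $\widetilde{\scF}_{d,K}^\dag\to \widetilde{\scF}_d^\dag$ is representable, finite, and \'etale, whence all the asserted properties. I expect the most delicate point to be the simultaneous matching of the BB form, the polarization class, and the orientation when constructing $\alpha$ locally; this is exactly where the lift $\omega_{\et}$ of Lemma \ref{lemma:liftoforientation} and the connectedness of $\scF_d^\dag$ are essential.
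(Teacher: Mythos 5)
Your argument is sound in outline and is essentially the standard torsor argument; note that the paper itself gives no proof of this proposition at all — it simply cites \cite[Proposition 3.11]{Pe15} — so what you have written is a genuine filling-in rather than a reproduction. The pseudo-torsor step (freeness and transitivity of the $K_h$-action, with $\det(g)=1$ forced by the matching of determinants with $\omega_{\et}$), the existence of a marking at each geometric point via connectedness of $\scF_d^\dag$ and \cite[Lemma 4.5.1]{BindtThesis}, and the reduction of finiteness and \'etaleness of $\pi_{d,K}$ to the finite local constancy of $K\backslash\mathcal{J}_h$ (using that the image of an admissible $K$ is open, hence of finite index, in the compact group $K_h$) are all correct, as is the descent of the smooth Deligne--Mumford property along a finite \'etale cover of $\widetilde{\scF}_d^{\dag}$.

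One step is stated imprecisely and would fail as literally written: you cannot in general pass to ``the finite \'etale cover trivializing $\rho$,'' since the monodromy representation $\rho\colon \pi_1^{\et}(T',\bar t)\to K_h$ typically has infinite (profinite) image, and consequently $\mathcal{J}_h$ itself is \emph{not} locally non-empty on the small \'etale site — its stalks are the markings with open stabilizer, and the stabilizer of any marking is $\ker\rho$, which is closed but not open. So $\mathcal{J}_h$ is a $K_h$-torsor only in the pro-\'etale (or fiber-functor) sense. This does not damage your conclusion, because all you need is that the \emph{finite} set $K\backslash\{\text{markings at }\bar t\}$ carries a continuous $\pi_1^{\et}$-action, i.e.\ that the composite of $\rho$ with the projection $K_h\to K\backslash K_h$ is trivialized by a finite \'etale cover; that composite has finite image by admissibility of $K$, and this is exactly what makes $K\backslash\mathcal{J}_h$ finite locally constant. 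You should phrase the local triviality at the level of the quotient $K\backslash\mathcal{J}_h$ rather than of $\mathcal{J}_h$ itself.
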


For any field $F$ of characteristic zero, the objects in $\widetilde{\scF}_{d,K}^{\dag}(F)$ are those $(X,H, \omega; \alpha)$ such that
\begin{itemize}
    \item $(X,H, \omega)$ is an oriented polarized family of hyper-Kähler varieties over $F$, and
    \item $\alpha$ is a $K$-level structure $\alpha \colon \Lambda \otimes \widehat{\ZZ} \xrightarrow{\sim} \rH^2_{\et}(X_{\bar{F}},\widehat{\ZZ}(1))$ satisfying
    \[
    \alpha(\rho)(\Gal(\bar{F}/F)) \subseteq K,
    \]
    where $\rho \colon \Gal(\bar{F}/F) \to \Aut{(\rH^2_{\et}(X, \widehat{\ZZ}(1))}$ is the Galois representation,  and $\alpha(\rho)(\sigma) = \alpha^{-1} \circ \rho(\sigma) \circ \alpha$ for any $\sigma \in \Gal(\bar{F}/F)$.
\end{itemize}

In this paper, we mainly consider the following level structures on hyper-Kähler varieties.
\begin{definition}\label{def:levelstructures}
Keep the notation as above.
\begin{enumerate}
   \item The \emph{full-level-$m$ structure} is defined by the admissible group\[K_{h,m} \coloneqq \left\{g \in \SO(\Lambda)(\widehat{\ZZ}) \big|g(h) = h, g \equiv 1~\bmod m\right\} \to G(\mathbb{A}_f).\]
  \item 
 As in \cite{Riz10}, we define the {\it spin level structures} as follows.
Let $\Cl^{+}(\Lambda_h)$ be the even Clifford algebra of $\Lambda_{h,\QQ}$.
Let
\[
\GSpin(\Lambda_h)(\widehat{\ZZ})=\GSpin(\Lambda_h)(\mathbb{A}_f)\cap \Cl^+(\Lambda_h\otimes \widehat{\ZZ}), 
\]
and
\[
K_m^{\operatorname{sp}}= \left\{g \in \GSpin(\Lambda_h)(\widehat{\ZZ}) \big| g \equiv 1~\bmod m\right\}.
\]
Recall that one has an adjoint representation
\[
\ad \colon \GSpin(\Lambda_h) \to G
\]
defined by $ \ad(x) = (v\mapsto xvx^{-1})$. The spin level-$m$ structure is the map
\[
K_m^{\operatorname{sp}} \xrightarrow{\ad} G(\mathbb{A}_f).
\]
We set $K^{\ad}_{m}\subseteq G(\widehat{\ZZ})$ to be the image $K_m^{\operatorname{sp}}$ under the adjoint representation: It is an open compact subgroup of $G(\mathbb{A}_f)$.
\end{enumerate}
For the following usage, we denote by $K^{\ad}_{L,m}$ and $K^{\operatorname{sp}}_{L,m}$  the corresponding level structures for a different lattice $L$.
\end{definition}

It could happen that the action of the automorphism group of a hyper-Kähler variety is not faithful on the second cohomology (see Example \ref{exm:aut}). Therefore, $\widetilde{\scF}_{d,K}$ is not represented by a scheme in this case, even when $K$ is very small, but we have the following remedy. 
\begin{corollary}
If the automorphism group of a very general member of hyper-Kähler varieties parametrized by $\widetilde{\scF}_{d}$ is trivial, then $\widetilde{\scF}_{d,K_{h,m}}^{\dag}$ is represented by a $\QQ$-scheme for $m\geq 3$, still denoted by $\widetilde{\mathcal{F}}_{d,K_{h,m}}^{\dag}$.
\end{corollary}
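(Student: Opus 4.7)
The plan is two-fold: first show that any automorphism of an object $(X,H,\omega,\alpha) \in \widetilde{\scF}_{d,K_{h,m}}^{\dag}$ is trivial when $m\geq 3$, so that the stack promotes to an algebraic space; then upgrade the resulting algebraic space to a scheme using quasi-projectivity furnished by the polarization.

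\textbf{Step 1 (rigidification via the level structure).} Let $f$ be an automorphism of $(X,H,\omega,\alpha)$ over a geometric point of $\widetilde{\scF}_{d,K_{h,m}}^{\dag}$. Because $f$ preserves the ample class $H$, it lies in the finite group $\Aut(X,H)$ (Section \ref{subsect:AutBir}). The compatibility of $f$ with the $K_{h,m}$-level structure means $f^{*}\circ\alpha = \alpha\circ k$ for some $k\in K_{h,m}$, hence the induced action of $f^{*}$ on $\rH^{2}_{\et}(X_{\bar{F}},\widehat{\ZZ}(1))$ is congruent to the identity modulo $m$. Via the Artin comparison isomorphism, the same congruence holds for the topological action of $f^{*}$ on $\rH^{2}(X_{\CC},\ZZ)$, which is of finite order. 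Minkowski's lemma (a torsion element of $\GL_{n}(\ZZ)$ congruent to the identity modulo $m\geq 3$ is the identity) then gives $f^{*} = \id$ on $\rH^{2}(X_{\CC},\ZZ)$, so $f \in \Aut_{0}(X_{\CC})$.

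\textbf{Step 2 (killing $\Aut_{0}$).} By Theorem \ref{thm:Aut0}, the finite group $\Aut_{0}$ is a deformation invariant of complex hyper-K\"ahler manifolds. The connected component $\widetilde{\scF}_{d}^{\dag}$ is, by construction, a single deformation family, so the hypothesis that a very general member has trivial automorphism group (and in particular trivial $\Aut_{0}$) propagates to $\Aut_{0}(X_{\CC}) = 1$ for every $(X,H,\omega,\alpha)$ appearing in $\widetilde{\scF}_{d,K_{h,m}}^{\dag}$. Combined with Step 1, this forces $f = \id$. Hence $\widetilde{\scF}_{d,K_{h,m}}^{\dag}$ is a separated Deligne--Mumford stack with trivial inertia, i.e. an algebraic space of finite type over $\QQ$.

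\textbf{Step 3 (algebraic space to scheme).} By Matsusaka's big theorem, there is an integer $k_{0}$ depending only on the Hilbert polynomial of the polarized fibers such that $H^{\otimes k_{0}}$ is very ample on every geometric fiber of the universal family. This embeds the universal family into a projective bundle over $\widetilde{\scF}_{d,K_{h,m}}^{\dag}$ and presents the algebraic space as a locally closed subspace of an appropriate Hilbert scheme; being a quasi-projective algebraic space, it is a scheme.

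The main obstacle lies in Step 1--2: translating the abstract level-structure condition into the concrete congruence on $\rH^{2}$ and then leveraging the deformation invariance of $\Aut_{0}$ from Theorem \ref{thm:Aut0} to extend the triviality of automorphisms from a single very general fiber to the entire family. Step 3 is formal once rigidification is achieved.
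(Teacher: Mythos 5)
Your argument is essentially the paper's: the proof there likewise reduces to showing each geometric point $(X,\xi,\omega;\alpha)$ has trivial automorphisms, using the hypothesis together with the deformation invariance of $\Aut_0$ (Theorem \ref{thm:Aut0}) to get faithfulness of $\Aut(X)$ on $\rH^2(X,\ZZ(1))$, and then the Minkowski--Serre rigidity of finite-order isometries congruent to the identity mod $m\geq 3$ (your Step 1, cited in the paper as \cite[Lemma 1.5.12]{Ri06}). The paper does not address your Step 3 at all; the only quibble there is that the Hilbert-scheme construction naturally presents the moduli problem as a free $\PGL$-quotient rather than a locally closed subspace, so the cleaner way to pass from algebraic space to scheme is to note that $\widetilde{\scF}_{d,K_{h,m}}^{\dag}$ is separated and \'etale over the quasi-projective Shimura variety $\Sh_{K_{h,m}}(\Lambda_h)$ via the period map, and a separated quasi-finite algebraic space over a scheme is a scheme.
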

\begin{proof}It suffices to show that any geometric point $ (X, \xi,\omega; \alpha)$ in $\widetilde{\scF}_{d,K^{\ad}_{m}}^{\dag}(\CC)$ has only trivial automorphism.  Under our assumption, this follows from the fact the automorphism group of $X$ acts faithfully on $\rH^2(X,\ZZ(1))$ and any finite order automorphism of the pair $(\rP^2(X,\ZZ(1)), \alpha)$  is trivial for $m \geq 3$ (cf.~\cite[Lemma 1.5.12]{Ri06}). 
\end{proof}

\section{Arithmetic period map and uniform Kuga--Satake}
\label{sec:ArithPerMap}

In this section, we recall the construction of the arithmetic period map for the moduli space $\widetilde{\scF}_{d}$ of polarized oriented hyper-Kähler varieties of degree $d$. Moreover, we will construct a Kuga--Satake map which is independent of the degree of polarization, which will be called the \emph{uniform Kuga--Satake map} (see \Cref{subsec:UniformKS}). %

\subsection{Orthogonal Shimura varieties} 
We firstly recall the notion of Shimura varieties attached to a lattice. Let $L$ be the lattice over $\ZZ$ of signature $(2, n)$ with $n \geq 1$ and let $G=\SO(L_\QQ)$ be the special orthogonal group scheme over $\QQ$ associated with $L_{\QQ}\coloneqq L\otimes \QQ$. 

Consider the pair $(G,D)$, where $D$ is the space of oriented negative definite planes in $L_{\RR}$:
\[ 
D = \SO(2,n) / \SO(2) \times \SO(n)
\]
where $\SO(2,n) = \SO(L)(\RR)$.
To any admissible level structure $K\to G(\mathbb{A}_f)$, one can associate a Shimura stack $\Sh_K(G,D)$. It is a smooth Deligne--Mumford stack over the reflex field. When $K$ is neat, $\Sh_K(G,D)$ is moreover a smooth quasi-projective variety. The complex points of $\Sh_K(G,D)$ can be identified with the double coset quotient stack:
\begin{equation}\Sh_K(G, D)(\CC)=G(\QQ)\backslash D\times G(\mathbb{A}_f)/K.
\end{equation}
The pair $(G,D)$ is called the Shimura datum of orthogonal type. 
It is well known that the reflex field of the Shimura datum $(G,D)$ is equal to $\QQ$ (see \cite[Appendix]{A96}). 
\begin{remark}
In \cite{taelman17} and \cite{BindtThesis}, the notation $\Sh_K[G,D]$ is used for Shimura stacks to distinguish them from the classical Shimura varieties. For ease of notation, we will use the classical notation to denote the Shimura stack.
\end{remark}

For simplicity, we denote by $K_{L}$ the \textit{discriminant kernel} of $G(\mathbb{A}_f)$, which is the largest subgroup of $G(\widehat{\ZZ})$ that acts trivially on the discriminant of $L$. Its $\ZZ_{p}$-component is just the image of $\GSpin(L)(\ZZ_{p})$ for $p\geq 3$.
In particular, we have
\begin{equation}\label{eq:admissibleinclusion}
    K^{\ad}_{L,m} \subseteq K_{L}
\end{equation}
as a compact open subgroup for $2 \nmid m $, 
where $K^{\ad}_{L,m}$ is defined in Definition \ref{def:levelstructures}.
We simply write \[\Sh(L)\coloneqq \Sh_{K_{L}}(G,D)\]
for the orthogonal Shimura stack with level $K_{L}$, and
\[ 
\Sh_{K}(L) \coloneqq \Sh_K(G,D)
\]
for any open compact subgroup $K \subset K_L$.
For any $m \geq 3$, the inclusion \eqref{eq:admissibleinclusion} induces a finite cover
\[
\Sh_{K^{\ad}_{L,m}}(L) \to \Sh(L)
\]
of degree $[K_{L}: K^{\ad}_{L,m}]$.
If $L$ contains a hyperbolic lattice, the Shimura variety $\Sh(L)(\CC)=\Gamma_{L}\backslash D$ is irreducible and $\Gamma_{L}$ is the largest subgroup of $G(\ZZ)$ that acts trivially on the discriminant (cf.~\cite[(4.1)]{Pe15}).

Let $\underline{L}$ be the local system on $\Sh(L)(\CC)$ attached to the tautological representation $L$ of $\operatorname{O}(L)$. The non-degenerate symmetric bilinear form on $L$ gives rise to an injective map of local systems
$$\underline{L} \rightarrow \underline{L}^\vee.$$ The finite local system $\underline{L}^\vee/\underline{L}$ with its $\QQ/\ZZ$-valued quadratic form is canonically isomorphic to the locally constant sheaf $\underline{\disc(L)}$  over $\Sh(L)(\CC)$. 

The Shimura stack $\Sh(L)$ has the following modular description (cf.~\cite[\S 3]{MilneMotives} or \cite[Lemma 4.4.4]{BindtThesis}).
\begin{proposition}\label{prop:modularshimura}
The groupoid $\Sh(L)(S)$ is the solution of the moduli problem of classifying tuples $(H, q,\xi, \omega,\alpha_{K_{L}})$, where 
\begin{itemize}
		\item  $H$ is a variation of $\ZZ$-Hodge structures over $S$ whose fibers are Hodge lattices of K3-type with signature $(3,m)$;
		\item  $q \colon H \times H \to \underline{\ZZ}$ is a bilinear form of variation of Hodge structures;
		\item  $\xi$ is a global section of $H$ of type $(0,0)$ such that $q(\xi,\xi) >0$;
		\item $\omega \colon \underline{\ZZ} \to \det(H) $ is an isomorphism of local systems;
		\item $\alpha_{K_{L}}$ is a $K_{L}$-level structure on $(H,q, \xi)$; 

\end{itemize}
such that for any point $s \in S$ there is a rational Hodge isometry $\beta_s \colon H_s \otimes \QQ \xrightarrow{\sim } \Lambda \otimes \QQ$  such that $\beta_s(\xi_s) = h$ and preserving the determinant.
\end{proposition}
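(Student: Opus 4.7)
The plan is to reduce to the general moduli-theoretic description of Shimura varieties of Hodge type, as developed in Milne \cite{MilneMotives} and recalled in Bindt \cite{BindtThesis}, specialized to the orthogonal Shimura datum $(G, D) = (\SO(L_\QQ), D)$. The group $G$ acts tautologically on $L_\QQ$ and trivially on $\QQ h$, giving a faithful representation on $\Lambda_\QQ = \QQ h \oplus L_\QQ$ that underlies the Hodge-theoretic side of the equivalence.

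First I would recall that any $\tau \in D$ corresponds to a morphism $h_\tau \colon \mathbb{S} \to G_\RR$ equipping $L_\QQ$ with a polarized Hodge structure of K3-type with $h^{2,0} = 1$, and that extending by the trivial Hodge structure on $\QQ h$ yields a rational Hodge structure on $\Lambda_\QQ$ of K3-type with signature $(3, n)$ and distinguished Hodge class $h$ satisfying $q(h,h) > 0$. Thus the data captured by $D$ matches the Hodge-theoretic content of the tuple $(H, q, \xi)$ up to choice of integral lattice.

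Next I would construct the equivalence of groupoids in both directions. From a point to a tuple: a class $[(\tau, g)] \in G(\QQ) \backslash D \times G(\mathbb{A}_f) / K_L$ produces a variation of Hodge structures by taking the constant rational local system with fiber $\Lambda_\QQ$ and Hodge filtration determined by $\tau$; the integral lattice $H \subset \Lambda_\QQ$ is recovered as $\Lambda_\QQ \cap g \cdot (\Lambda \otimes \widehat{\ZZ})$, the bilinear form $q$ and class $\xi = h$ are inherited from $\Lambda$, the orientation $\omega$ arises from the canonical orientation of $\Lambda$ (together with the restriction to $\SO$ rather than $\mathrm{O}$), and the level structure $\alpha_{K_L}$ is the class of $g$. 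Conversely, given a tuple $(H, q, \xi, \omega, \alpha_{K_L})$, the pointwise rational isometry $\beta_s \colon H_s \otimes \QQ \xrightarrow{\sim} \Lambda \otimes \QQ$ with $\beta_s(\xi_s) = h$ transports the Hodge structure on $H_s$ to a Hodge structure on $\Lambda_\QQ$ whose restriction to $h^\perp = L_\QQ$ defines a point $\tau_s \in D$; combining $\alpha_{K_L}$ with $\beta_s$ rationally then produces an element $g_s \in G(\mathbb{A}_f)/K_L$. Since different choices of $\beta_s$ differ by elements of $G(\QQ)$, the class $[(\tau_s, g_s)] \in \Sh(L)$ is well-defined.

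The final step is to check that these constructions are mutually inverse and functorial in $S$, i.e.\ that they define an equivalence of groupoids rather than just a bijection on geometric points. The main obstacle I anticipate is the sheaf-theoretic (as opposed to pointwise) compatibility: one must check that the data $(\tau_s, g_s)$ glues to a morphism $S \to \Sh(L)$. This is where the hypotheses on the tuple are used crucially — $\alpha_{K_L}$ is by definition a section of an \'etale sheaf of $K_L$-torsors, so it propagates algebraically; $\omega$ is an isomorphism of local systems, which handles the orientation globally; and the $G(\QQ)$-quotient in $\Sh(L)(\CC)$ absorbs precisely the ambiguity in the choice of pointwise $\beta_s$. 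With these observations in hand, the equivalence reduces to an instance of the general formalism \cite[\S 3]{MilneMotives}.
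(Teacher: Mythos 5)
The paper offers no argument here beyond the citations to Milne~\cite[\S 3]{MilneMotives} and Bindt~\cite[Lemma 4.4.4]{BindtThesis}, and your proposal is precisely a sketch of the construction those references carry out: identify points of $D$ with K3-type Hodge structures on $\Lambda_\QQ = \QQ h \oplus L_\QQ$, recover the integral lattice from $g \in G(\mathbb{A}_f)$ via $\Lambda_\QQ \cap g(\Lambda \otimes \widehat{\ZZ})$, observe that the $G(\QQ)$-quotient absorbs the ambiguity in the choice of $\beta_s$, and let the level structure and orientation handle the sheaf-theoretic gluing. Your proposal is correct and takes the same route the paper implicitly delegates to.
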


\subsection{Integral model of Shimura varieties.}\label{subsec:IntegralModel}
Keep the same notation as in the previous section. We recall some results on the existence of integral canonical models of the orthogonal Shimura varieties $\Sh(L)$.

We refer to \cite[Definition 4.2]{Pe16} for the definition of (smooth) integral canonical model. Among the requirements of this definition, the most important one for our usage is the smooth extension property. For the convenience of readers, we record it here.
\begin{definition}\label{def:extensionproperty}
A (pro)-scheme $\mathscr{S}$ over $\ZZ_{(p)}$ is with \emph{(smooth) extension property} if for any regular and locally-healthy (formally smooth) scheme $\mathscr{X}$ over $\ZZ_{(p)}$, any morphism $\mathscr{X}_{\QQ} \to \mathscr{S}$ can be extended to $\mathscr{X} \to \scS$. 
\end{definition}

Then we have the following result.
\begin{theorem}\label{integral-model}
Suppose the discriminant group $\disc(L\otimes \ZZ[\frac{1}{N}])$ is cyclic. Then the Shimura varieties $\Sh(L)$ admits a canonical regular integral models $\scS_{L}$  over $\ZZ[\frac{1}{2N}]$. Moreover, for any neat $K \subset K_{L}$ such that the $p$-primary component $K_p = K_{L,p}$ for some $p \nmid 2N$, there is a finite extension
\[
\scS_{L,K} \to  \scS_{L}.
\]
\'etale on $\ZZ_{(p)}$.
\end{theorem}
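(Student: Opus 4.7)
The plan is to realize $\Sh(L)$ as the image (under a central isogeny of Shimura data) of a $\GSpin$ Shimura variety of Hodge type, and then invoke the construction of integral canonical models for Hodge-type Shimura varieties at hyperspecial level. First, the adjoint map $\GSpin(L_\QQ) \to \SO(L_\QQ) = G$ presents $(G,D)$ as the image of a $\GSpin$ Shimura datum $(\GSpin(L_\QQ), \widetilde{D})$, which is of Hodge type via the spin representation into a symplectic group. Lift $K_L \subset G(\mathbb{A}_f)$ to the open compact subgroup $\widetilde{K}_L \subset \GSpin(L)(\mathbb{A}_f)$ whose $p$-component is the stabilizer of $L \otimes \ZZ_p$.

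Next, I would verify that $\widetilde{K}_{L,p}$ is hyperspecial for every $p \nmid 2N$. The cyclicity assumption on $\disc(L \otimes \ZZ[1/N])$ implies that for each such $p$, the $p$-part of the discriminant of $L \otimes \ZZ_p$ is cyclic (possibly trivial). Using the classification of non-degenerate quadratic lattices over $\ZZ_p$ for odd $p$, $L \otimes \ZZ_p$ is then an orthogonal sum of a unimodular lattice with at most one rank-one summand, and its stabilizer in $\GSpin(L)(\QQ_p)$ is a hyperspecial maximal open compact subgroup, with image in $G(\QQ_p)$ equal to the discriminant kernel $K_{L,p}$.

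With hyperspeciality in hand, I would apply Kisin's construction of smooth integral canonical models for Hodge-type Shimura varieties at hyperspecial level, as refined by Madapusi Pera in the $\GSpin$/orthogonal setting (\cite{Pe15}, \cite{Pe16}). This yields a smooth canonical model of the $\GSpin$ Shimura variety over $\ZZ_{(p)}$ satisfying the smooth extension property of Definition \ref{def:extensionproperty}; passing to the quotient by the finite central kernel gives the desired regular integral model $\scS_L$ of $\Sh(L)$ over $\ZZ_{(p)}$. Gluing over all $p \nmid 2N$ produces the model over $\ZZ[\tfrac{1}{2N}]$. For the finite cover, given neat $K \subset K_L$ with $K_p = K_{L,p}$ for some $p \nmid 2N$, I would define $\scS_{L,K}$ as the normalization of $\scS_L$ in $\Sh_K(L)$; the equality of $p$-components combined with the fact that the generic map $\Sh_K(L) \to \Sh(L)$ is finite \'etale implies that $\scS_{L,K} \to \scS_L$ is finite \'etale over $\ZZ_{(p)}$.

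The main obstacle is the lattice-theoretic verification of hyperspeciality of $\widetilde{K}_{L,p}$ at primes $p \nmid 2N$, which is essential because Kisin's theorem produces smooth integral canonical models only at hyperspecial primes: at $p=2$ one loses smoothness since quadratic forms over $\ZZ_2$ behave pathologically (forcing us to invert $2$), and at odd primes dividing $N$ the discriminant can have non-cyclic $p$-part, in which case only parahoric (typically non-smooth) integral models are available. Once hyperspeciality is established at every $p \nmid 2N$, both the existence of $\scS_L$ and the \'etale extension $\scS_{L,K} \to \scS_L$ follow from the general machinery of Kisin and Madapusi Pera.
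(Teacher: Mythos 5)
Your overall architecture --- pass to the $\GSpin$ Hodge-type datum, invoke Kisin/Madapusi Pera prime by prime, and glue over $p \nmid 2N$ --- matches the paper's proof, which simply applies \cite[Theorem 4.4]{Pe16} to $L\otimes\ZZ_{(p)}$ for each $p\nmid 2N$ and then forms $\scS_L=\bigcup_p \scS_{L,K_p}/K^p$. (The ``Moreover'' clause falls out of the paper's construction because the model is built at infinite prime-to-$p$ level as a pro-scheme with \'etale transition maps; your normalization argument is a workable variant, though the \'etaleness of the normalization needs a word of justification via the extension property.)

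The genuine gap is your hyperspeciality claim, which you yourself identify as ``the main obstacle.'' The hypothesis is that $\disc(L\otimes\ZZ[\tfrac 1N])$ is \emph{cyclic}, not trivial, so there may well be primes $p\nmid 2N$ with $p\mid\disc(L)$; this is exactly the situation in the paper's application (Corollary \ref{corollary:integral-shi-hk}, where $L=\Lambda_h$ and $\disc(\Lambda_h)$ acquires the primes dividing $q(h)$, which need not divide $N$). At such a prime one has $L\otimes\ZZ_p\cong M\perp\langle p^k u\rangle$ with $M$ unimodular and $k\geq 1$; the form is degenerate modulo $p$, the automorphism scheme of the lattice has non-reductive special fiber, and the stabilizer --- equivalently the discriminant kernel $K_{L,p}$ --- is a non-hyperspecial maximal parahoric (for $\SO_5\cong\mathrm{PGSp}_4$ it is the paramodular subgroup). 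Kisin's theorem at hyperspecial level therefore does not apply at these primes, and your closing paragraph, which locates all problematic odd primes among the divisors of $N$, misses precisely this middle case. What is needed there is Madapusi Pera's extension to the almost-self-dual/cyclic-discriminant setting, i.e.\ \cite[Theorem 4.4]{Pe16}, which is what the paper cites and which is also why the theorem asserts only \emph{regular}, rather than smooth, integral models.
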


\begin{proof}

For any $p \nmid 2N$, the $\ZZ_{(p)}$-lattice $L \otimes \ZZ_{(p)}$ is with cyclic discriminant group.  Let $K_p \subset G(\ZZ_p)$ be the $p$-primary component of the decomposition $K_{L} = K^pK_p$. Applying \cite[Theorem 4.4]{Pe16}, we will obtain an integral model $\scS_{L,K_p} = \{\scS_{L,KK_p}\}_{K \subset K^p}$ over $\ZZ_{(p)}$ as a pro-scheme with \'etale connect morphisms
\[ 
\Sh(L)_{(p)} \coloneqq \Sh_{K_p}(G,D) = \varprojlim_{K \subset K^p} \Sh_{K K_p} (G,D),
\]
 which is formally smooth and with smooth extension property.
 We can consider the union of quotients
\[
\scS_{L} = \bigcup_{p \notin S} \scS_{L,K_p}/{K^p},
\]
which is an integral canonical model of $\Sh(L)$ over $\ZZ[\frac{1}{2N}]$ as required.
\end{proof}

Given $L$ of signature $(2,n)$, suppose we can find a primitive embedding of $\ZZ$-lattices
\[
L \hookrightarrow L_0
\]
such that $L_0$ has signature $(3,n)$.  Let $N_0$ be the product of all primes in $|\mathrm{disc}(L_0)|$. Then $L_0$ is unimodular over $\ZZ[\frac{1}{N_0}]$ and the discrimiant group $\disc \left(L\otimes \ZZ[\frac{1}{N_0}]\right)$ is cyclic.  We deduce the following consequence of Theorem \ref{integral-model}.
\begin{corollary}\label{corollary:integral-shi-hk}
Let $m \geq 3$ be an integer. Let $N$ be the product of all primes in  $2m|\disc(\Lambda)|$. Then $\Sh_{K^{\ad}_m}(\Lambda_h)$ admits a regular integral canonical model over $\ZZ[\frac{1}{2N}]$ for all $\Lambda_h$. 
\end{corollary}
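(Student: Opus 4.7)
The plan is to deduce the corollary as a direct application of Theorem \ref{integral-model}, using the canonical primitive embedding $\Lambda_h \hookrightarrow \Lambda$ to produce an ambient lattice $L_0$ of signature $(3, b_2(M)-3)$ in the framework set up in the paragraph immediately preceding the corollary.

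First I would set $L = \Lambda_h$ and $L_0 = \Lambda$. Since $\Lambda$ has signature $(3, b_2 - 3)$ (see Section \ref{subsec:BBform}) and $h \in \Lambda$ is a class of positive square giving the orthogonal decomposition $\Lambda_\QQ = \QQ h \oplus (\Lambda_h)_\QQ$, the inclusion $\Lambda_h \hookrightarrow \Lambda$ is indeed a primitive embedding of the required shape. Applying the general principle recorded just before the corollary, we get that $\Lambda$ is unimodular over $\ZZ[\frac{1}{N_0}]$ where $N_0$ is the product of the primes dividing $|\disc(\Lambda)|$, and consequently $\disc(\Lambda_h \otimes \ZZ[\frac{1}{N_0}])$ is cyclic (indeed it is generated by the class of the orthogonal projection of $h$, whose order divides $h^2$, and $h^2$ is invertible after inverting $2N$). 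Hence the hypothesis of Theorem \ref{integral-model} is satisfied with $N_0$ in place of $N$, yielding an integral canonical model $\scS_{\Lambda_h}$ of $\Sh(\Lambda_h)$ over $\ZZ[\frac{1}{2N_0}]$, and a fortiori over $\ZZ[\frac{1}{2N}]$ since $N_0 \mid N$.

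Next I would install the level structure $K^{\ad}_{\Lambda_h, m}$. For $m \geq 3$ this group is neat (it is contained in $K_{\Lambda_h, m}$ which acts trivially mod $m$), and by construction $K^{\ad}_{\Lambda_h, m} \subseteq K_{\Lambda_h}$. The key local check is that for every prime $p \nmid 2N$ the $p$-component satisfies $K^{\ad}_{\Lambda_h, m, p} = K_{\Lambda_h, p}$: such $p$ does not divide $m$, so the congruence condition defining $K^{\operatorname{sp}}_{\Lambda_h, m}$ is trivial at $p$, and for $p \geq 3$ the adjoint representation sends $\GSpin(\Lambda_h)(\ZZ_p)$ onto the discriminant kernel $K_{\Lambda_h, p}$ (as recalled in Section \ref{subsec:IntegralModel}). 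The \emph{moreover} clause of Theorem \ref{integral-model} then provides, for each such $p$, a finite étale extension $\scS_{\Lambda_h, K^{\ad}_{\Lambda_h, m}} \to \scS_{\Lambda_h}$ over $\ZZ_{(p)}$. Gluing these local models over all $p \nmid 2N$ produces a regular scheme over $\ZZ[\frac{1}{2N}]$ which is the desired integral canonical model of $\Sh_{K^{\ad}_{m}}(\Lambda_h)$.

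The only real delicacy is the uniformity statement that a single $N$, depending only on $\Lambda$ and $m$, works for every polarization class $h$ and hence every $\Lambda_h$; this is precisely what the embedding $\Lambda_h \hookrightarrow \Lambda$ buys, since the prime support of $\disc(\Lambda_h \otimes \ZZ[1/|\disc \Lambda|])$ is controlled by $h^2$, and $h^2$ is a unit after inverting $2N$. No new input beyond Theorem \ref{integral-model} is needed; the corollary is essentially a bookkeeping consequence once the primitive embedding and the comparison between $K^{\ad}_{m}$ and the discriminant kernel at primes away from $2N$ are in place.
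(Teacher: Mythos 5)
Your overall strategy is the right one and is exactly what the paper intends: set $L=\Lambda_h$, $L_0=\Lambda$, observe the primitive embedding $\Lambda_h\hookrightarrow\Lambda$ has rank-one orthogonal complement $\ZZ h$, invoke the preamble paragraph for cyclicity of the discriminant away from $N$, and then apply Theorem \ref{integral-model} together with the observation that $K^{\ad}_{m,p}=K_{\Lambda_h,p}$ for all $p\nmid 2m$. The paper gives no separate argument for the corollary precisely because it is this bookkeeping, so the proposal fills in the intended details.

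There is, however, one genuine error in your justification of cyclicity, repeated at the end: you assert that ``$h^2$ is invertible after inverting $2N$.'' This is false in general. The polarization degree $h^2$ is not constrained by $m$ or by $\disc(\Lambda)$ --- for instance $h^2$ may be divisible by a prime $p$ with $p\nmid 2m|\disc(\Lambda)|$, in which case $h^2$ is not a unit in $\ZZ[1/2N]$. If your claim were true the discriminant group $\disc(\Lambda_h\otimes\ZZ[1/2N])$ would in fact be \emph{trivial}, which is stronger than cyclic and, more importantly, simply does not hold. Fortunately nothing is lost: the correct and complete reason for cyclicity is exactly the first half of your parenthetical. Since $\Lambda\otimes\ZZ_{(p)}$ is unimodular for $p\nmid 2N$ and $\Lambda_h$ has rank-one orthogonal complement $\ZZ h$ in $\Lambda$, one has $\disc(\Lambda_h\otimes\ZZ_{(p)})\cong\disc(\ZZ h\otimes\ZZ_{(p)})\cong\ZZ_{(p)}/(h^2)$, a cyclic group regardless of whether $p\mid h^2$; by CRT the groups at different primes assemble into a cyclic group over $\ZZ[1/N]$. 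The uniformity in $h$ is therefore automatic from unimodularity of $\Lambda$ away from $N$, and no control over $h^2$ is needed or available. With that phrase deleted, the proof is correct.
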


\begin{remark}
The requirement for $\disc(L \otimes \ZZ[\frac{1}{N}])$ being cyclic can be relaxed with some modification of $L$. Actually we can take the regular integral model with parahoric levels given in \cite{KisinPappas,PappasZachos} instead. This will allow us to consider the periods of lattice-polarized hyper-Kähler varieties.
\end{remark}

\subsection{Period map and its descent}

Let $\widetilde{\scF}_{d}^{\dag}$ be a fixed connected component which contains a geometric point $(X_0, H_0)$ as in Section \ref{subsection:levelstructure}. There is an isometry of $\widehat{\ZZ}$-lattices
\[ 
 \phi \colon \Lambda \otimes \widehat{\ZZ} \xrightarrow{\sim}\rH^2_{\et}(X, \widehat{\ZZ})
\]
such that $\phi(h)= \widehat{c}_1(H)$ for any $(X,H) \in \widetilde{\scF}_d^\dag(\CC)$. This $\widehat{\ZZ}$-isometry also induces an isomorphism
\[
\Lambda_h^{\vee}/\Lambda_h \xrightarrow{\sim} \disc(c_1(\xi)^{\bot}).
\]

With the modular description of the orthogonal Shimura stack given in Proposition \ref{prop:modularshimura}, we thus obtain a holomorphic map, called the   \textit{period map}:
\[
\mathcal{P}_{d,K,\CC} \colon \widetilde{\scF}_{d,K}^{\dag}(\CC)\rightarrow \Sh_{K}(\Lambda_h)(\CC), 
\]
which sends $(X,H,\omega,\alpha)$ to $\left((\rP^2_B(X,\ZZ), F^\bullet_{\hdg}),q_X,\omega, c_1(H),\alpha\right)$. 
The global Torelli theorem for polarized hyper-Kähler varieties implies the following. 
\begin{theorem}[{\cite[Theorem 8.4]{Ve13}}]
	The period map 
	\begin{equation}\label{torelli}
	\mathcal{P}_{d,K,\CC}\colon \widetilde{\scF}_{d,K}^{\dag}(\CC)\rightarrow \Sh_{K}(\Lambda_h)(\CC)
	\end{equation}  
	is \'etale.
\end{theorem}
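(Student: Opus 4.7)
The plan is to derive étaleness from the local Torelli theorem for hyper-K\"ahler manifolds, combined with a dimension comparison of the smooth stacks on both sides. Since étaleness is a local property, it suffices to check that $\mathcal{P}_{d,K,\CC}$ is a local isomorphism at every complex point, and for this it is enough to verify that (a) both the source and the target are smooth analytic stacks of the same complex dimension, and (b) the induced map on tangent spaces is an isomorphism at every point. The cited Verbitsky theorem is the strong global version and immediately implies the statement, but the approach below emphasises the structural ingredients, which is useful elsewhere in the paper.

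For (a), the stack $\widetilde{\scF}_{d,K}^{\dag}$ is smooth as a finite \'etale cover of $\widetilde{\scF}_{d}^{\dag}$, which in turn is smooth by Proposition \ref{prop:representabilityHyperK} (via Bogomolov--Tian--Todorov unobstructedness for hyper-K\"ahler varieties). Its complex dimension at $(X,H,\omega,\alpha)$ is $b_2(X)-3$: the full Kuranishi space has dimension $h^1(X,T_X)=h^{1,1}(X)=b_2(X)-2$ thanks to the symplectic identification $T_X\simeq \Omega^1_X$, and imposing that $H$ stays algebraic cuts down by one. On the other side, $\Sh_K(\Lambda_h)$ is smooth of complex dimension equal to $\dim_{\CC}D=\operatorname{rk}(\Lambda_h)-2=b_2(X)-3$, matching the source.

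For (b), I would identify the differential of $\mathcal{P}_{d,K,\CC}$ at $(X,H)$ with the infinitesimal period map: the source tangent space is $\mathrm{H}^1(X,T_X)_H\subset \mathrm{H}^1(X,T_X)$, the subspace of deformations preserving $c_1(H)$, and the target tangent space at the corresponding period is
\[
\Hom\bigl(\rH^{2,0}(X),\,\rH^{1,1}(X)_{\mathrm{prim}}\bigr),
\]
i.e.\ the $(-1,1)$-part of $\End(\rH^2)$ restricted to $\Lambda_h$. Under Griffiths transversality and the contraction $\iota\colon \rH^1(X,T_X)\otimes\rH^{2,0}(X)\to\rH^{1,1}(X)$, the symplectic isomorphism $T_X\simeq\Omega^1_X$ turns $\iota$ into the natural pairing, which is an isomorphism; restricting to the primitive part with respect to $H$ yields the required isomorphism on tangent spaces. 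This is precisely the (polarized) local Torelli theorem for hyper-K\"ahler manifolds.

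Combining (a) and (b), $\mathcal{P}_{d,K,\CC}$ is a morphism of smooth analytic Deligne--Mumford stacks of equal dimension inducing isomorphisms on all tangent spaces, hence is \'etale. The main conceptual ingredient is thus local Torelli for hyper-K\"ahler manifolds; the global Torelli theorem of Verbitsky provides a stronger open-embedding statement on connected components of marked moduli, but for the étaleness assertion recorded here only the infinitesimal version is needed. The only mild subtlety is keeping track of the passage between analytic stacks and moduli with orientations and $K$-level structures, which is a routine verification since these additional data are parametrised by finite \'etale covers on both sides.
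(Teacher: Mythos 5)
Your proof is correct but takes a genuinely different route from the paper. The paper invokes Verbitsky's global Torelli theorem \cite[Theorem 8.4]{Ve13} as a black box and records no argument; you instead derive \'etaleness from only the local Torelli theorem of Beauville, via a dimension count, the infinitesimal Torelli computation, and the general principle that a morphism of smooth Deligne--Mumford stacks over $\CC$ of equal dimension inducing tangent isomorphisms at every geometric point is \'etale (unramified by the tangent condition, flat by miracle flatness since both sides are regular of equal dimension, hence \'etale). This is more elementary and correctly isolates the minimal input: global Torelli gives the much stronger open-embedding statement on connected components of marked moduli, which is not needed for the bare \'etaleness claim recorded here. The ``mild subtlety'' you leave as a routine verification---the passage from the Kuranishi-space statement to the stack-level one when $\Aut_0(X)$ is nontrivial, as for $\operatorname{Kum}_n$ and $OG_6$ types (Example \ref{exm:aut}), so that $\mathcal{P}_{d,K,\CC}$ is not representable---does deserve the explicit remark that \'etaleness of DM stacks is insensitive to what happens on inertia: if $U\to V$ is an \'etale $G$-equivariant map of smooth schemes and $G\to H$ is any homomorphism of finite groups intertwining the actions, then $[U/G]\to[V/H]$ is \'etale, because the cotangent complex of $BG$ vanishes when $G$ is finite and therefore $BG \to BH$ is \'etale; this is what makes the descent from the Kuranishi picture to the period map of stacks work.
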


 Let $F$ be a number field. Fix a complex embedding $F \subset \CC$. Let $(X,H)$ be a polarized hyper-Kähler variety over $F$. Let $V$ be its deformation space. Consider its universal deformation $f \colon \mathcal{X} \to V$. 
\begin{proposition}[{\cite[Proposition 16]{Ch13}}]\label{prop:localTorelli}
There is a finite extension $F'$ of $F$ and a formally \'etale morphism of $F'$-schemes $ V \to \Sh(\Lambda_{h})_{F'}$ which factors through $\mathcal{P}_{d,\CC}$ after taking field extension along $F' \subset \CC$.
\end{proposition}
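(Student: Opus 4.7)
The plan is to construct an arithmetic refinement of the period map using the $\widehat{\ZZ}$-Beauville--Bogomolov form on étale cohomology together with the modular description of the orthogonal Shimura stack given in Proposition \ref{prop:modularshimura}, and then to verify formal étaleness by comparison with the complex-analytic period map, invoking the global Torelli theorem of Verbitsky.

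First, I would enlarge $F$ to a finite extension $F'$ so that the following data exists over $F'$: (a) an orientation $\omega$ on $X_{F'}$; (b) an isometry of $\widehat{\ZZ}$-lattices $\alpha\colon \Lambda \otimes \widehat{\ZZ} \xrightarrow{\sim} \rH^2_{\et}(X_{\bar F}, \widehat{\ZZ}(1))$ with $\alpha(h) = \widehat{c}_1(H)$ and $\det(\alpha) = \omega_{\et}$; (c) the image of the Galois representation $\rho\colon \Gal(\bar F/F') \to \operatorname{O}(\rH^2_{\et}(X_{\bar F}, \widehat{\ZZ}(1)))$, transported via $\alpha$, lies in the discriminant kernel $K_{\Lambda_h}$. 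Condition (a) and (b) can be arranged over a finite extension since the complex analogue exists and the sets of such trivializations form torsors under finite extensions of profinite groups; condition (c) follows since the induced action on the finite discriminant group $\Lambda^\vee_h/\Lambda_h$ factors through a finite quotient of $\Gal(\bar F/F)$.

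Second, consider the universal deformation $f\colon \mathcal{X} \to V$ of $(X_{F'}, H_{F'})$, which is algebraizable to a smooth morphism of $F'$-schemes thanks to the presence of a polarization. The local system $R^2_{\et} f_* \widehat{\ZZ}(1)$ equipped with its fiberwise $\widehat{\ZZ}$-BB form (Section \ref{subsec:BBform}), the orientation inherited from $X_{F'}$, the universal polarization class, and the $K_{\Lambda_h}$-level structure transported from the base point via $\alpha$ assemble into a tuple of the type classified by $\Sh(\Lambda_h)$ per Proposition \ref{prop:modularshimura}. This produces a morphism of $F'$-schemes
\[
p\colon V \longrightarrow \Sh(\Lambda_h)_{F'}.
\]

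Third, I would check that $p$ is formally étale. Both source and target are smooth over $F'$ of dimension $b_2(X) - 3$ (the former by Bogomolov--Tian--Todorov, the latter since $\Sh(\Lambda_h)$ is a Shimura variety for $\SO(2,b_2-3)$), so it suffices to show that $p$ is unramified, which may be checked after the faithfully flat base change along $F' \hookrightarrow \CC$. After this base change, Artin's comparison identifies the $\widehat{\ZZ}$-BB form with the profinite completion of the integral BB form, so the tuple classifying $p_\CC$ is precisely the one classifying the restriction of $\mathcal{P}_{d,K_{\Lambda_h},\CC}$ from \eqref{torelli} to the image of $V_\CC$ in $\widetilde{\scF}_{d,K_{\Lambda_h}}^{\dag}$. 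Since $\mathcal{P}_{d,K_{\Lambda_h},\CC}$ is étale by the global Torelli theorem \cite[Theorem 8.4]{Ve13} and $V_\CC$ is étale over $\widetilde{\scF}_{d,K_{\Lambda_h}}^{\dag}$ near the marked point (as the universal polarized deformation), the composite $p_\CC$ is étale; consequently $p$ is formally étale.

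The main obstacle is the verification of the precise compatibility in the third step between the arithmetic tuple $(R^2_{\et}f_* \widehat{\ZZ}(1), q_{\text{BB}}, c_1(\mathcal{H}), \omega, \alpha)$ over $V_\CC$ and the Hodge-theoretic datum $(R^2_{\text{an}}f_* \ZZ, q_{\text{BB}}, c_1(\mathcal{H}), \omega_{\text{an}}, \alpha)$ that defines the complex period map; this amounts to a careful unwinding of the comparison isomorphism, the $\widehat{\ZZ}$-BB form, and Lemma \ref{lemma:liftoforientation}. Once this compatibility is established, the rest of the argument is essentially formal and the conclusion follows immediately from local Torelli.
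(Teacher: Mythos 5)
The paper offers no proof of this statement; it is quoted verbatim from Charles \cite[Proposition 16]{Ch13}, whose argument runs through the Kuga--Satake construction and Andr\'e's descent results. Measured against that, your proposal has a genuine gap precisely at its second step, which is where the actual content of the proposition lives. Proposition \ref{prop:modularshimura} describes $\Sh(\Lambda_h)(S)$ as a moduli of \emph{variations of $\ZZ$-Hodge structures}; this is a complex-analytic description of the stack (valid for $S$ over $\CC$), not an \'etale-cohomological moduli interpretation of the canonical model over $\QQ$. Feeding it the tuple $(R^2_{\et}f_*\widehat{\ZZ}(1), q, \widehat{c}_1(\mathcal{H}), \omega_{\et}, \alpha)$ over the $F'$-scheme $V$ therefore does not produce a morphism $V \to \Sh(\Lambda_h)_{F'}$: orthogonal Shimura varieties are not of PEL type, and their canonical models are not defined by representing any such arithmetic moduli problem. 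The statement that the complex period map descends to a number field is exactly the nontrivial assertion being proved; Charles establishes it by deforming the Kuga--Satake abelian variety together with $(X,H)$, mapping the deformation space into a Siegel (hence genuinely modular) Shimura variety defined over a number field, and descending the orthogonal period map through the $\GSpin$ cover. Your proposal assumes this descent rather than proving it, and correspondingly misplaces the ``main obstacle'': the delicate point is not the comparison of realizations in your third step but the existence of $p$ over $F'$ in your second step.

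Your first step (arranging orientation, marking, and discriminant-kernel condition over a finite extension $F'$) and your third step (checking formal \'etaleness after base change to $\CC$ using local/global Torelli and equality of dimensions) are fine in substance and match what any proof would need. A repair within the paper's own toolkit would be to compose the classifying morphism $V_{F'} \to \widetilde{\scF}^{\dag}_{d,K}$ of the universal (oriented, level-structured) deformation with the arithmetically descended period map of Proposition \ref{prop:descentperiod}; but note that the latter (Bindt, Taelman) is itself a deep descent theorem proved by Kuga--Satake/special-point techniques, so the descent cannot be treated as formal bookkeeping.
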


Globally, we have the following result of Bindt. 
\begin{proposition}[{\cite[Theorem 4.5.2]{BindtThesis}}]
\label{prop:descentperiod}
For any  admissible level $K \subset K_{\Lambda_h}$, there is an \'etale morphism over $\QQ$
\[
 \mathcal{P}_{d, K}\colon \widetilde{\scF}_{d,K}^{\dag} \to \Sh_{K}(\Lambda_h),
\]
such that $\mathcal{P}_{d,K} \otimes \CC = \mathcal{P}_{d,K,\CC}$. Moreover, we have a 2-Cartesian diagram
\begin{equation}
\begin{tikzcd}\label{eq:cartisianlevel}
 \widetilde{\scF}_{d,K}^{\dag} \ar[r,"\mathcal{P}_{d, K} "] \ar[d, "\pi_{d,K}"'] & \Sh_{K}(\Lambda_h) \ar[d] \\
 \widetilde{\scF}_{d}^{\dag} \ar[r,"\mathcal{P}_{d}"] &\Sh_{K_h}(\SO(\Lambda_h), D_{\Lambda_h}) .
\end{tikzcd}
\end{equation}
We sometimes write $\mathcal{P}$ for a period map in short.
\end{proposition}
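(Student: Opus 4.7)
The plan is to construct, functorially on any $\QQ$-scheme $T$, an assignment sending an object $(f\colon \fX \to T, \xi, \omega; \alpha)$ of $\widetilde{\scF}_{d,K}^{\dag}(T)$ to the tuple
\[
\bigl(\rP^2_{\et}(\fX/T, \widehat{\ZZ}(1)),\; q_{\mathrm{BB}},\; \widehat{c}_1(\xi),\; \omega_{\et},\; \alpha\bigr),
\]
where $\rP^2_{\et}(\fX/T,\widehat{\ZZ}(1))$ is the primitive $\widehat{\ZZ}$-local system defined by $\ker(\widehat{c}_1(\xi)\cup \cdot)$, $q_{\mathrm{BB}}$ is the $\widehat{\ZZ}$-Beauville--Bogomolov form from Section \ref{subsec:BBform}, and $\omega_{\et}$ is the lift of the orientation supplied by Lemma \ref{lemma:liftoforientation}. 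Via the Artin comparison isomorphism, this assignment reproduces $\mathcal{P}_{d,K,\CC}$ on $\CC$-points.

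The main task is to show that this assignment lands in the $\QQ$-stack $\Sh_K(\Lambda_h)$ and not merely in its analytification. I would follow Rizov's strategy as extended by Bindt. The Shimura datum $(\SO(\Lambda_{h,\QQ}), D)$ is of abelian type, and via the adjoint representation $\GSpin(\Lambda_h) \to G$ together with a faithful spin representation, it is a quotient of a Shimura variety of Hodge type embedded in a Siegel modular variety. The Siegel variety has a genuine moduli interpretation over $\QQ$ in terms of polarized abelian varieties with level structure, and its canonical model's $\QQ$-points are characterized by this moduli problem. Pulling back, one deduces that for any $\sigma \in \Gal(\overline{\QQ}/\QQ)$, the point $\mathcal{P}_{d,K,\CC}(\sigma\cdot \fX_{\overline{\QQ}})$ coincides with $\sigma\cdot \mathcal{P}_{d,K,\CC}(\fX_{\overline{\QQ}})$ under Shimura reciprocity, since the passage from the $\widehat{\ZZ}$-primitive cohomology of $\fX$ to the corresponding point on $\Sh_K(\Lambda_h)$ factors through the Kuga--Satake abelian variety, where Galois equivariance is automatic from the moduli interpretation. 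This descends the analytic period map to a morphism $\mathcal{P}_{d,K}$ of $\QQ$-stacks.

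Étaleness then transfers for free: by global Torelli (Theorem \ref{torelli}), the base change $\mathcal{P}_{d,K}\otimes \CC = \mathcal{P}_{d,K,\CC}$ is étale, and both source and target are of finite type over $\QQ$, so faithfully flat descent along $\Spec\CC \to \Spec\QQ$ yields étaleness of $\mathcal{P}_{d,K}$. For the 2-Cartesian diagram \eqref{eq:cartisianlevel}, observe that $\pi_{d,K}\colon \widetilde{\scF}_{d,K}^{\dag} \to \widetilde{\scF}_d^{\dag}$ and $\Sh_K(\Lambda_h) \to \Sh_{K_h}(\SO(\Lambda_h), D_{\Lambda_h})$ are both finite étale covers whose fibers parametrize the same datum (a $K$-orbit in $K_h\backslash \mathcal{J}_h$), so they identify canonically along $\mathcal{P}_d$; one checks cartesianness fiberwise on geometric points and invokes étaleness.

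The hard part will be verifying the Galois-equivariance step, since the $\QQ$-structure on $\Sh_K(\Lambda_h)$ is defined by Shimura reciprocity rather than by a direct moduli description of the variation of Hodge structures, while the hyper-K\"ahler side naturally produces only the $\widehat{\ZZ}$-local system $\rP^2_{\et}(\fX,\widehat{\ZZ}(1))$. Bridging these two descriptions is precisely the role of the uniform Kuga--Satake construction developed in the remainder of Section \ref{sec:ArithPerMap}, whose functoriality and arithmetic properties (in families over an arbitrary $\QQ$-base) will be needed to complete the descent argument cleanly.
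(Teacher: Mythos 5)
The paper does not actually prove this proposition: it is imported wholesale from Bindt's thesis \cite[Theorem 4.5.2]{BindtThesis}, so there is no in-paper argument to compare against. Judged on its own terms, your outline follows the standard Rizov--Bindt strategy, and the easy parts (\'etaleness by faithfully flat descent from $\CC$, and 2-Cartesianness of \eqref{eq:cartisianlevel} by comparing the two finite \'etale covers classifying $K$-orbits of markings) are fine.

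However, there is a genuine gap at the step you yourself flag as ``the hard part,'' and the way you propose to close it does not work as stated. You claim that Galois equivariance of the period point is ``automatic from the moduli interpretation'' because the passage to $\Sh_K(\Lambda_h)$ ``factors through the Kuga--Satake abelian variety.'' But the factorization through the Siegel variety requires lifting the $\SO$-period point to the $\GSpin$ Shimura variety, and that lift is a choice of section $\gamma$ of $\ad$ which, as the paper's own diagram \eqref{eq:bigdiag} records, descends only to a nontrivial abelian extension $E_m$ of $\QQ$ determined by $\mathbf{N}(K^{\operatorname{sp}}_{\Sigma,m})$ via class field theory. So the Kuga--Satake abelian variety attached to a point of $\widetilde{\scF}^{\dag}_{d,K}$ is not canonically defined over the base field, and no Galois equivariance over $\QQ$ comes for free from the Siegel moduli problem. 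To actually descend $\mathcal{P}_{d,K,\CC}$ to $\QQ$ one needs a genuine input: either Andr\'e's theorem that the Kuga--Satake correspondence is motivated (hence compatible with the full Galois action on \'etale realizations, independently of the choice of lift), or the Milne--Moonen criterion that a morphism into a canonical model is defined over the reflex field once it intertwines the Galois action on a dense set of special (CM) points, verified there via the main theorem of complex multiplication. Your sketch names neither ingredient, so the descent step — which is the entire content of the proposition — remains open.

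A second, smaller imprecision: the functorial assignment you write down sends a family to its primitive $\widehat{\ZZ}$-local system with the $\widehat{\ZZ}$-BB form, but the target $\Sh_K(\Lambda_h)$ is characterized over $\CC$ by the variation-of-Hodge-structure moduli problem of Proposition \ref{prop:modularshimura}, not by an \'etale-local-system moduli problem over arbitrary $\QQ$-schemes. Reconciling the two descriptions is exactly where the canonical-model formalism enters, and it cannot be bypassed by declaring the assignment to be ``over $\QQ$'' by fiat.
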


\begin{remark} By Theorem \ref{integral-model}, there exists an integral model for $\Sh_K(\Lambda_h)$. 
However, it is not clear if one  can extend the period map $\mathcal{P}_{d,K}$ integrally as in the case of polarized K3 surfaces.
\end{remark}

\subsection{Kuga--Satake constructions in general}
\label{subsec:KS}
The Kuga--Satake map plays a central role in our approach. The classical Kuga--Satake construction for polarized hyper-Kähler varieties is recalled here. We will construct in the next section a uniform Kuga--Satake map for all polarization types. 

 Given a lattice $L$ of signature $(2, n)$ for $n \geq 1$ and an element $e \in \Cl^+(L)$ that is sent to $-e$ under the canonical involution of the Clifford algebra, the spin representation defines a morphism of Shimura datum
\begin{equation*}
\spin_{e}\colon (\GSpin(L),\widetilde{D}_L)\rightarrow (\GSp(W),\Omega^{\pm})
\end{equation*}
where $W=(\Cl^+(L), \varphi_{e})$ and $\Omega^\pm$ is the Siegel half space. This induces morphisms
\begin{equation}
    \xymatrix{  \Sh_K(\GSpin(L),\widetilde{D}_L)\ar[d]^{\ad}\ar[dr]^{{\spin_e}} &~ \\ \Sh_{K''}(\SO(L), D_L) &  \Sh_{K'}(\GSp(W),\Omega^{\pm}) }
\end{equation}
between the canonical models of Shimura varieties, where $\spin_e(K)\subseteq K'$ and $\ad(K)\subseteq K''$.

If we take the levels $K$ and $K''$ to be the so-called spin levels as in Definition \ref{def:levelstructures}, then there exists a  (non-canonical) section 
\begin{equation}\label{eq:sectionSOSpin}
\gamma_\CC \colon \Sh_{K''}( \SO(L), D_L)(\CC)\rightarrow\Sh_{K}( \GSpin(L), \widetilde{D}_L)(\CC)
\end{equation}
of $\ad$ (cf.~\cite[\S 5.5]{Riz10}).  
It has a descent over a number field $E$, denoted by $\gamma_{E}$. The field $E$ only depends on the group $K$. The composition $$\spin_e(\CC)\circ\gamma_\CC\colon\Sh_K(L)(\CC)\rightarrow \Sh_{K'}(\GSp(W),\Omega^\pm)(\CC) $$ is called the \textit{Kuga--Satake map}. 

Geometrically, write $W_R=W\otimes R$. Then $W_\RR$ is naturally a $G$-module (action by left multiplication), which gives rise to a polarizable Hodge structure of weight one on $W_\ZZ$. This defines a complex abelian variety $A(W)$ of dimension $2^N$, called the \textit{Kuga--Satake variety} attached to $(W_\ZZ, \varphi)$, by the condition that $H_1(A, \ZZ)=W_\ZZ $ is a Hodge structure. Andr\'e has shown that $\psi_L$ can be defined over some number field, and this construction is the main ingredient to prove the polarized Shafarevich conjecture in \cite{A96}.

If one takes $L$ to be $\Lambda_h$,  the composition 
$$\psi_h\colon \widetilde{\scF}_{d,K}^{\dag}\to \Sh_K(\Lambda_h)\to \Sh_{K'}(\GSp(W),\Omega^{\pm})$$
is the classical Kuga--Satake morphism constructed by Deligne \cite{DeligneK3} and André \cite{A96}.

\subsection{Uniform Kuga--Satake for hyper-Kähler varieties}
\label{subsec:UniformKS}
Given a deformation type $M$ of hyper-Kähler varieties  with $\Lambda_M=\Lambda$, we can define the so-called \textit{uniform Kuga--Satake construction}, which extends the construction in \cite{Sh17}. 
The idea of the uniform Kuga--Satake construction has its origin in \cite{Ch16}, which uses the theory of the moduli space of stable sheaves instead of the Shimura variety, and is called Zarhin's trick for K3 surfaces.

The first step is to embed the lattice $\Lambda_h$ of signature $(2,n)$ primitively into a fixed unimodular lattice of signature $(2,*)$ for any $h$.

\begin{lemma}[Uniform lattice]
\label{lemma:UniformKS}
Given an even lattice $(\Lambda,q)$ of signature $(3,n)$, there exists an even unimodular lattice $\Sigma$ of signature $(2, *)$, depending only on $\rk (\Lambda)$, such that there is a primitive embedding 
\begin{equation}\label{uniks}
h^\perp=\Lambda_h\hookrightarrow \Sigma,
\end{equation}
for any $h\in \Lambda$ with $q(h)>0$. 
\end{lemma}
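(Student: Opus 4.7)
The plan is to apply Nikulin's theory of primitive embeddings of even lattices into even unimodular lattices (see \cite{Nikulin}), exploiting the uniform boundedness of the basic invariants of $\Lambda_h$ as $h$ varies.

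First, I would reduce to the case that $h$ is primitive: since $\Lambda_{mh}=\Lambda_h$ for any nonzero integer $m$, the primitive case suffices. For primitive $h\in\Lambda$ with $q(h)>0$, the orthogonal complement $\Lambda_h=h^\perp$ has rank $r-1$ (writing $r:=\operatorname{rank}\Lambda$), signature $(2,r-3)$ (using that $h$ contributes exactly one positive square to the signature $(3,n)$ of $\Lambda$), and discriminant group of length bounded by
\[
l(\operatorname{disc}\Lambda_h)\ \leq\ l(\operatorname{disc}\Lambda)+1,
\]
where the bound is Nikulin's formula for the discriminant of orthogonal complements of primitive rank-$1$ sublattices (\cite[Prop.\,1.5.1]{Nikulin}). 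These three invariants are thus uniformly bounded in terms of $r$ alone.

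Next, I would take $\Sigma:=2U\oplus k E_8(-1)$, an even unimodular lattice of signature $(2,2+8k)$. Since $2-(2+8k)\equiv 0\pmod 8$ and both signature entries are positive, the classification of indefinite even unimodular lattices shows that this is the unique such lattice in its signature (hence $\Sigma$ is determined by $k$, which we pick as a function of $r$). Choose $k$ large enough, depending only on $r$, so that (a)~$2+8k> r-3$, and (b)~the rank gap $8k-r+5$ exceeds $l(\operatorname{disc}\Lambda)+3$.

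Then I apply Nikulin's criterion (\cite[Thm.\,1.12.2]{Nikulin}): $\Lambda_h$ admits a primitive embedding into $\Sigma$ if and only if there exists an even negative definite lattice $K_h$ of rank $8k-r+5$ with discriminant form $-q_{\Lambda_h}$. By Nikulin's existence theorem for lattices with prescribed discriminant quadratic form (\cite[Thm.\,1.10.1]{Nikulin}, cf.\ also Cor.\,1.12.3), such $K_h$ exists provided the rank is at least the length of $q_{\Lambda_h}$, the signature condition modulo $8$ is satisfied, and the $p$-adic local conditions hold at each prime dividing $|\operatorname{disc}\Lambda_h|$; the signature congruence $-(8k-r+5)\equiv 5-r\pmod 8$ is automatic. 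The main obstacle is the \emph{tight} case $l_+=s_+=2$, which forces $K_h$ to be definite and thereby activates genuine $p$-adic obstructions to the existence of $K_h$ that \emph{a priori} depend on $h$. The key point to overcome this is that the length of $\operatorname{disc}\Lambda_h$ is uniformly bounded, so for sufficiently large $k$ (depending only on $r$) the rank of $K_h$ dominates all the local invariants, and the $p$-adic realizability of any prescribed discriminant form of length $\leq l(\operatorname{disc}\Lambda)+1$ by a negative definite lattice of this rank follows from the local theory of quadratic forms (Hasse--Minkowski at each place, together with a Meyer-type argument giving local universality in rank large compared with the length of the discriminant). This delivers the desired $\Sigma$ depending only on $\operatorname{rank}\Lambda$.
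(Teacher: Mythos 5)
Your proof is correct, but it re-derives Nikulin's result rather than cite it. The paper's proof is a one-line application of \cite[Theorem 1.12.4]{Ni79}, which states directly that an even lattice $L$ of signature $(2,n)$ embeds primitively into a (fixed) even unimodular lattice of signature $(2,N)$ once $N \geq 2\operatorname{rk}(L) - 2$ and $8 \mid (N-2)$; applying this to $L = \Lambda_h$ (whose rank $\operatorname{rk}\Lambda - 1$ and signature $(2, \operatorname{rk}\Lambda - 3)$ depend only on $\operatorname{rk}\Lambda$) immediately yields the lemma. Your argument instead goes through Nikulin's Theorem 1.12.2 (translating the primitive embedding into the existence of an orthogonal-complement lattice $K_h$) and then Theorem 1.10.1/Corollary 1.10.2 (realizing $K_h$ with prescribed signature and discriminant form), which is essentially reproving Theorem 1.12.4 from its ingredients. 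That works, and correctly isolates the key uniformity input, but it is a detour. A few small remarks: the reduction to primitive $h$ is unnecessary since $\Lambda_h$ only sees the rational span of $h$; the uniform bound on discriminant length should just be the trivial $l(\operatorname{disc}\Lambda_h) \leq \operatorname{rk}\Lambda_h = \operatorname{rk}\Lambda - 1$ rather than the route through $l(\operatorname{disc}\Lambda)+1$ (which only becomes uniform once you also use $l(\operatorname{disc}\Lambda)\leq\operatorname{rk}\Lambda$); and the displayed signature congruence has a sign slip (one side should read $r-5$, not $5-r$), though this does not affect the conclusion since the congruence is automatic either way. None of these are gaps — your proof is valid, just less economical than the direct citation.
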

\begin{proof}
By Nikulin \cite[Theorem 1.12.4]{Ni79}, any even lattice $L$ of signature $(2,n)$ admits a primitive embedding into a fixed  even unimodular lattice $\Sigma$ of signature $(2,N)$ with $N\geq 2\rk (L)-2$ and $8|(N-2)$. One can thus choose $N\geq 2n+2$ satisfying $8 | (N-2)$ and let $$\Sigma= U^{\oplus 2}\oplus E_8^{\oplus d}$$ with $d=\frac{N-2}{8}$.
\end{proof}

\begin{remark}\label{odd}
If the Beauville--Bogomolov lattice $\Lambda$ is not even (cf. Remark \ref{rmk:evenness}),  Lemma \ref{lemma:UniformKS} does not apply. Nevertheless, we can always multiply the Beauville--Bogomolov quadratic form on $\rH^2(X,\ZZ)$ by $2$; this operation will not affect any construction. 
In the sequel, we assume that the lattice $\Lambda$ (hence also $\Lambda_h$) is even .
\end{remark}

For a lattice $L$, we write 
$G_L=\SO(L_{\QQ})$ for short. The inclusion \eqref{uniks} defines a map $D_{\Lambda_h} \rightarrow D_{\Sigma}$, which gives an embedding of Shimura data 
$$\iota_h : (G_{\Lambda_h}, D_{\Lambda_h}) \rightarrow (G_{\Sigma}, D_{\Sigma}).$$ 
Let $K \subseteq G_\Sigma(\mathbb{A}_f)$ be a compact open subgroup. Then for any compact open subgroup $K_{h}\subseteq G_{\Lambda_h}(\mathbb{A}_f)$ with $\iota_h (K_{h}) \subseteq K$, we have a finite and unramified map 
\begin{equation} \label{uKS1}
\Sh_{K_h} (G_{\Lambda_h}, D_{\Lambda_h}) \rightarrow \Sh_K(G_\Sigma, D_{\Sigma}).
\end{equation}
defined over $\QQ$. 

If $K$ is contained in the discriminant kernel $K_{\Sigma}$, then we get a section as \eqref{eq:sectionSOSpin} for
\[
\ad(\CC) \colon\Sh_{K^{\operatorname{sp}}}(\GSpin(\Sigma), \widetilde{D}_L)(\CC) \to \Sh_{K}(G_{\Sigma}, D_{L})(\CC) =\Sh_K(\Sigma)(\CC).
\]
In particular, we take the following level-$m$ structures for $m \geq 3$
\begin{equation}\label{eq:conven}
 \quad  \mathbf{K}'_m\coloneqq K_{\Sigma, m}^{\ad}\quad \text{and}\quad \mathbf{K}_m'' \coloneqq \left\{g \in \GSp(W)(\widehat{\ZZ})\big| g \equiv 1 \bmod m\right\},
\end{equation}
where $W= (\Cl^+(\Sigma_{\QQ}), \varphi_e)$ the symplectic space described in \S \ref{subsec:KS}.  For any $d \in \ZZ_{\geq 1}$, we have the following commutative diagram of stacks over $\QQ$
\begin{equation}\label{eq:bigdiag}
\begin{tikzcd}[column sep= large]
& & \Sh_{K^{\operatorname{sp}}_{\Sigma,m}}(\GSpin(\Sigma), \widetilde{D}_{\Sigma}) \ar[d,"\spin"] \ar[ld, shift left,"\ad"] \\
\widetilde{\scF}^{\dag}_{d,\mathbf{K}_{m}} \ar[d] \ar[r,"j_{\Sigma} \circ \mathcal{P}_{d,\mathbf{K}_m}"]& \Sh_{\mathbf{K}'_{m}}(\Sigma) \ar[d] \ar[ru, "\gamma_{E_m}",dashed] & \Sh_{\mathbf{K}''_m}(\GSp(W), \Omega^{\pm})\\
\widetilde{\scF}_{d}^{\dag} \ar[r,"j_{\Sigma} \circ \mathcal{P}_d"] & \Sh(\Sigma)
\end{tikzcd}
\end{equation}
where $\gamma_{E_m}$ is the descent of a chosen section $\gamma_{\CC}$ to the finite abelian extension $E_m$ of $\QQ$ corresponding to $\RR_{>0} \QQ^{\times} \mathbf{N}(K^{\operatorname{sp}}_{\Sigma,m}) \subseteq \mathbb{A}_{\QQ}^{\times}$ via the class field theory. We now obtain a map
\begin{equation}
\Psi_{d,\mathbf{K}_m,\CC}^{\KS}\colon \widetilde{\scF}_{d,\mathbf{K}_{m}}^\dag (\CC)\rightarrow  \Sh_{\mathbf{K}''_m}(\GSp(W), \Omega^{\pm})(\CC)
\end{equation}
as the composition, which is quasi-finite. It is called the \textbf{uniform Kuga--Satake map}. By the construction, we have the following descent theorem. 
\begin{theorem}\label{thm:descentKuga}
The uniform  Kuga--Satake map  $\Psi^{\KS}_{d,\mathbf{K}_m,\CC}$
descends to  map 
\[
\Psi^{\KS}_{d,\mathbf{K}_m}\colon \widetilde{\scF}^{\dag}_{d,\mathbf{K}_{m}} \to \Sh_{\mathbf{K}''_n}(\GSp(W), \Omega^{\pm})\]
over the number field $E_m$. The field $E_m$ is clearly independent of $d$ and $h$.   
\end{theorem}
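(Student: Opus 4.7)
My plan is to descend each arrow of the diagram \eqref{eq:bigdiag} separately and then compose. Three of the four arrows are already morphisms over $\QQ$: the period map $\mathcal{P}_{d,\mathbf{K}_m}$ descends to $\QQ$ by Proposition \ref{prop:descentperiod}; the morphism $j_\Sigma$ induced by the primitive embedding of lattices $\Lambda_h \hookrightarrow \Sigma$ of Lemma \ref{lemma:UniformKS} is a morphism of Shimura data over $\QQ$, hence descends to $\QQ$ as in \eqref{uKS1}; and the spin representation $\spin\colon (\GSpin(\Sigma),\widetilde{D}_\Sigma)\to (\GSp(W),\Omega^\pm)$ is a morphism of Shimura data whose source and target both have reflex field $\QQ$, so it defines a morphism of canonical models over $\QQ$. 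Thus the entire issue reduces to descending the dashed section $\gamma_{\CC}$ of $\ad$.

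The main obstacle is therefore the descent of $\gamma_{\CC}$. The section is non-canonical because $\Sh_{K^{\operatorname{sp}}_{\Sigma,m}}(\GSpin(\Sigma),\widetilde{D}_\Sigma)$ has strictly more geometric connected components than $\Sh_{\mathbf{K}'_m}(\Sigma)$; choosing $\gamma_{\CC}$ is equivalent to choosing a connected component of the former above each connected component of the latter. By the standard theory of connected components of Shimura varieties (cf.~Deligne, or the analogous treatment in Rizov \cite{Riz10} for K3 surfaces), the set of geometric connected components of $\Sh_{K^{\operatorname{sp}}_{\Sigma,m}}(\GSpin(\Sigma),\widetilde{D}_\Sigma)$ is a torsor under $\RR_{>0}\QQ^{\times} \backslash \mathbb{A}_\QQ^{\times}/\mathbf{N}(K^{\operatorname{sp}}_{\Sigma,m})$, where $\mathbf{N}$ denotes the spinor norm, and the canonical $\Gal(\overline{\QQ}/\QQ)$-action on this set factors through this quotient via the reciprocity map of class field theory. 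By definition, $E_m$ is the abelian extension of $\QQ$ corresponding under class field theory to the open subgroup $\RR_{>0}\QQ^{\times}\mathbf{N}(K^{\operatorname{sp}}_{\Sigma,m})$, so $\Gal(\overline{\QQ}/E_m)$ acts trivially on the set of connected components. Hence any choice of $\gamma_{\CC}$ is automatically $\Gal(\overline{\QQ}/E_m)$-equivariant and descends to a morphism $\gamma_{E_m}$ over $E_m$.

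Composing $\gamma_{E_m}$ with $\spin$ and with $j_\Sigma \circ \mathcal{P}_{d,\mathbf{K}_m}$ (both base changed to $E_m$) produces the sought-after $E_m$-morphism $\Psi^{\KS}_{d,\mathbf{K}_m}$. Its complex-analytic realization is the composition in \eqref{eq:bigdiag}, which matches $\Psi^{\KS}_{d,\mathbf{K}_m,\CC}$ by construction. Finally, the independence of $E_m$ from $d$ and $h$ is transparent from the construction: the unimodular lattice $\Sigma$ depends only on $\mathrm{rk}(\Lambda)$, i.e.~on the deformation type $M$, by Lemma \ref{lemma:UniformKS}; the spin level $K^{\operatorname{sp}}_{\Sigma,m}\subseteq \GSpin(\Sigma)(\widehat{\ZZ})$ is built purely from $\Sigma$ and $m$; hence the spinor norm image $\mathbf{N}(K^{\operatorname{sp}}_{\Sigma,m})$, and therefore the corresponding class field $E_m$, involves no information about the specific polarization class $h\in \Lambda$ or the degree $d$.
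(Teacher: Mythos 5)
Your proposal is correct and follows the same basic strategy as the paper: decompose the diagram \eqref{eq:bigdiag} into arrows, observe that $\mathcal{P}_{d,\mathbf{K}_m}$, $j_\Sigma$, and $\spin$ descend to $\QQ$, and reduce everything to the descent of the section $\gamma_\CC$ over $E_m$. The paper's actual proof is essentially a one-liner that invokes Proposition \ref{prop:descentperiod} for the period map and takes the descent of $\gamma_\CC$ as given in the paragraph preceding the theorem, deferring to \cite[\S 5.5]{Riz10}; your proposal usefully unpacks what that reference actually says, namely that the Galois action on $\pi_0$ of the $\GSpin$-Shimura variety factors through the spinor norm via reciprocity, so that $\Gal(\overline{\QQ}/E_m)$ fixes all geometric components. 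One small point worth keeping in mind (implicit in your phrase ``choosing a connected component above each connected component'' and needed to promote ``Galois fixes $\pi_0$'' to ``$\gamma_\CC$ is $\Gal(\overline{\QQ}/E_m)$-equivariant as a morphism'') is that $\ad$ restricts to an isomorphism on each geometric connected component of $\Sh_{K^{\operatorname{sp}}_{\Sigma,m}}(\GSpin(\Sigma),\widetilde{D}_\Sigma)$ onto its image; since $\ad$ is defined over $\QQ$ and inverses are unique, $\sigma(\gamma_\CC)$ and $\gamma_\CC$ then agree component by component for $\sigma\in\Gal(\overline{\QQ}/E_m)$. Your account of the independence of $E_m$ from $d$ and $h$ is also correct and matches the paper.
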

\begin{proof}
The $\Psi_{d,\mathbf{K}_m,\CC}^{\KS}$ has a descent over $E_m$ as the period map $\widetilde{\scF}_{d, \mathbf{K}_{m}}^{\dag}(\CC) \to \Sh_{\mathbf{K}_{m}}(\Lambda_h)(\CC)$ has a descent over $\QQ$ by Proposition \ref{prop:descentperiod}. It is clear that $E_m$ is independent of $h$. 
\end{proof}

\begin{definition}  
If $f \colon T \rightarrow \widetilde{\scF}_{d,\mathbf{K}_m}^{\dag}$ is a map corresponding to a polarized oriented hyper-Kähler space with level structure $(\fX, \xi,\alpha)$ over $T$. We define the uniform Kuga--Satake abelian space $\mathcal{A}_T \rightarrow T$ associated with $f$ as the pullback of the universal polarized abelian scheme on $\Sh_{\mathbf{K}''_m}(\GSp(W), \Omega)$ under the composition 
\begin{equation}
T \rightarrow  \widetilde{\scF}_{d,\mathbf{K}_{m}}^{\dag}\rightarrow \Sh_{\mathbf{K}''_m}(\GSp(W), \Omega^{\pm}).
\end{equation}

\end{definition}
Suppose $f\colon \Spec F \rightarrow \Sh_{\mathbf{K}_{m}'}(\Sigma)$ comes from a $F$-point  of $\widetilde{\scF}_{d,\mathbf{K}_{m}}^{\dag}$, which represents  the tuple of polarized hyper-Kähler variety $(X, H, \alpha)$ with a $\mathbf{K}_{m}$-level structure. 
\begin{proposition}\label{propertyofks}
Let  $(A, L)$ be the associated uniform Kuga--Satake polarized abelian variety of $(X,H,\alpha)$. Then there is a Galois-equivariant lattice  embedding
$$\rP^2_{\et}(X_{\bar{F}},\widehat{\ZZ}(1))\cong f^\ast (\underline{\Sigma})\hookrightarrow \End_{\Cl^+(\Sigma)}\left(\rH^1_{\et}(A_{\bar{F}}, \widehat{\ZZ})\right) $$
such that $\Gal(\overline{F}/F)$ acts trivially on the orthogonal complement.
\end{proposition}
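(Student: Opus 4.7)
The plan is to globalize the desired embedding on the Shimura variety level and then pull it back through $f$, proceeding in three steps.

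First, over the spin Shimura variety $\Sh_{K^{\operatorname{sp}}_{\Sigma,m}}(\GSpin(\Sigma), \widetilde{D}_{\Sigma})_{E_m}$, the spin representation pulls back the universal abelian scheme from the Siegel variety, producing the universal Kuga--Satake abelian scheme $\mathcal{A}$ whose relative first $\widehat{\ZZ}$-adic \'etale cohomology local system is canonically modelled on $\Cl^+(\Sigma) \otimes \widehat{\ZZ}$, equipped with a natural $\Cl^+(\Sigma)$-action. The classical Clifford-algebraic construction of Deligne--Andr\'e in the theory of Kuga--Satake varieties (cf.~\cite{Sh17} for the integral formulation) produces a canonical monodromy-equivariant inclusion of $\widehat{\ZZ}$-local systems
\[
\underline{\Sigma} \otimes \widehat{\ZZ} \hookrightarrow \End_{\Cl^+(\Sigma) \otimes \widehat{\ZZ}}\bigl(\rH^1_{\et}(\mathcal{A}, \widehat{\ZZ})\bigr),
\]
arising from the natural embedding $\Sigma \subset \Cl(\Sigma)$ and multiplication in the Clifford algebra. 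Descending via the adjoint map, and using the section $\gamma_{E_m}$ in diagram \eqref{eq:bigdiag}, this embedding yields a Galois-equivariant inclusion of $\widehat{\ZZ}$-local systems on $\Sh_{\mathbf{K}'_m}(\Sigma)_{E_m}$.

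Second, I pull back along $f\colon\Spec F \to \Sh_{\mathbf{K}'_m}(\Sigma)$ (after enlarging $F$ to contain $E_m$, which is harmless for our purposes) to obtain the Galois-equivariant embedding
\[
f^{*}(\underline{\Sigma}) \hookrightarrow \End_{\Cl^+(\Sigma)}\bigl(\rH^1_{\et}(A_{\bar{F}}, \widehat{\ZZ})\bigr),
\]
where $A = f^{*}\mathcal{A}$ is the uniform Kuga--Satake abelian variety of $(X,H,\alpha)$. The identification $f^{*}(\underline{\Sigma}) \cong \rP^2_{\et}(X_{\bar{F}}, \widehat{\ZZ}(1))$ as Galois representations follows from the commutativity of diagram \eqref{eq:bigdiag}: the map $f$ factors as
\[
\Spec F \to \widetilde{\scF}^{\dag}_{d,\mathbf{K}_m} \xrightarrow{\mathcal{P}_{d,\mathbf{K}_m}} \Sh_{\mathbf{K}_m}(\Lambda_h) \to \Sh_{\mathbf{K}'_m}(\Sigma),
\]
and by Proposition \ref{prop:descentperiod} the $\mathbf{K}_m$-level structure $\alpha$ provides a Galois-equivariant isomorphism between $\rH^2_{\et}(X_{\bar{F}}, \widehat{\ZZ}(1))$ and $\Lambda \otimes \widehat{\ZZ}$. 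Restricting to the primitive part identifies $\rP^2_{\et}(X_{\bar{F}}, \widehat{\ZZ}(1))$ with $\Lambda_h \otimes \widehat{\ZZ}$, sitting primitively inside $\Sigma \otimes \widehat{\ZZ}$ via the uniform embedding \eqref{uniks}.

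Third, for the claim on the orthogonal complement: the factorization of $f$ through $\Sh_{\mathbf{K}_m}(\Lambda_h)$ forces the Galois representation on $f^{*}(\underline{\Sigma})$ to factor through the compact open subgroup $\mathbf{K}_m \subset G_{\Lambda_h}(\widehat{\ZZ})$. Under the standard embedding $G_{\Lambda_h} \hookrightarrow G_{\Sigma}$ induced by \eqref{uniks}, the group $G_{\Lambda_h}$ acts via the tautological representation on $\Lambda_h$ and trivially on $\Lambda_h^{\perp} \cap \Sigma$. Consequently, $\Gal(\overline{F}/F)$ acts trivially on $(\Lambda_h^{\perp} \cap \Sigma) \otimes \widehat{\ZZ}$, which is precisely the orthogonal complement of the image of $\rP^2_{\et}(X_{\bar{F}}, \widehat{\ZZ}(1)) \cong \Lambda_h \otimes \widehat{\ZZ}$ inside $\Sigma \otimes \widehat{\ZZ}$.

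The main technical point requiring care is in the first step: establishing the canonical monodromy-equivariant Clifford embedding requires careful bookkeeping of the $\GSpin$-equivariant structures on the tautological representation $\Sigma$ and the spin representation $\Cl^+(\Sigma)$, as well as compatibility with the section $\gamma_{E_m}$ of the adjoint representation. Once this is in place, the pull-back and identification through the level structures proceed formally from the definitions.
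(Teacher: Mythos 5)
Your argument is correct and is essentially the standard construction that the paper itself does not spell out: its entire proof of this proposition is the one-line citation ``This was proved in \cite{Pe15} and \cite{Sh17}.'' Your three steps --- the $\GSpin$-equivariant Clifford embedding $\underline{\Sigma}\hookrightarrow \End_{\Cl^+(\Sigma)}(\rH^1_{\et}(\mathcal{A},\widehat{\ZZ}))$ over the spin Shimura variety, descent via $\gamma_{E_m}$ and pullback along $f$, and triviality of the Galois action on $(\Lambda_h^{\perp}\cap\Sigma)\otimes\widehat{\ZZ}$ because the representation factors through $\mathbf{K}_m\subset G_{\Lambda_h}(\widehat{\ZZ})$ --- faithfully reconstruct the argument of those references, including the correct reading of the loosely stated ``$\cong$'' as the primitive embedding $\rP^2_{\et}(X_{\bar F},\widehat{\ZZ}(1))\cong\Lambda_h\otimes\widehat{\ZZ}\subset\Sigma\otimes\widehat{\ZZ}$.
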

\begin{proof}
This was proved in \cite{Pe15} and \cite{Sh17}. 
\end{proof}

\begin{corollary}\label{cor:kugasatakegoodreduction}
Let $\p \in \Spec R$ be a prime ideal of height $1$.
If $X$ has essentially good reduction at $\p$,
then the associated Kuga--Satake variety $A_X$ admits good reduction at $\p$, i.e.\, there exists a smooth projective model over the localization $R_{\p}$ whose generic fiber is isomorphic to $A_{X}$.
\label{lemKSgood}
\end{corollary}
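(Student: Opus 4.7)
The plan is to apply the N\'eron--Ogg--Shafarevich criterion: good reduction of $A_X$ over $R_\p$ will follow from unramifiedness of an $\ell$-adic Tate module at $\p$, possibly after a finite unramified extension of $R_\p$ (which does not affect good reduction, since the inertia subgroup is preserved under such extensions).

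First I would extract, from the essentially good reduction hypothesis, a finite \'etale extension $S/\widehat{R}_\p$ and a smooth proper algebraic space $\mathcal{Y}/S$ whose generic fiber is hyper-K\"ahler and $\Frac(S)$-birational to $X_{\Frac(S)}$. Smooth and proper base change for \'etale cohomology of algebraic spaces gives that $\rH^2_{\et}(\mathcal{Y}_{\overline{\Frac(S)}},\QQ_\ell)$ is unramified at the closed point of $\Spec S$ for every $\ell$ coprime to the residue characteristic. Because birational hyper-K\"ahler varieties are isomorphic in codimension one, the same argument as in the proof of Proposition \ref{prop:bir-obs} produces a $G_{\Frac(S)}$-equivariant isomorphism between the second cohomologies of $X_{\Frac(S)}$ and $\mathcal{Y}_{\Frac(S)}$. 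As $\Frac(S)/F_\p$ is unramified, the two inertia subgroups coincide, and so $\rH^2_{\et}(X_{\bar F},\QQ_\ell)$ is unramified at $\p$; the primitive part $\rP^2$ inherits this property, being cut out by the Galois-invariant polarization class $c_1(H)$.

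Next I would transfer this unramifiedness to the Kuga--Satake side via Proposition \ref{propertyofks}: the ambient $\Sigma\otimes\QQ_\ell$ differs from $\rP^2_{\et}(X_{\bar F},\QQ_\ell(1))$ only by a Galois-trivial summand, and is thus also unramified at $\p$. The Kuga--Satake Galois representation
\[
\rho_{A_X}\colon G_F\longrightarrow \GSpin(\Sigma)(\QQ_\ell)\subset \GL\bigl(\Cl^+(\Sigma_{\QQ_\ell})\bigr)
\]
composed with the adjoint map $\ad\colon \GSpin\to\SO$ recovers the Galois action on $\Sigma$, so $\rho_{A_X}(\rI_\p)$ must lie in $\ker(\ad)=\mathbb{G}_m$, i.e.\ the center, acting on $\rH^1_{\et}(A_{X,\bar F},\QQ_\ell)=\Cl^+(\Sigma_{\QQ_\ell})$ by scalars. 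Using the spinor norm $N\colon\GSpin\to\mathbb{G}_m$ (which is squaring on the central $\mathbb{G}_m$) together with the built-in compatibility $N\circ\rho_{A_X}=\chi_{\operatorname{cyc}}^{-1}$ of the polarized Kuga--Satake construction, unramifiedness of $\chi_{\operatorname{cyc}}$ at $\p$ for $\ell$ coprime to the residue characteristic forces this scalar character of $\rI_\p$ to land in $\mu_2=\{\pm 1\}$.

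The quadratic character $\rI_\p\to\mu_2$ is killed by a finite unramified extension, over which $T_\ell A_X$ becomes unramified and N\'eron--Ogg--Shafarevich yields good reduction; unramified descent then delivers the smooth projective model over $R_\p$. I expect the subtlest point to be verifying cleanly the compatibility $N\circ\rho_{A_X}=\chi_{\operatorname{cyc}}^{-1}$, which requires tracing through the specific polarization and level data used in the uniform construction. An alternative is to extend the Kuga--Satake $F$-point directly using the smooth extension property of an integral canonical model of the $\GSpin(\Sigma)$-Shimura variety refining Corollary \ref{corollary:integral-shi-hk}, thereby bypassing the scalar-character bookkeeping at the cost of invoking more integral Shimura machinery.
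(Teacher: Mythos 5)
Your first half coincides with the paper's argument: extract the finite \'etale extension $S/\widehat{R}_\p$ and the smooth proper model $\mathcal{Y}$ from Definition \ref{essentially good}, apply smooth and proper base change, transfer unramifiedness of $\rH^2_{\et}$ through the codimension-one isomorphism as in Proposition \ref{prop:bir-obs}, and note that the unramified extension does not change the inertia group. The reduction of good reduction of $A_X$ to unramifiedness of $\rH^2_{\et}(X_{\bar F},\QQ_\ell)$ is exactly what the paper delegates to Andr\'e's Lemma 9.3.1, and your $\ker(\ad)=\mathbb{G}_m$ plus spinor-norm computation correctly pins the image of $\rI_\p$ inside $\{\pm 1\}$ acting by scalars on $\rH^1_{\et}(A_{X,\bar F},\QQ_\ell)$.

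The gap is in how you dispose of the residual sign. A nontrivial character $\rI_\p\to\mu_2$ is, by definition, ramified, and a finite \emph{unramified} extension does not shrink the inertia subgroup; only a ramified quadratic extension kills such a character. So your argument as written only yields \emph{potentially} good reduction, and the subsequent ``unramified descent'' step has nothing to descend along. What actually rules out the $-1$ is the level structure built into the uniform Kuga--Satake construction: $A_X$ is pulled back from $\Sh_{\mathbf{K}''_m}(\GSp(W),\Omega^{\pm})$ with $m\geq 3$ coprime to the residue characteristic, so the Galois action on $A_X[m]$ is trivial, whereas $-1\not\equiv 1 \bmod m$; equivalently one invokes Raynaud's semi-abelian reduction criterion, which is precisely the point the paper flags (``we use that $m$ is coprime to the residue characteristic of $R_\p$, and $m\geq 3$''). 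Your closing alternative via the smooth extension property of an integral canonical model of the $\GSpin(\Sigma)$-Shimura variety would also bypass the issue, but as presented it is an aside rather than the argument.
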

\begin{proof}
By the argument in \cite[Lemma 9.3.1]{A96} (see also \cite[Proposition 4.2.4]{Takamatsu2020c}),
it suffices to show that
$
\rH^{2}_{\et} (X_{\overline{k}}, \ZZ_{\ell})
$
is unramified at $\p$ for some odd prime number $\ell$, which is different from the residue characteristic of $R_{\p}$.  Here, we use the fact that $m$ is coprime to the residue characteristic of $R_{\p}$ and $m\geq 3$, so that we can apply the Raynaud semi-abelian reduction criterion.

We take a finite unramified extension $V$ of $\widehat{R_{\p}}$ and a smooth proper algebraic space $\mathcal{Y}$ over $V$ as in Definition \ref{essentially good}. Let $F$ be the fraction field of $V$.
By the smooth and proper base change theorem, $\rH^{2}_{\et}(\mathcal{Y}_{\bar{F}}, \ZZ_{\ell})$ is unramified at $\p$ as a $\Gal(\bar{F}/F)$-representation.
By the proof of Proposition \ref{prop:bir-obs}, it concludes the proof.

\end{proof}

\section{Finiteness of twists admitting essentially good reduction}
\label{sectionMatsusakaMumford}

\label{sectiontwists}
As an intermediate step towards the Shafarevich conjecture for polarized hyper-Kähler varieties, we prove, in Proposition \ref{Proposition:FinitenessTwists2} below, the finiteness of twists/models of polarized hyper-Kähler varieties admitting essentially good reductions.
We will make use of results in Appendix \ref{appendixMatsusakaMumford} on specialization of birational automorphisms (Matsusaka--Mumford) in the setting of algebraic spaces.

\begin{lemma}
\label{ruled}
Let $R$ be a discrete valuation ring with fraction field $K$ of characteristic zero.
Let $s$ be the closed point of $\Spec R$ and $k(s)$ the residue field.
Let $\mathcal{X} \rightarrow \Spec R$ be a smooth proper morphism of algebraic spaces such that the generic fiber $\mathcal{X}_{\eta}$ is a hyper-Kähler variety over $K$.
Then the special fiber $\mathcal{X}_{s}$ is a non-ruled algebraic space over $k(s)$ (Definition \ref{def:Ruled}).
\end{lemma}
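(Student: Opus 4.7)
The plan is to chain three facts: $\mathcal{X}_{\eta}$ hyper-K\"ahler gives $K_{\mathcal{X}_{\eta}}\cong \mathcal{O}_{\mathcal{X}_{\eta}}$; this triviality propagates across the family to $K_{\mathcal{X}_{s}}\cong \mathcal{O}_{\mathcal{X}_{s}}$; and a smooth proper algebraic space with trivial canonical bundle is not ruled. Since $\mathcal{X}_{\eta}$ is $2n$-dimensional hyper-K\"ahler over $K$, a generator $\sigma\in \rH^{0}(\mathcal{X}_{\eta},\Omega^{2})$ is nowhere degenerate and its $n$-th wedge power $\sigma^{n}$ is a nowhere-vanishing section of $K_{\mathcal{X}_{\eta}}$, so $K_{\mathcal{X}_{\eta}}\cong \mathcal{O}_{\mathcal{X}_{\eta}}$. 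Smoothness and properness of $f\colon\mathcal{X}\to \Spec R$ ensure that $\omega_{\mathcal{X}/R}=\det \Omega^{1}_{\mathcal{X}/R}$ is a line bundle on $\mathcal{X}$ whose restriction to each geometric fiber is the canonical bundle of that fiber.

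To transfer the triviality to the closed fiber, I view $\sigma^{n}$ as a rational section of $\omega_{\mathcal{X}/R}$ on all of $\mathcal{X}$. Its divisor vanishes on the generic fiber and so is supported on $\mathcal{X}_{s}$. Cohomology and base change applied to the proper flat $f$ (whose generic fiber is geometrically connected, being hyper-K\"ahler) shows that $\mathcal{X}_{s}$ is geometrically connected; since it is also smooth over $k(s)$, it is a reduced irreducible Cartier divisor on $\mathcal{X}$ cut out by a uniformizer $\pi\in R$. Hence $\mathrm{div}(\sigma^{n})=m\cdot[\mathcal{X}_{s}]$ for a unique $m\in \ZZ$, and $\pi^{-m}\sigma^{n}$ is a nowhere-vanishing global section of $\omega_{\mathcal{X}/R}$. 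Therefore $\omega_{\mathcal{X}/R}\cong \mathcal{O}_{\mathcal{X}}$ and, restricting to the closed fiber, $K_{\mathcal{X}_{s}}\cong \mathcal{O}_{\mathcal{X}_{s}}$.

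It remains to show that the smooth proper algebraic space $\mathcal{X}_{s}$ with trivial canonical bundle is not ruled. Suppose otherwise: the schematic locus $U\subset \mathcal{X}_{s}$ is $k(s)$-birational to $\PP^{1}_{k(s)}\times Y$ for some $k(s)$-scheme $Y$ of dimension $2n-1$. A smooth proper model $\PP^{1}\times Y'$ of $\PP^{1}\times Y$, available via Hironaka's resolution in residue characteristic zero, is then birational to $\mathcal{X}_{s}$, and K\"unneth yields $h^{0}(\PP^{1}\times Y', K^{\otimes m})=h^{0}(\PP^{1},\mathcal{O}(-2m))\cdot h^{0}(Y', K_{Y'}^{\otimes m})=0$ for every $m\geq 1$. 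Birational invariance of plurigenera among smooth proper algebraic spaces---provable by Hartogs-type extension across the codimension-$\geq 2$ indeterminacy locus of a birational map between smooth proper algebraic spaces, using that pluricanonical sheaves are reflexive---then forces $h^{0}(\mathcal{X}_{s}, K_{\mathcal{X}_{s}})=0$, contradicting $K_{\mathcal{X}_{s}}\cong \mathcal{O}_{\mathcal{X}_{s}}$ established above.

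The main obstacle is the appeal to Hironaka's resolution when $k(s)$ has positive characteristic and $Y$ is of large dimension. One can circumvent this by invoking the characteristic-free principle that a smooth proper variety (or algebraic space) with trivial canonical bundle is not ruled, which can be deduced directly from the birational invariance of plurigenera together with an analysis of the vanishing order of any global top form at the place at infinity of the $\PP^{1}$-factor.
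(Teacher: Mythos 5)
Your first step (triviality of $\omega_{\mathcal{X}_{s}}$) is correct and is essentially the same computation the paper compresses into one sentence: $\omega_{\mathcal{X}/R}$ restricts to a trivial line bundle on $\mathcal{X}_{\eta}$ (the $n$-th power of the symplectic form), so its class in $\Pic(\mathcal{X})$ lies in the kernel of restriction to $\mathcal{X}_{\eta}$, which is generated by $\mathcal{O}(\mathcal{X}_{s})\cong\mathcal{O}_{\mathcal{X}}$; hence $\omega_{\mathcal{X}/R}$ and its restriction $\omega_{\mathcal{X}_{s}}$ are trivial.

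The second step has a genuine gap. You need a smooth proper model of $\PP^{1}_{k(s)}\times Y$ to run the plurigenus argument, and you propose to obtain one via Hironaka's resolution. But the hypotheses only force $K=\Frac(R)$ to have characteristic zero; the residue field $k(s)$ may well have positive characteristic (this is exactly the situation used in the application, where $R$ is an unramified extension of $\widehat{R}_{\mathfrak p}$ over a ring of $S$-integers), and $\mathcal{X}_{s}$ has dimension $2n$, so resolution of singularities is not available. Your closing paragraph acknowledges this but does not actually close the gap: "birational invariance of plurigenera" only holds for free between smooth proper models, and producing a smooth proper model is precisely what is out of reach. The appeal to a "characteristic-free principle" is asserted, not proved, and the proposed "analysis of the vanishing order ... at the place at infinity of the $\PP^{1}$-factor" again presupposes a common resolution on which to compare forms.

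The paper's proof avoids resolution entirely. After compactifying and normalizing $Y$ to a normal projective $V$, it produces a normal projective scheme $W$ with projective birational morphisms $f\colon W\to\mathcal{X}_{s}$ and $g\colon W\to\PP^{1}_{k(s)}\times V$ using only Chow's lemma and normalization of a graph closure — no resolution of singularities. It then works with canonical Weil divisors: smoothness of $\mathcal{X}_{s}$ (hence canonical discrepancies) gives $f_{*}\mathcal{O}_{W}(K_{W})\cong\omega_{\mathcal{X}_{s}}$, while for any normal variety the natural inclusion $g_{*}\mathcal{O}_{W}(K_{W})\hookrightarrow\mathcal{O}_{\PP^{1}\times V}(K_{\PP^{1}\times V})$ holds. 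Triviality of $\omega_{\mathcal{X}_{s}}$ then forces a nonzero global section of $\mathcal{O}_{\PP^{1}\times V}(K_{\PP^{1}\times V})$, which is impossible because the $\PP^{1}$-factor contributes $\mathcal{O}(-2)$ along every fiber. To repair your argument in a way faithful to the generality of the lemma, you should replace the Hironaka step by this normalized-graph-closure device and carry out the section comparison with Weil-divisorial canonical sheaves rather than with smooth models.
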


\begin{proof} 
The relative canonical sheaf
\[
\omega_{\mathcal{X}/R} \coloneqq \bigwedge \Omega_{\mathcal{X}/R}^{1}
\]
is trivial, since the special fiber $\mathcal{X}_{s}$ is a principal Cartier divisor of $\mathcal{X}$.
Therefore, the special fiber $\mathcal{X}_{s}$ is a smooth proper algebraic space over $k(s)$ with a trivial canonical sheaf.
Suppose, by contradiction, that $\mathcal{X}_{s}$ were ruled.
Take a normal projective variety $V$ over $k(s)$ such that $\PP^{1}_{k(s)} \times V$ is birationally equivalent to $\mathcal{X}_{s}$.
By using Chow's lemma and taking the elimination of indeterminacies,
there exists a projective normal scheme $W$ over $k(s)$ with projective birational morphisms $f\colon W \rightarrow \mathcal{X}_{s}$ and $g\colon W \rightarrow \PP^{1}_{k(s)} \times V$.
Take a canonical (Weil) divisor $K_{W}$ on $W$.
We define a Weil divisor $K_{X_{s}}$ as the push-forward $f_{\ast} (K_{W})$.
We also set $K_{\PP^{1}_{k(s)} \times V} \coloneqq g _{\ast} (K_{W})$. 
By étale descent, we have $\mathcal{O}(K_{X_{s}}) \simeq \omega_{\mathcal{X}_{s}/k(s)}$.
Moreover, we have a natural isomorphism $f_{\ast}(\mathcal{O}_{W}(K_{W})) \simeq \mathcal{O}_{X_{s}}(K_{X_{s}})$, since $X_{s}$ has terminal singularities étale locally.
On the other hand, we have a natural inclusion
$g_{\ast} (\mathcal{O}_{W}(K_{W})) \hookrightarrow \mathcal{O}_{\PP^{1}_{k(s)} \times V} (K_{\PP_{1} \times V})$.
Thus $\Gamma(\PP^{1}_{k(s) \times V},\mathcal{O}_{\PP^{1}_{k(s)} \times V}(K_{\PP^{1}_{k(s)} \times V})) \neq 0$, a contradiction.
\end{proof}

\begin{lemma}
\label{Lemma:Unramifiedness}
Let $R$ be a henselian discrete valuation ring with $K = \Frac R$.
Let $(X, \mathscr{L})$, $(Y,\mathscr{M})$ be polarized hyper-Kähler varieties over $K$. Let
$f \colon (X_{\overline{K}}, \mathscr{L}_{\overline{K}})
\overset{\simeq}{\to}
(Y_{\overline{K}}, \mathscr{M}_{\overline{K}})$
be an isomorphism of polarized varieties over $\overline{K}$
with the associated 1-cocycle
\[ \alpha_{f} \colon \Gal(\overline{K}/K) \to \mathscr{G} \coloneqq \Aut(X_{\overline{K}}, \mathscr{L}_{\overline{K}}) \]
defined by %
$\alpha_{f} (\sigma) = f^{-1} \circ ^{\sigma} \!f$.
Assume that
\begin{itemize}
\item the order of $\mathscr{G}$ is invertible in the residue field of $R$,
\item there exist proper smooth algebraic spaces
$\mathcal{X}', \mathcal{Y}'$ over $R$
whose generic fibers are hyper-Kähler varieties
such that there are birational maps
$X \dasharrow \mathcal{X}'_{\eta}$,
$Y \dasharrow \mathcal{Y}'_{\eta}$ over $K$.
\end{itemize}
Then the restriction of $\alpha_{f}$ to the inertia subgroup
$\mathrm{I}_K \subset \Gal(\overline{K}/K)$
is trivial.
\end{lemma}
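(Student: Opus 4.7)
The plan is to transfer the cocycle $\alpha_{f}$ to the setting of birational automorphisms of the integral models $\mathcal{X}'$, $\mathcal{Y}'$ and then apply the specialization of finite subgroups of $\Bir$ recorded in the appendix (Lemma~\ref{Lemma:Specialization}), whose kernel has order a power of the residue characteristic. Since $|\mathscr{G}|$ is invertible in the residue field by hypothesis, the specialization map on the relevant finite subgroup will be injective, so it suffices to prove that for $\sigma\in\rI_{K}$ the element $\alpha_{f}(\sigma)$ specializes to the identity birational automorphism of the special fiber.

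First I would replace $K$ by a finite Galois extension $L/K$ over which $f$ and every element of $\mathscr{G}$ are defined, so that $\alpha_{f}$ factors through $\Gal(L/K)$; let $R'$ denote the integral closure of $R$ in $L$, which is again a DVR since $R$ is henselian. Composing $f$ with the $K$-rational birational maps $\phi_{X}\colon X\dashrightarrow\mathcal{X}'_{\eta}$ and $\phi_{Y}\colon Y\dashrightarrow\mathcal{Y}'_{\eta}$ yields a birational map $g\colon \mathcal{X}'_{L}\dashrightarrow \mathcal{Y}'_{L}$. By Lemma~\ref{ruled} (whose proof passes unchanged to the base change along $R\to R'$, the relative canonical sheaf remaining trivial), the special fibers of $\mathcal{X}'_{R'}$ and $\mathcal{Y}'_{R'}$ are non-ruled, so the Matsusaka--Mumford theorem for algebraic spaces (Theorem~\ref{ClosedSubset}) extends $g$ to an isomorphism $\widetilde{g}\colon\mathcal{X}'_{R'}\setminus V\xrightarrow{\sim}\mathcal{Y}'_{R'}\setminus W$ with $V_{s}\neq\mathcal{X}'_{s}$ and $W_{s}\neq\mathcal{Y}'_{s}$. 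Conjugation by $\phi_{X}$ simultaneously embeds $\mathscr{G}$ as a finite subgroup of $\Bir(\mathcal{X}'_{L})$.

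The key calculation is then purely formal: using that $\phi_{X}$ and $\phi_{Y}$ are $K$-rational and hence Galois-invariant, for every $\sigma\in\Gal(L/K)$ the element $\alpha_{f}(\sigma)\in\mathscr{G}$ corresponds under $\phi_{X}$-conjugation to the birational automorphism $h_{\sigma}:=(g^{\sigma})^{-1}\circ g$ of $\mathcal{X}'_{L}$, where $g^{\sigma}=\sigma_{Y}\circ g\circ\sigma_{X}^{-1}$ is built from the natural Galois actions $\sigma_{X}$, $\sigma_{Y}$ on $\mathcal{X}'_{R'}=\mathcal{X}'\otimes_{R}R'$ and $\mathcal{Y}'_{R'}$. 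For $\sigma\in\rI_{K}$, the automorphism $\sigma$ fixes the residue field of $R'$, so $\sigma_{X}$ and $\sigma_{Y}$ restrict to the identity on the respective special fibers; hence $(\widetilde{g}^{\sigma})_{s}=\widetilde{g}_{s}$, and the specialization of $h_{\sigma}$ equals $\widetilde{g}_{s}^{-1}\circ\widetilde{g}_{s}=\mathrm{id}$. Applying Lemma~\ref{Lemma:Specialization} to the finite subgroup $\phi_{X}\mathscr{G}\phi_{X}^{-1}\subset\Bir(\mathcal{X}'_{L})$ --- whose order is invertible in the residue field --- the specialization is injective, and therefore $\alpha_{f}(\sigma)=\mathrm{id}$. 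The only real technical burden, rather than a conceptual obstacle, will be bookkeeping the various domains of definition: one must intersect the open subspaces produced by Theorem~\ref{ClosedSubset} for $\widetilde{g}$, for each conjugate $\widetilde{g}^{\sigma}$ with $\sigma\in\Gal(L/K)$, and for each element of $\mathscr{G}$, and check that the resulting common open still meets the special fiber, which is straightforward because only finitely many proper closed subspaces are removed.
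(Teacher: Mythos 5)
Your proposal is correct and follows essentially the same route as the paper: pass to a Galois extension $L/K$, transfer the cocycle to $\Bir$ of the integral models via the given $K$-rational birational maps, extend to an isomorphism of opens by Theorem~\ref{ClosedSubset} (using Lemma~\ref{ruled} for non-ruledness), observe that inertia fixes the special fiber so the cocycle values specialize to the identity, and conclude by the injectivity of the specialization map from Lemma~\ref{Lemma:Specialization}(2). The only cosmetic difference is that you make the $\phi_X$-conjugation embedding $\mathscr{G}\hookrightarrow\Bir(\mathcal{X}'_L)$ explicit, whereas the paper treats the inclusion $\mathscr{G}\subset\Bir(\mathcal{X}'_L)$ as implicit and handles the domain-of-definition bookkeeping by ``enlarging $V$ and $W$ if necessary.''
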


\begin{proof}
Take a finite Galois extension $L/K$ such that
$\mathscr{G} = \Aut(X_{L}, \mathscr{L}_{L})$ and 
$f$ come from an isomorphism
$f_L \colon (X_L, \mathscr{L}_{L})
\overset{\simeq}{\to}
(Y_L, \mathscr{M}_{L})$ over $L$.
The 1-cocycle  $\alpha_f$ comes from a $1$-cocycle
$$\alpha_{f,L} \colon \Gal(L/K) \to \mathscr{G}$$
corresponding to $f_L$.
Let $\rI_{L/K} \subset \Gal(L/K)$ be the inertia group.
For an element $\sigma \in \rI_{L/K}$, we have
$\alpha_{f,L}(\sigma) \coloneqq (f_L)^{-1} \circ ^{\sigma} \!f_L$ 
and 
we shall show that $\alpha_{f,L}(\sigma) = \id$.

Let $\mathcal{X}'_{\mathscr{O}_L,s}$, $\mathcal{Y}'_{\mathscr{O}_L,s}$
be the special fibers of
$\mathcal{X}'_{\mathscr{O}_L} \coloneqq \mathcal{X}' \otimes_{\mathscr{O}_K} \mathscr{O}_L$,
$\mathcal{Y}'_{\mathscr{O}_L} \coloneqq \mathcal{Y}' \otimes_{\mathscr{O}_K} \mathscr{O}_L$,
respectively.
We note that the isomorphism $f_L \colon X_L \overset{\simeq}{\to} Y_L$
can be considered as a birational map
\[f_L \colon \mathcal{X}'_{L} \overset{\simeq}{\dashrightarrow} \mathcal{Y}'_{L}.\]
Here, we put $\mathcal{X}'_{L} \coloneqq \mathcal{X}' \otimes_{\mathscr{O}_K} L$ and
$\mathcal{Y}'_{L} \coloneqq \mathcal{Y}' \otimes_{\mathscr{O}_K} L$. By Theorem \ref{ClosedSubset} and Lemma \ref{ruled},
the birational map $f_L$
extends to an isomorphism
\[
  \widetilde{f}_L \colon
  (\mathcal{X}'_{\mathscr{O}_L} \setminus V) \overset{\simeq}{\to}
  (\mathcal{Y}'_{\mathscr{O}_L} \setminus W)
\]
over $\mathscr{O}_L$
for some proper closed subsets
$V \subset \mathcal{X}'_{\mathscr{O}_L}$, $W \subset \mathcal{Y}'_{\mathscr{O}_L}$
satisfying
$V_s \neq \mathcal{X}'_{\mathscr{O}_L,s}$, $W_s \neq \mathcal{Y}'_{\mathscr{O}_L,s}$.
Enlarging $V$ and $W$ if necessary, by Lemma \ref{Lemma:Specialization},
we may assume the following holds:
The action of
$\mathscr{G} = \Aut(X_{L}, \mathscr{L}_{L})$ on $X_L$
extends to a homomorphism
\[
  \psi \colon \mathscr{G} \to \Aut(\mathcal{X}'_{\mathscr{O}_L} \setminus V).
\]

Since $\sigma \in \rI_{L/K}$ acts trivially on the residue field of $\mathscr{O}_L$,
the isomorphisms
\[
  \widetilde{f}_L \colon
  (\mathcal{X}'_{\mathscr{O}_L} \setminus V) \overset{\simeq}{\to}
  (\mathcal{Y}'_{\mathscr{O}_L} \setminus W)
  \quad \text{and} \quad
  ^{\sigma} \!\widetilde{f}_L \colon
  (\mathcal{X}'_{\mathscr{O}_L} \setminus \sigma(V)) \overset{\simeq}{\to}
  (\mathcal{Y}'_{\mathscr{O}_L} \setminus \sigma(W))
\]
induce the same morphism on the special fiber.
Thus, the element
\[
 \alpha_{f,L}(\sigma) = (f_L)^{-1} \circ ^{\sigma}\!f_L \in \mathscr{G} \subset \Bir (\mathcal{X}_{L})
 \]
sits in the kernel of the specialization map
\[
  \overline{\psi} \colon \mathscr{G} \to \Aut((\mathcal{X}'_{\mathscr{O}_L} \setminus V)_s).
\]
Since the order of $\mathscr{G}$ is invertible
in the residue field of $R$,
the specialization map
$$\mathscr{G} \to \Aut((\mathcal{X}'_{\mathscr{O}_{L}} \setminus V)_{s})$$
is injective by Lemma \ref{Lemma:Specialization} (2).
Therefore,  we have $\alpha_{f,L}(\sigma) = \id$.
\end{proof}

\begin{proposition}
\label{Proposition:FinitenessTwists2}
Let $R$ be a finitely generated $\ZZ$-algebra that is a normal integral domain with fraction field $F$.
Let $X$ be a hyper-Kähler variety over $F$ which admits essentially good reduction at any height $1$ prime ideal $\p \in \Spec R$.
Let $\mathscr{L}$ be an ample line bundle on $X$.
Then there exist only finitely many isomorphism classes of
polarized varieties $(Y,\mathscr{M})$ over $F$
satisfying the following conditions$\colon$
\begin{itemize}
\item 
$Y$ admits essentially good reduction at any height $1$ prime $\p \in \Spec R$.
\item There exists an isomorphism of polarized varieties
$(X_{\overline{F}},\mathscr{L}_{\overline{F}}) \simeq (Y_{\overline{F}},\mathscr{M}_{\overline{F}})$
over $\overline{F}$.
\end{itemize}
\end{proposition}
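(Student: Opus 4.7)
The plan is to classify the twists of $(X,\mathscr{L})$ by nonabelian Galois cohomology, then combine the unramifiedness statement of Lemma \ref{Lemma:Unramifiedness} with a Hermite--Minkowski type finiteness for étale fundamental groups of arithmetic schemes.

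Since $\mathscr{L}$ is ample, Fujiki's theorem recalled at the end of Section \ref{subsect:AutBir} shows that the group $\mathscr{G}:=\Aut(X_{\overline{F}},\mathscr{L}_{\overline{F}})$ is finite. By standard nonabelian Galois descent, the set of $F$-isomorphism classes of polarized pairs $(Y,\mathscr{M})$ over $F$ satisfying $(Y_{\overline{F}},\mathscr{M}_{\overline{F}})\simeq (X_{\overline{F}},\mathscr{L}_{\overline{F}})$ is in bijection with the pointed cohomology set $H^{1}(G_F,\mathscr{G})$, where $G_F:=\Gal(\overline{F}/F)$ acts on $\mathscr{G}$ through a finite Galois quotient. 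The class attached to an isomorphism $f$ is represented by the cocycle $\sigma\mapsto (f^\sigma)^{-1}\circ f$.

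Let $S\subset\Spec R$ be the finite set of height 1 primes whose residue characteristic divides $|\mathscr{G}|$, together with the height 1 primes ramified in the finite Galois extension $L/F$ over which the action of $G_F$ on $\mathscr{G}$ becomes trivial. For any $(Y,\mathscr{M})$ as in the statement and any height 1 prime $\p\notin S$, the essentially good reduction hypothesis applied to both $X$ and $Y$ provides, possibly after replacing the given finite étale extensions of $\widehat{R_\p}$ by a common refinement, proper smooth algebraic space models birational to $X$ and $Y$ respectively. Since the inertia subgroup at $\p$ is preserved under finite unramified base change, Lemma \ref{Lemma:Unramifiedness} applies and yields that $\alpha_f$ restricted to the inertia at $\p$ is trivial. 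Hence $\alpha_f$ is unramified at every height 1 prime of $\Spec R$ outside $S$.

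Setting $U:=\Spec R\setminus S$, the normality of $R$ together with the Zariski--Nagata purity theorem identifies the subset of $H^{1}(G_F,\mathscr{G})$ consisting of classes unramified at every height 1 prime of $U$ with $H^{1}(\pi_1^{\et}(U),\mathscr{G})$. Because $U$ is of finite type over $\ZZ$ and $\mathscr{G}$ is finite, the generalized Hermite--Minkowski theorem -- equivalently, the finiteness of connected finite étale covers of $U$ of degree bounded by $|\mathscr{G}|$ -- forces $H^{1}(\pi_1^{\et}(U),\mathscr{G})$ to be a finite set, which concludes the argument. \textbf{The main obstacle} is in the previous paragraph: reconciling the finite étale base changes witnessing essentially good reduction for $X$ and $Y$ into a single one compatible with the cocycle, which is handled by taking their compositum inside a separable closure and using that unramified local extensions preserve the full inertia subgroup, so that triviality of the restricted cocycle survives descent back to $\widehat{R_\p}$.
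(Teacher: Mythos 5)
Your proof is correct and, after unwinding notation, follows essentially the same strategy as the paper's: classify the twists by a $1$-cocycle valued in the finite group $\mathscr{G}=\Aut(X_{\overline{F}},\mathscr{L}_{\overline{F}})$, show via Lemma~\ref{Lemma:Unramifiedness} that essentially good reduction forces the cocycle to kill inertia at almost all height~$1$ primes, invoke Zariski--Nagata purity to pass to the \'etale fundamental group of an arithmetic scheme, and then use the Hermite--Minkowski finiteness theorem. The presentational difference is that the paper first passes to a finite Galois extension $L/F$ trivializing the $G_F$-action on $\mathscr{G}$, takes the normalization $R'$ of $R$ in $L$, and works with the group \emph{homomorphism} $\alpha_f|_{\Gal(\overline{F}/L)}$ there (so purity and Hermite--Minkowski are applied to $\Spec R'$); you instead remove the ramified primes of $L/F$ along with the primes of bad residue characteristic from $\Spec R$ and argue directly with nonabelian $H^1(\pi_1^{\et}(U),\mathscr{G})$. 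Both routes are valid, and the paper's normalization step is really just an implementation of the reduction that you describe; one small thing to be careful about, if you keep your phrasing, is that for nonabelian $\mathscr{G}$ the unramifiedness condition on a cohomology \emph{class} (not just a chosen cocycle) at each height~$1$ prime needs to be formulated up to coboundary, which is why the paper prefers to first base-change to $L$ so that cocycles become honest homomorphisms. You also need $U$ (or $\Spec R'$) to be regular for purity, which is ensured by shrinking the base; the paper does this implicitly, and you should too. Finally, you correctly flag and handle the need to combine the two finite unramified local extensions witnessing essentially good reduction of $X$ and of $Y$ into a common one before applying Lemma~\ref{Lemma:Unramifiedness}; the paper also does this, just more tersely.
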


\begin{proof}
We put $\mathscr{G} \coloneqq \Aut(X_{\overline{F}}, \mathscr{L}_{\overline{F}})$.
Shrinking $\Spec R$ if necessary, we may assume that the order of $\mathscr{G}$ is invertible
in $R$.
Note that $\mathscr{G}$ can be considered as a finite subgroup of
$\Bir(X_{\overline{F}})$.

We take a finite Galois extension $L/F$ such that $\Aut(X_{L},\mathscr{L}_{L}) = \Aut (X_{\overline{F}}, \mathscr{L}_{\overline{F}})$.
Let $$f \colon (X_{\overline{F}}, \mathscr{L}_{\overline{F}})
\simeq
(Y_{\overline{F}}, \mathscr{M}_{\overline{F}})$$
be any isomorphism of polarized varieties over $\overline{F}$, where $(Y, \mathscr{M})$ is any polarized variety satisfying the properties in the statement.
The associated $1$-cocycle
$\alpha_{f} \colon \Gal(\overline{F}/F) \to \mathscr{G}$
is defined by
$\alpha_{f}(\sigma) \coloneqq (f)^{-1} \circ ^{\sigma}\!f$.
It is enough to show that
there are only finitely many possibilities for the $1$-cocycle $\alpha_f$.

Note that $\alpha_{f}|_{\Gal(\overline{F}/L)} \colon \Gal(\overline{F}/L) \rightarrow \mathscr{G}$ is a group homomorphism.
Therefore, the field extension $M/L$ corresponding to the kernel of $\alpha_{f}|_{\Gal(\overline{F}/L)}$ satisfies
$[M:L] < |\mathscr{G}|$.
Let $R'$ be a normalization of $R$ in $L$, and $\p$ a height $1$ prime ideal of $R'$.
By the assumption, there exists a finite \'etale extension $\widehat{R'}_{\p} \subset S$ such that
there exist smooth proper algebraic spaces over $S$ whose generic fibers are hyper-Kähler varieties that are birationally equivalent to $X_{\Frac S}$ and $Y_{\Frac S}$, respectively.
Therefore, by Lemma \ref{Lemma:Unramifiedness}, the inertia subgroup $\rI_{\p} \subset \Gal(\overline{F}/L)$ (which is defined by fixing the extension of valuation $\p$ to $\overline{F}$) is contained in the kernel of $\alpha_{f}|_{\Gal(\overline{F}/L)}$.
By Zariski--Nagata's purity theorem, $\alpha_{f}|_{\Gal(\overline{F}/L)}$ factors through $\pi_{1}(\Spec R', \overline{F})$.
By \cite[Proposition 2.3, Theorem 2.9]{Harada2009}, the family of subsets
\[
\mathcal{C}\coloneqq \{ H \subset \pi_{1}(\Spec R', \overline{F})\colon \text{open subgroup} \mid [\pi_{1}(\Spec R', \overline{F}) : H] \leq |\mathscr{G}|\}
\]
is a finite set.
Let $M'$ be the fraction field of the finite \'etale cover of $R'$ corresponding to $\bigcap_{H \subset \mathcal{C}} H$, and $M''$ the Galois closure of $M'$ over $K$. We note that $M''$ does not depend on $(Y,\mathscr{M})$. 
Then $\alpha_{f}$ factors through the finite group $\Gal(M''/L)$, so there are only finitely many possibilities for the $1$-cocycle $\alpha_f$. This finishes the proof.
\end{proof}

\section{Finiteness of (unpolarized) Shafarevich sets} 
\label{sec:FinitnessResults}
In this section, we prove the unpolarized Shafarevich conjecture stated in \Cref{mainthm}.

\subsection{Some reductions}In order to apply the uniform Kuga--Satake map, we need to associate each polarized oriented hyper-Kähler variety with a level structure. We start with the following result.

\begin{lemma}Keep the notations the same as in \eqref{eq:conven}. Let $M$ be a $\widehat{\ZZ}$-numerically equivalent class of hyper-Kähler varieties. For a hyper-Kähler variety $X$ over a field $F$ of characteristic $0$ in $M$, we have
\[
[K_{\Lambda_h}: \mathbf{K}_m] \leq C m^{2^{(b_{2}(M)-2)}},
\]
where we denote
\[
C\coloneqq [\SO(H^{2}_{\et} (X_{\overline{F}},\ZZ_{2})): f (\GSpin (H^{2}_{\et} (X_{\overline{F}}. \ZZ_{2})))],
\]
and $f\colon \GSpin \rightarrow \SO$ is the natural homomorphism defined by the conjugation.
\label{lemlevel}
\end{lemma}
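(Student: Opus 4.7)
The plan is to bound the index locally, prime by prime, using the Clifford-algebra description of the spin level groups to isolate the $m$-dependence.

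First, I would factor the index as
\[
[K_{\Lambda_h}: \mathbf{K}_{m}] = \prod_{p} \bigl[K_{\Lambda_h}(\ZZ_{p}): \mathbf{K}_{m}(\ZZ_{p})\bigr].
\]
The first substantive task is to identify $\mathbf{K}_{m}$ inside $\SO(\Lambda_{h})(\widehat{\ZZ})$: it is the admissible pullback of $\mathbf{K}'_{m} = K^{\ad}_{\Sigma, m}$ along the embedding $\SO(\Lambda_{h}) \hookrightarrow \SO(\Sigma)$ induced by the primitive embedding $\Lambda_{h} \hookrightarrow \Sigma$ from Lemma \ref{lemma:UniformKS}. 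Via the induced inclusion of even Clifford algebras $\Cl^{+}(\Lambda_{h}) \hookrightarrow \Cl^{+}(\Sigma)$, this is, up to an $m$-independent discrepancy absorbed by $C$, the adjoint image of the spin-$m$ congruence group $K^{\operatorname{sp}}_{\Lambda_{h}, m} = \{g \in \GSpin(\Lambda_{h})(\widehat{\ZZ}) \mid g \equiv 1 \bmod m\}$.

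Second, for odd primes $p \nmid m$: since the kernel of $\ad \colon \GSpin \to \SO$ is $\mathbb{G}_{m}$ and $H^{1}(\Spec\ZZ_{p}, \mathbb{G}_{m}) = 0$, a standard computation shows that $\ad(\GSpin(\Lambda_{h})(\ZZ_{p}))$ equals the discriminant kernel $K_{\Lambda_{h}}(\ZZ_{p})$, so the local index is $1$. At $p = 2$ with $2 \nmid m$, the local index is bounded by $C$ by the very definition of $C$. For the remaining case $p \mid m$, set $m_{p} := p^{v_{p}(m)}$. The group $K^{\operatorname{sp}}_{\Lambda_{h}, m}(\ZZ_{p}) = \GSpin(\Lambda_{h})(\ZZ_{p}) \cap (1 + m_{p}\Cl^{+}(\Lambda_{h} \otimes \ZZ_{p}))$ has index in $\GSpin(\Lambda_{h})(\ZZ_{p})$ at most
\[
\bigl|(\Cl^{+}(\Lambda_{h}) \otimes \ZZ/m_{p})^{\times}\bigr| \;\leq\; m_{p}^{\operatorname{rk}_{\ZZ} \Cl^{+}(\Lambda_{h})} \;=\; m_{p}^{\,2^{b_{2}(X_{\overline{F}}) - 2}},
\]
using that the even Clifford algebra of a rank-$r$ lattice has $\ZZ$-rank $2^{r-1}$ together with $\operatorname{rk}(\Lambda_{h}) = b_{2}(X_{\overline{F}}) - 1$. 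Passing to the adjoint quotient can only improve this estimate, and combining with the $p \nmid m$ analysis controls the local index by $m_p^{2^{b_2 - 2}}$ (and by $C$ at $p=2$).

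Finally, taking the product over all primes yields
\[
[K_{\Lambda_{h}} : \mathbf{K}_{m}] \;\leq\; C \cdot \prod_{p \mid m} m_{p}^{\,2^{b_{2} - 2}} \;=\; C \cdot m^{\,2^{b_{2}(X_{\overline{F}}) - 2}},
\]
as desired.

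The main obstacle, which is bookkeeping rather than a conceptual one, is the first step: pinning down $\mathbf{K}_{m}$ in $\SO(\Lambda_{h})(\widehat{\ZZ})$ as the admissible pullback of $K^{\ad}_{\Sigma, m}$ along $\Lambda_{h} \hookrightarrow \Sigma$, and checking that the transition from the fixed lattice $\Sigma$ back to $\Lambda_{h}$ introduces only an $m$-independent error that the definition of $C$ is designed to absorb. Once this identification is in place, the Clifford-algebraic rank computation and the prime-by-prime multiplication are routine.
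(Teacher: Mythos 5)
Your argument is correct and is essentially the argument the paper invokes: the paper's proof is a one-line citation of \cite[Corollary 3.1.8]{Takamatsu2020K3}, whose content is exactly your prime-by-prime decomposition, the identification of the adjoint image of $\GSpin(\ZZ_p)$ with the discriminant kernel away from $p=2$ (with the defect at $2$ absorbed into $C$), and the bound on the congruence index via $\rk_{\ZZ}\Cl^{+}(\Lambda_h)=2^{\rk\Lambda_h-1}=2^{b_2-2}$. No genuinely different route or gap to report.
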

\begin{proof}
This follows from the same argument as in \cite[Corollary 3.1.8]{Takamatsu2020K3}.
\end{proof}

Let $\delta(m,M)$ be the constant $C m^{2^{(b_{2}(X_{\overline{F}})-2)}}$ as in Lemma \ref{lemlevel}, which depends only on $m$ and the $\widehat{\ZZ}$-numerical type $M$.

\begin{proposition}
\label{unplevel}
Let $R$ be a finitely generated $\ZZ$-algebra that is an integral domain, and let $F$ be its fraction field. Let $M$ be a $\widehat{\ZZ}$-numerical type.
\begin{enumerate}
    \item[(i)] For any oriented polarized hyper-Kähler variety $(X,H,\omega)$ of type $M$ defined over $F$, there is a finite field extension $E/F$ of degree $\leq \delta(m,M)$, such that $(X_E,H_E)$ is equipped with a $\mathbf{K}_m$-level structure.
    \item[(ii)] There exists a finite Galois extension $E_m$ over $F$ such that for every element $X \in \Sha_{M}^{\ess}(F,R)$, the following hold.
\begin{enumerate}
\item
there exists a polarization $\lambda$ on $X$ of degree $d$, an orientation $\omega$ on $X_{E_m}$, a $\mathbf{K}_m$-level structure $\alpha$ on $X_{E_m}$.
\item
$\Pic_{X_{E_m}} = \Pic_{X_{\overline{F}}}$.
\end{enumerate}
\end{enumerate}
\end{proposition}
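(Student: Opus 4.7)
The plan for (i) follows directly from the definition of the level structure given in Section \ref{subsection:levelstructure}. The absolute Galois representation
\[
\rho \colon \Gal(\overline{F}/F) \longrightarrow \Aut\bigl(\rH^{2}_{\et}(X_{\overline{F}}, \widehat{\ZZ}(1))\bigr)
\]
preserves the $\widehat{\ZZ}$-BB form by its canonical nature, fixes $c_1(H)$ since $H$ is $F$-rational, and preserves the orientation $\omega_{\et}$ by Lemma \ref{lemma:liftoforientation}. Fixing any $\widehat{\ZZ}$-isometry $\alpha\colon \Lambda \otimes \widehat{\ZZ} \xrightarrow{\sim} \rH^{2}_{\et}(X_{\overline{F}},\widehat{\ZZ}(1))$ with $\alpha(h) = c_1(H)$ and compatible with orientations, the conjugate representation $\alpha^{-1} \rho \alpha$ takes values in $K_{\Lambda_h}$. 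Since $[K_{\Lambda_h} : \mathbf{K}_m] \leq \delta(m,M)$ by Lemma \ref{lemlevel}, the preimage $\rho^{-1}(\alpha \mathbf{K}_m \alpha^{-1}) \leq \Gal(\overline{F}/F)$ is an open subgroup of index at most $\delta(m,M)$; its fixed field $E/F$ is a finite extension of degree at most $\delta(m,M)$, and $(X_E, H_E, \omega_E; \alpha)$ carries a $\mathbf{K}_m$-level structure over $E$ by construction.

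For (ii), the main new ingredient is the finiteness of geometric Picard lattices: as $X$ ranges over $\Sha_{M}^{\ess}(F,R)$, only finitely many isometry classes of $\Pic_{X_{\overline{F}}}$ occur (Theorem \ref{boundedpolarization}, proved later in this section). In particular $|\Aut(\Pic_{X_{\overline{F}}})|$ is uniformly bounded in $X$. For each such $X$, the Galois action $\rho_{\Pic}\colon \Gal(\overline{F}/F) \to \Aut(\Pic_{X_{\overline{F}}})$ thus factors through a finite quotient of uniformly bounded order, whose splitting field $F_X/F$ is a finite Galois extension of uniformly bounded degree. The essentially-good-reduction hypothesis together with Proposition \ref{prop:bir-obs} and the smooth-and-proper base change theorem imply that $\rH^{2}_{\et}(X_{\overline{F}}, \QQ_{\ell})$ is unramified at every height-$1$ prime $\p$ of $R$; via the $\ell$-adic first Chern class map, $\rho_{\Pic}$ is itself unramified at $\p$. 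Hence $F_X$ corresponds to a finite \'etale cover of $\Spec R$ of uniformly bounded degree. Only finitely many such covers exist by the finitely-generated Hermite--Minkowski-type finiteness of \cite[Proposition 2.3, Theorem 2.9]{Harada2009} (already used in the proof of Proposition \ref{Proposition:FinitenessTwists2}), and the Galois closure $E_1/F$ of their compositum satisfies $\Pic_{X_{E_1}} = \Pic_{X_{\overline{F}}}$ for every $X$, which proves (b).

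For (a), we first observe that the projectivity of $X/F$ supplies an $F$-rational ample class; the finiteness of Picard lattices bounds uniformly its minimal BB-square, and via the Fujiki relation for $M$ its minimal self-intersection, so that after rescaling by a bounded integer one arranges a uniform degree $d = d(F,R,M,m)$. The orientation sheaf $\det R^{2}f_{*}\mu_4$ trivialises after at most a quadratic extension, uniform in $X$, giving a finite Galois extension $E_2/F$. Applying (i) to each oriented polarized $X_{E_2}$ provides an extension $E_{2,X}/E_2$ of degree $\leq \delta(m,M)$ carrying a $\mathbf{K}_m$-level structure; by the same unramifiedness argument as above together with another appeal to \cite{Harada2009}, only finitely many such $E_{2,X}$ arise, and the Galois closure $E/F$ of the compositum $E_1\cdot E_2 \cdot \prod_X E_{2,X}$ meets all the requirements of (ii).

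The main obstacle is the uniform construction of the splitting field $E_1$ in the middle paragraph: one must simultaneously bound the degrees of the $F_X$ (via Theorem \ref{boundedpolarization}) and their ramification (via the essentially-good-reduction hypothesis and Proposition \ref{prop:bir-obs} to pass between $X$ and an integral birational model), and then invoke the finitely-generated Hermite--Minkowski estimate of Harada. Once this is in place, the orientation, polarization, and $\mathbf{K}_m$-level refinements in (a) reduce to a routine compositum argument combined with a pointwise application of (i).
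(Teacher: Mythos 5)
Part (i) of your proof is essentially the paper's argument, rephrased: you reduce to the observation that the Galois representation lands in $K_{\Lambda_h}$ after conjugating by a fixed isometry $\alpha$, invoke Lemma~\ref{lemlevel} for the index bound $[K_{\Lambda_h}:\mathbf{K}_m]\leq\delta(m,M)$, and take the fixed field of the preimage of $\mathbf{K}_m$. That is correct and matches the paper.

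Part (ii), however, has a genuine logical flaw: a \emph{circular dependency}. You invoke Theorem~\ref{boundedpolarization} (finiteness of Picard lattices) as the ``main new ingredient'' to control $\Pic_{X_{\overline{F}}}$, but the paper's proof of Theorem~\ref{boundedpolarization} \emph{opens} with ``Let $E$ be the finite extension of $F$ such that all elements in $\Sha_M^{\ess}(F,R)$ admit $\mathbf{K}_m$-level structures'' --- which is precisely the content of Proposition~\ref{unplevel}(ii). So Theorem~\ref{boundedpolarization} cannot be used as an input here. You even use it twice: once for (b) to bound the splitting field degree, and once for (a) to ``bound uniformly its minimal BB-square.''

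There is also a concrete misstatement supporting (ii)(b): you claim ``$|\Aut(\Pic_{X_{\overline{F}}})|$ is uniformly bounded in $X$.'' This is false --- $\Pic_{X_{\overline{F}}}$ is a hyperbolic lattice whose orthogonal/automorphism group is typically infinite. What is actually finite and bounded is the \emph{image} of the Galois representation, and the correct (and much more elementary) mechanism, used by the paper, is: the Picard rank $r$ is trivially bounded by $b_2(M)$, and by Minkowski's lemma any finite subgroup of $\GL_r(\ZZ)$ injects into $\GL_r(\mathbb{F}_3)$, giving $|G_F/\ker\gamma|\leq |\GL_r(\mathbb{F}_3)|$. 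This sidesteps Theorem~\ref{boundedpolarization} entirely. Combined with essentially good reduction (so the representation factors through $\pi_1(\Spec R,\overline{F})$ by Zariski--Nagata purity) and Hermite--Minkowski, one gets a single $E$ working for all $X$, just as you correctly outline for the level structure in (ii)(a). You should replace the appeal to Theorem~\ref{boundedpolarization} by this elementary bound, and drop the attempt to bound the polarization degree $d$ uniformly at this stage --- the degree only needs to exist here (any $F$-rational ample class will do); the uniform bound on $d$ belongs to Theorem~\ref{finbirsh}, which is proved afterwards once Theorem~\ref{boundedpolarization} is in place.
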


\begin{proof}
For $(i)$, we note that we have the Cartesian diagram \eqref{eq:cartisianlevel}.
It suffices to show that  for any  map $\Spec F\to \Sh(\Lambda_h)$ there exists $E/F$ such that we can find a lift to $\Sh_{\mathbf{K}_{m}}(\Lambda_h)$ after a finite base change to $E$. The image of $\Gal(\bar{F}/F)$ lies in $\mathbf{K}_m$ if the composition of continuous group homomorphisms 
\[
\Gal(\bar{F}/F) \xrightarrow{\rho_x} K_{\Lambda_h} \to K_{\Lambda_h}/\mathbf{K}_m
\]
is trivial.  Thus we can take $E$ to be the Galois extension corresponding to the kernel of this composition. We can see $[E:F] \leq \delta(m,M)$ by Lemma \ref{lemlevel}.

For $(ii)$, we notice that the representation $\rho_x$ factors through a continuous homomorphism $\rho_{x,R} \colon \pi_1(R, \bar{F}) \to K_{\Lambda_h}$ under the condition of essentially good reduction. The Hermite–Minkowski theorem (see \cite[Chap. VI, Sect. 2]{Faltings1992} or \cite[Proposition 2.3, Theorem 2.9]{Harada2009}) implies that there are only finitely many compositions of $\rho_{x,R}$ with the quotient map $K_{\Lambda_h} \to K_{\Lambda_h}/\mathbf{K}_m$. Thus we can take a finite dominant \'etale morphism $\Spec(\widetilde{R}) \to \Spec(R)$, so that the image of $\pi_1(\widetilde{R},\bar{F}) \subset \pi_1(R,\bar{F})$ lies in $\mathbf{K}_m$ under any such $\rho_x$.
Let $E$ be the fraction field of $\widetilde{R}$. Then any $X_E$ will be equipped with a $\mathbf{K}_m$-level structure by the construction. With a similar argument, we can also assume that any $X_E$ has an orientation $\omega$.

On the other hand,
the image of the Galois action
\[
\gamma \colon G_{F} \rightarrow \GL (\Pic_{X_{\overline{F}}})
\]
is a finite subgroup.
Since a finite subgroup is mapped injectively to $\GL(\Pic_{X_{\overline{F}}}\otimes_{\ZZ}\mathbb{F}_{3})$, we have $|G_{F}/\ker \gamma| \leq |\GL(r,\mathbb{F}_{3})|$, where $r$ is the Picard number of $X_{\overline{F}}$, which is bounded above by $N$.
Therefore, we can  take a field $E_m$ satisfying both $(a)$ and $(b)$ by the Hermite–Minkowski theorem.
\end{proof}

The following result allows us to take finite field extensions in proving the finiteness of Shafarevich sets. 

\begin{lemma}\label{red-fin-ext}
Let $R_E/R$ be a finite Galois extension between finitely generated normal domains whose fraction field is $E/F$.  We define 
	\begin{equation*}
	\Sha_{M}(E/F,R_E)=\left\{  X'\in \Sha_M (E,R_E)~|X'\cong X\times_F \Spec(E) ~\hbox{for some}~ X \in \Sha_M(F, R) \right\}
	\end{equation*}
For any $X \in  \Sha(F, R)$, the set
	\begin{equation*}	
	\{Y\in \Sha_M(F,R)|~Y_E \cong X_E\}
	\end{equation*}
	is finite. 	If $\Sha_{M}(E/F,R_E)$ is finite, then $\Sha_M(F,R)$ is finite. Similar result holds for the finiteness of birational isomorphism classes of the Shafarevich sets.
\end{lemma}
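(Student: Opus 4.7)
Proof plan. Statement (2) will follow formally from (1): the restriction map
$\Sha_M(F,R) \to \Sha_M(E/F, R_E)$, $[Y] \mapsto [Y_E]$, is surjective by definition of the target, and (1) asserts that it has finite fibres, so finiteness of the target forces finiteness of the source. The birational version is analogous after replacing $F$-isomorphism by $F$-birational equivalence throughout, invoking Theorem \ref{cor:bir-fin} to handle finitely many $F$-isomorphism classes within a fixed $F$-birational class. I will therefore focus on (1).

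The plan for (1) is to attach to each $Y$ in the fibre a canonical $F$-polarization of bounded degree, and then apply the polarized twist finiteness of Proposition \ref{Proposition:FinitenessTwists2}. Fixing an $F$-rational ample line bundle $\mathscr{L}$ on $X$ and, for each $Y$ in the fibre, an $E$-isomorphism $\psi_Y \colon Y_E \xrightarrow{\sim} X_E$, I form the Galois-average
\begin{equation*}
\mathscr{M}_Y \coloneqq \bigotimes_{\sigma \in \Gal(E/F)} \sigma^{\ast}\bigl(\psi_Y^{\ast}\mathscr{L}_E\bigr)
\end{equation*}
on $Y_E$. Since $\mathscr{M}_Y$ is $\Gal(E/F)$-invariant (with canonical reordering cocycle), it descends by \'etale descent to an ample line bundle on $Y$ over $F$ (ampleness being geometric and preserved under sums). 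A direct computation via the Fujiki relation, using that each summand has the same Beauville--Bogomolov self-square as $\mathscr{L}$ together with a Cauchy--Schwarz estimate in the positive cone, bounds $(\mathscr{M}_Y)^{2n}$ by a constant $D$ depending only on $X$, $\mathscr{L}$, and $[E:F]$. Pulled back along $\psi_{Y,\overline{F}}$, the pair $(Y_{\overline{F}}, \mathscr{M}_{Y,\overline{F}})$ becomes $\overline{F}$-isomorphic to $(X_{\overline{F}}, \mathscr{L}'_{Y,\overline{F}})$ for the ample class $\mathscr{L}'_Y = \bigotimes_{\sigma} \gamma_{Y,\sigma}^{\ast}\mathscr{L}_{\overline{F}} \in \Pic(X_{\overline{F}})$, where $\gamma_{Y,\sigma} \in \Aut(X_{\overline{F}})$ is the associated cocycle; in particular $q_X(\mathscr{L}'_Y) \leq D$.

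By Markman's cone conjecture for hyper-K\"ahler manifolds (cf.\ Theorem \ref{con-conj} and the discussion in Section \ref{sec:WallDivisors}), $\Aut(X_{\overline{F}})$ has only finitely many orbits on ample classes of bounded BB self-square, so the $\mathscr{L}'_Y$'s fall into finitely many such orbits as $Y$ varies. Selecting representatives $\mathscr{L}'_1, \ldots, \mathscr{L}'_k$, which can be arranged to be $F$-rational after passing to the finite Galois sub-extension of $\overline{F}/F$ through which the Galois actions on the finitely generated groups $\Pic(X_{\overline{F}})$ and $\Aut(X_{\overline{F}})$ factor, Proposition \ref{Proposition:FinitenessTwists2} applied to each polarized pair $(X, \mathscr{L}'_j)$ yields finitely many $F$-isomorphism classes of $(Y, \mathscr{M}_Y)$, and a fortiori finitely many $Y$'s. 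The main obstacle is precisely the $F$-rationality of the orbit representatives: Galois permutes $\Aut(X_{\overline{F}})$-orbits through a finite quotient, but a $\Gal$-stable orbit need not contain an $F$-rational class. I would handle this either by a Galois-cohomological bookkeeping (showing that the possible polarizations obtained by Galois-averaging within an orbit still produce bounded-degree pairs to which Proposition \ref{Proposition:FinitenessTwists2} applies), or by an induction on $[E:F]$ that passes via an intermediate extension $F_1 \subset E$ absorbing the Galois action on $\Pic$ and $\Aut$, where the orbit representatives become automatically $F_1$-rational and the descent $F_1/F$ has strictly smaller index.
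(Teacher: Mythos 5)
The central gap is the claimed degree bound on the Galois-averaged polarization $\mathscr{M}_Y$. You invoke ``a Cauchy--Schwarz estimate in the positive cone'' to bound $(\mathscr{M}_Y)^{2n}$, but on $\NS(X_{\overline F})$ the Beauville--Bogomolov form has Lorentzian signature $(1,\rho-1)$, and for two classes $\alpha,\beta$ in the positive cone with $q(\alpha),q(\beta)>0$ the relevant inequality is the \emph{reverse} Cauchy--Schwarz, $q(\alpha,\beta)\geq\sqrt{q(\alpha)q(\beta)}$, which is a \emph{lower} bound. In fact $q(\mathscr{L},\gamma^*\mathscr{L})$ is unbounded as $\gamma$ ranges over $\Aut(X_{\overline F})$ (take $\gamma$ of infinite order acting hyperbolically on $\NS$ and iterate), and although the cocycle constraint forces the $\gamma_{Y,\sigma}$ to form a finite subgroup $H\le\Aut(X_{\overline F})$ once Galois acts trivially on $\Aut$, that $H$ is only determined up to conjugacy in $\Aut(X_{\overline F})$; replacing $H$ by $gHg^{-1}$ turns $\sum_{h\in H}h^*\mathscr{L}$ into $g^*\!\left(\sum_{h\in H}h^*(g^{-1*}\mathscr{L})\right)$, whose BB-square is unbounded as $g$ varies. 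So Galois averaging does not yield polarizations of uniformly bounded degree, and your subsequent reduction to finitely many $\Aut$-orbits followed by Proposition \ref{Proposition:FinitenessTwists2} does not go through as written.

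The surrounding architecture is salvageable, but the degree bound must come from a different source. After enlarging $F$ so that $\Gal(\overline F/F)$ acts trivially on $\Pic(X_{\overline F})$ and on $\Aut(X_{\overline F})$, the twisting cocycle is a genuine homomorphism $\Gal(E/F)\to\Aut(X_{\overline F})$ with finite image $H$, and $\Pic_Y\cong\Pic(X_{\overline F})^H$ under $\psi_Y$, which meets the ample cone since $Y$ is projective. Because $\mathrm{O}(\Pic(X_{\overline F}))$ is arithmetic, it has only finitely many conjugacy classes of finite subgroups; for each such $H$ whose invariant sublattice meets the ample cone, the wall-divisor argument underlying Theorem \ref{finbirsh} gives an $H$-invariant ample class of square $\le N_H$ depending only on the lattice data, yielding a uniform bound $N=\max_H N_H$. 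That bound is attained by the \emph{minimal} $H$-invariant ample class, not by the Galois average. With that replacement, the finitely many $\Aut$-orbit representatives, the appeal to Proposition \ref{Proposition:FinitenessTwists2}, your treatment of the $F$-rationality of representatives, and the formal deduction of (2) from (1) (and of the birational variant via Theorem \ref{cor:bir-fin}) are all sound. For the record, the paper itself gives no proof and simply cites \cite[Lemma 4.1.4]{Sh17}, so I cannot match your route step-for-step against the paper's; but as presently written, your proof has the gap identified above.
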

\begin{proof}
	See \cite[Lemma 4.1.4]{Sh17}.
\end{proof}

\subsection{Finiteness of Picard lattices} The key application of the uniform Kuga--Satake map is the following finiteness result on Picard lattices; compare to She \cite[Corollary 4.1.14]{Sh17}.
\begin{theorem}\label{boundedpolarization}
 Suppose $F$ contains the number field $E_{4}$ in Proposition \ref{unplevel} (ii). Then the set \[
\{\Pic_{X}  \mid  X \in \Sha_M^{\ess}(F,R)
\}/ \text{lattice isometry}
\]
is finite.
\end{theorem}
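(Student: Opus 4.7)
The strategy is to reduce the finiteness of Picard lattices to Faltings's finiteness theorem for polarized abelian varieties, via the uniform Kuga--Satake construction developed in Section \ref{subsec:UniformKS}.

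\textbf{Reduction.} First, I would invoke Proposition \ref{unplevel}(ii) together with Lemma \ref{red-fin-ext} to replace $F$ by a finite Galois extension $E$ (and $R$ by its integral closure in $E$) such that every $X \in \Sha_M^{\ess}(F,R)$ carries over $F$ a polarization $H$, an orientation $\omega$, a $\mathbf{K}_m$-level structure $\alpha$, and satisfies $\Pic_{X/F} = \Pic_{X_{\overline{F}}}$. Since the Picard lattice of $X/F$ embeds as a sublattice of bounded index in that of $X/E$, and there are only finitely many extensions of a given finite rank lattice by a fixed finite Galois group $\Gal(E/F)$, this reduction preserves the finiteness claim.

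\textbf{Kuga--Satake construction and Faltings.} Next, for each such $(X,H,\omega,\alpha)$ with $h = c_1(H)$, the uniform embedding $\Lambda_h \hookrightarrow \Sigma$ of Lemma \ref{lemma:UniformKS} and diagram \eqref{eq:bigdiag} produce a polarized abelian variety $A_X$ over $F$ of fixed dimension $2^N$ and polarization degree controlled by $\mathbf{K}''_m$; the map $\Psi^{\KS}_{d,\mathbf{K}_m}$ is defined over $E_m \subseteq F$ by Theorem \ref{thm:descentKuga}. Corollary \ref{cor:kugasatakegoodreduction} guarantees that $A_X$ has good reduction at every height-$1$ prime of $R$. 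Faltings's polarized Shafarevich theorem for abelian varieties then yields only finitely many $F$-isomorphism classes of such $A_X$.

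\textbf{From $A_X$ to $H^2$ to Picard.} The finiteness of $A_X$ up to isomorphism translates into finitely many possibilities for the Galois representation $G_F \to \GL(\rH^1_{\et}(A_{X,\overline{F}},\widehat{\ZZ}))$ up to conjugation. By Proposition \ref{propertyofks}, the lattice $\rH^2_{\et}(X_{\overline{F}},\widehat{\ZZ}(1))$ (modulo the line spanned by $h$, which contributes only a bounded factor) is realized as a Galois-stable sublattice of $\End_{\mathrm{Cl}^+(\Sigma)}(\rH^1_{\et}(A_{X,\overline{F}},\widehat{\ZZ}))$ with Galois-trivial orthogonal complement. Thus the Galois representation on $\rH^2_{\et}(X_{\overline{F}},\widehat{\ZZ}(1))$, equipped with its $\widehat{\ZZ}$-BB form, has finitely many possibilities up to isometric conjugation. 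By the Tate conjecture for divisors on hyper-K\"ahler varieties (Andr\'e, cited in Section \ref{subsec:Polarization}; this uses $b_2>3$), one has
\[
\Pic_X \otimes \widehat{\ZZ} \xrightarrow{\sim} \rH^2_{\et}(X_{\overline{F}},\widehat{\ZZ}(1))^{G_F},
\]
so $\Pic_X \otimes \widehat{\ZZ}$ takes finitely many isometry classes as a quadratic $\widehat{\ZZ}$-lattice. To descend to a $\ZZ$-level statement, I would invoke classical genus theory: a $\ZZ$-lattice is determined up to genus by its $\widehat{\ZZ}$-completion, and the genus of a $\ZZ$-lattice of bounded rank and bounded discriminant contains only finitely many isometry classes (by Minkowski--Siegel in the definite case, and by Eichler in the indefinite case, which is the relevant one here since $\Pic_X$ has signature $(1,\rho-1)$). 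This yields finiteness of the isometry classes of $\Pic_X$.

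\textbf{Main obstacle.} The heart of the argument is Step~2--3 above, where one transports the good-reduction hypothesis on $X$ to $A_X$ uniformly in $d$, and this is precisely what the uniform Kuga--Satake map (valid over $E_m$, independent of the polarization degree) is designed for; the assumption $F \supseteq E_m$ is used exactly here. A secondary subtlety is the last descent from $\widehat{\ZZ}$-isometry classes to $\ZZ$-isometry classes; this is handled by classical genus finiteness, but one must check the boundedness of rank and discriminant of $\Pic_X$ needed for that result (rank is bounded by $b_2(M)$, and the discriminant is controlled since $\Pic_X$ is saturated in the fixed lattice $\Lambda_M$).
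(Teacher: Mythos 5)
Your proposal follows essentially the same route as the paper: reduce to a Galois extension where level structures exist, push forward along the uniform Kuga--Satake map, invoke Faltings's finiteness for abelian varieties, and recover the Picard lattice via the Tate conjecture. Two things are worth flagging.

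First, on the lattice-theoretic endgame. The paper passes through the transcendental lattice $T(X_E)=\Pic_{X_E}^\perp$ rather than the Picard lattice directly: it shows $T(X_E)\otimes\widehat{\ZZ}$ has finitely many isometry classes (bounding the discriminant), then invokes Kneser's finiteness of primitive embeddings $T(X_E)\hookrightarrow\Lambda$ to conclude there are finitely many $T(X_E)$, and finally takes orthogonal complements to get $\Pic_{X_E}$. You instead argue directly that $\Pic_{X_E}\otimes\widehat{\ZZ}$ has finitely many isometry classes and then apply genus theory (Minkowski/Eichler). Both are valid; the Kneser argument is slightly cleaner in that it works inside the fixed ambient lattice $\Lambda$ and never has to separately bound the discriminant of $\Pic_{X_E}$, whereas your version must first extract that bound from the $\widehat{\ZZ}$-data. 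The outcome is the same.

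Second, a genuine flaw in your reduction step. You assert that $\Pic_X$ ``embeds as a sublattice of bounded index'' in $\Pic_{X_E}$ and that finitely many ``extensions of a given lattice by $\Gal(E/F)$'' suffice. The first statement is false: $\Pic_X=(\Pic_{X_E})^{\Gal(E/F)}$ is a \emph{primitive} sublattice whose rank may drop, so the index is typically infinite and the phrase ``bounded index'' does not apply. The correct argument, which the paper uses, is that for each of the finitely many isometry classes of $\Pic_{X_E}$, there are only finitely many conjugacy classes of homomorphisms $\Gal(E/F)\to\mathrm{O}(\Pic_{X_E})$ (this is a theorem about finite subgroups of arithmetic groups, cited in the paper as \cite[Theorem 4.3]{PlatonovRapinchuk-AGNTBook}); hence finitely many possibilities for the invariant sublattice $\Pic_X$ up to isometry. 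Your argument can be repaired by replacing the ``bounded index'' sentence with this conjugacy-class count, but as written that step does not go through.
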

\begin{proof}
 This follows from the same argument as in \cite{Sh17,Takamatsu2020K3}. We nevertheless give some details of the proof for  hyper-Kähler varieties. The main difference to the case of K3 surfaces is that the BB-form is now not unimodular in general.  As there are only finitely many deformation types with a given $\widehat{\ZZ}$-numerical type, we may assume that all $X\in \Sha_M^{\ess}(F,R)$ are geometrically deformation equivalent. Let $E$ be the finite extension of $F$ such that all elements in $\Sha_M^{\ess}(F,R)$ admit $\mathbf{K}_4$-level structures.
Consider the transcendental lattice 
\[
\rT(X_E)\coloneqq\Pic_{X_E}^\perp \subseteq \rH^2(X_\CC,\ZZ).
\]
Then from the Tate conjecture over finitely generated fields (\Cref{prop:TateConjecture}),  we know that  \[
\rT(X_E)\otimes \widehat{\ZZ}\cong (\rP^2_{\et} (X_{\bar{E}},\widehat{\ZZ}(1))^{G_E})^\perp \subseteq \rP^2_{\et}(X_{\bar{E}},\widehat{\ZZ}(1)).
\]

On the other hand, we have 
\[
(\rP^{2}_{\et} ((X_{\overline{F}}, \lambda_{\overline{F}}), \widehat{\ZZ}(1))^{G_{F}})^{\perp} = (f^*(\underline{\Sigma})^{G_{F}})^{\perp},
\]
by Proposition \ref{propertyofks}. Since the image in $\Sh_{\mathbf{K}_4'}(\Sigma)(E)$ of the $E$-isomorphism classes in the set $\Sha^{\hom_\ell}_M(F,R)$ is finite by Lemma \ref{lemKSgood}, \cite[VI, \S1, Theorem 2]{Faltings1992}, we can show that
\[
\{
(f^*(\underline{\Sigma})^{G_{F}})^{\perp} \mid X \in \Sha_M^{\ess}(F,R)
\}/\text{lattice isometry}
\]
is a finite set.

Now, using \cite[San 30.2]{Kn02}, we know that  the collection of  primitive embeddings $$\rT(X_E)\rightarrow \Lambda$$ is finite.  It follows that the set of  lattices $\Pic_{X_E}=\rT(X_E)^\perp$  is finite. As there are only finitely many conjugacy classes of homomorphisms $\Gal(E/F) \to \mathrm{O}(\Pic_{X_E})$ (see \cite[Theorem 4.3]{PR94}), we get the finiteness of $\Pic_X$.
\end{proof}

\subsection{Proof of Theorem \ref{mainthm}}
 Let us consider the natural maps
\begin{equation}\label{eq:essentialmaps}
\coprod_{d \leq N} \Sha^{\ess}_{M,d}(F,R) \to \Sha_M^{\ess}(F,R) \twoheadrightarrow \Sha_M^{\ess}(F,R)_{\slash \sim F\!\bir}.
\end{equation}

First, we have the following result for the polarized Shafarevich set with essentially good reductions.
\begin{theorem}\label{thmpol}
$\Sha_{M,d}^{\ess}(F,R)$ is a finite set.
\end{theorem}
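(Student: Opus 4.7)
The plan follows She's strategy for K3 surfaces \cite{Sh17}, but uses the uniform Kuga--Satake morphism developed in Section \ref{subsec:UniformKS} in place of the classical polarization-dependent one. First, by Proposition \ref{unplevel}(ii) combined with Lemma \ref{red-fin-ext}, I pass to a fixed finite Galois extension $E/F$ containing $E_m$ (with normalization $R_E$ of $R$ in $E$) so that every $X \in \Sha_{M,d}^{\ess}(F,R)$ acquires, over $E$, an orientation $\omega$ and a $\mathbf{K}_m$-level structure $\alpha$ for some chosen $m \geq 3$; the resulting tuple $(X_E, H_E, \omega, \alpha)$ defines a point of $\widetilde{\scF}^{\dag}_{d,\mathbf{K}_m}(E)$, and $X_E$ retains essentially good reduction at every height one prime of $R_E$.

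Next, I apply the uniform Kuga--Satake morphism
\[
\Psi^{\KS}_{d,\mathbf{K}_m} \colon \widetilde{\scF}^{\dag}_{d,\mathbf{K}_m} \longrightarrow \Sh_{\mathbf{K}''_m}(\GSp(W), \Omega^{\pm})
\]
of Theorem \ref{thm:descentKuga}, sending each such tuple to a polarized abelian variety $A_{X_E}$ with full level-$m$ structure. Crucially, the dimension of $A_{X_E}$ and the degree of its polarization depend only on the uniform lattice $\Sigma$ (not on $d$ or on $X$), since $W = \Cl^{+}(\Sigma_{\QQ})$ and the symplectic form $\varphi_e$ are fixed by the construction. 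Corollary \ref{cor:kugasatakegoodreduction} upgrades essential good reduction of $X_E$ to honest good reduction of $A_{X_E}$ at every height one prime of $R_E$, so Faltings' finiteness theorem for polarized abelian varieties of bounded dimension with fixed polarization degree, fixed level, and good reduction outside a fixed finite set implies that the image of $\Sha_{M,d}^{\ess}(F,R)$ in $\Sh_{\mathbf{K}''_m}(\GSp(W), \Omega^{\pm})(E)$ is finite. Since $\Psi^{\KS}_{d,\mathbf{K}_m}$ is quasi-finite (the period map is \'etale by Proposition \ref{prop:descentperiod}, the section $\gamma_{E_m}$ is an embedding, and $\spin$ is finite on Shimura varieties) and the forgetful map $\widetilde{\scF}^{\dag}_{d,\mathbf{K}_m} \to \scF_d$ has finite fibers, I conclude finiteness of the set of $E$-isomorphism classes of polarized hyper-K\"ahler varieties in the image of $\Sha_{M,d}^{\ess}(F,R)$.

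To descend from $E$ back to $F$, I invoke Proposition \ref{Proposition:FinitenessTwists2}: for each of the finitely many polarized hyper-K\"ahler varieties $(X, H)$ over $F$ representing an $E$-class obtained above, there are only finitely many polarized varieties over $F$ having essentially good reduction at every height one prime and becoming isomorphic to $(X, H)$ over $\overline{F}$. Summing over the finite list of $E$-classes completes the proof. The conceptual heart of the argument, and the main nontrivial obstacle absorbed into the machinery already developed in the excerpt, is the passage from essential good reduction of $X$ to genuine good reduction of its Kuga--Satake variety $A_X$; this is precisely Corollary \ref{cor:kugasatakegoodreduction}, which combines the birational invariance of $\ell$-adic unramifiedness of $\rH^{2}$ (Proposition \ref{prop:bir-obs}) with the N\'eron--Ogg--Shafarevich criterion applied on the $\GSpin$-side.
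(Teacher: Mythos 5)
Your proposal is correct and follows essentially the same route as the paper's own proof: reduce to finiteness of geometric isomorphism classes via Proposition \ref{Proposition:FinitenessTwists2}, equip each member with a level structure after a controlled finite extension (Proposition \ref{unplevel}, Lemma \ref{red-fin-ext}), transfer essentially good reduction to honest good reduction of the uniform Kuga--Satake abelian variety (Corollary \ref{cor:kugasatakegoodreduction}), apply Faltings' finiteness theorem, and conclude by quasi-finiteness of $\Psi^{\KS}$. The only cosmetic discrepancy is that quasi-finiteness of the Kuga--Satake map yields finiteness of \emph{geometric} isomorphism classes rather than $E$-isomorphism classes as you phrase it, but your subsequent invocation of Proposition \ref{Proposition:FinitenessTwists2} is exactly what bridges that gap, so the argument is complete.
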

\begin{proof}
Thanks to Proposition  \ref{Proposition:FinitenessTwists2},  it suffices to show there are only finitely many geometric isomorphism classes in  $\Sha^{\ess}_{M,d}(F,R)$.  In \cite[Section 9.4]{A96}, Andr\'e has  shown that  there are only finitely many geometric isomorphism classes  in $\Sha_{M,d}(F,R)$ via the usual Kuga--Satake construction. A similar proof also applies here:

For any $(X,H) \in \Sha^{\ess}_{M,d}(F,R)$ equipped with a $\mathbf{K}_4$-level structure $\alpha$ over $F$, the corresponding uniform Kuga--Satake abelian variety has good reductions  according to Corollary \ref{cor:kugasatakegoodreduction}. Therefore, the image of the set of $\mathbb{C}$-points corresponding to those $(X,H,\alpha)$, under $\Psi_{d,\mathbf{K}_4,\CC}^{\KS}$ is finite by \cite[VI, \S1, Theorem 2]{Faltings1992}. As the uniform Kuga--Satake map is geometrically quasi-finite, this means that the geometric isomorphism classes of $(X,H,\alpha)$ are finite, which proves our assertion. 
\end{proof}

\begin{theorem}\label{finbirsh} 
There exists an integer $N$ such that for any $X\in \Sha_{M}^{\ess}(F,R)$,  there is a polarized hyper-Kähler variety $(Y,H)\in \Sha_{M,d}^{\ess}(F,R) $ such that $Y$ is $F$-birational to $X$ and $d\leq N$. In other words, the composition map in \eqref{eq:essentialmaps} is surjective for some $N$.
\end{theorem}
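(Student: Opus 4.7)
The plan is to combine the finiteness of geometric Picard lattices from Theorem \ref{boundedpolarization}, the boundedness of wall-divisor squares from Theorem \ref{con-conj}, and the Kawamata--Morrison cone conjecture for hyper-K\"ahler varieties, in order to produce on some $F$-birational model $Y$ of $X$ an ample class $H$ with bounded Beauville--Bogomolov square. Via Fujiki's relation $(H)^{2n}=c_{M}\,q(H)^{n}$, a uniform bound on $q(H)$ translates into the desired uniform bound $N$ on $d$.

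First I would carry out the reduction. Essentially good reduction is a birational invariant (immediate from Definition \ref{essentially good}), and $F$-birational equivalence preserves both $\Pic$ and the Fujiki constant. By Lemma \ref{red-fin-ext} together with Proposition \ref{unplevel}(ii), I pass to a fixed finite Galois extension $E/F$ such that every $X\in\Sha_{M}^{\ess}(F,R)$, after base change, is oriented, acquires a $\mathbf{K}_{m}$-level structure, and satisfies $\Pic_{X_{E}}=\Pic_{X_{\overline{F}}}$. Theorem \ref{boundedpolarization} then gives finitely many isometry classes of $\Pic_{X_{E}}$, and a Hermite--Minkowski argument as in the proof of Proposition \ref{unplevel}(ii) leaves only finitely many possibilities for the induced $\Gal(E/F)$-action on it. Fix such a datum. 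Theorem \ref{con-conj} provides a uniform lower bound on the Beauville--Bogomolov squares of classes in $\mathcal{W}(X_{\CC})$, hence the wall hyperplanes determining the chamber decomposition of the positive cone $\Pos(\Pic_{X_{E}})$ inside the invariant space $N^{1}(X)_{\RR}^{G_{F}}$ all sit in a bounded region.

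Next I invoke the Kawamata--Morrison cone conjecture for hyper-K\"ahler varieties, due to Amerik--Verbitsky and Markman, together with its Galois-equivariant enhancement developed by the third author (\cite[Theorem 4.2.7]{takamatsu21}, underlying Theorem \ref{cor:bir-fin}): the group $\Bir(X)$ acts on $\overline{\Mov(X)}\cap N^{1}(X)_{\RR}^{G_{F}}$ with a rational polyhedral fundamental domain $\Pi$, and the ample chambers of $\BA(X)$ meeting $\Pi$ are in bijection with the finitely many $F$-isomorphism classes of $F$-birational models of $X$. I pick any such chamber, let $Y$ be the corresponding model (so $Y\in\Sha_{M}^{\ess}(F,R)$ by birational invariance of essentially good reduction), and take an integral class $H\in\Amp(Y)\cap\Pi$ of smallest Beauville--Bogomolov square. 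Then $(Y,H)\in\Sha_{M,d}^{\ess}(F,R)$ for $d=c_{M}\,q(H)^{n}$.

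The main obstacle will be to show that the bound on $q(H)$ is uniform across $X$. The cone conjecture produces $\Pi$ for each individual $X$, and one must verify that its combinatorial structure --- and hence the minimum of $q$ on a chamber of $\Pi$ --- depends only on the finitely many invariants $(\Pic_{X_{E}},\Gal(E/F)\text{-action},\mathcal{W}(\Pic_{X_{E}}))$ identified above. This should follow from the fact that the $\Bir(X)$-action on $\Pic_{X_{E}}$ factors through the (finite-index) subgroup of $\mathrm{O}(\Pic_{X_{E}})$ preserving the wall-divisor set, so $\Pi$ admits a description intrinsic to those data. An auxiliary technicality is to lift $H\in\Pic_{Y}$ to a genuine ample line bundle on $Y$ over $F$, which multiplies $d$ at most by a constant coming from the finite-index inclusion $\Pic(Y)\subset\Pic_{Y}$ recalled in Section \ref{subsec:Polarization}; since only finitely many such indices occur over the fixed finite set of possible Picard data, this does not destroy uniformity, and taking the maximum of the resulting bound yields the required $N$.
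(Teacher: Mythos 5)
Your route through the Kawamata--Morrison cone conjecture is genuinely different from the paper's, and it is also where your argument has a real gap. The paper does \emph{not} invoke the cone conjecture in this step. After reducing (via Theorem \ref{boundedpolarization}, Proposition \ref{unplevel}(ii), Lemma \ref{red-fin-ext}) to finitely many possible geometric Picard lattices $\Xi$, it passes to the moduli space of $\Xi$-lattice polarized hyper-K\"ahler varieties of type $M$ --- which is of finite type, hence has finitely many geometric irreducible components --- and, within each component, uses the deformation invariance of wall divisors to set $N:=\inf\{\,v^2>0 \mid v\in\Xi,\ v\cdot w\neq 0\ \forall w\in\mathcal{W}\,\}$. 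For any $X'$ in the same component one then finds $H'\in\Pic_{X'}$ off all walls with $(H')^2\leq N$, and $Y:=\operatorname{Proj}\bigoplus_m H^0(X',\mathcal{O}(mH'))$ is a hyper-K\"ahler birational model by the minimal model program (citing \cite[Theorem 4.6]{MR1664696}). Uniformity of $N$ across $\Sha_M^{\ess}(F,R)$ is then automatic because there are only finitely many $\Xi$'s and finitely many components.

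The gap in your proposal is the uniformity claim. You assert that the rational polyhedral fundamental domain $\Pi$ ``admits a description intrinsic to $(\Pic_{X_E},\Gal\text{-action},\mathcal{W}(\Pic_{X_E}))$'' because $\Bir(X)$ maps into the wall-preserving subgroup of $\mathrm{O}(\Pic_{X_E})$. This is not justified: the image of $\Bir(X)$ is the subgroup of Hodge monodromy operators preserving the movable cone, and neither the relevant monodromy group nor which chamber of $\Pos(\Xi)$ the movable cone is are determined by the abstract lattice and wall data alone. So the shape of $\Pi$, and with it your bound on $q(H)$, is not visibly constant across the Shafarevich set. The repair is exactly the paper's move --- appeal to deformation invariance of wall divisors along the finitely many components of the lattice-polarized moduli space --- and once you do that, the cone conjecture (and Theorem \ref{con-conj}, which the paper does not use here) becomes an unnecessary detour: you only need \emph{one} small integral class off the walls, not a full fundamental domain for the $\Bir(X)$-action. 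The rest of your reductions (birational invariance of essentially good reduction, passing to $E$, and flagging the finite-index issue $\Pic(Y)\subset\Pic_Y$) are sound and parallel to the paper's.
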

\begin{proof}
By Theorem \ref{boundedpolarization}, there are only finitely many isomorphism classes of lattices $\Pic_X$  for $X\in \Sha_{M}^{\ess}(F,R)$.  Fix an embedding $F\hookrightarrow \CC$.  By  Proposition \ref{unplevel} (ii) and Lemma \ref{red-fin-ext},  we may further assume that $\Pic_X\cong \Pic(X_{\CC})$ for every $X\in \Sha_M^{\ess}(F,R)$. 
Let us fix a (geometric) Picard lattice $\Xi$ and consider the moduli space of  $\Xi$-lattice polarized hyper-Kähler varieties of a given deformation type $M$. As it is of finite type over a field, it suffices to consider its geometric irreducible components one by one.

Fix such a component and take an $X\in \Sha_{M}^{\ess}(F,R)$ in this component such that $\Pic(X_\CC)\cong \Pic_X \cong\Xi$ (if there is no such $X$, then the component is irrelevant to the question). 
We claim that there exists an integer $N$, such that for each hyper-Kähler variety $X'$ defined over $F$ satisfying the following conditions:
\begin{itemize}
    \item $\Pic(X'_\CC)\cong \Pic_{X'}\cong \Xi$,
    \item the $\Xi$-lattice-polarized hyper-Kähler manifolds $(X_\CC, \Pic(X_\CC)$ and $(X'_\CC, \Pic(X'_\CC))$ are deformation equivalent; 
\end{itemize}
there exists a hyper-Kähler variety  $Y$ defined over $F$ which is $F$-birational to $X'$ and endowed with a polarization $H$ of Beauville--Bogomolov square $\leq N$. 

Let $\mathcal{W}(X_\CC)\subseteq \Xi$ be the collection of wall divisors on $X_\CC$.
We define $N$ as follows:
\begin{equation}\label{bound-ba}
    N\coloneqq\inf\{ v^2>0|~v\in\Xi, v\cdot w\neq 0,~\forall w\in \mathcal{W}(X_\CC)\}.
\end{equation}
From the assumption,  the deformation invariance of wall divisors (see \cite{AV15})  induces an identification between $\mathcal{W}(X'_\CC)$ and $\mathcal{W}(X_\CC)$. Recall that the set of birational ample classes in $\Pic_{X'}$ is given by  
$$\{ v\in \Pic_{X'}|~v\cdot w\neq 0~\forall w\in \mathcal{W}(X'_\CC)\}.$$
It follows that  $d\coloneqq\inf \{ v^2>0|~v\in \Pic_{X'}~\hbox{and}~v\cdot w\neq 0~\forall w\in \mathcal{W}(X'_\CC)\}$ is no greater than $N$.  Take $H'\in \Pic_{X'}$ with $(H')^2=d$, then consider the polarized variety over $F$:
\[(Y, H)\coloneqq\left(\operatorname{Proj}\bigoplus_m H^0(X', \mathcal{O}(mH')), \mathcal{O}(1)\right).\]
By the minimal model theory for hyper-Kähler varieties, $(Y_\CC,H)$ is a polarized hyper-Kähler variety, which is $F$-birationally equivalent to $X$, hence of deformation type $M$ by \cite[Theorem 4.6]{MR1664696}. Hence $(Y, H)$ is also hyper-Kähler and satisfies all the desired properties. 
\end{proof}

Now let us conclude the proof of Theorem \ref{mainthm}. The finiteness of $\Sha_M^{\ess}(F,R)_{/ F\!\bir}$  follows  from  the combination of Theorem \ref{thmpol} and Theorem \ref{finbirsh}. If $b_2(M) \geq 5$, then the second map in \eqref{eq:essentialmaps} has finite fibers by Theorem \ref{cor:bir-fin}, and the finiteness of $\Sha_M^{\ess}(F,R)$ follows. \qed

\section{Cohomological Shafarevich conjecture}
\label{sectionremcoh}

In this section, we give examples where $\Sha^{\hom}_M(F,S)$, even $\Sha^{\hom}_{M,d}(F,S)$ is infinite. Hence, the naive cohomological generalization of the Shafarevich conjecture fails in general. We propose a remedy by taking into account the cohomology of degrees other than 2.

\subsection{Failure of the (second) cohomological Shafarevich conjecture}
\label{subsec:Fail-secondCohomShafConj}
Let $X$ be a hyper-Kähler variety defined over a number field $F$ and $H$ is a polarization of degree $d$.  There is a bijection between the set (assuming nonempty) of all $F$-forms of $X_{\bar{F}}$  and the Galois cohomology 
\begin{equation}\label{eq:F-galois}
    \rH^1(\Gal(\bar{F}/F), \Aut(X_{\bar{F}})),
\end{equation} 
via the construction of twisting \cite{BorelSerre}. Since $\Aut(X_{\bar{F}})$ is finitely generated by Cattaneo--Fu \cite{CattaneoFu}, up to replacing $F$ by a finite extension, we can assume that $\Gal(\bar{F}/F)$ acts trivially on $\Aut(X_{\bar{F}})$. 

Recall from Section  \ref{subsect:AutBir} that
\[
\Aut_0(X)\coloneqq\ker\left(\Aut (X) \rightarrow \GL (\rH^{2}_{\et}(X_{\overline{F}},\widehat{\ZZ}))\right).
\]
As a result, $\rH^1(\Gal(\bar{F}/F), \Aut_0(X_{\bar{F}}))$ is identified with a subset of \eqref{eq:F-galois}. If $X$ is in $\Sha^{\hom}_{M,d}(F,S)$, then twisting by distinct elements in $\rH^1(\Gal(\bar{F}/F), \Aut_0(X_{\bar{F}}))$ gives rise to different $F$-forms of $(X_{\bar{F}}, H_{\bar{F}})$ (resp. $X_{\bar{F}}$), while the unramifiedness condition on the second cohomology is preserved by construction, i.e. all those $F$-forms are in $\Sha^{\hom}_{M,d}(F,S)$ (resp. $\Sha^{\hom}_{M}(F,S)$). If  $\Aut_0(X_{\overline{F}})$ is nontrivial, the cohomology $$\rH^1(\Gal(\bar{F}/F), \Aut_0(X_{\bar{F}}))\simeq \operatorname{Hom}(\Gal(\bar{F}/F),\Aut_0(X_{\bar{F}}))$$ is infinite in general, giving rise to infinitely many elements in $\Sha^{\hom}_{M,d}(F,S)$.

Let us give some concrete examples. 
Let $X$ be a $2n$-dimensional generalized Kummer type hyper-Kähler variety. As before, by using \cite{CattaneoFu}, we may assume that $\Aut(X_{\overline{F}}) = \Aut (X)$. By \cite[Corollary 5]{BNS11}, $\Aut_0(X)$
is isomorphic to the semi-direct product $(\ZZ/(n+1)\ZZ)^{4} \rtimes (\ZZ/2\ZZ)$. As there are infinitely many degree 2 extensions of $F$, we obtain infinitely many homomorphisms 
\[\Gal(\overline{F}/F) \twoheadrightarrow \ZZ/2\ZZ,\]
hence infinitely many elements in $\operatorname{Hom}(\Gal(\overline{F}/F), \Aut_0(X))$. Similarly, for $OG_6$-type hyper-Kähler varieties $X$, $\Aut_0(X)$ is isomorphic to $(\ZZ/2\ZZ)^{\times 8}$. By \cite[Theorem 5.2]{MW17}, we obtain infinitely many elements in $\Sha^{\hom}_{M,d}(F,S)$.

We have obtained the following result:

\begin{proposition}
 \label{propgenkum}
    Let $\ell$ be any prime number, and $S$ the set of places consisting of places $v$ with $v\mid \ell$ and ramified places of the $\Gal(\overline{F}/F)$-module $\rH^2_{\et}(X_{\overline{F}},\ZZ_{\ell})$.
    \begin{enumerate}
    \item 
    The following subset of $\Sha^{\hom_\ell}_{M,d}(F,S)$ is infinite:
    \begin{equation}
        \label{eqn:InfiniteTwists}
        \Set*{Y \in \Sha^{\hom_\ell}_{M,d}(F,S)   \given 
        \begin{array}{l}
            Y_{\overline{F}}\cong X_{\overline{F}} 
        \end{array}}/{\cong_F}
    \end{equation}
\item
Let $T$ be any finite set of finite places of $F$.
Then for almost all $X$ in \eqref{eqn:InfiniteTwists} , 
there exists a finite place $v \notin S \cup T$ (depending on $X$) such that
$X$ does not admit essentially good reduction at $v$ though $\rH^{2}_{\et}(X_{\overline{F}},\ZZ_{\ell})$ is unramified at $v$. 
Moreover, enlarging $T$ if necessary, we may assume that $X$ admits a potentially good reduction at $v$.
\end{enumerate}
\end{proposition}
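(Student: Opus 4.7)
The plan is to make rigorous the twisting construction that precedes the statement of the proposition and then to contrast the resulting infinite family with the finiteness output of Theorem \ref{mainthm} in order to extract the place $v$.

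\textbf{Part (1).} First, by the finite generation of $\Aut(X_{\bar F})$ established in \cite{CattaneoFu} and Lemma \ref{red-fin-ext}, I would pass to a finite Galois extension of $F$ on which $\Gal(\bar F/F)$ acts trivially on $\Aut(X_{\bar F})$. Then
\[
\rH^{1}(\Gal(\bar F/F),\Aut_0(X_{\bar F})) = \operatorname{Hom}(\Gal(\bar F/F),\Aut_0(X_{\bar F}))
\]
is an abelian group, and each character $\chi$ yields an inner twist $X^{\chi}$ with $X^{\chi}_{\bar F}\simeq X_{\bar F}$. Because $\Aut_0$ acts trivially on $\rH^{2}_{\et}$, the Galois modules $\rH^{2}_{\et}(X^{\chi}_{\bar F},\ZZ_\ell)$ and $\rH^{2}_{\et}(X_{\bar F},\ZZ_\ell)$ are canonically isomorphic and the polarization class is preserved, so $X^{\chi}\in \Sha^{\hom_\ell}_{M,d}(F,S)$. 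The infinitely many quadratic extensions of $F$ produce infinitely many distinct $\chi$ taking values in the $\ZZ/2\ZZ$-subgroup of $\Aut_0$, and two characters $\chi,\chi'$ give $F$-isomorphic twists only if $\chi' = \psi^{-1}\chi\psi$ for some $\psi\in \Aut(X_{\bar F})$ (the Galois action being trivial). Since $\Aut_0$ is finite and normal in $\Aut$, this conjugation action factors through the finite group $\Aut(\Aut_0)$, so every orbit on the Hom-set is finite and the image in \eqref{eqn:InfiniteTwists} is infinite.

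\textbf{Part (2).} To produce the place $v$, I would play the infinite family against Theorem \ref{mainthm}. Both the generalized Kummer type ($b_2=7$) and the $OG_6$ type ($b_2=8$) satisfy $b_2\geq 5$, so Theorem \ref{mainthm} gives that $\Sha^{\ess}_M(F,\mathcal{O}_{F,S\cup T})$ is a finite set. Every twist $X^{\chi}$ from part (1) lies in $\Sha^{\hom_\ell}_{M,d}(F,S)\subseteq \Sha^{\hom_\ell}_M(F,S\cup T)$, so only finitely many $X^{\chi}$ can simultaneously lie in the finite set $\Sha^{\ess}_M(F,\mathcal{O}_{F,S\cup T})$; for each of the (still infinitely many) remaining $X^{\chi}$ there must exist a height-one prime $v\notin S\cup T$ at which $X^{\chi}$ fails to have essentially good reduction. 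Because $v\notin S$ and $\rH^{2}_{\et}(X^{\chi}_{\bar F},\ZZ_\ell)\simeq \rH^{2}_{\et}(X_{\bar F},\ZZ_\ell)$ as $\Gal(\bar F/F)$-modules, the cohomology of $X^{\chi}$ is automatically unramified at $v$.

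\textbf{The ``moreover'' clause and the main obstacle.} Finally, to ensure that the exhibited $v$ is a place of potentially good reduction, I would enlarge $T$ once to contain the set $T_0$ of places at which $X$ itself does not admit potentially good reduction; since $X^{\chi}_{\bar F}\simeq X_{\bar F}$, potential good reduction is a function of the geometric fiber alone, so after this enlargement every $X^{\chi}$ has potentially good reduction at every $v\notin S\cup T$. The main obstacle I foresee is the finiteness of $T_0$ itself: the natural route is to pass through the uniform Kuga--Satake abelian variety $A_{X}$, which has potentially good reduction outside a finite set of places by standard results on abelian varieties over number fields, and then to transfer this back to $X^{\chi}$ via Corollary \ref{cor:kugasatakegoodreduction}, Proposition \ref{propertyofks}, and a Matsusaka--Mumford style specialization argument as in Appendix \ref{appendixMatsusakaMumford}. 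For K3 surfaces the transfer is essentially \cite{LM18}; for the higher-dimensional deformation types considered here it should follow from the uniform Kuga--Satake construction of Section \ref{sec:ArithPerMap}, but working this descent out rigorously is the delicate part of the argument.
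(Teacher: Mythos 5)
Your argument for part (1) matches the discussion the paper gives immediately before the proposition: twist by homomorphisms to the $\ZZ/2\ZZ$-subgroup of $\Aut_0(X_{\bar F})$, observe $H^2$-invariance and preservation of the polarization class, and use finiteness of conjugation orbits to conclude infinitely many distinct $F$-forms. The paper gives no separate proof for part (2); it is left as a consequence of the surrounding results, so there is nothing to match against literally, but your route through Theorem \ref{thmpol}/Theorem \ref{mainthm} is a legitimate one.

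The one substantive issue in your write-up is that the obstacle you flag at the end is not an obstacle. You worry whether the set $T_0$ of places where $X$ fails potentially good reduction is finite, and propose an indirect route through the Kuga--Satake abelian variety. But $T_0$ is contained in the set of places where $X$ fails \emph{good} reduction, and that set is finite by spreading out a smooth proper model of $X$ over a nonempty open of $\Spec\mathcal{O}_F$; good reduction trivially implies potentially good reduction. No Kuga--Satake or Matsusaka--Mumford input is needed for this step. Once $T$ contains $T_0$, the $v\notin S\cup T$ produced by your finiteness argument is automatically a place of good reduction for $X$, and hence of potentially good reduction for every $X^\chi$ (as $X^\chi_{L}\cong X_{L}$ for the finite extension $L$ killing $\chi$). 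So your argument closes without the ``delicate part'' you feared.

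You may also find it cleaner to construct $v$ directly rather than extract it from a finiteness theorem. For all but finitely many $\chi$ (Hermite--Minkowski), the associated extension $L/F$ is ramified at some $v\notin S\cup T$; enlarge $T$ so that $X$ has good reduction and the order of $\Aut(X_{\bar F},H_{\bar F})$ is invertible outside $S\cup T$. Then the contrapositive of Lemma \ref{Lemma:Unramifiedness} applied to the cocycle $\chi$ shows that $X^\chi$ cannot have essentially good reduction at $v$ (both $X$ and $X^\chi$ having it would force $\chi$ unramified at $v$), while $H^2_{\et}$ is still unramified there and $X^\chi$ has potentially good reduction at $v$. This route pinpoints the place $v$ in terms of the ramification of $\chi$, which is closer in spirit to the role the proposition plays as a counterexample, and avoids invoking the heavy finiteness theorems of Section~\ref{sec:FinitnessResults}; both approaches are valid.
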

The (2) above states that the analogue of \cite[Theorem 1.3 (ii)]{LM18} (see also \cite[Introduction]{Takamatsu2020K3}) does not hold for higher-dimensional hyper-Kähler varieties when $\Aut_0$ is nontrivial.

\subsection{Full degree cohomological generalization} 
\label{subsec:FullDegCohomShafConj}
We have seen that the non-faithfulness of the action of automorphisms on $\rH^2$ leads to the infiniteness of the cohomological Shafarevich sets. We provide a remedy by taking into account more cohomological degrees other than 2.  Throughout this section, let $I$ be a subset of $\NN$ containing 2. 

Define the following group
\begin{equation}
    \label{def:AutI}
\Aut_I(X)\coloneqq \bigcap_{i\in I}\ker\left(\Aut (X_{\overline{k}}) \rightarrow \GL\left(\rH^{i}_{\et}(X_{\overline{k}},\QQ_{\ell}) \right)\right). 
\end{equation}
By the Artin comparison theorem, the group $\Aut_I(X)$ is independent of $\ell$, and $\Aut_0(X)$  (resp.~$\Aut_{00}(X)$) defined in Section \ref{subsect:AutBir}
are nothing but $\Aut_{\{2\}}(X)$ (resp.~$\Aut_{\NN}(X)$).

We use the following definition of level structure on the cohomology groups in several degrees. Let $k$ be a field of characteristic $0$, and $X$ a hyper-Kähler variety over $k$.
We denote the torsion-free part of $\bigoplus_{i\in I}\rH^i_{\et}(X_{\overline{k}},\ZZ_{\ell})$ by $\rH^I_{\ell}(X_{\overline{k}})$.
\begin{definition}
Let $\rH$ be the abstract $\ZZ_{\ell}$-module which is isomorphic to $\rH^I_{\ell}(X_{\overline{k}})$.
A \emph{level $\ell^{n}$ structure} $\beta$ on $X$ of degree $I$ is a $\Gal(\overline{k}/k)$-invariant
$\GL(\rH,\ell^{n})$-orbit of an isomorphism of $\ZZ_{\ell}$-modules
\[
\rH \xrightarrow{\sim} \rH^I_{\ell}(X_{\overline{k}}).
\]
Here, we put
\[
\GL(\rH,\ell^{n}) \coloneqq\{g \in \GL (\rH) \mid g \equiv 1 \textup{ mod } \ell^{n} \}.
\]
\end{definition}

Now let $R$ be a finitely generated $\ZZ$-algebra which is a normal domain, with fraction field denoted by $F$. Suppose that $1/\ell \in R$.
Define the following \textit{generalized} cohomological Shafarevich set:
\begin{equation}
    \Sha_{M}^{I\hbox{-}\hom_\ell}(F,R)\coloneqq \left\{
  X\in \Sha_M^{\hom_\ell}(F,R) \left|
  \begin{array}{l}
    \rH^{I}_{\ell}(X_{\overline{F}}) \text{is unramified }\\
  \text{at any height 1 prime } \p\in \Spec R \\
  \end{array}
  \right.
\right\}/{\cong_F}.
\end{equation}

\begin{theorem}
\label{cohshaf}
Consider $M$ to be a deformation type or a $\widehat{\ZZ}$-numerical type of hyper-Kähler varieties with $b_2(M)\geq 5$. The set
\begin{equation}
\Set*{
  X\in \Sha_M^{I\hbox{-}\hom_\ell}(F,R) \given 
    \Aut_{I}(X)=\{1\}
}/{\cong_F}
\end{equation}
is finite.
\end{theorem}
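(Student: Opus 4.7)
We follow the two-step pattern of \cite{Takamatsu2020K3} for K3 surfaces: first bound the number of geometric isomorphism classes of $X$ in the larger set $\Sha^{\hom_\ell}_M(F,R)$, and then bound the number of $F$-forms within each such class using the hypothesis $\Aut_I(X)=\{1\}$. The hypothesis plays no role in the first step; it enters crucially in the second, where it ``rigidifies'' the cocycle governing twisting.

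\textbf{Step 1: Finiteness of geometric isomorphism classes.} The Kuga--Satake arguments of Section \ref{sec:FinitnessResults} go through essentially unchanged when essential good reduction is weakened to $\hom_\ell$-unramifiedness. Indeed, the key input Corollary \ref{cor:kugasatakegoodreduction} is based on Raynaud's semi-abelian reduction criterion, which requires only the unramifiedness of $\rH^2_{\et}(X_{\bar F},\ZZ_\ell)$ to guarantee good reduction of the Kuga--Satake abelian variety attached to a polarized model of $X$ with a $\mathbf{K}_m$-level structure ($m\geq 3$); likewise Proposition \ref{unplevel}(ii) extends to the cohomological setting, since the Galois representation on $\rH^2_{\et}$ already factors through $\pi_1(\Spec R,\bar F)$ under the $\hom_\ell$-condition. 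Combining this with Faltings' finiteness theorem for abelian varieties, the wall-divisor bound on polarization degree (Theorems \ref{con-conj} and \ref{finbirsh}), and the quasi-finiteness of the uniform Kuga--Satake map (Theorem \ref{thm:descentKuga}), we obtain the finiteness of geometric isomorphism classes of hyper-K\"ahler varieties in $\Sha^{\hom_\ell}_M(F,R)$, and hence in our set.

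\textbf{Step 2: Bounding $F$-forms within a geometric class.} Fix a representative $X_0$ and set $G:=\Aut(X_{0,\overline F})$; after enlarging $F$ via Lemma \ref{red-fin-ext}, we may assume the Galois action on $G$ is trivial, so $F$-forms of $X_{0,\overline F}$ are classified by continuous homomorphisms $\alpha\colon G_F\to G$ modulo $G$-conjugation. Since $G$ is discrete and $G_F$ is profinite, $\alpha(G_F)$ is a finite subgroup of $G$. The assumption $\Aut_I(X_0)=\{1\}$ means $G$ embeds into $\prod_{i\in I}\GL(\rH^i_{\et}(X_{0,\overline F},\QQ_\ell))$; combined with the uniform bound on $|\Aut_0|$ (Theorem \ref{thm:Aut0}) and Minkowski's bound on finite subgroups of $\mathrm{O}(\Lambda_M)\subseteq\GL(b_2,\ZZ)$ applied to the quotient $G/\Aut_0\hookrightarrow\mathrm{O}(\Lambda_M)$, we obtain a uniform constant $N_0(M)$ with $|\alpha(G_F)|\leq N_0(M)$. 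A standard twist computation expresses the Galois representation of $X$ on $\rH^I_\ell$ as $\rho_X(\sigma)=\rho_{X_0}(\sigma)\circ\alpha(\sigma)^{\ast}$, so that the $I$-unramifiedness of both $X$ and $X_0$ forces $\alpha(\rI_\p)\subseteq\Aut_I(X_0)=\{1\}$ at each height-$1$ prime $\p\in\Spec R$. Hence $\alpha$ factors through $\pi_1(\Spec R,\bar F)$ with image of order at most $N_0(M)$. By Hermite--Minkowski \cite[Proposition 2.3, Theorem 2.9]{Harada2009}, $\pi_1(\Spec R,\bar F)$ has only finitely many open subgroups of index at most $N_0(M)$; combined with the fact that the arithmetic group $\mathrm{O}(\Lambda_M)$, and hence $G$, has only finitely many conjugacy classes of finite subgroups of bounded order, we conclude that only finitely many such $\alpha$ exist modulo $G$-conjugation.

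\textbf{Main obstacle.} The substantive technical task is Step 1: verifying that every ingredient of the Kuga--Satake-based finiteness arguments of Section \ref{sec:FinitnessResults}, originally formulated for essentially good reduction, remains valid with the weaker unramifiedness of $\rH^2_{\et}$. Step 2 then reduces to a pigeonhole via Hermite--Minkowski enabled by the $\Aut_I$-triviality hypothesis, with the key subtlety being the bound on $|\alpha(G_F)|$ via Minkowski together with the arithmeticity of the image of $\Aut(X_{0,\overline F})$ in $\mathrm{O}(\Lambda_M)$.
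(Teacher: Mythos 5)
Your Step~2 is in the right spirit: exploiting triviality of $\Aut_I$ to bound $F$-forms via a Hermite--Minkowski count of cocycles factoring through $\pi_1(\Spec R,\bar F)$ with bounded image. The paper achieves the same end a bit differently, by recording a level-$m$ structure of degree $I$ on $\rH^I_{\ell}$ so that the decorated moduli objects have trivial automorphism group, at which point finiteness transfers directly from geometric to $F$-isomorphism classes. The genuine gap in your proposal is in Step~1: the claim that the uniform Kuga--Satake arguments of Section~\ref{sec:FinitnessResults} ``go through essentially unchanged'' when essentially good reduction is weakened to $\hom_\ell$-unramifiedness is not correct. The uniform Kuga--Satake map $\Psi^{\KS}_{d,\mathbf{K}_m}$ (Theorem~\ref{thm:descentKuga}) is built from a section $\gamma$ of the adjoint map $\Sh_{K^{\operatorname{sp}}_{\Sigma,m}}(\GSpin(\Sigma),\widetilde{D}_{\Sigma})\to\Sh_{\mathbf{K}'_m}(\Sigma)$, and that section exists only for the \emph{spin} levels $\mathbf{K}'_m = K^{\ad}_{\Sigma,m}$ (cf.~\cite[\S5.5]{Riz10}). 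In the cohomological setting one only has $\ell$-adic unramifiedness of $\rH^2_{\et}$, so the $2$-adic Clifford/orientation data implicit in the spin level cannot be controlled; the paper therefore replaces $K^{\ad}_{\Sigma,m}$ by the plain congruence level $\widetilde{\mathbf{K}}_m$ with $m$ a power of $\ell$. But then the section $\gamma$ genuinely fails to exist, because $\mathbf{K}'_m\cap G_\Sigma(\QQ)\subsetneq \widetilde{\mathbf{K}}_m\cap G_\Sigma(\QQ)$ in general, so one cannot ``push up'' to the $\GSpin$-Shimura variety as in the essentially-good-reduction case.

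The missing idea is the paper's use of \emph{integral canonical models}: via \cite{Kisin-IntegralModel}, the map $f_\Sigma\colon\Sh_{K^{\operatorname{sp}}_{\Sigma,m}}(\GSpin(\Sigma))\to\Sh_{\widetilde{\mathbf{K}}_m}(\Sigma)$ extends to a finite \'etale morphism of integral models over $R$, and combining this extension with Hermite--Minkowski (as in Lemma~\ref{unplevel}) produces a single finite extension $F_M/F$ over which every period point of the relevant set lifts to the $\GSpin$-Shimura variety --- without appealing to a section at all. Only after this can one invoke Faltings' theorem, Corollary~\ref{lemKSgood}, and quasi-finiteness of the period map. Your write-up flags Step~1 as ``the substantive technical task'' but treats it as routine verification; it is, in fact, the point where a new tool is needed, and without it Step~1 --- and hence the whole argument --- does not get off the ground.
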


\begin{proof} 
The proof is similar to \cite[Section 4]{Takamatsu2020K3}. We need to use integral models of Shimura varieties, an idea taught to the third author by Yoichi Mieda. 

  Assume $b_2(M)\geq 4$. We will first prove the polarized case, i.e.\,the finiteness of the following set for any positive number $d$:
\begin{equation}
S_1\coloneqq \left\{
  (X, \lambda) \left|
  \begin{array}{l}
  X \in \Sha_M^{I \hbox{-}\hom_\ell}(F,R) \text{ with } \Aut_{I}(X)=\{1\}  \\
  \lambda \colon \text{polarization of BB degree} \ d
  \end{array}
  \right.
\right\}/{\cong_F}.
\end{equation}
Note that we may assume $1/\ell \in R$ and $R$ is regular.

In the following, we fix an integer $m$ which is a sufficiently large power of $\ell$. To avoid using the unramifiedness of $2$-adic cohomology groups, we consider the congruence level-$m$ subgroup
\[
\widetilde{\mathbf{K}}_m \coloneqq \left\{g \in G_\Sigma(\widehat{\ZZ}) \big| g \equiv 1~\bmod m \right\}
\]
instead of the spin-level-$m$ subgroup  in \eqref{eq:bigdiag}.
We have the following diagram:
\[
\begin{tikzcd}[column sep = small]
      & & & \Sh_{K^{\operatorname{sp}}_{\Sigma,m}}(\GSpin(\Sigma), \widetilde{D}_{\Sigma}) \ar[d,"\spin"] \ar[ld, shift left,"f_\Sigma"'] \\
\widetilde{\scF}^{\dag}_{d,K_{h,m}} \ar[r,"\mathcal{P}"] & \Sh_{K_{h,m}}(G_{\Lambda_h},D_{\Lambda_h}) \ar[r,"j_{\Sigma}"] & \Sh_{\widetilde{\mathbf{K}}_m}(G_\Sigma, D_{\Sigma})   & \Sh_{\mathbf{K}''_m}(\GSp(W), \Omega^{\pm}).
\end{tikzcd}
\]
We shall remark that the construction of a section for $f_\Sigma$ as in \eqref{eq:bigdiag}  fails as $\mathbf{K}'_m \cap G_{\Sigma}(\QQ) \subset \widetilde{\mathbf{K}}_m \cap G_{\Sigma}(\QQ)$ may be a proper arithmetic subgroup.

Consider the following set 
\[
S_{2}\coloneqq
\left\{
  (X, \lambda, \omega, \alpha) \left|
  \begin{array}{l}
(X, \lambda, \omega, \alpha) \in  \widetilde{\scF}^{\dag}_{d,K_{h,m}}(F) \\
 \rH^{2}_{\et}(X_{\overline{F}},\ZZ_{\ell}) \colon 
  \text{unramified at any height 1 prime } \p\in \Spec R 
  \end{array}
  \right.
\right\}/{\cong_F}
\]
Firstly, we show that there are only finitely many geometric isomorphism classes in $S_2$.
By \cite[Section 2, Section 3]{Kisin-IntegralModel}, the map $f_{\Sigma}$ can be extended to a morphism of integral models over $R$
\[
\widetilde{f}_{\Sigma}\colon \scS_{K^{\Sp}_{\Sigma,m}}(\GSpin(\Sigma)) \rightarrow \scS_{\widetilde{\mathbf{K}}_m}(\Sigma).
\]
We may assume that $\widetilde{f}_{\Sigma}$ is a finite \'{e}tale cover.
A similar proof as in Lemma \ref{unplevel} shows that there exists a finite extension $F_{M}/F$ such that, for any object $x \in S_{2}$, the $F$-rational point $j_{\Sigma} \circ \mathcal{P}(x)$ can be lifted to an $F_{M}$-valued point of $z_{x}$ of $\Sh_{K^{\Sp}_{\Sigma,m}}(\GSpin(\Sigma))$.
Combining \cite[VI, \S1, Theorem 2]{Faltings1992}, Lemma \ref{lemKSgood} and the quasi-finiteness of $j_{\Sigma} \circ \mathcal{P}$, we can see the set of geometrical isomorphism classes in $S_2$ is finite.

By the choice of the level structure, the unramifiedness condition, and the proof of Lemma \ref{unplevel}, one can show there exists a finite Galois extension $E$ over $F$ such that for every $(X,\lambda) \in S_{2}$, there exists a level $\widetilde{\mathbf{K}}_m$-structure on $(X,\lambda)_{E}$. Now, by the same argument as in the proof of Lemma \ref{unplevel} and Theorem \ref{thmpol}, the problem is reduced to show that
\[
S_3\coloneqq
\Set*{
  (X, \lambda, \omega, \alpha,\beta) \given 
\parbox{16em}{
$(X, \lambda, \omega, \alpha) \in S_2$ \\
$\beta$: level $\widetilde{\mathbf{K}}_m$-structure on  $X$  of degree  $I$ \\
$\Aut_{I}(X)=\{1\}$}
  }/{\cong_F}
\]
is a finite set.
Since the automorphism group of any object $(X, \lambda, \omega, \alpha,\beta) \in S_3$ is trivial, the finiteness of $S_3$ follows from the finiteness of $S_{2}$ modulo geometric isomorphic equivalences. It finishes the proof of the polarized case.

Finally, for the general (unpolarized) case, by the choice of $\widetilde{\mathbf{K}}_m$ and Lemma \ref{unplevel}, we can take a finite extension $E/F$ which satisfies the following:
\begin{enumerate}
\item
$E$ contains the above field $F_{M}$.
\item
For any element $X \in \Sha_M^{I\hbox{-}\hom_\ell}(F,R)$, there exist a polarization $\lambda$ on $X$, an orientation $\omega$ on $X_{E}$, and a level $\widetilde{\mathbf{K}}_m$ structure $\alpha$ on $X_{E}$.
 \item
 For any element $X \in \Sha_M^{I \hbox{-}\hom_\ell}(F,R)$, we have $\Pic_{X/F}(E) = \Pic_{X/F} (\overline{F})$.
\end{enumerate}
By a similar proof of Theorem \ref{boundedpolarization}, one can also show that
the set
\[
\{
\Pic_{X/F}(E) \mid X \in \Sha_M^{I \hbox{-}\hom_\ell}(F,R)
\}/ \text{lattice isometry}
\]
is a finite set.
Using the same argument in Theorem \ref{finbirsh} and \cite[Theorem 1.0.1]{takamatsu21}, we can get the boundedness of polarization when $b_2 \geq 5$. The desired finiteness follows from the finiteness of $S_{1}$.
\end{proof}

\subsection{Some consequences}
We deduce from Theorem \ref{cohshaf} finiteness results of the (second or full) cohomological Shafarevich sets for projective hyper-Kähler varieties of known deformation types and unconditionally in dimension 4:
\begin{corollary}
\label{cor:cohshaf}
The  following sets are finite.
\begin{enumerate}
    \item $\Sha_{K3^{[n]}}^{\hom_\ell}(F,R)$ and $\Sha_{OG_{10}}^{\hom_\ell}(F,R)$.
    \item  $\Sha_{\operatorname{Kum}_n}^{I \hbox{-}\hom_\ell}(F,R)$ and $\Sha_{OG_{6}}^{I \hbox{-}\hom_\ell}(F,R)$, for $I=\NN$.
    \item $\Sha_{M}^{I-\hom_\ell}(F,R)$ for $\dim(M)=4$ and $I=\mathbb{N}$.
\end{enumerate}
\end{corollary}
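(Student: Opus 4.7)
The plan is to derive both parts directly from Theorem \ref{cohshaf} by checking, deformation type by deformation type, that the triviality hypothesis $\Aut_I(X)=\{1\}$ is automatically satisfied, so the theorem applies with no further restriction. Note first that by unwinding the definitions one has $\Sha_{M}^{\hom_\ell}(F,R) = \Sha_{M}^{\{2\}\hbox{-}\hom_\ell}(F,R)$, since the unramifiedness in \eqref{homunpolHKset-uniform-l} is imposed only on $\rH^{2}_{\et}$. Moreover, the numerical hypothesis $b_2(M)\geq 5$ of Theorem \ref{cohshaf} holds in all four cases: $b_2 = 22$ or $23$ for $K3^{[n]}$, $b_2 = 7$ for $\operatorname{Kum}_n$ with $n\geq 2$, $b_2 = 8$ for $OG_6$, and $b_2 = 24$ for $OG_{10}$.

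For (1), I would invoke Example \ref{exm:aut}(1) and (4), which record that $\Aut_0(Y)$ is trivial for every complex hyper-K\"ahler manifold $Y$ of type $K3^{[n]}$ or $OG_{10}$. Given $X\in \Sha_{M}^{\hom_\ell}(F,R)$ with $M \in \{K3^{[n]}, OG_{10}\}$, fix any embedding $\overline{F}\hookrightarrow \CC$. The induced injection $\Aut(X_{\overline{F}})\hookrightarrow \Aut(X_\CC)$ is, via Artin's comparison isomorphism $\rH^{2}_{\et}(X_{\overline{F}}, \QQ_\ell) \cong \rH^{2}(X_\CC, \QQ)\otimes\QQ_\ell$, equivariantly compatible with the respective actions on the second cohomology. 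Hence $\Aut_{\{2\}}(X) \subseteq \Aut_0(X_\CC) = \{1\}$, and Theorem \ref{cohshaf} with $I = \{2\}$ yields the finiteness.

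For (2), the argument is parallel, using Example \ref{exm:aut}(2) and (3), which assert that $\Aut_{00}(Y)$ is trivial for every complex hyper-K\"ahler manifold $Y$ of type $\operatorname{Kum}_n$ or $OG_6$; here the $OG_6$ case crucially relies on Proposition \ref{prop:Aut00} to pass from the projective locus (where the result is first proved) to the whole deformation class. Applying Artin's comparison in every degree $i\in \NN$, one gets $\Aut_{\NN}(X) \subseteq \Aut_{00}(X_\CC) = \{1\}$ for any $X \in \Sha_{M}^{\NN\hbox{-}\hom_\ell}(F,R)$ with $M \in \{\operatorname{Kum}_n, OG_6\}$, and Theorem \ref{cohshaf} applied with $I = \NN$ concludes.

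I expect no serious obstacle: both the finiteness engine (Theorem \ref{cohshaf}) and the explicit vanishing results for $\Aut_0$ and $\Aut_{00}$ are already in hand. The only mildly subtle point is to ensure that the vanishing over $\CC$ stated in Example \ref{exm:aut} descends to the arithmetic $\Aut_I(X)$ defined in \eqref{def:AutI}, which is immediate from the fact that the base-change map on automorphism groups is injective and intertwines the Galois-equivariant and the Hodge-theoretic actions on cohomology.
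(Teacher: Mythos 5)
Your proposal is correct and follows essentially the same route as the paper's own (very brief) proof: apply Theorem \ref{cohshaf} and reduce to the vanishing of $\Aut_0$, resp.\ $\Aut_{00}$, for the four known deformation types, as recorded in Example \ref{exm:aut}. The extra details you supply (the identification $\Sha_{M}^{\hom_\ell}=\Sha_{M}^{\{2\}\hbox{-}\hom_\ell}$, the check $b_2\geq 5$, and the comparison-isomorphism argument showing the complex vanishing implies $\Aut_I(X)=\{1\}$) are all correct and are exactly what the paper leaves implicit.
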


\begin{proof}
By Theorem \ref{cohshaf}, it suffices to check the vanishing of $\Aut_0$ or $\Aut_{00}$ for the corresponding complex projective hyper-Kähler varieties. These vanishing results are collected in Example \ref{exm:aut}.
\end{proof}
\begin{remark}
In Corollary \ref{cor:cohshaf}, for the generalized Kummer type and $OG_6$ type, one may be able to choose $I$ to be a smaller set. For example, for the $OG_6$-type, $I=\{2, 6\}$ is in fact enough: for some projective $OG_6$-type hyper-Kähler manifolds $X$ constructed as crepant resolutions of singular moduli spaces $\overline{X}$, the 3-dimensional subvarieties lying over the 256 singular points of the singular locus of $\overline{X}$ are with distinct cohomology classes in $\rH^6(X,\QQ)$ and are permuted by $\Aut_0(X)$ as a regular representation. Then by Proposition \ref{prop:Aut00} (which holds more generally for $\Aut_I$ with the same proof) and the density of projective members in the moduli space, we see that $\Aut_{\{2, 6\}}$ vanishes for all $OG_6$-type manifolds.
\end{remark}

\section{Finiteness of CM type hyper-Kähler varieties}  
\label{sec:FiniteCM}
Recall that a hyper-Kähler variety $X$ over $\CC$ is called of \emph{CM type} if the Mumford--Tate group $\mathrm{MT}(\rT(X))$ is abelian, where $\rT(X)\subset \rH^2(X, \QQ)$ is the transcendental (rational) cohomology. The following results allow us to study hyper-Kähler varieties of CM type through abelian varieties of CM type. 
\begin{lemma}\label{KS-CM}
 The Kuga--Satake variety of a complex hyper-Kähler variety $X$ is of CM type if and only if $X$ is of CM type. In particular, the point
 \[
 \Psi^{\KS}_{d,\mathbf{K}_4}(X,\xi,\omega; \alpha) \in \Sh_{\mathbf{K}''_4}(\GSp(W), \Omega^{\pm})(\CC)
 \]
is a CM point if $X$ is of CM type. 
\end{lemma}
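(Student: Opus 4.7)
The plan is to translate the ``CM type'' condition on both sides into the assertion that the relevant Mumford--Tate group is a torus, and then to exploit the fact that the Kuga--Satake construction is the spin covering $\ad\colon \GSpin \to \SO$ at the level of Shimura data.

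First I would reduce the statements. A complex hyper-K\"ahler variety $X$ is of CM type iff $\MT(T(X))$ is abelian, equivalently a (connected) torus. Since the algebraic part $\mathrm{NS}(X)_{\QQ} \subset \rH^2(X,\QQ)$ carries only trivial sub-Hodge structures, one has $\MT(\rH^2(X,\QQ)) = \MT(T(X))$, so $X$ is of CM type iff $\MT(\rH^2(X,\QQ))$ is a torus. Analogously, the Kuga--Satake abelian variety $A$ is of CM type iff $\MT(\rH^1(A,\QQ))$ is a torus. Via the uniform Kuga--Satake construction of Section \ref{subsec:UniformKS}, after choosing the primitive embedding $\Lambda_h \hookrightarrow \Sigma$ of Lemma \ref{lemma:UniformKS}, we may view $\rH^2(X,\QQ)$ as a $\MT$-stable subquotient of $\Sigma_{\QQ}$, and $\rH^1(A,\QQ)$ is the Clifford representation $W = \Cl^+(\Sigma_{\QQ})$ equipped with the lifted Hodge structure.

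Next I would use the compatibility of Mumford--Tate groups under the spin cover. The hyper-K\"ahler Hodge cocharacter $h\colon \mathbb{S} \to \SO(\Sigma_{\RR})$ lifts uniquely to $\widetilde{h}\colon \mathbb{S} \to \GSpin(\Sigma_{\RR})$, and by construction $\MT(A)$ is the smallest $\QQ$-algebraic subgroup of $\GSpin(\Sigma)$ whose real points contain the image of $\widetilde{h}$. Applying $\ad\colon \GSpin \to \SO$ sends $\MT(A)$ onto $\MT(\rH^2(X,\QQ))$, and the kernel of $\ad$ is the central $\mathbb{G}_m \subset \GSpin$. This gives a short exact sequence
\[
1 \to \MT(A) \cap \mathbb{G}_m \to \MT(A) \xrightarrow{\ad} \MT(\rH^2(X,\QQ)) \to 1
\]
with central kernel contained in a torus.

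The two implications then fall out. If $A$ has CM, then $\MT(A)$ is a torus, and so is its image $\MT(\rH^2(X,\QQ))$, giving that $X$ is of CM type. Conversely, if $X$ has CM, then $\MT(\rH^2(X,\QQ))$ is a torus; since $\MT(A)$ is connected and reductive (as the Mumford--Tate group of a polarizable Hodge structure), its derived subgroup $\MT(A)^{\mathrm{der}}$ is semisimple and lies in $\ker(\ad) \subset \mathbb{G}_m$, hence is trivial. Thus $\MT(A)$ is a torus and $A$ is of CM type.

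For the second assertion, recall that the Siegel Shimura variety $\Sh_{\mathbf{K}''_4}(\GSp(W),\Omega^{\pm})$ is a moduli space of polarized abelian varieties with level structure, and a complex point is a CM (special) point precisely when the associated polarized abelian variety is of CM type. By Theorem \ref{thm:descentKuga} and the construction, $\Psi^{\KS}_{d,\mathbf{K}_4}(X,H,\omega;\alpha)$ is the moduli point of the uniform Kuga--Satake abelian variety attached to $(X,H,\omega;\alpha)$, which by the first part is of CM type whenever $X$ is. This yields the claim.

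The main obstacle is the Mumford--Tate compatibility across the $\GSpin \to \SO$ cover; once the reductivity of $\MT(A)$ and the centrality of $\ker(\ad)$ are in hand the argument is essentially formal, but care is needed in matching conventions between the uniform lattice $\Sigma$, the intermediate $\Lambda_h$, and the genuinely transcendental lattice $T(X)$.
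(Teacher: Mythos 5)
Your proposal is correct and follows essentially the same route as the paper: lift the Hodge cocharacter to $\GSpin(\Sigma)$, compare $\MT(A)$ with $\MT(T(X))$ via $\ad$ with central kernel $\mathbb{G}_m$, and use reductivity of $\MT(A)$ for the converse (the paper phrases this as ``reductive and contained in the solvable group $\ad^{-1}(\MT(T(X)))$'' rather than via the derived subgroup, but the content is identical), then identify CM points of the Siegel Shimura variety with CM abelian varieties. The only cosmetic caveats are that the lift $\widetilde{h}$ is unique only after the standard normalization, and that $\MT(\rH^2)$ and $\MT(T(X))$ may differ by a central torus factor coming from the Tate-type part — neither affects the abelianness argument.
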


\begin{proof}
A proof for the statement for K3 surfaces can be found in, for example, \cite[Proposition 9.3]{CMlifting}, which works for general Hodge structures of K3 type. We sketch it here for the convenience of the readers. 

Let $A$ be the (uniform) Kuga--Satake abelian variety of $X$. Let $\MT(A)$ be the Mumford--Tate group of $W \coloneqq \rH_1(A, \QQ)$. The Hodge structure of $W$ factors through $\widetilde{h}$ in the commutative diagram
\[
\begin{tikzcd}
 \mathbb{S} \ar[r,"\widetilde{h}"] \ar[rd,"h"']&  \GSpin(\Sigma)_{\mathbb{R}} \ar[d,"\ad"] \ar[r,hook] & \GSp(W) \\
 & G_{\Sigma,\mathbb{R}} &
\end{tikzcd}
\]
Therefore, the Mumford--Tate group $\MT(A)$ is contained in $\GSpin(\Sigma)_\QQ$. The commutativity of the diagram implies that $\MT(\rT(X))$ is contained in $\ad(\MT(A))$.

If $A$ is of CM type, then $\MT(A)$ is abelian, and so is its image in $G_{\Sigma}$ under $\ad$. Therefore, $\MT(\rT(X))$ is also abelian. Conversely, we note that $\widetilde{h}(\mathbb{S}) \subseteq \ad^{-1}(\MT(\rT(X))_{\mathbb{R}}$. Thus $\MT(A)$ is a subgroup of $\ad^{-1}(\MT(\rT(X))$, which are both solvable if $\MT(\rT(X))$ is abelian. Since $\MT(A)$ is reductive, $\MT(A)$ is abelian if $\MT(\rT(X))$ is abelian.
\end{proof}
\begin{remark}
The lemma \ref{KS-CM} also implies that for a hyper-Kähler variety $X$ over a field $F$ of characteristic zero, being of CM type is independent of the embedding $F\hookrightarrow \CC$.

\end{remark}
The following finiteness result for abelian varieties of CM type is obtained by Orr--Skorobogatov.  
\begin{theorem}[{\cite[Theorem 2.5]{OrrSkorobogatov}}]\label{CMab-fin}
The geometric isomorphism classes of abelian varieties of CM type defined over a number field of bounded degree form a finite set. 
\end{theorem}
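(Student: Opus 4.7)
The plan is to first bound the dimension $g$ of CM abelian varieties in terms of the degree bound $d$, and then invoke Faltings' finiteness theorem for abelian varieties of bounded dimension. By Zarhin's trick, it suffices to handle \emph{principally polarized} CM abelian varieties $(B,\lambda)$ of some dimension $g$ defined over a number field $F$ with $[F:\QQ]\leq d$: isomorphism classes of such $B$'s control those of the original $A$'s up to finite ambiguity, since $A$ is determined up to finitely many choices by an isogeny factor of $(A\times\widehat{A})^{4}$.

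The key step is to show that $g$ is bounded purely in terms of $d$. A principally polarized CM abelian variety of dimension $g$ corresponds to a special point $z=[B,\lambda]\in \mathcal{A}_{g}(\CC)$ on the Siegel moduli space, classified up to isogeny by a pair $(E,\Phi)$, where $E$ is a CM algebra with $[E:\QQ]=2g$ and $\Phi$ is a CM type. By the Main Theorem of Complex Multiplication, the field of definition of $z$ contains the reflex field $E^{\ast}$, and its $\Gal(\overline{\QQ}/E^{\ast})$-orbit in $\mathcal{A}_{g}(\overline{\QQ})$ has size at least $|\Cl(\mathcal{O}_{E^{\ast}})|/C(g)$, with a factor depending only on $g$. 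Consequently $|\Cl(\mathcal{O}_{E^{\ast}})|\leq C(g)\cdot d$. Combining Tsimerman's lower bound for Galois orbits of CM points on $\mathcal{A}_{g}$ (a consequence of the averaged Colmez conjecture), which gives a polynomial lower bound $|\Cl(\mathcal{O}_{E^{\ast}})|\gg |\disc(E^{\ast})|^{\delta}$ for some $\delta>0$, with Minkowski's super-exponential lower bound for $|\disc(E^{\ast})|$ in $[E^{\ast}:\QQ]$, we conclude that $[E:\QQ]=2g$ is bounded in terms of $d$. (If one is content with an ineffective bound, Brauer--Siegel suffices in place of Colmez.)

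Once $g$ is bounded, only finitely many CM algebras $E$ with $[E:\QQ]\leq 2g$ and bounded discriminant can occur, and each admits only finitely many CM types; hence only finitely many isogeny classes of CM abelian varieties can arise in the problem. By the classical Serre--Tate criterion, every CM abelian variety has potentially good reduction everywhere, and after a uniformly bounded extension of $F$ one arranges good reduction outside a finite set of places controlled by the finitely many $E$'s that appear. Faltings' finiteness theorem for abelian varieties of bounded dimension with good reduction outside a fixed finite set of places, applied over number fields of bounded degree, then yields finiteness of geometric isomorphism classes within each isogeny class, concluding the proof.

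The hard part is the dimension bound, which is the arithmetic heart of the argument and relies on class number lower bounds from either Brauer--Siegel (ineffective) or the averaged Colmez conjecture (Andreatta--Goren--Howard--Madapusi~Pera, Yuan--Zhang). Zarhin's trick, the Main Theorem of CM, and Faltings' finiteness handle the surrounding structural reductions in a more standard way.
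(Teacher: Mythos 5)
The paper cites this result as a black box from Orr--Skorobogatov and gives no proof of its own, so the comparison must be with their argument. Your proposal follows the broad strategy of that proof: Zarhin's trick to reduce to principally polarized CM abelian varieties, then lower bounds for Galois orbits of CM points on $\mathcal{A}_g$ (Tsimerman, via the averaged Colmez conjecture) combined with Minkowski's discriminant bound to bound $g$ purely in terms of $d$. That first half is correct in spirit and is indeed the arithmetic heart of the Orr--Skorobogatov argument.

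The finish, however, has a genuine gap. You invoke Faltings' Shafarevich-type finiteness ``applied over number fields of bounded degree,'' but Faltings' theorem is stated for a \emph{fixed} base field $K$, fixed $g$, fixed polarization degree and fixed finite set $S$. Number fields of degree $\leq d$ form an infinite family, and there is no off-the-shelf version of Faltings giving finiteness of $\overline{\QQ}$-isomorphism classes uniformly over that family; if there were, the theorem you are proving would follow immediately with no CM theory at all. So the last paragraph, as written, does not close.

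The correct conclusion --- and what Orr--Skorobogatov actually do --- is that Tsimerman's Galois-orbit lower bound does more work than you extracted from it: it bounds the Galois orbit of $[A,\lambda]\in\mathcal{A}_g$ from below in terms of the discriminant of the \emph{full CM order} $R=\End(A)\cap E$, not merely of $\mathcal{O}_{E^{\ast}}$ or $\mathcal{O}_E$. Bounded degree of the field of moduli therefore bounds $|\disc(R)|$. With $g$ already bounded, only finitely many CM orders $R$ of bounded rank and discriminant, and finitely many CM types $\Phi$ for each, can occur; and for a fixed pair $(R,\Phi)$ the $\CC$-isomorphism classes of principally polarized abelian varieties with CM by $(R,\Phi)$ form a finite set controlled by the Picard group of $R$. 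This yields the desired finiteness directly, with no appeal to Faltings, and Zarhin's trick then removes the principal polarization hypothesis. In short: replace the Faltings step by a second use of the Galois-orbit bound (to bound the CM order) plus finiteness of $\Pic(R)$.
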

Moreover, they used this to deduce a finiteness result of CM points on a Shimura variety of abelian type.
\begin{theorem}[{\cite[Proposition 3.1]{OrrSkorobogatov}}]\label{CM-abfin}
Let $\Sh$ be a component of the complex points of a Shimura variety of abelian type. The set of CM points in $\Sh$ defined over a number field of bounded degree is finite.
\end{theorem}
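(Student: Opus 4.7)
The plan is to reduce the finiteness statement for CM points on a Shimura variety of abelian type to the analogous statement on Shimura varieties of Hodge type, and thence to the already established finiteness of CM abelian varieties defined over number fields of bounded degree, namely Theorem \ref{CMab-fin}.

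First, I would recall Deligne's structural theorem: if $(G, \mathfrak{X})$ is a Shimura datum of abelian type, then there exists a Shimura datum $(G_1, \mathfrak{X}_1)$ of Hodge type together with a central isogeny $G_1^{\mathrm{der}} \to G^{\mathrm{der}}$ realizing an isomorphism of adjoint data $(G^{\mathrm{ad}}, \mathfrak{X}^{\mathrm{ad}}) \simeq (G_1^{\mathrm{ad}}, \mathfrak{X}_1^{\mathrm{ad}})$. Using the Deligne--Milne connected-component formalism, any geometric connected component of $\mathrm{Sh}_{K}(G, \mathfrak{X})(\mathbb{C})$ can be written as the quotient, by a finite group action, of a geometric connected component of some $\mathrm{Sh}_{K_1}(G_1, \mathfrak{X}_1)(\mathbb{C})$. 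In particular, after shrinking the level on the Hodge type side, I obtain a finite, surjective morphism between appropriate connected components, defined over a fixed number field independent of the CM points under consideration.

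Next, I would lift CM points through this finite morphism: given a CM point $x \in X$ defined over a number field $L$ of degree $\leq d$, the fibre of the finite cover over $x$ consists of CM points (the CM condition is detected on the adjoint datum via maximal tori, so it is preserved under central isogenies and quotients by finite groups), each defined over a number field of degree at most $C d$ for a constant $C$ depending only on the degree of the covering morphism and the fixed auxiliary data. Since $(G_1, \mathfrak{X}_1)$ is of Hodge type, $\mathrm{Sh}_{K_1}(G_1, \mathfrak{X}_1)$ embeds (after possibly further shrinking the level) into a Siegel modular variety, so its CM points correspond in a finite-to-one manner to CM abelian varieties equipped with a polarization, level structure, and Hodge tensors. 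Applying Theorem \ref{CMab-fin} with bound $Cd$ gives only finitely many CM abelian varieties of CM type, hence only finitely many CM points on $\mathrm{Sh}_{K_1}(G_1, \mathfrak{X}_1)$ defined over number fields of degree $\leq Cd$, and therefore only finitely many CM points on $X$ of degree $\leq d$.

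The main obstacle is the uniform control of the field of definition of the lift: one must verify, via the reciprocity law at CM points (Shimura--Taniyama, Deligne) and the behaviour of Galois on the finite set of Shimura connected components, that a lift in $\mathrm{Sh}_{K_1}(G_1, \mathfrak{X}_1)$ of a CM point $x$ can indeed be chosen over a number field whose degree is bounded in terms of $[L:\mathbb{Q}]$ by a constant depending only on $(G,\mathfrak{X})$, and not on $x$ itself. Once this degree-control is established, the remaining steps reduce to the standard description of CM points on Hodge type Shimura varieties in terms of CM Hodge structures on $H_1$ of abelian varieties, plus the finite-to-one nature of the extra polarized/level data.
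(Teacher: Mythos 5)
The paper does not prove this statement; it is imported verbatim as Proposition~3.1 of Orr--Skorobogatov \cite{OrrSkorobogatov} and used as a black box in the proof of Theorem~\ref{mainthm3}. There is therefore no ``paper's own proof'' to compare against, so I evaluate your proposal against the cited source's likely argument.

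Your reduction chain (abelian type $\Rightarrow$ Hodge type via Deligne's structure theory $\Rightarrow$ CM abelian varieties via a Siegel embedding, then invoke Theorem~\ref{CMab-fin}) is indeed the standard route and matches the strategy in Orr--Skorobogatov. The one point you flag as ``the main obstacle,'' namely uniform control of the degree of the number field over which a lift can be chosen, is actually not a serious obstacle: after fixing compatible levels, the covering morphism from (a suitable component of) the Hodge-type Shimura variety onto $X$ is a finite morphism of quasi-projective varieties defined over a fixed number field $E_0$; the scheme-theoretic fibre over any $L$-point $x$ with $L\supseteq E_0$ is a finite $L$-scheme of degree bounded by the covering degree, so every lift is defined over an extension of $L$ of degree at most that covering degree. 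No appeal to the reciprocity law at CM points is needed for this uniform bound, though it is of course what guarantees the lifts are again CM and lie over abelian extensions of the reflex field. Your remark that the special-point property descends along central isogenies because it is detected on the adjoint datum is correct and is the key observation that makes the reduction work. In short: the proposal is a sound reconstruction of the cited proof; the obstacle you identify dissolves once one remembers the elementary fact about fibres of finite morphisms over points of bounded degree.
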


We now give a proof of the generalizations of these results for hyper-Kähler varieties of CM type.

\begin{proof}[Proof of Theorem \ref{mainthm3}]
We need to show the finiteness of the following set:
\begin{equation*}
\mathtt{CM}_{d}(M)=\bigg\{ X\bigg|\begin{array}{lr}
	\hbox{\rm  $X$~is a hyper-Kähler variety over a number field $F$ of  degree $\leq d$  } &  \\
	\hbox{\rm   $X_\CC$ is of deformation type $M$ and of CM type for some $F\hookrightarrow \CC$} &
	\end{array}  \bigg \}.
\end{equation*}
We first prove that the isometry class of  $\{ \NS(X_\CC) |~X\in \mathtt{CM}_{d}(M)\}$ is finite. By Proposition \ref{unplevel} (i), for any positive integer $N$, we can find an integer $\delta(4,M)$ such that for every $X/F \in \mathtt{CM}_d(M)$ with a polarization $H$ of degree $\leq N$, it has a $\mathbf{K}_4$-level structure  over a finite extension $F'/F$ with $[F':F]\leq \delta(4,M)$. Note that $\delta(4,M)$
depends only on the $\widehat{\ZZ}$-BB form and hence is independent of the field $F$ and the embedding $F\hookrightarrow \CC$.

By Lemma \ref{KS-CM} and  Theorem \ref{CM-abfin},  the collection 
\begin{equation}
    \bigg\{ \Psi^{\KS}_{d,\mathbf{K}_4}(X_\CC, \xi_\CC,\omega_{\CC};\alpha_\CC)\bigg|\begin{array}{lr}
	\hbox{\rm  $(X,\xi,\omega;\alpha)$ is a polarized oriented hyper-Kähler with a $\mathbf{K}_{4}$-level  } &  \\
	\hbox{\rm  structure defined over $F'$  with  $[F':\QQ]\leq \delta(4,M)d$; $X\in \mathtt{CM}_{d}(M)$  } &
	\end{array}  \bigg \}
\end{equation}
is finite. By a similar argument in Theorem \ref{boundedpolarization}, one can obtain the finiteness of the set of isometry classes of geometric transcendental lattices $\rT(X_\CC)$, which implies the finiteness of the set of isometry classes of $\Pic(X_\CC)$. 

According to the proof in Theorem \ref{finbirsh}, up to a birational transformation, there exists a polarization on $X_\CC$ of degree $\leq N$ for some $N$.    Note that the geometric isomorphism classes of the polarized hyper-Kähler varieties $(X,\xi)$ with $X\in \mathtt{CM}_d(M)$ and $(\xi)^2\leq N$ are finite because the Kuga--Satake map over $\CC$ is quasi-finite.  Thus, the geometric birational isomorphism class of $\mathtt{CM}_d(M)$ is finite. 

When $b_2\geq 5$, the last assertion follows from Theorem \ref{cor:bir-fin}. 
\end{proof}

\begin{proof}[Proof of Corollary \ref{CorBr}]

For a hyper-Kähler variety $Y$, the uniform boundedness of \[ \lvert \Br(Y)/\Br_0(Y) \vert\] follows from the uniform boundedness of \[\lvert \Br(Y)/\Br_1(Y) \rvert,\]
by a similar proof to \cite[Proposition 6.3]{VV17}. Here, $\Br_1(Y) \coloneqq \ker(\Br(Y) \to \Br(Y_{\bar{F}})$ is the \emph{algebraic Brauer group} of $Y$. On the other hand, we have an inclusion
\[
\Br(Y)/\Br_1(Y) \subseteq \Br(Y_{\bar{F}})^{\Gal(\bar{F}/F)}.
\]
Therefore, it is sufficient to prove the uniform boundedness of $\lvert \Br(Y_{\bar{F}})^{\Gal(\bar{F}/F)} \rvert$.

According to \cite[Theorem 5.1]{OrrSkorobogatov},  for any positive integer $d$, there is a constant $C(d,X)$ such that, for any $L$-form $Y$ of $X$ with $[L:F] \leq d$, 
\[ 
\lvert \Br(Y_{\bar{F}})^{\Gal(\bar{F}/L)} \rvert < C(d,X).
\]
if the integral Mumford--Tate conjecture in codimension one holds. By \Cref{mainthm3}, there is a constant $C(d)$ which is independent of $X$ and $C(d,X) \leq  C(d)$. Then we can conclude the uniform boundedness.

We claim that the integral Mumford--Tate conjecture holds true for hyper-Kähler varieties with $b_2 \geq 4$. From \cite[Proposition 6.2]{CadoretMoonen}, we can see that the Hodge structure on $\rH^2(X,\ZZ)$ is Hodge-maximal. Then we can follow the proof of Theorem 6.6 in {\it loc.cit.} to show that the classical Mumford--Tate conjecture of $X$ implies the integral version, in which we need the arithmetic period map given in Proposition \ref{prop:descentperiod}. However, the Mumford--Tate conjecture in codimension one holds true for all hyper-Kähler varieties of $b_2 \geq 4$ (see \cite[Corollary 1.5.2]{A96}).
\end{proof}

\appendix

\section{Matsusaka--Mumford theorem for algebraic spaces}
\label{appendixMatsusakaMumford}
The Matsusaka--Mumford theorem \cite{MM64}
states that a birational map between two algebraic varieties
over the fraction field of a discrete valuation ring
specializes to a birational map between the special fibers
if the special fibers are non-ruled.
In this appendix, we extend this theorem to
birational maps between algebraic spaces that are not necessarily schemes and study the specialization homomorphism. 
(Such birational maps appear naturally in the study of
hyper-Kähler varieties. See \cite[Section 4.4]{LM18}.)
The result should be known to experts, but we could not find an appropriate reference. 

We recall that for an integral and separated algebraic space $X$, there exists an open subspace that is a scheme and contains a generic point of $X$. Moreover, there exists a largest such open subspace, called the \textit{schematic locus} of $X$.
\begin{definition}
\label{def:Ruled}
Let $X$ be an $n$-dimensional integral separated algebraic space of finite type over a field $k$.
We say $X$ is ruled if the schematic locus $X'$ of $X$ is ruled, i.e.,$X'$ is $k$-birationally isomorphic to $\PP^{1}_{k} \times Y$ for some scheme $Y$ of dimension $n-1$ over $k$. 
\end{definition}

In the sequel,  let $R$ be a discrete valuation ring, $K = \Frac R$ its fraction field, $s$ the closed point of $\Spec R$, and $k(s)$ the residue field. 

\begin{theorem}[Matsusaka--Mumford for algebraic spaces]
\label{ClosedSubset}
Let $\mathcal{X}, \mathcal{Y}$ be integral smooth proper algebraic spaces over $R$.
Suppose that the special fiber $\mathcal{Y}_{s}$ is not ruled.
Let $f \colon \mathcal{X}_{\eta} \overset{\simeq}{\dashrightarrow} \mathcal{Y}_{\eta}$
be a birational map between the generic fibers.
Then there exist closed algebraic subspaces
$V \subset \mathcal{X}$, $W \subset \mathcal{Y}$
with special fibers $V_{s}, W_{s}$ satisfying $V_{s} \neq \mathcal{X}_{s}$, $W_{s} \neq \mathcal{Y}_{s}$,
such that
$f$ extends to an isomorphism over $R$
\[
  \widetilde{f} \colon 
  \mathcal{X} \setminus V \overset{\simeq}{\to}
  \mathcal{Y} \setminus W.
\]
\end{theorem}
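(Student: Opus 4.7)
I will adapt the classical argument of Matsusaka--Mumford \cite{MM64}, replacing the scheme-theoretic tools by their algebraic space analogues and using the schematic density of generic and codimension-one points. First, I form the graph $\Gamma_f \subset \mathcal{X}_\eta \times_K \mathcal{Y}_\eta$ of $f$ and let $\Gamma \subset \mathcal{X} \times_R \mathcal{Y}$ be its scheme-theoretic closure. Then $\Gamma$ is an integral algebraic space, proper and flat over $R$, and the two projections $p_1 \colon \Gamma \to \mathcal{X}$ and $p_2 \colon \Gamma \to \mathcal{Y}$ are proper birational $R$-morphisms of algebraic spaces. Setting $V := p_1(\mathrm{Exc}(p_1)) \subset \mathcal{X}$ and $W := p_2(\mathrm{Exc}(p_2)) \subset \mathcal{Y}$, the projections induce over their common complement an isomorphism $\widetilde{f}\colon \mathcal{X}\setminus V \xrightarrow{\sim} \mathcal{Y}\setminus W$ extending $f$. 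The entire content of the theorem thus reduces to showing $V_s \neq \mathcal{X}_s$ (the statement $W_s \neq \mathcal{Y}_s$ follows by the same argument with the roles of $\mathcal{X}$ and $\mathcal{Y}$ exchanged, provided we set things up symmetrically).

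Second, I reduce the non-dominance statement to the scheme case via étale localization. Let $\xi$ be a generic point of $\mathcal{X}_s$; since $\mathcal{X}$ is smooth over $R$, the henselian local ring $\mathcal{O}^{h}_{\mathcal{X},\xi}$ is a regular discrete valuation ring, and there exists an étale neighborhood $U \to \mathcal{X}$ by a scheme containing (the image of) $\xi$. Similarly, choose a scheme étale cover $V \to \mathcal{Y}$, and replace $\mathcal{X}, \mathcal{Y}, \Gamma$ by the fibered products with $U$ and $V$ so that one is working with a birational map of generic fibers of smooth proper schemes over the henselization of $R$ at $\xi$. The classical Matsusaka--Mumford theorem \cite[Theorem~1]{MM64}, applied to this scheme-theoretic model, asserts that the indeterminacy does not contain the generic point of the special fiber, provided the corresponding special fiber of $V$ is not ruled.

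Third, I transfer the non-ruledness hypothesis. By Definition \ref{def:Ruled}, non-ruledness of $\mathcal{Y}_s$ means non-ruledness of its schematic locus $\mathcal{Y}_s^{\circ}$. Since the étale cover $V \to \mathcal{Y}$ can be chosen so that its special fiber dominates $\mathcal{Y}_s^{\circ}$, and ruledness descends under dominant étale morphisms (through the uniruled locus), non-ruledness of $V_s$ follows from non-ruledness of $\mathcal{Y}_s$. Feeding this into the previous step shows that $p_1$ is an isomorphism at the generic point $\xi$; as this holds for every generic point of $\mathcal{X}_s$, we conclude $V_s \neq \mathcal{X}_s$, and symmetrically $W_s \neq \mathcal{Y}_s$.

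\textbf{Main obstacle.} The delicate point is the étale-local reduction together with the bookkeeping of non-ruledness: an algebraic space can fail to be a scheme precisely along loci that matter (e.g.\ images of exceptional divisors), so one must verify that the schematic locus of the special fiber is large enough to capture both the domain of the extended isomorphism $\widetilde{f}$ and the hypothesis of non-ruledness. The smoothness of $\mathcal{X}$ and $\mathcal{Y}$ over $R$ is essential here: it guarantees that all codimension-one points of the special fibers have regular (hence schematic) local rings, so that the étale-local arguments cover the required loci. All other ingredients (generic points of algebraic spaces, henselization at such points, Zariski's main theorem, and extraction of rational curves from exceptional fibers) are available in the algebraic space setting.
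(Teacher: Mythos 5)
Your high-level strategy matches what the paper sketches: pass to the graph closure $\Gamma\subset\mathcal{X}\times_R\mathcal{Y}$, analyze the projections $p_1,p_2$ at codimension-one points of the special fiber, and argue via non-ruledness exactly as in Matsusaka--Mumford. However, two concrete issues arise in your execution of the reduction to the scheme case.

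First, you propose to pass to étale charts $U\to\mathcal{X}$, $V'\to\mathcal{Y}$ by schemes and then ``apply \cite[Theorem 1]{MM64}'' to the resulting scheme-theoretic model. But \cite[Theorem~1]{MM64} is a statement about \emph{proper} schemes over a discrete valuation ring, and an étale chart of a proper algebraic space is not proper in general (indeed if such a proper chart existed the space would already be a scheme). So the theorem cannot be invoked on the chart. What actually carries over is the \emph{proof} of MM64, not its statement: one passes to a generic point $\xi$ of $\mathcal{X}_s$, observes that $\mathcal{O}_{\mathcal{X},\xi}$ (equivalently its henselization) is a discrete valuation ring because $\mathcal{X}$ is smooth over $R$, and analyzes the fiber of the proper morphism $p_1\colon\Gamma\to\mathcal{X}$ over $\xi$; each of these constructions is already available for decent integral algebraic spaces because points, codimension-one local rings, and henselization are defined through the étale topology. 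This is what the paper means by running ``the same proof $\ldots$ using the notion of generic point and henselian local rings of decent algebraic spaces''; there is no literal reduction to a proper scheme model.

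Second, your transfer of non-ruledness to the étale cover $V'_s$ rests on the assertion that ruledness descends along dominant étale morphisms. This is both unnecessary and unjustified. Unnecessary, because the non-ruledness of $\mathcal{Y}_s$ enters the MM64 argument globally (via the image under $p_2$ of the exceptional components of $\Gamma_s$), not on a chart; the global step can and should be kept at the level of $\mathcal{Y}_s$ itself, which is unproblematic since Definition \ref{def:Ruled} already phrases ruledness of an integral separated algebraic space through its schematic locus. Unjustified, because a dominant image of a ruled variety is in general only uniruled, and in dimension $\geq 3$ uniruled does not imply ruled; so the claimed descent need not hold as stated, and the ``(through the uniruled locus)'' aside does not repair it. Dropping the chart-wise transfer and keeping the ruledness argument global closes this gap.
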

\begin{proof}
The same proof as in \cite[Theorem 1]{MM64} (see also \cite[Theorem 5.4]{Matsumoto2015}) works, by using the notion of generic points and Henselian local rings of decent algebraic spaces.
\end{proof}

We thank Tetsushi Ito for his help with the proof of the following lemma.

\begin{lemma}
\label{Lemma:Specialization}
Let $\mathcal{X}$
be an integral proper smooth algebraic space over $R$.
Suppose that the special fiber $\mathcal{X}_{s}$ is not ruled.
Let $\Bir(\mathcal{X}_{\eta})$ be the group of $F$-birational automorphisms
of the generic fiber $\mathcal{X}_{\eta}$.
Let $G$ be a finite subgroup of $\Bir(\mathcal{X}_{\eta})$.
\begin{enumerate}
\item There exists a proper closed subspaces $V \subset \mathcal{X}$
with $V_s \neq \mathcal{X}_{s}$ and a homomorphism
\[
\psi \colon G \to \Aut_R(\mathcal{X} \setminus V)
\]
such that $\psi(f)|_{\mathcal{X}_{\eta}} = f$ for any $f\in G$.

\item Let $\overline{\psi}$ be the specialization morphism defined as the composition 
\[
G \overset{\psi}{\to} \Aut_R(\mathcal{X} \setminus V)
 \to \Aut((\mathcal{X} \setminus V)_{s})
\]
given by $\overline{\psi}(f) \coloneqq \psi(f)|_{(\mathcal{X} \setminus V)_{s}}$.
If the characteristic of the residue field of $R$ is $0$,
the map $\overline{\psi}$ is injective.
If the characteristic of the residue field of $R$ is $p > 0$,
the kernel of $\overline{\psi}$ is a $p$-group.
\end{enumerate}
\end{lemma}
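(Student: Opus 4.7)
For part (1), the plan is to apply Theorem \ref{ClosedSubset} to each element of the finite group $G = \{f_1 = \id, f_2, \ldots, f_r\}$, obtaining closed subspaces $V_{f_i}, W_{f_i} \subset \mathcal{X}$ with $(V_{f_i})_s \neq \mathcal{X}_s$, $(W_{f_i})_s \neq \mathcal{X}_s$, and an extension $\widetilde{f}_i \colon \mathcal{X} \setminus V_{f_i} \xrightarrow{\sim} \mathcal{X} \setminus W_{f_i}$ over $R$ (with $V_{f_1} = \emptyset$). The naive union $V' = \bigcup_i V_{f_i}$ is not stable under the $\widetilde{f}_i$, so I would enlarge it to
\[
V := \bigcup_{i=1}^{r} \widetilde{f}_i^{-1}(V'),
\]
which is a closed subspace of $\mathcal{X}$ because each $|\widetilde{f}_i^{-1}(V') \cup V_{f_i}|$ is closed in $|\mathcal{X}|$. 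Using that $\widetilde{f}_j \circ \widetilde{f}_i$ agrees on the generic fiber with $\widetilde{f_j \circ f_i}$ and therefore they agree on the open subspace where both are defined, one checks $\widetilde{f}_i^{-1}(V) \subset V$, so each $\widetilde{f}_i$ restricts to an $R$-morphism $f'_i \colon \mathcal{X} \setminus V \to \mathcal{X} \setminus V$, and $(f'_i)' \circ (f_i^{-1})' = \id$ shows $f'_i$ is an automorphism. This yields the desired homomorphism $\psi$.

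For part (2), I would take $f \in \ker \overline{\psi}$ whose order $N := \ord(f)$ is invertible in the residue field $k(s)$ of $R$, and prove $f = \id$. Choose a $\overline{k(s)}$-point $x$ lying in the schematic locus of $(\mathcal{X} \setminus V)_s$; since $\Spec \mathcal{O}_{\mathcal{X}, x}$ is dense in a neighborhood of the generic point of the special fiber, it suffices to show $f$ acts trivially on the local ring $\mathcal{O}_{\mathcal{X}, x}$. Let $\pi$ be a uniformizer of $R$ and set $A_n := \mathcal{O}_{\mathcal{X}, x}/\pi^n \mathcal{O}_{\mathcal{X}, x}$. By Krull's intersection theorem, $\bigcap_n \pi^n \mathcal{O}_{\mathcal{X}, x} = 0$, so it is enough to show $f^* = \id$ on each $A_n$. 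The base case $n = 1$ follows because $f \in \ker \overline{\psi}$. For the inductive step, the short exact sequence
\[
0 \to \pi^n A_{n+1} \to A_{n+1} \to A_n \to 0
\]
shows that for $a \in A_{n+1}$ one has $f^*(a) = a + \pi^n b$ for some $b$; since $\pi^{n+1} = 0$ in $A_{n+1}$, iterating gives $(f^*)^N(a) = a + N \pi^n b$. The identity $(f^*)^N = \id$ thus yields $N \pi^n b = 0$, and because $\pi^n A_{n+1}$ is a $k(x)$-vector space with $N \in k(x)^\times$, we conclude $\pi^n b = 0$. Hence $f^*(a) = a$, completing the induction.

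The content of the argument is largely routine once Theorem \ref{ClosedSubset} is available; the main subtlety is the combinatorial bookkeeping in (1) to produce a single $G$-stable complement $V$ (rather than only pointwise extensions), and the key input in (2) is the algebraic lemma that an automorphism of finite order $N$ coprime to the residue characteristic which is trivial modulo $\pi$ is trivial modulo every $\pi^n$, via Krull's intersection theorem. The $p$-group statement in mixed characteristic follows because the same argument rules out any $\ell$-torsion with $\ell \neq p$ in the kernel of $\overline{\psi}$.
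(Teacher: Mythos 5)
Your proposal is correct and follows essentially the same route as the paper's proof: part (1) via the same $G$-stable enlargement $V=\bigcup_i \widetilde{f}_i^{-1}(V')$ of the union of the loci from Theorem \ref{ClosedSubset}, and part (2) via the same induction on $\mathscr{O}_{\mathcal{X},x}/\pi^n$ using Krull's intersection theorem and the invertibility of $\ord(f)$ in the residue field. No substantive differences.
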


\begin{proof}
(1) We put $G = \{ f_1=\mathrm{id},\ldots,f_r \}$.
Let $V_{f_i}, W_{f_{i}} \subset \mathcal{X}$ be proper closed subspaces
associated with $f_i$ given by Theorem \ref{ClosedSubset} (i.e.\,$f_{i}$ extends to an isomorphism $\widetilde{f}_{i}\colon \mathcal{X} \setminus V_{f_{i}} \simeq \mathcal{X} \setminus W_{f_{i}}$).
Here, we put $V_{f_{1}} = \emptyset$.
We put $V'\coloneqq\bigcup_{i = 1}^{r} V_{f_i} $,
and 
\[
V\coloneqq \bigcup_{i=1}^{r} \widetilde{f}_{i}^{-1}(V').
\] 
First, since $|\widetilde{f}_{i}^{-1}(V') \cup V_{f_{i}}|$ is a closed subset of $|\mathcal{X}|$, the subset $V$ is a closed subspace of $\mathcal{X}$.
Moreover, since we have 
\[
|\widetilde{f}_{i}^{-1}( \widetilde{f}_{j}^{-1}(V'))| \subset
|\widetilde{(f_{j}\circ f_{i})}^{-1}(V') \cup V_{f_{j} \circ f_{i}}|,
\]
we have $\widetilde{f}_{i}^{-1}(V) \subset V$.
Since $V' \subset V$, the morphism $\widetilde{f}_{i}$ restricts to an  $R$-morphism 
\[
f_{i}' \colon
\mathcal{X} \setminus V \rightarrow \mathcal{X} \setminus V.
\]
Since $(f_{i})' \circ (f_{i}^{-1})' =(f_{i}^{-1})' \circ (f_{i})' = \mathrm{id}$,
$f_{i}'$ is an isomorphism, and $V$ satisfies the desired condition.

(2)
Take an automorphism $f \in \ker \overline{\psi}$.
Let $\ord(f)$ be the order of $f$.
Assume that $\ord(f)$ is invertible in the residue field of $R$.
We shall show that $f$ is the identity.

Since $f$ acts trivially on $(\mathcal{X} \setminus V)_{s}$,
it fixes every closed point $x \in (\mathcal{X} \setminus V)_{s}$.
Let $k(s)$ be the residue field of $R$.
We can take a $\overline{k(s)}$-valued point $x$ of $(\mathcal{X} \setminus V)_{s}$, such that $x$ is contained in the schematic locus of $(\mathcal{X} \setminus V)_{s}$.
It is enough to show that the action of $f$ on the local ring $\mathscr{O}_{\mathcal{X},x}$ is trivial. 
Let $\pi$ be a uniformizer of $R$.
Since $\pi$ is not invertible in $\mathscr{O}_{\mathcal{X}, x}$,
we have $\bigcap_{n \geq 1} \pi^n \mathscr{O}_{\mathcal{X}, x} = (0)$ by Krull's intersection theorem; see \cite[Theorem 8.10]{Matsumura1989}.
Therefore, it is enough to show that
the action of $f$ on 
$\mathscr{O}_{\mathcal{X}, x}/\pi^n \mathscr{O}_{\mathcal{X}, x}$
is trivial for every $n \geq 1$.
We shall prove the assertion by induction on $n$.
We put $A_n \coloneqq \mathscr{O}_{\mathcal{X}, x}/\pi^{n} \mathscr{O}_{\mathcal{X}, x}$.
When $n = 1$, it follows because $f$ acts trivially on
the special fiber $(\mathcal{X} \setminus V)_{s}$.
If the assertion holds for some $n \geq 1$, the exact sequence
\[
0 \to \pi^n A_{n+1}
  \to A_{n+1}
  \to A_{n}
  \to 0
\]
shows that, for an element $a \in A_{n+1}$,
we have $f^{\ast}(a) = a + \pi^n b$ for some $b \in A_{n+1}$
(here the action of $f$ on $A_{n+1}$ is denoted by $f^{\ast}$).
Since $\pi^{n+1} = 0$ in $A_{n+1}$,
we have $(f^{\ast})^r(a) = a + r \pi^n b$ for every $r \geq 1$.
In particular, we have
\[ (f^{\ast})^{\ord(f)}(a) = a + \ord(f) \pi^n b = a. \]
Since $\pi^n A_{n+1}$ is a vector space over $k(x)$
and $\ord(f)$ is invertible in the residue field,
we have $\pi^n b = 0$.
Thus, the action of $f$ on $A_{n+1}$ is trivial.
By induction on $n$, the assertion is proved.
\end{proof}

\begin{remark}
If $\Aut(\mathcal{X}_{\eta})$ is infinite,
a homomorphism
\[
\Aut(\mathcal{X}_{\eta}) \to \Aut(\mathcal{X} \setminus V)
\]
may not exist for any choice of $V \subset \mathcal{X}_s$.
The problem is that there might be infinitely many closed subsets
that should be removed from $\mathcal{X}$
if we apply Lemma  \ref{ClosedSubset}.
On the other hand, there is a well-defined specialization homomorphism
\[ \Bir(\mathcal{X}_{\eta}) \to \Bir(\mathcal{X}_s), \]
where $\Bir(-)$
denotes the group of birational automorphisms;
see \cite[p.\ 191-192, Exercises 1.17]{Kollar-RationalCurvesBook}.
\end{remark}

\begin{remark}
There does not exist a specialization map
\[ \Aut(\mathcal{X}_{\eta}) \to \Aut(\mathcal{X}_s) \]
in general.
The problem is that an automorphism of the generic fiber
does not always extend to an automorphism of the integral model.
Such an extension exists for abelian varieties (\cite[Theorem 1.4.3]{BLR90}), but fails in general for 
for K3 surfaces (\cite[Example 5.4]{LM18}). 
However, since any birational map between K3 surfaces can be extended to an automorphism, there still exists a map $\Aut(\mathcal{X}_{\eta}) \to \Aut(\mathcal{X}_s)$ for K3 surfaces.
On the other hand, suppose that there exists a polarization on $\mathcal{X}$ (this does not hold in general, see \cite[Example 5.2]{Matsumoto2015}), then by Matsusaka--Mumford \cite[Theorem 2]{MM64},
there always exists a specialization map
\[
\Aut(\mathcal{X}_{\eta}, \mathcal{L}_{\mathcal{X}_{\eta}}) \to
\Aut(\mathcal{X}_{s}, \mathcal{L}_{\mathcal{X}_{s}}).
\]
provided that the special fiber is non-ruled.
This map is used in Andr\'e's proof of the Shafarevich
conjecture for (very) polarized hyper-Kähler varieties;
see the proof of \cite[Lemma 9.3.1]{A96}.
\end{remark}

\bibliographystyle{plain}
\bibliography{SHK}

\begin{thebibliography}{10}

\bibitem{SGA4}
{\em Th\'{e}orie des topos et cohomologie \'{e}tale des sch\'{e}mas.}
\newblock Lecture Notes in Mathematics, Vol. 269, 270, 305. Springer-Verlag, Berlin-New York, 1972.
\newblock S\'{e}minaire de G\'{e}om\'{e}trie Alg\'{e}brique du Bois-Marie 1963--1964 (SGA 4), Dirig\'{e} par M. Artin, A. Grothendieck, et J. L. Verdier. Avec la collaboration de N. Bourbaki, P. Deligne et B. Saint-Donat.

\bibitem{AV15}
Ekaterina Amerik and Misha Verbitsky.
\newblock Rational curves on hyperk\"{a}hler manifolds.
\newblock {\em Int. Math. Res. Not. IMRN}, 2015(23):13009--13045, 2015.

\bibitem{AV17}
Ekaterina Amerik and Misha Verbitsky.
\newblock Morrison-{K}awamata conjecture for hyperk\"ahler manifolds.
\newblock {\em Ann. Sci. \'Ec. Norm. Sup\'er. (4)}, 50(4):973--993, 2017.

\bibitem{AV-IMRN}
Ekaterina Amerik and Misha Verbitsky.
\newblock Collections of orbits of hyperplane type in homogeneous spaces, homogeneous dynamics, and hyperk\"ahler geometry.
\newblock {\em Int. Math. Res. Not. IMRN}, 2020(1):25--38, 2020.

\bibitem{A96}
Yves Andr\'e.
\newblock On the {S}hafarevich and {T}ate conjectures for hyper-{K}\"ahler varieties.
\newblock {\em Math. Ann.}, 305(2):205--248, 1996.

\bibitem{Beauville-82Katata}
Arnaud Beauville.
\newblock Some remarks on {K}\"{a}hler manifolds with {$c_{1}=0$}.
\newblock In {\em Classification of algebraic and analytic manifolds ({K}atata, 1982)}, volume~39 of {\em Progr. Math.}, pages 1--26. Birkh\"{a}user Boston, Boston, MA, 1983.

\bibitem{Bea83}
Arnaud Beauville.
\newblock Vari\'et\'es {K}\"ahleriennes dont la premi\`ere classe de {C}hern est nulle.
\newblock {\em J. Differential Geom.}, 18(4):755--782 (1984), 1983.

\bibitem{BL19}
Nicolas Bergeron and Zhiyuan Li.
\newblock Tautological classes on moduli spaces of hyper-{K}\"{a}hler manifolds.
\newblock {\em Duke Math. J.}, 168(7):1179--1230, 2019.

\bibitem{BindtThesis}
W.P. Bindt.
\newblock {\em Hyperk{\"a}hler varieties and their relation to {S}himura stacks}.
\newblock PhD thesis, University of Amsterdam, February 2021.

\bibitem{Bogomolov74}
F.~A. Bogomolov.
\newblock The decomposition of {K}\"{a}hler manifolds with a trivial canonical class.
\newblock {\em Mat. Sb. (N.S.)}, 93(135):573--575, 630, 1974.

\bibitem{Bo78}
F.~A. Bogomolov.
\newblock Hamiltonian {K}\"{a}hlerian manifolds.
\newblock {\em Dokl. Akad. Nauk SSSR}, 243(5):1101--1104, 1978.

\bibitem{BNS11}
Samuel Boissi\`ere, Marc Nieper-Wi\ss~kirchen, and Alessandra Sarti.
\newblock Higher dimensional {E}nriques varieties and automorphisms of generalized {K}ummer varieties.
\newblock {\em J. Math. Pures Appl. (9)}, 95(5):553--563, 2011.

\bibitem{BorelSerre}
A.~Borel and J.-P. Serre.
\newblock Th\'{e}or\`emes de finitude en cohomologie galoisienne.
\newblock {\em Comment. Math. Helv.}, 39:111--164, 1964.

\bibitem{BLR90}
Siegfried Bosch, Werner L\"{u}tkebohmert, and Michel Raynaud.
\newblock {\em N\'{e}ron models}, volume~21 of {\em Ergebnisse der Mathematik und ihrer Grenzgebiete (3) [Results in Mathematics and Related Areas (3)]}.
\newblock Springer-Verlag, Berlin, 1990.

\bibitem{BLvL}
Martin Bright, Adam Logan, and Ronald van Luijk.
\newblock Finiteness results for {K}3 surfaces over arbitrary fields.
\newblock {\em Eur. J. Math.}, 6(2):336--366, 2020.

\bibitem{CadoretMoonen}
Anna Cadoret and Ben Moonen.
\newblock Integral and adelic aspects of the {M}umford-{T}ate conjecture.
\newblock {\em J. Inst. Math. Jussieu}, 19(3):869--890, 2020.

\bibitem{CattaneoFu}
Andrea Cattaneo and Lie Fu.
\newblock Finiteness of {K}lein actions and real structures on compact hyperk\"{a}hler manifolds.
\newblock {\em Math. Ann.}, 375(3-4):1783--1822, 2019.

\bibitem{Ch13}
Fran\c{c}ois Charles.
\newblock The {T}ate conjecture for {$K3$} surfaces over finite fields.
\newblock {\em Invent. Math.}, 194(1):119--145, 2013.

\bibitem{Ch16}
Fran\c{c}ois Charles.
\newblock Birational boundedness for holomorphic symplectic varieties, {Z}arhin's trick for {$K3$} surfaces, and the {T}ate conjecture.
\newblock {\em Ann. of Math. (2)}, 184(2):487--526, 2016.

\bibitem{DeligneK3}
Pierre Deligne.
\newblock La conjecture de {W}eil pour les surfaces {$K3$}.
\newblock {\em Invent. Math.}, 15:206--226, 1972.

\bibitem{Faltings83}
Gerd Faltings.
\newblock Endlichkeitss\"{a}tze f\"{u}r abelsche {V}ariet\"{a}ten \"{u}ber {Z}ahlk\"{o}rpern.
\newblock {\em Invent. Math.}, 73(3):349--366, 1983.

\bibitem{Faltings1992}
Gerd Faltings, Gisbert W\"{u}stholz, Fritz Grunewald, Norbert Schappacher, and Ulrich Stuhler.
\newblock {\em Rational points}.
\newblock Aspects of Mathematics, E6. Friedr. Vieweg \& Sohn, Braunschweig, third edition, 1992.
\newblock Papers from the seminar held at the Max-Planck-Institut f\"{u}r Mathematik, Bonn/Wuppertal, 1983/1984, With an appendix by W\"{u}stholz.

\bibitem{Fuj81}
Akira Fujiki.
\newblock A theorem on bimeromorphic maps of {K}\"{a}hler manifolds and its applications.
\newblock {\em Publ. Res. Inst. Math. Sci.}, 17(2):735--754, 1981.

\bibitem{GHS10}
Valeri~A. Gritsenko, Klaus Hulek, and Gregory~Kumar Sankaran.
\newblock Moduli spaces of irreducible symplectic manifolds.
\newblock {\em Compos. Math.}, 146(2):404--434, 2010.

\bibitem{MR1963559}
M.~Gross, D.~Huybrechts, and D.~Joyce.
\newblock {\em Calabi-{Y}au manifolds and related geometries}.
\newblock Universitext. Springer-Verlag, Berlin, 2003.
\newblock Lectures from the Summer School held in Nordfjordeid, June 2001.

\bibitem{Harada2009}
Shinya Harada and Toshiro Hiranouchi.
\newblock Smallness of fundamental groups for arithmetic schemes.
\newblock {\em J. Number Theory}, 129(11):2702--2712, 2009.

\bibitem{HT13}
Brendan Hassett and Yuri Tschinkel.
\newblock Hodge theory and {L}agrangian planes on generalized {K}ummer fourfolds.
\newblock {\em Mosc. Math. J.}, 13(1):33--56, 189, 2013.

\bibitem{MR1664696}
Daniel Huybrechts.
\newblock Compact hyper-{K}\"ahler manifolds: basic results.
\newblock {\em Invent. Math.}, 135(1):63--113, 1999.

\bibitem{Hu03}
Daniel Huybrechts.
\newblock Finiteness results for compact hyperk\"ahler manifolds.
\newblock {\em J. Reine Angew. Math.}, 558:15--22, 2003.

\bibitem{CMlifting}
Kazuhiro Ito, Tetsushi Ito, and Teruhisa Koshikawa.
\newblock C{M} liftings of {$K3$} surfaces over finite fields and their applications to the {T}ate conjecture.
\newblock {\em Forum Math. Sigma}, 9:Paper No. e29, 70, 2021.

\bibitem{Javanpeykar-CanPolarizedSurface}
Ariyan Javanpeykar.
\newblock N\'{e}ron models and the arithmetic {S}hafarevich conjecture for certain canonically polarized surfaces.
\newblock {\em Bull. Lond. Math. Soc.}, 47(1):55--64, 2015.

\bibitem{Javanpeykar-MathAnn21}
Ariyan Javanpeykar.
\newblock Arithmetic hyperbolicity: automorphisms and persistence.
\newblock {\em Math. Ann.}, 381(1-2):439--457, 2021.

\bibitem{Javanpeykar-FlagVarieties}
Ariyan Javanpeykar and Daniel Loughran.
\newblock Good reduction of algebraic groups and flag varieties.
\newblock {\em Arch. Math. (Basel)}, 104(2):133--143, 2015.

\bibitem{JavanpeykarCI2017}
Ariyan Javanpeykar and Daniel Loughran.
\newblock Complete intersections: moduli, {T}orelli, and good reduction.
\newblock {\em Math. Ann.}, 368(3-4):1191--1225, 2017.

\bibitem{JavanpeykarLoughran-SexticSurface}
Ariyan Javanpeykar and Daniel Loughran.
\newblock Good reduction of {F}ano threefolds and sextic surfaces.
\newblock {\em Ann. Sc. Norm. Super. Pisa Cl. Sci. (5)}, 18(2):509--535, 2018.

\bibitem{JavanpeykarLoughran-StackyCW21}
Ariyan Javanpeykar and Daniel Loughran.
\newblock Arithmetic hyperbolicity and a stacky {C}hevalley-{W}eil theorem.
\newblock {\em J. Lond. Math. Soc. (2)}, 103(3):846--869, 2021.

\bibitem{Javanpeykar-CyclicCover}
Ariyan Javanpeykar, Daniel Loughran, and Siddharth Mathur.
\newblock Good reduction and cyclic covers.
\newblock {\em J. Inst. Math. Jussieu}, 23(1):463--494, 2024.

\bibitem{Javanpeykar-PointedFamily}
Ariyan Javanpeykar, Ruiran Sun, and Kang Zuo.
\newblock The shafarevich conjecture revisited: Finiteness of pointed families of polarized varieties.
\newblock {\em arXiv:2005.05933}, 2020.

\bibitem{JiangLiu-NumericalTrivialAutomorphism}
Chen Jiang and Wenfei Liu.
\newblock On numerically trivial automorphisms of compact hyperk\"ahler manifolds of dimension 4.
\newblock {\em Math. Res. Lett.}, 31(6):1771--1784, 2024.

\bibitem{Kamenova2018a}
Ljudmila Kamenova.
\newblock Survey of finiteness results for hyperk\"{a}hler manifolds.
\newblock In {\em Phenomenological approach to algebraic geometry}, volume 116 of {\em Banach Center Publ.}, pages 77--86. Polish Acad. Sci. Inst. Math., Warsaw, 2018.

\bibitem{KapferMenet}
Simon Kapfer and Gr\'{e}goire Menet.
\newblock Integral cohomology of the generalized {K}ummer fourfold.
\newblock {\em Algebr. Geom.}, 5(5):523--567, 2018.

\bibitem{Kisin-IntegralModel}
Mark Kisin.
\newblock Integral models for {S}himura varieties of abelian type.
\newblock {\em J. Amer. Math. Soc.}, 23(4):967--1012, 2010.

\bibitem{KisinPappas}
Mark Kisin and Georgios Pappas.
\newblock Integral models of {S}himura varieties with parahoric level structure.
\newblock {\em Publ. Math. Inst. Hautes \'{E}tudes Sci.}, 128:121--218, 2018.

\bibitem{Kn02}
Martin Kneser.
\newblock {\em Quadratische {F}ormen}.
\newblock Springer-Verlag, Berlin, 2002.
\newblock Revised and edited in collaboration with Rudolf Scharlau.

\bibitem{Kollar-RationalCurvesBook}
J\'{a}nos Koll\'{a}r.
\newblock {\em Rational curves on algebraic varieties}, volume~32 of {\em Ergebnisse der Mathematik und ihrer Grenzgebiete. 3. Folge. A Series of Modern Surveys in Mathematics [Results in Mathematics and Related Areas. 3rd Series. A Series of Modern Surveys in Mathematics]}.
\newblock Springer-Verlag, Berlin, 1996.

\bibitem{KM83}
J\'{a}nos Koll\'{a}r and Teruhisa Matsusaka.
\newblock Riemann-{R}och type inequalities.
\newblock {\em Amer. J. Math.}, 105(1):229--252, 1983.

\bibitem{LawrenceSawin}
Brian Lawrence and Will Sawin.
\newblock The {S}hafarevich conjecture for hypersurfaces in abelian varieties.
\newblock {\em arXiv:2004.09046}, 2020.

\bibitem{LawrenceVenkatesh}
Brian Lawrence and Akshay Venkatesh.
\newblock Diophantine problems and {$p$}-adic period mappings.
\newblock {\em Invent. Math.}, 221(3):893--999, 2020.

\bibitem{LM18}
Christian Liedtke and Yuya Matsumoto.
\newblock Good reduction of {K}3 surfaces.
\newblock {\em Compos. Math.}, 154(1):1--35, 2018.

\bibitem{Pe15}
Keerthi Madapusi~Pera.
\newblock The {T}ate conjecture for {K}3 surfaces in odd characteristic.
\newblock {\em Invent. Math.}, 201(2):625--668, 2015.

\bibitem{Pe16}
Keerthi Madapusi~Pera.
\newblock Integral canonical models for spin {S}himura varieties.
\newblock {\em Compos. Math.}, 152(4):769--824, 2016.

\bibitem{Mar11}
Eyal Markman.
\newblock A survey of {T}orelli and monodromy results for holomorphic-symplectic varieties.
\newblock In {\em Complex and differential geometry}, volume~8 of {\em Springer Proc. Math.}, pages 257--322. Springer, Heidelberg, 2011.

\bibitem{MY15}
Eyal Markman and Kota Yoshioka.
\newblock A proof of the {K}awamata-{M}orrison cone conjecture for holomorphic symplectic varieties of {$K3^{[n]}$} or generalized {K}ummer deformation type.
\newblock {\em Int. Math. Res. Not. IMRN}, 2025(24):13563--13574, 2015.

\bibitem{Matsumoto2015}
Yuya Matsumoto.
\newblock Good reduction criterion for {K}3 surfaces.
\newblock {\em Math. Z.}, 279(1-2):241--266, 2015.

\bibitem{Matsumura1989}
Hideyuki Matsumura.
\newblock {\em Commutative ring theory}, volume~8 of {\em Cambridge Studies in Advanced Mathematics}.
\newblock Cambridge University Press, Cambridge, second edition, 1989.
\newblock Translated from the Japanese by M. Reid.

\bibitem{MM64}
Teruhisa Matsusaka and David Mumford.
\newblock Two fundamental theorems on deformations of polarized varieties.
\newblock {\em Amer. J. Math.}, 86:668--684, 1964.

\bibitem{MilneMotives}
James~S. Milne.
\newblock Shimura varieties and motives.
\newblock In {\em Motives ({S}eattle, {WA}, 1991)}, volume~55 of {\em Proc. Sympos. Pure Math.}, pages 447--523. Amer. Math. Soc., Providence, RI, 1994.

\bibitem{Mongardi-WallDiv}
Giovanni Mongardi.
\newblock A note on the {K}\"{a}hler and {M}ori cones of hyperk\"{a}hler manifolds.
\newblock {\em Asian J. Math.}, 19(4):583--591, 2015.

\bibitem{MongardiPacienza}
Giovanni Mongardi and Gianluca Pacienza.
\newblock {Density of Noether–Lefschetz loci of polarized irreducible holomorphic symplectic varieties and applications}.
\newblock {\em Kyoto Journal of Mathematics}, 63(4):749 -- 781, 2023.

\bibitem{MW17}
Giovanni Mongardi and Malte Wandel.
\newblock Automorphisms of {O}'{G}rady's manifolds acting trivially on cohomology.
\newblock {\em Algebr. Geom.}, 4(1):104--119, 2017.

\bibitem{Nagamachi2019}
Ippei Nagamachi and Teppei Takamatsu.
\newblock The {S}hafarevich conjecture and some extension theorems for proper hyperbolic polycurves.
\newblock {\em Math. Res. Lett.}, 29(2):541--557, 2022.

\bibitem{Ni79}
V.~V. Nikulin.
\newblock Integer symmetric bilinear forms and some of their geometric applications.
\newblock {\em Izv. Akad. Nauk SSSR Ser. Mat.}, 43(1):111--177, 238, 1979.

\bibitem{OG10}
Kieran~G. O'Grady.
\newblock Desingularized moduli spaces of sheaves on a {$K3$}.
\newblock {\em J. Reine Angew. Math.}, 512:49--117, 1999.

\bibitem{OG6}
Kieran~G. O'Grady.
\newblock A new six-dimensional irreducible symplectic variety.
\newblock {\em J. Algebraic Geom.}, 12(3):435--505, 2003.

\bibitem{Ogu20}
Keiji Oguiso.
\newblock No cohomologically trivial nontrivial automorphism of generalized {K}ummer manifolds.
\newblock {\em Nagoya Math. J.}, 239:110--122, 2020.

\bibitem{OrrSkorobogatov}
Martin Orr and Alexei~N. Skorobogatov.
\newblock Finiteness theorems for {K}3 surfaces and abelian varieties of {CM} type.
\newblock {\em Compos. Math.}, 154(8):1571--1592, 2018.

\bibitem{PappasZachos}
Georgios Pappas and Ioannis Zachos.
\newblock Regular integral models for {S}himura varieties of orthogonal type.
\newblock {\em Compos. Math.}, 158(4):831--867, 2022.

\bibitem{PR94}
Vladimir Platonov and Andrei Rapinchuk.
\newblock {\em Algebraic groups and number theory}, volume 139 of {\em Pure and Applied Mathematics}.
\newblock Academic Press, Inc., Boston, MA, 1994.
\newblock Translated from the 1991 Russian original by Rachel Rowen.

\bibitem{Ri06}
Jordan Rizov.
\newblock Moduli stacks of polarized {$K3$} surfaces in mixed characteristic.
\newblock {\em Serdica Math. J.}, 32(2-3):131--178, 2006.

\bibitem{Riz10}
Jordan Rizov.
\newblock Kuga-{S}atake abelian varieties of {K}3 surfaces in mixed characteristic.
\newblock {\em J. Reine Angew. Math.}, 648:13--67, 2010.

\bibitem{Sch85}
Anthony~J. Scholl.
\newblock A finiteness theorem for del {P}ezzo surfaces over algebraic number fields.
\newblock {\em J. London Math. Soc. (2)}, 32(1):31--40, 1985.

\bibitem{ShafarevichICM}
I.~R. Shafarevich.
\newblock Algebraic number fields.
\newblock In {\em Proc. {I}nternat. {C}ongr. {M}athematicians ({S}tockholm, 1962)}, pages 163--176. Inst. Mittag-Leffler, Djursholm, 1963.

\bibitem{Shafarevich96}
I.~R. Shafarevich.
\newblock On the arithmetic of singular {$K3$}-surfaces.
\newblock In {\em Algebra and analysis ({K}azan, 1994)}, pages 103--108. de Gruyter, Berlin, 1996.

\bibitem{Sh17}
Yiwei She.
\newblock The unpolarized {S}hafarevich conjecture for {K}3 surfaces.
\newblock {\em arXiv:1705.09038.}, 2017.

\bibitem{stacks-project}
The {Stacks project authors}.
\newblock The stacks project.
\newblock \url{https://stacks.math.columbia.edu}, 2026.

\bibitem{taelman17}
Lenny Taelman.
\newblock Complex multiplication and shimura stacks.
\newblock {\em arXiv:1707.01236}, 2018.

\bibitem{Takamatsu2020K3}
Teppei Takamatsu.
\newblock On a cohomological generalization of the {S}hafarevich conjecture for {K}3 surfaces.
\newblock {\em Algebra Number Theory}, 14(9):2505--2531, 2020.

\bibitem{Takamatsu2021a}
Teppei Takamatsu.
\newblock On the {S}hafarevich conjecture for {E}nriques surfaces.
\newblock {\em Math. Z.}, 298(1-2):489--495, 2021.

\bibitem{takamatsu21}
Teppei Takamatsu.
\newblock On the finiteness of twists of irreducible symplectic varieties.
\newblock {\em Math. Ann.}, 392(1):339--371, 2025.

\bibitem{takamatsu-Bielliptic}
Teppei Takamatsu.
\newblock Reduction of bielliptic surfaces.
\newblock {\em Canad. Math. Bull.}, 68(3):856--873, 2025.

\bibitem{Takamatsu2020c}
Teppei Takamatsu and Shou Yoshikawa.
\newblock Minimal model program for semi-stable threefolds in mixed characteristic.
\newblock {\em J. Algebraic Geom.}, 32(3):429--476, 2023.

\bibitem{VV17}
Anthony V\'arilly-Alvarado and Bianca Viray.
\newblock Abelian {$n$}-division fields of elliptic curves and {B}rauer groups of product {K}ummer \& abelian surfaces.
\newblock {\em Forum Math. Sigma}, 5:Paper No. e26, 42, 2017.

\bibitem{Ve13}
Misha Verbitsky.
\newblock Mapping class group and a global {T}orelli theorem for hyperk\"ahler manifolds.
\newblock {\em Duke Math. J.}, 162(15):2929--2986, 2013.
\newblock Appendix A by Eyal Markman.

\bibitem{Yang2019}
Ziquan Yang.
\newblock On irreducible symplectic varieties of {${\rm K3}^{[n]}$}-type in positive characteristic.
\newblock {\em Adv. Math.}, 417:Paper No. 108930, 58, 2023.

\bibitem{ZarhinTrick85}
Yu.~G. Zarhin.
\newblock A finiteness theorem for unpolarized abelian varieties over number fields with prescribed places of bad reduction.
\newblock {\em Invent. Math.}, 79(2):309--321, 1985.

\end{thebibliography}
\end{document}